\documentclass[reqno,tbtags,intlimits,oneside,letterpaper, 12pt]{amsart}

\usepackage[foot]{amsaddr}

\usepackage[english]{babel}
\usepackage{amssymb,upref,calc,url}
\usepackage[mathscr]{eucal}

\usepackage{amsmath, amsfonts,amscd,amsthm,amssymb,verbatim}
\usepackage[all]{xy}
\usepackage{textcomp}
\usepackage[in]{fullpage}

\usepackage[title]{appendix}

\theoremstyle{definition}
\newtheorem{theorem}{Theorem}[section]
\newtheorem{proposition}{Proposition}[section]
\newtheorem{corollary}[theorem]{Corollary}
\newtheorem{lemma}[theorem]{Lemma}
\newtheorem{definition}[theorem]{Definition}
\newtheorem{remark}[theorem]{Remark}

\newenvironment{proofsketch}
  {\begin{proof}[Proof sketch]}
  {\end{proof}}

\usepackage[
backend=biber,
style=alphabetic,
sorting=nyt,
maxbibnames=99,
maxcitenames=99,
maxalphanames=99
]{biblatex}

\usepackage{subcaption}
\usepackage{graphicx}
\graphicspath{{images/}}

\newcommand{\bbC}{\mathbb{C}}
\newcommand{\bbR}{\mathbb{R}}

\newcommand{\bbZ}{\mathbb{Z}}
\newcommand{\bbN}{\mathbb{N}}
\newcommand{\bbF}{\mathbb{F}}

\newcommand{\cH}{\mathcal{H}}
\newcommand{\cM}{\mathcal{M}}
\newcommand{\cA}{\mathcal{A}}

\newcommand{\cT}{\mathcal{T}}

\newcommand{\rmH}{\mathrm{H}}
\newcommand{\rmF}{\mathrm{F}}
\newcommand{\rmE}{\mathrm{E}}
\newcommand{\rmV}{\mathrm{V}}
\newcommand{\rmW}{\mathrm{W}}

\newcommand{\rmB}{\mathrm{B}}

\newcommand{\del}{\partial}
\newcommand{\delbar}{\overline{\partial}}

\newcommand{\MCG}{\mathrm{MCG}}

\newcommand{\Diff}{\mathrm{Diff}}

\newcommand{\Id}{\mathrm{Id}} 

\newcommand{\SLgr}{\mathrm{SL}_2\mathbb{R}} 

\newcommand{\GLgr}{\mathrm{GL}_2\mathbb{R}} 

\newcommand{\PGLgr}{\mathrm{PGL}_2\mathbb{R}} 

\newcommand{\End}{\mathrm{End}} 

\newcommand{\Teichmuller}{Teichm\"uller}

\usepackage{xcolor}

\definecolor{shamrockgreen}{rgb}{0.0, 0.62, 0.38}

\usepackage{tikz-cd}

\usepackage{hyperref}
\hypersetup{
    colorlinks=true,
    linkcolor=shamrockgreen,
    filecolor=magenta,
    citecolor=blue,
    urlcolor=blue,
    pdftitle={Semisimplicity and Rigidity of the Kontsevich--Zorich Cocycle for Products of Strata}
    }

\usepackage{setspace}[1.5]
\title[Semisimplicity and Rigidity of the Kontsevich--Zorich Cocycle for Products of Strata]{Semisimplicity and Rigidity\\ of the Kontsevich--Zorich Cocycle\\ for Products of Strata}
\author{Polina Baron}
\address{University of Toronto, Canada}
\email{p.baron@utoronto.ca}

\begin{document}

\begin{abstract}
We study the Kontsevich--Zorich cocycle, its fixed tensor constructions, and finite direct sums of such constructions over products of strata of Abelian differentials. The acting groups are closed subgroups $G\subset(\mathrm{SL}_2\mathbb R)^n$ which, after permuting factors and coordinatewise conjugation by rotations, are products of diagonal $\mathrm{SL}_2\mathbb R$ blocks times a finite central group. For ergodic invariant probability measures we establish Deligne-type semisimplicity: the corresponding bundles admit Hodge-compatible decompositions into $G$-invariant pieces $V_i\otimes_{A_i}W_i$, where the $A_i$ are division algebras, the $V_i$ carry pure variations of Hodge structure, and the multiplicity spaces $W_i$ carry finite direct sums of pure Hodge structures. Every $G$-invariant subbundle arises by choosing $A_i$-submodules of the $W_i$.

Under the additional hypothesis that the block marginal measures have smooth local supports and are locally equivalent to smooth volume, we prove that Hodge-orthogonal projections onto invariant subbundles are locally polynomial in period coordinates on the normalized support. The proof derives a local product-of-quadrics model and establishes joint polynomiality from polynomiality along stable and unstable leaves. On the cone obtained by scaling each diagonal block together, the projection coefficients have homogeneous degree-zero expressions with integral powers of the reciprocal block areas. For the full product group $(\SLgr)^n$, the single-stratum measure classification supplies the support hypothesis, so the rigidity conclusion holds for every ergodic invariant probability measure. Under the transported support hypothesis, polynomial equivariant projectors also exist for arbitrary coordinatewise $\SLgr$-conjugates of these groups, although they need not be Hodge-orthogonal. 

Genus-two examples show that nonrotational conjugation can destroy Deligne-torus invariance and that independent factor scalings can require square roots of area functions.
\end{abstract}

\maketitle

\tableofcontents

\section*{Introduction}

A foundational breakthrough of Eskin--Mirzakhani--Mohammadi shows that orbit closures for the $\SLgr$-action on a moduli space of translation surfaces are
\emph{affine invariant submanifolds}: immersed suborbifolds locally cut out by real linear equations in period coordinates
(and more generally, closed invariant sets are finite unions of such manifolds) \cite{EMM15}.
This structural result has shifted many questions in \Teichmuller~dynamics to the study of affine invariant manifolds and the
linear-algebraic constraints they impose on the geometry of the surface.
Further tools for analyzing affine invariant manifolds include Wright's Cylinder Deformation Theorem and its consequences for
rank, field of definition, and cylinder configurations \cite{Wright2014}.

The principal linear object over strata is the (cohomological) Hodge bundle, equipped with the Gauss--Manin connection.
Parallel transport along $\SLgr$-orbits defines the Kontsevich--Zorich (KZ) cocycle.
Its Lyapunov exponents govern deviation of ergodic averages for translation flows and related problems (beginning with works
of Zorich and Forni, and in particular Forni's non-uniform hyperbolicity and spectral gap phenomena \cite{Forni2002Deviation}).
They also satisfy strong global constraints: for any regular $\SLgr$-invariant suborbifold, the sum of the nonnegative
Lyapunov exponents admits an explicit expression in terms of the Siegel--Veech constant \cite{EskinKontsevichZorich2014}.

A key Hodge-theoretic input to the modern study of the KZ cocycle is Filip's work on semisimplicity and rigidity
\cite{Filip2013}.
Among other consequences, the Deligne torus preserves the family of invariant subbundles of tensor powers of the Hodge bundle. These cocycles admit Deligne-type semisimplicity statements, and invariant subbundles exhibit strong regularity (real-analyticity along stable/unstable leaves and polynomial dependence in period coordinates) \cite{Filip2013}.
In a complementary direction, Filip proved that affine invariant manifolds are algebraic varieties via a splitting statement for
certain mixed Hodge structures \cite{FilipSplitting2016}.
These results have since been sharpened and extended in several directions, including a precise computation of the algebraic
hull of the KZ cocycle over any $\mathrm{GL}_2^+\bbR$-invariant subvariety and finiteness consequences
\cite{EskinFilipWright2018} and boundary (compactification) techniques, starting with Mirzakhani--Wright \cite{MirzakhaniWright2017Boundary} and continuing through
more recent compactification frameworks surveyed by Dozier \cite{Dozier2024}.

\vspace{.5cm}

\noindent\textbf{Products of strata and diagonal-type actions.}
This paper studies Hodge-theoretic constraints for group actions on \emph{products} of strata
$$
X=\cH^{(1)}(\alpha_1)\times\cdots\times \cH^{(1)}(\alpha_n),
$$
equipped with an ergodic invariant measure for a closed subgroup $G\subset (\SLgr)^n$ that projects surjectively to each factor.
Their identity components are products of diagonal blocks after coordinatewise conjugation, and the remaining components are described by one finite central subgroup, which may couple different blocks (Section~\ref{sect1.2}).
For the Hodge-theoretic results of this paper we impose the additional standing hypothesis that the coordinatewise conjugations can be chosen to be rotations. Equivalently, after permuting factors and conjugating individual coordinates by elements of $SO_2(\bbR)$, we work with groups of the form
$$
G=(D_1\cdots D_r)F,
$$
where every $D_\beta\cong\SLgr$ acts diagonally on one block of factors and trivially on the others, and $F\leq\{\pm\Id\}^n$.
Thus, even though the ambient action is a product action, the groups we consider may impose nontrivial \emph{couplings} between factors, and invariant bundles for the induced cocycles encode genuinely multi-factor phenomena.
The restriction on the conjugating maps is essential: in Section~\ref{sect5} we give an explicit genus-two example showing that Deligne-torus invariance can fail for an arbitrarily conjugated diagonal subgroup.

\vspace{.5cm}

\noindent\textbf{Main results.} Our work extends Filip's semisimplicity and rigidity mechanisms from single strata to this product setting.
Let $\rmE_\bbF$ be the Hodge bundle over $X$, a fixed pure tensor construction, or a finite direct sum of such constructions, with $\bbF=\bbR$ or $\bbC$, and let $\Gamma$ be the cocycle over $(X,\mu)$ induced by the Gauss--Manin connection and the $G$-action. Invariant subbundles are understood to be measurable and defined modulo $\mu$-null sets. Our first result is 

\vspace{.25cm}
\noindent\textbf{Theorem A (Deligne semisimplicity).}
There are pairwise nonisomorphic irreducible $G$-bundles $V_i$ and constant finite-dimensional spaces $W_i$ with a $G$-equivariant, Hodge-compatible decomposition
\begin{equation}\label{E-decomp-eq1}
 \rmE_\bbF\cong\bigoplus_i V_i\otimes_{A_i}W_i,
 \qquad A_i=\operatorname{End}_G(V_i)^{\mathrm{op}}.
\end{equation}
Each $A_i$ is a division algebra, each $V_i$ carries a pure variation of Hodge structure along the orbit polydiscs, and each $W_i$ carries a finite direct sum of pure Hodge structures. The right $A_i$-action on $V_i$ and the left $A_i$-action on $W_i$ have Hodge type $(0,0)$. If $\rmE_\bbF$ is pure, the $W_i$ are pure as well. Every invariant subbundle has the form
\[
 U=\bigoplus_i V_i\otimes_{A_i}W_i',
\]
where $W_i'\subset W_i$ are $A_i$-submodules, which need not respect the Hodge or weight decompositions.
\vspace{.25cm}

Theorem A combines Theorem~\ref{thm__Deligne-semisimplicity} and Corollary~\ref{cor:graded-semisimplicity}. The Hodge structures on the underlying irreducible bundles are part of the conclusion; a chosen inclusion $V_i\subset\rmE_\bbF$ need not preserve the ambient Hodge decomposition. Allowing several weights in $W_i$ accounts for invariant graphs between isomorphic cocycles occurring in different ambient weights.

A crucial intermediate step is a \emph{fixed-part} statement (Theorem~\ref{fixed_part_thm}) showing that if a measurable section of a VHS is flat
along almost every $G$-orbit, then each of its Hodge components is also flat. This implies that the Deligne torus acts on the poset of invariant
subbundles: if $V\subset \rmE_\bbC$ is $G$-invariant and $a\in\mathbb S$ is a Deligne torus element, then $a\cdot V$ is again $G$-invariant
(Theorem~\ref{deligne-torus-thm}). Together with reductivity of the relevant algebraic hulls (Section~\ref{sect2}), this yields the semisimplicity
classification. The restriction to rotationally conjugated diagonal blocks enters precisely here: an arbitrary conjugation need not preserve the canonical Hodge structures, and the corresponding Deligne-torus invariance can fail. 

\vspace{.25cm}
For the second theorem, we impose an additional hypothesis on the measure. The measure $\mu$ has finitely many $D$-ergodic components, and each component is a product of ergodic block marginals (Proposition~\ref{prop:affine-measure-reduction}). For every such marginal we assume:
\begin{quote}
\textup{(S)} The support is locally a smooth immersed submanifold, and the measure is locally equivalent to smooth volume on that support. We work on smooth branches off the null self-intersection locus.
\end{quote}
The measure-rigidity theorem for diagonal actions in \cite{BrownEskinFilipRodriguezHertz2025} motivates this hypothesis, which we retain as an explicit assumption.

\vspace{.25cm}
\noindent\textbf{Theorem B (Polynomial rigidity).}
Assume \textup{(S)}. Every measurable $G$-invariant subbundle $V\subset\rmE_\bbF$ has a unique continuous, real-analytic representative on each smooth branch of the support of each $D$-ergodic component. In local flat trivializations, the coefficients of its Hodge-orthogonal projection $\Pi_V$ are restrictions of ambient polynomials in the real period coordinates.

Extend $\Pi_V$ by area normalization to the cone obtained by independently scaling the original blocks $I_\beta$, using one common positive scalar within each block. If $A_\beta$ is the common area of the components in $I_\beta$, each coefficient is locally of the form
\[
 \frac{P(x,y)}{\prod_{\beta=1}^r A_\beta^{d_\beta}},
 \qquad d_\beta\in\mathbb Z_{\geq0},
\]
where $P$ is bihomogeneous of total degree $(d_\beta,d_\beta)$ in the real and imaginary period variables belonging to $I_\beta$. Thus the extension has degree zero under scalar dilation of each block.
\vspace{.25cm}

This result combines Theorems~\ref{thm__joint-polynomiality-fixed-area} and~\ref{thm:original-block-cone} with Corollary~\ref{cor:regular-representatives}. When $G=(\SLgr)^n$, each block is a singleton and \textup{(S)} follows from the single-stratum measure classification \cite{EM18}. The factorwise area-denominator formula therefore holds for every ergodic invariant probability measure for the full product action (Corollary~\ref{thm__joint-polynomiality}).

The geometric input to Theorem B is derived from \textup{(S)}. On each smooth branch, the support is locally a product of quadrics
\[
 \prod_a\{(x_a,y_a)\in L_a\times L_a:\Omega_a(x_a,y_a)=1\},
\]
where the $L_a$ are real linear subspaces of the relative period spaces and the $\Omega_a$ are nonzero alternating forms, possibly degenerate. The corresponding partition of the factors may refine the original group blocks $I_\beta$. Applying Filip's leafwise mechanism to one simultaneous geodesic flow gives polynomiality along stable and unstable affine fibers. A separate-polynomiality theorem on these quadrics then gives joint polynomiality. This argument uses the local geometry derived from \textup{(S)} and does not assume an independently scaled affine cone.

\vspace{.5cm}

\noindent\textbf{Scope and sharpness.}
Polynomial regularity also holds for a broader class of groups. If $G=cG_0c^{-1}$ with $c\in(\SLgr)^n$ and $G_0$ satisfying the standing hypothesis, and if $(c^{-1})_*\mu$ satisfies \textup{(S)} on the appropriate block marginals, every invariant subbundle admits a polynomial equivariant projector on the normalized support and a unique continuous, real-analytic representative (Corollary~\ref{cor:conjugated-polynomial-projectors}). The transported projector need not be orthogonal for the canonical Hodge metric. In contrast, a genus-two example in Section~\ref{subsec:arbitrary-conjugations} shows that nonrotational conjugation can destroy Deligne-torus invariance and hence the Hodge-compatible classification in Theorem A.

A second genus-two example, in Section~\ref{subsec:sharpness-full-product}, distinguishes the original group blocks from the finer local quadric factors. For a diagonal action, independent area normalization of the components can introduce unavoidable square roots such as $\sqrt{A_2/A_1}$, even though the projection is polynomial on the normalized support. Thus the integer area-denominator conclusion of Theorem B cannot in general be extended to independent scalings within a diagonal block.

\vspace{.5cm}

Section~\ref{sect1} recalls background on translation surfaces, strata, and products of strata, including the structure of projection-surjective
subgroups of $(\SLgr)^n$ and the standing diagonal-block hypothesis used later.
Section~\ref{sect2} reviews cocycles and algebraic hulls in the sense of \cite{Filip2013} and establishes reductivity for the cocycles considered here.
Section~\ref{sect3} summarizes the Hodge-theoretic and curvature identities used later.
Section~\ref{sect4} records the random-walk and subharmonicity input used to pass from subharmonicity to constancy.
Section~\ref{sect5} proves the fixed-part and semisimplicity results, extends the classification to finite direct sums of different weights, and explains by example why arbitrary conjugations of diagonal blocks must be excluded.
Section~\ref{sect6} proves leafwise and joint polynomiality under \textup{(S)}, obtains common-block cone localization and its full-product specialization, and gives a counterexample to independent factorwise localization. Appendix~\ref{app:local-geometry} derives the local quadric model, Appendix~\ref{app:separate-polynomiality} proves the separate-polynomiality theorem, and Appendix~\ref{app:tempered-regularity} verifies the tempered regularity needed for the invariance principle.

\vspace{.5cm}

\noindent\textbf{Acknowledgements.} The author of the paper is deeply grateful to Simion Filip for his thoughtful guidance and support during the author's work on this paper.

\section{Flat surfaces}\label{sect1}
This section is largely based on surveys by Filip \cite{Filip2024}, Matheus \cite{Matheus2018}, and Wright \cite{Wright2014}.
We also recommend the surveys by Zorich \cite{Zorich2006} and Massart \cite{Massart2021} for more introductory-level reading.

\subsection{Definitions and motivation}\label{sect1.1}

Denote by $\cM_g$ the moduli space of genus $g\ge 1$ Riemann surfaces (an orbifold).
Let $\Omega\cM_g$ be the (orbifold) Hodge bundle over $\cM_g$, whose fiber over $X\in\cM_g$ is the complex vector space
$H^0(X,\Omega^1_X)$ of holomorphic $1$--forms on $X$.
A point of $\Omega\cM_g$ is a pair $(X,\omega)$ with $\omega\not\equiv 0$, called an \emph{Abelian differential}.

Fix a topological surface $S_g$ of genus $g$.
Let $\Diff^+(S_g)$ be the group of orientation-preserving diffeomorphisms and let $\Diff^+_0(S_g)$ be its identity component.
The (classical) \Teichmuller~space is
$$
\cT_g:=\{\text{complex structures on }S_g\}/\Diff^+_0(S_g),
$$
and the mapping class group is $\MCG_g:=\Diff^+(S_g)/\Diff^+_0(S_g)$.
Let $\Omega\cT_g$ be the pullback of $\Omega\cM_g$ to $\cT_g$; equivalently, $\Omega\cT_g$ is the space of \emph{marked}
Abelian differentials.
Then $\MCG_g$ acts properly discontinuously on $\Omega\cT_g$ and
$$
\Omega\cM_g=\Omega\cT_g/\MCG_g.
$$
For notational convenience we write $\cH_g:=\Omega\cM_g$ and $\cT_g:=\Omega\cT_g$.

$$
\begin{tikzpicture}[scale=2, node/.style={}]
\node[node] (A) at (0, 0) {$\cT_g$};
\node[node] (B) at (2, 0) {$\cH_g$};
\draw [thick, ->] (A) -- (B) node[midway, above] {$\cdot/\MCG_g$};
\end{tikzpicture}
$$

\begin{remark}\label{flatsurfremark}
Let $(X,\omega)$ be a translation surface of genus $g\ge 1$. For $g\ge 2$, let $\Sigma=\{P_1,\dots,P_m\}$ be the zero set of $\omega$, and write $a_i=\mathrm{ord}_{P_i}(\omega)>0$. For $g=1$, the form $\omega$ has no zeros; by convention, we choose a marked regular point $P_1$, set $\Sigma=\{P_1\}$, and put $a_1=0$. In either case, $\sum_{i=1}^m a_i=2g-2$, and the tuple $\alpha=(a_1,\dots,a_m)$ determines the stratum $\cH(\alpha)\subset\cH_g$ containing $(X,\omega)$.

The form $\omega$ defines a singular flat metric on $X$ by declaring the length of a piecewise $C^1$ path $\gamma$ to be
$$
\ell_\omega(\gamma)=\int_\gamma |\omega|,
$$
and the distance between two points to be the infimum of the lengths of paths joining them.
Away from $\Sigma$, one can choose local coordinates in which $\omega=dz$, so this metric is locally Euclidean.
If $P_i\in\Sigma$ is a zero of order $a_i$, then in a local coordinate centered at $P_i$ one has
$$
\omega=z^{a_i}\,dz,
$$
and the induced metric is a Euclidean cone metric of total angle $2\pi(a_i+1)$ at $P_i$.
\end{remark}

\begin{remark}
The projection $\cH_g\to\cM_g$ is the Hodge orbifold bundle with its zero section removed. Its fiber over $X$ consists of nonzero elements of $H^{1,0}(X)\cong H^0(X,\Omega_X^1)$. This should be distinguished from the cohomological Hodge bundle, whose fiber is $H^1(X;\bbC)$ and which carries the flat Gauss--Manin connection. After choosing a marking of the underlying topological surface $S_g$, the monodromy of the latter is given by the mapping class group action
\[
\MCG_g\longrightarrow \mathrm{Sp}(H^1(S_g;\bbZ)).
\]
The Hodge subspaces $H^{1,0}(X)\subset H^1(X;\bbC)$ vary with the complex structure of $X$ and are generally not preserved by Gauss--Manin parallel transport. Thus the above monodromy belongs to the cohomology local system, not to the bundle of holomorphic differentials.
\end{remark}

The space $\cH_g$ admits a stratification by the zero orders:
\[
\cH_g=\bigsqcup_{\alpha}\cH(\alpha),
\qquad
\alpha=(a_1,\dots,a_m),\quad \sum_{i=1}^m a_i=2g-2,
\]
where $a_i>0$ for $g\ge 2$, while in genus one we use the convention $\alpha=(0)$. In particular, $m\ge 1$ in all cases. Each stratum $\cH(\alpha)$ is a complex-analytic orbifold of (complex) dimension $2g+m-1$. A stratum need not be connected; for example, $\cH(4)$ and $\cH(2,2)$ in genus $3$ have two connected components
(see \cite{Kontsevich-Zorich2002}).

Points of $\cH_g$ are also called \emph{translation surfaces} or \emph{flat surfaces}.
Indeed, on $X\setminus \Sigma$ the $1$--form $\omega$ provides local coordinates by integrating $\omega$,
and the transition maps between these coordinates are translations.
Thus the induced metric is flat on $X\setminus \Sigma$ with conical singularities at the zeros of $\omega$. As a consequence, one can think of a translation surface as a polygon with pairwise equal and parallel sides, where the coordinate change maps glue those sides together and the choice of $\omega$ is defined by choosing a directional vector.

\begin{figure}[h]
\centering
\begin{subfigure}{.4\textwidth}
  \centering
  \includegraphics[width=.89\linewidth]{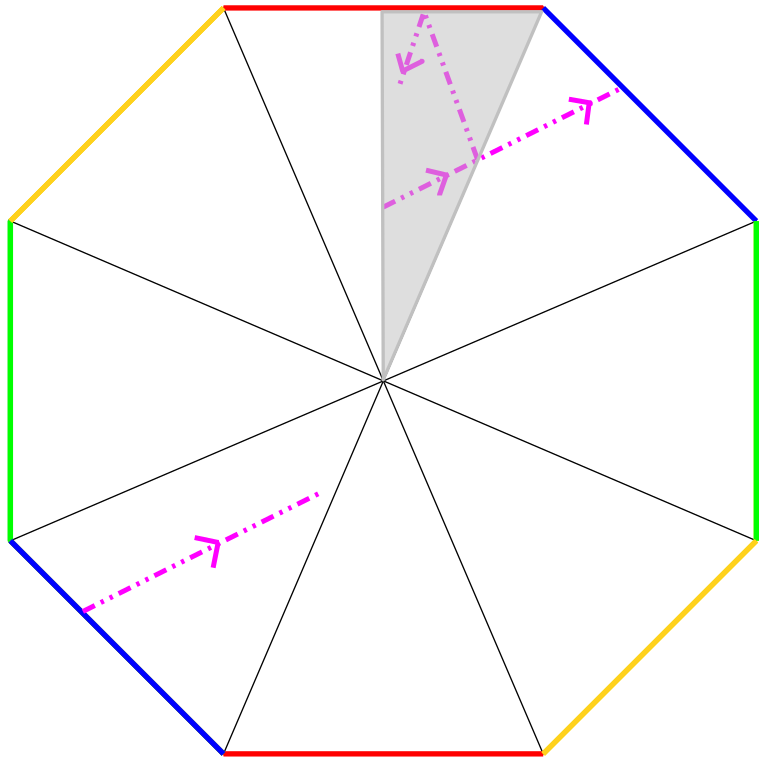}
  \label{fig:sub1}
\end{subfigure}%
\begin{subfigure}{.6\textwidth}
  \centering
  \includegraphics[width=.95\linewidth]{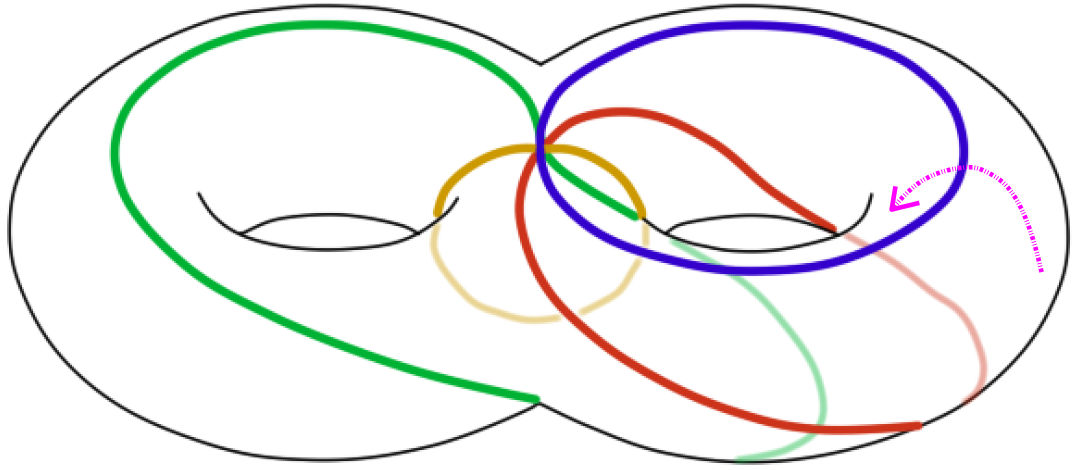}
   \vspace{.75cm}
  \label{fig:sub2}
\end{subfigure}
\caption{Unfolded billiard in the triangle with angles $(\pi/2,\,3\pi/8,\,\pi/8)$ and the flow on the resulting surface.}
\label{fig:test}
\end{figure}

\begin{remark}One of the motivations for studying flows on surfaces comes from \emph{polygonal billiards} with rational angles. Given such a rational polygon $P$ with the set of vertices $V$, we can glue a (topologically) sphere $S$ from it and its mirror copy by identifying corresponding sides. This sphere has a metric induced by the flat metric on the polygon, the holonomy of $S/V$ naturally embeds into $SO(2,\bbR)$. The holonomy representation \quad $\mathrm{rep}: \pi_1(S/V)\to SO(2,\bbR)$ induces the representation of the first homology $\mathrm{rep}: H_1(S/V,\bbR)\to SO(2,\bbR)$. Moreover, since all angles of $P$ are rational, the generators of the first homology are represented by rational angle rotation matrices. Then the image of this representation is a finite subgroup $G_P\subset SO(2,\bbR)$, so we can construct a finite $G_P$-cover $S_{G_P}$ of $S$, on which the holonomy is trivial. Thus we can "lift" a billiard trajectory into a straight line on $S_{G_P}$ unless it passes through one of the vertices. 

A simple example for a triangular rational billiard with the right angle is shown in the pictures.
\end{remark}

Let $(X,\omega)\in\cH(\alpha)$, and let $\Sigma$ consist of the zeros of $\omega$, or of a single marked regular point in genus one, as above. Write $m=|\Sigma|$.
Choose a basis $\{\gamma_1,\dots,\gamma_{2g+m-1}\}$ of the relative homology group $H_1(X,\Sigma;\bbZ)$.
The \emph{period coordinates} in a neighbourhood of $(X,\omega)$ are given by
$$
\Phi(X,\omega)=\Big(\int_{\gamma_1}\omega,\dots,\int_{\gamma_{2g+m-1}}\omega\Big)\in \bbC^{2g+m-1}.
$$
Changes of basis in $H_1(X,\Sigma;\bbZ)$ give transition maps in $GL(2g+m-1,\bbZ)$.
Identifying $\bbC\cong \bbR^2$ via $x+iy\mapsto (x,y)$, the natural $\SLgr$--action on period coordinates is given by
the standard action on the $\bbR^2$ factor:
\begin{equation*}
  \begin{pmatrix}
        a & b\\
        c & d
    \end{pmatrix}
    \begin{pmatrix}
        x_1 & \ldots & x_{2g+m-1}\\
        y_1 & \ldots & y_{2g+m-1}
    \end{pmatrix}
    =
    \begin{pmatrix}
        ax_1+by_1 & \ldots & ax_{2g+m-1}+by_{2g+m-1}\\
        cx_1+dy_1 & \ldots & cx_{2g+m-1}+dy_{2g+m-1}
    \end{pmatrix}.
\end{equation*}
The \emph{\Teichmuller~geodesic flow} is the action of the diagonal subgroup $g_t=\mathrm{diag}(e^t,e^{-t})$ on each stratum.

The \emph{(cohomological) Hodge bundle} over $\cM_g$ (with coefficients $k=\bbR$ or $k=\bbC$) is the local system
$H^1_k\to \cM_g$ whose fiber over $X$ is $H^1(X;k)$.
It carries the Gauss--Manin connection, i.e.\ parallel transport in cohomology along paths in $\cM_g$.
Pulling back to $\cH_g$ via the projection $\cH_g\to \cM_g$ gives a bundle (still denoted $H^1_k\cH_g$).

The $\SLgr$--action on strata induces the \Teichmuller~flow $g_t$ on $\cH(\alpha)$.
Lifting $g_t$ to $H^1_k\cH_g$ by Gauss--Manin parallel transport defines the \emph{Kontsevich--Zorich cocycle}
(see Subsection~\ref{KZsubsection}).

\begin{figure}
\centering
  \includegraphics[width=.9\linewidth]{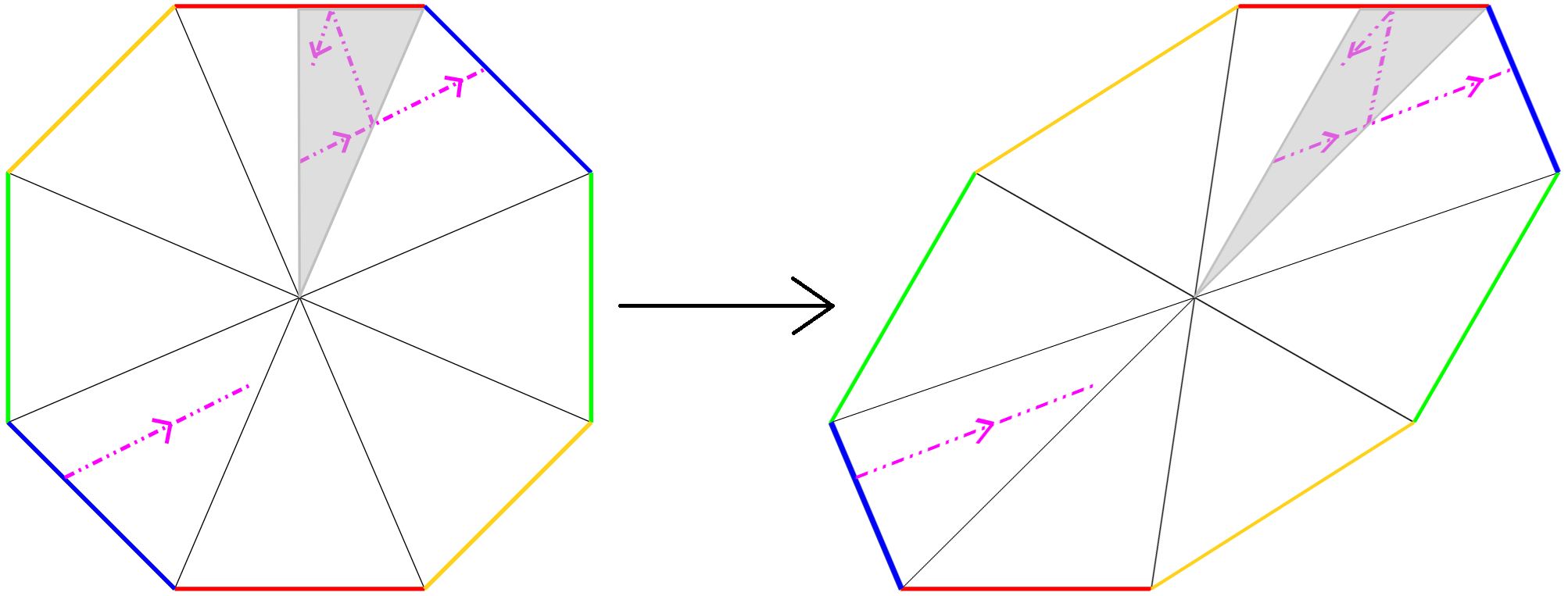}
\caption{Action of the matrix 
$\begin{pmatrix}
    1 & \frac{1}{\sqrt{3}}\\
    0 & 1
\end{pmatrix}$
on the surface from the previous example.}
\end{figure}

\subsection{The Kontsevich--Zorich cocycle}\label{KZsubsection}

Let us consider in more detail the action of $\SLgr$ on the space $\cH_g$. Clearly, each stratum $\cH(\alpha)$ is invariant under this action. The $\SLgr$--action preserves the \emph{area} (or volume) function
$$
A(X,\omega):=\frac{i}{2}\int_X \omega\wedge \bar\omega,
$$
so it is often convenient to restrict to the area-one locus $\cH^{(1)}(\alpha):=\cH(\alpha)\cap\cH^{(1)}_g$, $\cH^{(1)}_g:=A^{-1}(1)$. 

On the marked space $\cT_g=\Omega\cT_g$, the cohomology bundle is canonically trivial:
$$
H^1_\bbR \cT_g \;\cong\; \cT_g\times H^1(S_g;\bbR).
$$
The $\SLgr$--action on $\cT_g$ lifts to an action on $H^1_\bbR\cT_g$ by acting trivially on the fiber.
Passing to the quotient $\cH_g=\cT_g/\MCG_g$ identifies fibers using the mapping class group action on $H^1(S_g;\bbR)$.
The resulting fiberwise linear maps define the \emph{Kontsevich--Zorich cocycle}; equivalently, it is Gauss--Manin
parallel transport along $\SLgr$--orbits in $\cH_g$.

Since $\cH^{(1)}(\alpha)$ is an orbifold, one may pass to a torsion-free finite-index subgroup of $\MCG_g$ (a finite manifold cover)
to avoid orbifold stabilizers; this does not affect Lyapunov-exponent computations.

\subsection{Products of strata}\label{sect1.2}

Consider an $n$-tuple of natural numbers $(g)_n=(g_1,\,\ldots,\,g_n)$, $g_i\geq 1$.  Consider Cartesian products $\cM^n=\cM_{(g)_n}=\cM_{g_1}\times\ldots\times\cM_{g_n}$ and $\cH^n=\cH_{(g)_n}=\cH_{g_1}\times\ldots\times\cH_{g_n}$.
Let $H^1_k\cM^n$, where $k=\bbR$ or $k=\bbC$, be the real (complex) Hodge bundle over $\cM^n$ (which is just the product $H^1_k\cM_{g_1}\times\ldots\times H^1_k\cM_{g_n}$), and let $H^1_k\cH^n$ be its pullback under the natural projection $\cH^n\to\cM^n$.
Like in the case of one stratum, we are interested in acting on $H^1_k\cH^n$ by subgroups of $(\SLgr)^n=\prod_{i=1}^{n}\SLgr$.

\begin{remark}\label{applications-remark}
A map $T$ on a finite-measure space $(X,\mu)$ is weakly mixing if and only if $T\times T$ is ergodic, but this is far from the
only reason to study products. Strata of Abelian differentials admit compactifications by nodal curves, which arise as limits
where one or more (possibly separating) closed curves shrink to length $0$. Such nodal surfaces can be viewed as configurations
of lower-genus components attached at nodes. A survey of results on this topic is given by Dozier \cite{Dozier2024}.
\end{remark}

\begin{figure}[h]
\centering
  \includegraphics[width=.8\linewidth]{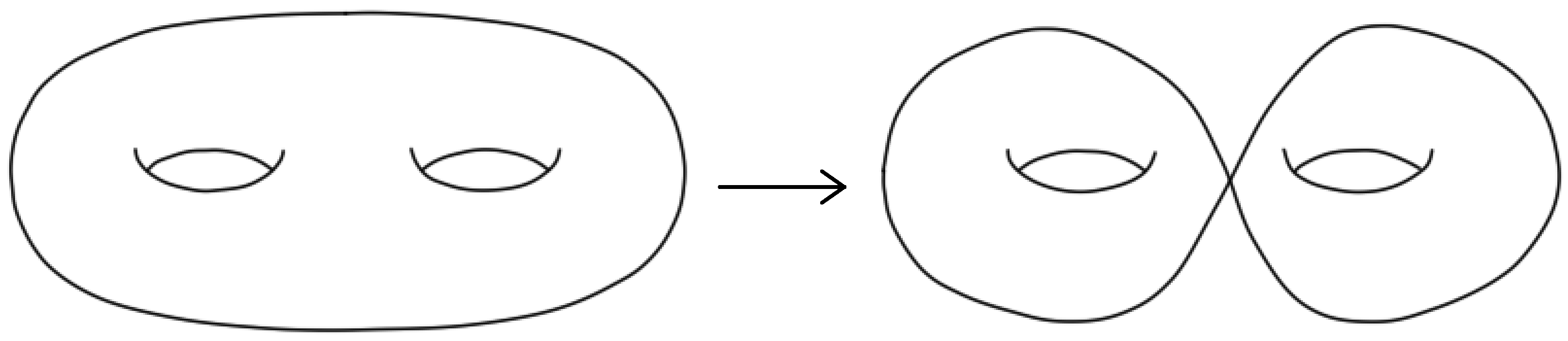}
\caption{Deformation of a genus $2$ surface into a nodal genus $(1,1)$ surface at the boundary of the compactification of $\cH_2$.}
\end{figure}

Given the motivations of our interest, we would like to consider only those subgroups of  $(\SLgr)^n$ that project surjectively onto each of its factors.
We will call such subgroups \emph{projection-surjective}.

\begin{lemma}[Normal form for projection-surjective groups]\label{prodlemma}
Let $G\subset(\SLgr)^n$ be a closed projection-surjective subgroup. There exist a partition
$\{1,\ldots,n\}=I_1\sqcup\cdots\sqcup I_r$, elements $g_i\in\GLgr$, and a subgroup
$F\leq\{\pm\Id\}^n$ such that
\[
 (g_1,\ldots,g_n)G(g_1,\ldots,g_n)^{-1}
 =(D_1\cdots D_r)F,
\]
where $D_\beta\cong\SLgr$ acts diagonally on the coordinates indexed by $I_\beta$ and trivially on the others. The identity component of the conjugated group is $D_1\cdots D_r$.
\end{lemma}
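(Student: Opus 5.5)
Write $p_i\colon G\to\SLgr$ for the coordinate projections. The plan is to first determine the identity component $G^0$ through its Lie algebra, and then control the component group using the fact that $\SLgr$ is simple modulo its center.

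\emph{Step 1 (Lie algebra).} $G$ is closed, so it is a Lie group. Let $\mathfrak g\subset\mathfrak{sl}_2^n$ be its Lie algebra. A connected Lie subgroup $H\subset\SLgr$ with $[\SLgr:H]$ countable must be all of $\SLgr$. Since $p_i(G)=\SLgr$ and $G$ has countably many components, this forces $p_i(G^0)=\SLgr$, so $d p_i(\mathfrak g)=\mathfrak{sl}_2$ for every $i$. By Goursat's lemma applied to the simple Lie algebra $\mathfrak{sl}_2$, induction on $n$ shows that a subalgebra of $\mathfrak{sl}_2^n$ surjecting onto every factor is a direct sum of diagonal copies. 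Concretely, there is a partition $I_1\sqcup\cdots\sqcup I_r$ and automorphisms $\phi_i$ of $\mathfrak{sl}_2$ such that
\[
\mathfrak g=\bigoplus_\beta\{(\phi_i(X))_{i\in I_\beta}\}.
\]
Here $i\sim j$ exactly when the projection onto the $(i,j)$ coordinates is not surjective, so that it is the graph of an automorphism. Every automorphism of $\mathfrak{sl}_2(\bbR)$ is $\mathrm{Ad}(g)$ with $g\in\GLgr$, since the automorphism group is $\PGLgr$. Conjugating coordinatewise by suitable $g_i\in\GLgr$ therefore turns $\mathfrak g$ into $\bigoplus_\beta\mathfrak d_\beta$, with $\mathfrak d_\beta$ the diagonal copy over $I_\beta$. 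Because $\SLgr$ is connected, $G^0$ is then exactly $D_1\cdots D_r$, with $D_\beta$ the diagonal embedding. The $D_\beta$ commute and intersect trivially, so this is an internal direct product.

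\emph{Step 2 (component group).} Set $N=N_{(\SLgr)^n}(D_1\cdots D_r)$. Since $G^0$ is normal in $G$, we have $G\subset N$. I will compute $N$ as follows. An element $h=(h_i)$ normalizes the product exactly when it permutes the simple factors $D_\beta$ compatibly with the block structure. Each $h_i\in\SLgr$ preserves the $i$-th coordinate, so the permutation of the blocks must be trivial. It follows that $h$ normalizes each $D_\beta$, which means $\mathrm{Ad}(h_i)=\mathrm{Ad}(h_j)$ for all $i,j\in I_\beta$. Hence $h_i=\pm h_j$ for $i,j$ in the same block. (This is where we use that after conjugation the factors lie in $\SLgr$ and inner automorphisms by $\SLgr$ elements agree only up to sign.) Thus $N=(D_1\cdots D_r)\cdot\{\pm\Id\}^n$. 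Intersecting with $G$ and setting $F=G\cap\{\pm\Id\}^n$ gives $G=(D_1\cdots D_r)F$, because $G^0\subset G$.

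\emph{Main obstacle.} The delicate points are in Step 1. First, one must pass from surjectivity of $p_i$ on $G$ to surjectivity on $G^0$, which I will do with the countable-index argument above. Second, the $g_i$ lie in $\GLgr$ rather than $\SLgr$, because of the outer automorphism. Step 2 also needs care: the conjugating elements $g_i$ lie in $\GLgr$, yet the conjugated group still sits in $(\SLgr)^n$, since conjugation by $\GLgr$ preserves $\SLgr$. The normalizer computation is carried out there. The final assertion, that the identity component is $D_1\cdots D_r$, follows because $F$ is finite, so $D_1\cdots D_r$ has finite index and is connected.
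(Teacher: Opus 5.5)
Your proposal is correct and follows essentially the same route as the paper. Both arguments run a Lie-algebra Goursat induction to get diagonal blocks identified by automorphisms, lift these automorphisms from $\PGLgr$ to $\GLgr$ to conjugate $G^\circ$ onto $D_1\cdots D_r$, and compute the normalizer $N(D)=D\{\pm\Id\}^n$ to obtain $F=G\cap\{\pm\Id\}^n$. The only cosmetic difference is that you get surjectivity of $dp_i$ from a countable-index argument for $p_i(G^\circ)$, whereas the paper observes that $dp_i(\operatorname{Lie}G)$ is an ideal of $\mathfrak{sl}_2(\bbR)$, nonzero by the same countability, and then uses simplicity.
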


\begin{proof}
By the closed subgroup theorem, $G$ is a Lie group with at most countably many connected components. For each coordinate projection $p_i$, the image of $\operatorname{Lie}(G)$ is an ideal in $\mathfrak{sl}_2(\bbR)$, since $G^\circ$ is normal in $G$ and $p_i(G)=\SLgr$. This ideal is nonzero: otherwise $p_i$ would be constant on every connected component of $G$, giving a countable image. Simplicity therefore makes every Lie-algebra projection surjective.

The Lie-algebra form of Goursat's lemma now shows, by induction, that $\operatorname{Lie}(G)$ is a sum of diagonal copies of $\mathfrak{sl}_2(\bbR)$ on disjoint coordinate blocks, with the coordinates in a block identified by Lie-algebra automorphisms. Indeed, a new simple factor is either independent or identified with one simple factor of the previously obtained direct sum. Every automorphism of $\mathfrak{sl}_2(\bbR)$ is induced by an element of $\PGLgr$. After lifting these elements to $\GLgr$ and conjugating, the unique connected subgroup with this Lie algebra is $D=D_1\cdots D_r$. Hence $G^\circ=D$.

Every element of $G$ normalizes $D$. Its coordinatewise adjoint actions must therefore agree within each block. Two elements of $\SLgr$ with the same adjoint action differ by a central sign, so
\[
 N_{(\SLgr)^n}(D)=D Z,\qquad Z=\{\pm\Id\}^n.
\]
Write any element of $G$ as $dz$ with $d\in D$ and $z\in Z$. Since $D\subset G$, necessarily $z\in G\cap Z$. Taking $F=G\cap Z$ proves the assertion.
\end{proof}

\begin{remark}\label{prodremark}
The finite central group $F$ may couple different connected diagonal blocks. For example,
\[
 \{(g,\varepsilon g,h,\varepsilon h):
       g,h\in\SLgr,\ \varepsilon\in\{\pm1\}\}
\]
does not split as a direct product on the coordinate pairs $\{1,2\}$ and $\{3,4\}$, although its identity component and its projective image do. The normal form therefore uses one global subgroup $F$.

For the Hodge-theoretic results below we impose the stronger standing hypothesis that the required coordinatewise conjugations can be chosen in $SO_2(\bbR)$. These rotations preserve the canonical Hodge structures. Arbitrary conjugations by $\PGLgr$ provide a group-theoretic normal form but are not used to change the Hodge-theoretic dynamical system.
\end{remark}

\section{Properties of cocycles and the algebraic hull}\label{sect2}

Throughout this section, we assume that $X$ is a standard Borel space endowed with a probability measure $\mu$ and that $G$ is a Polish group whose measurable left action on $X$ is ergodic and measure-preserving. All cocycles and changes of trivialization are assumed measurable, and assertions about invariant subbundles are understood almost everywhere.

\begin{definition}[See \cite{Filip2013},  Section 2.1, p. 625]\label{invcocycledef}
  A \emph{linear cocycle} on the $n$-dimensional real (complex) vector bundle $V$ over $X$ is the lift of the action of the  group $G$ on $X$ to the action of  $G$ on $V$. More specifically, each element $g\in G$ induces a linear map of fibers $g_x: V_x\to V_{gx}$ for any $x\in X$. After choosing a measurable trivialization of $V$, we obtain a measurable map $\alpha:G\times X\to\mathrm{GL}(\mathbb{F}^n)$, where $\mathbb{F}=\bbR$ or $\bbC$, satisfying 
  $$
\alpha(gh,x)=\alpha(g,hx)\alpha(h,x)
\qquad
\text{for all }g,h\in G\text{ and $\mu$-a.e.\ }x\in X.
$$

  Two cocycles $\alpha,\beta$ are \emph{cohomologous} (or \emph{conjugate}) if there exists a measurable map
$C:X\to \mathrm{GL}(\mathbb{F}^n)$ such that
$$
\alpha(g,x)=C(gx)^{-1}\,\beta(g,x)\,C(x)
\qquad\text{for all }g\in G\text{ and $\mu$-a.e.\ }x\in X.
$$
\end{definition}

\subsection{Algebraic hulls} \label{AHsubsection}

Suppose that $H$ is a linear $\bbF$-algebraic group over a local field of characteristic $0$ (in our case, $\bbF=\bbR$ or $\bbF=\bbC$). Denote by $H(\bbF)$ its group of $\bbF$-rational points. For $H(\bbF)$-valued cocycles, cohomology is understood using measurable transfer maps $C:X\to H(\bbF)$.

\begin{theorem}[See \cite{Zimmer}, Chapter 9.2, pp. 166--167]\label{thmZim}
Given a cocycle $\alpha: G \times X \rightarrow H(\bbF)$, there exists a (unique up to conjugacy) $\bbF$-algebraic subgroup $L\subset H$ such that $\alpha$ can be conjugated into a cocycle $\alpha'$ taking values in $L(\bbF)$, and cannot be conjugated into a cocycle $\alpha''$ taking values in $M(\bbF)$ for any $\bbF$-algebraic subgroup $M\subsetneq L$.
\end{theorem}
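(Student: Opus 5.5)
The plan is Zimmer's classical argument, which combines the Noetherian property of Zariski-closed sets with ergodicity. \emph{Existence} comes first. Consider the family $\mathcal{S}$ of $\bbF$-algebraic subgroups $M\subset H$ such that $\alpha$ is cohomologous, via a measurable $C:X\to H(\bbF)$, to a cocycle with values in $M(\bbF)$. This family is nonempty, since it contains $H$. By the descending chain condition on Zariski-closed subsets of $H$, it has an element $L$ that is minimal under inclusion. So $\alpha$ is cohomologous to some $\alpha'$ with values in $L(\bbF)$, and by minimality no cocycle cohomologous to $\alpha$ takes values in $M(\bbF)$ for any $M\in\mathcal S$ with $M\subsetneq L$. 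The only point to check here is that cohomology is an equivalence relation under composition of transfer maps, which is immediate from the cocycle identity.

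\emph{Uniqueness up to conjugacy} is the substantive part. Suppose $L_1,L_2$ are both minimal, with $\alpha_j(g,x)=C_j(gx)^{-1}\alpha(g,x)C_j(x)$ taking values in $L_j(\bbF)$. Put $\Phi(x)=C_1(x)^{-1}C_2(x)$, so that $\alpha_1(g,x)\Phi(x)=\Phi(gx)\alpha_2(g,x)$. Project $\Phi$ to the quotient variety $L_1\backslash H$ (or $L_1(\bbF)\backslash H(\bbF)$), and use the equivariance in the form $\Phi(gx)=\alpha_1(g,x)\Phi(x)\alpha_2(g,x)^{-1}$. From this, the map $\bar\Phi:X\to L_1(\bbF)\backslash H(\bbF)$ satisfies $\bar\Phi(gx)=\bar\Phi(x)\cdot\alpha_2(g,x)^{-1}$. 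Thus $\bar\Phi$ is a measurable map into an algebraic $L_2$-variety that is equivariant for the cocycle $\alpha_2$.

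The key step, and the main obstacle, is the following fact: for a real algebraic action, orbits are locally closed, so $L_1(\bbF)\backslash H(\bbF)$ modulo $L_2(\bbF)$ is countably separated (the action is smooth; this is Borel–Zimmer tameness, as in \cite{Zimmer} Ch.~3). Granting this, ergodicity forces $\bar\Phi$ to land in a single $L_2(\bbF)$-orbit $L_1(\bbF)h\,L_2(\bbF)$ almost everywhere. A measurable section of the orbit map then lets me modify $C_2$ by an $L_2(\bbF)$-valued map, so that afterwards $\Phi(x)\in L_1(\bbF)h$ for a fixed $h$. Replacing $C_2$ by this modified map, the relation above gives $h\alpha_2(g,x)h^{-1}\in L_1(\bbF)$. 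Hence $\alpha$ is cohomologous to a cocycle with values in $L_1\cap hL_2h^{-1}$. Minimality of $L_1$ then gives $L_1\subset hL_2h^{-1}$, and the symmetric argument together with dimension and component counts yields $L_1=hL_2h^{-1}$.

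One more issue arises when $\bbF=\bbR$: $H(\bbR)$-orbits on real points can split, so I would work throughout with the Zariski closure of the $\bbF$-points to keep the subgroups $\bbF$-algebraic.
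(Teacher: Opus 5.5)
Your proposal is correct and is essentially the paper's argument. Existence comes from the descending chain condition. Uniqueness comes from smoothness of an algebraic action plus ergodicity, which force the equivariant map into a single orbit; a measurable section then reduces $\alpha$ to $L_1\cap hL_2h^{-1}$, and minimality finishes. Your map $C_1^{-1}C_2$ into $L_1(\bbF)\backslash H(\bbF)$ with the right $L_2(\bbF)$-action is the paper's diagonal $H(\bbF)$-action on $H(\bbF)/L(\bbF)\times H(\bbF)/M(\bbF)$, rewritten on the double coset space. Applying minimality of $L_2$ directly to $h^{-1}L_1h\cap L_2$, as the paper does with both minimalities, would spare you the dimension and component count.
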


Intuitively, we want to use the trivial fact that conjugating a cocycle amounts to changing its measurable trivialization, so we seek a minimal algebraic subgroup into which it can be conjugated. 

\begin{definition}
The equivalence class of the subgroup $L$ from Theorem \ref{thmZim} is called the \emph{algebraic hull} of $\alpha$.
\end{definition}

\begin{proofsketch}(After \cite{Zimmer})
The Zariski topology is Noetherian, so algebraic subgroups satisfy the descending chain condition. This implies the existence of a minimal algebraic subgroup into which $\alpha$ can be conjugated.

To prove uniqueness up to conjugacy, suppose that $L$ and $M$ are two such minimal subgroups. The corresponding reductions give $\alpha$-equivariant measurable maps
$$
\phi_L:X\to H(\bbF)/L(\bbF),
\qquad
\phi_M:X\to H(\bbF)/M(\bbF).
$$
Their product
$$
\phi=(\phi_L,\phi_M):
X\to H(\bbF)/L(\bbF)\times H(\bbF)/M(\bbF)
$$
is also $\alpha$-equivariant. The diagonal $H(\bbF)$-action on this product is smooth. By ergodicity, the image of $\phi$ therefore lies almost everywhere in a single $H(\bbF)$-orbit. Choosing a measurable section of this orbit gives a reduction of $\alpha$ to
$$
L\cap hMh^{-1}
$$
for some $h\in H(\bbF)$. Minimality of $L$ and $M$ implies 
$$
L=L\cap hMh^{-1}=hMh^{-1},
$$
which proves uniqueness up to conjugacy.
\end{proofsketch}

\subsection{Reductivity of cocycles}

\begin{lemma}[See \cite{Filip2013}, Section 2.3,
pp.~627--629]\label{replemma}
Let $\rho:H\to H'$ be an algebraic group homomorphism, and let $L$ be the algebraic hull of a cocycle $\alpha:G\times X\to H(\bbF)$. Then the algebraic hull of $\rho\circ\alpha$ is conjugate to $\rho(L)$.
\end{lemma}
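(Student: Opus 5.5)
The plan has two inclusions. First, $\rho\circ\alpha$ reduces to $\rho(L)$. Second, no proper algebraic subgroup of $\rho(L)$ can serve as a hull. Note that $\rho(L)$ is itself an $\bbF$-algebraic subgroup of $H'$, since images of algebraic groups under algebraic homomorphisms are closed.

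\textbf{Easy inclusion.} By Theorem~\ref{thmZim}, choose a measurable $C:X\to H(\bbF)$ with $\alpha'(g,x)=C(gx)^{-1}\alpha(g,x)C(x)\in L(\bbF)$. Applying $\rho$ gives
\[
\rho(\alpha'(g,x))=\rho(C(gx))^{-1}\,\rho(\alpha(g,x))\,\rho(C(x)).
\]
So $\rho\circ\alpha$ is cohomologous, via $\rho\circ C$, to a cocycle with values in $\rho(L(\bbF))\subset\rho(L)(\bbF)$. Hence, by Theorem~\ref{thmZim}, the hull $M'$ of $\rho\circ\alpha$ can be taken, after conjugation in $H'(\bbF)$, to lie inside $\rho(L)$.

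\textbf{Reverse inclusion.} Suppose $\rho\circ\alpha'$ could be conjugated into $M'(\bbF)$ with $M'\subsetneq\rho(L)$ algebraic. We want to lift this reduction to a reduction of $\alpha'$ into $\rho^{-1}(M')\cap L\subsetneq L$, contradicting minimality of $L$. The reduction of $\rho\circ\alpha'$ is an equivariant measurable map $\phi:X\to H'(\bbF)/M'(\bbF)$. First I would use the minimality argument from the proof sketch of Theorem~\ref{thmZim}: combining $\phi$ with the tautological equivariant map to $H'(\bbF)/\rho(L)(\bbF)$ and using ergodicity and smoothness of the algebraic action, one may assume $\phi$ takes values in the $\rho(L)(\bbF)$-space $\rho(L)(\bbF)/M'(\bbF)$, after conjugating $M'$ by an element of $H'(\bbF)$ and intersecting with $\rho(L)$. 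Next, $L(\bbF)$ acts on $\rho(L)(\bbF)/M'(\bbF)$ through $\rho$, and the $\alpha'$-equivariant map $\phi$ takes values there. This action is algebraic, so it is smooth, and by ergodicity $\phi$ lands a.e.\ in a single $L(\bbF)$-orbit $L(\bbF)\cdot y\cong L(\bbF)/\mathrm{Stab}(y)$. A measurable section of this orbit then conjugates $\alpha'$ into $\mathrm{Stab}(y)=\rho^{-1}(hM'h^{-1})\cap L$ for some $h\in\rho(L)(\bbF)$.

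\textbf{Main obstacle.} The main difficulty is that $\rho(L(\bbF))$ may be a proper finite-index (or non-Zariski-dense in the real case, e.g.\ sign) subgroup of $\rho(L)(\bbF)$. In that case the orbit $L(\bbF)\cdot y$ is not all of $\rho(L)(\bbF)/M'(\bbF)$, and the stabilizer might fail to be a proper subgroup. I would handle this by observing that $\rho(L(\bbF))$ is Zariski dense in $\rho(L)$. Its intersection with a proper algebraic subgroup $hM'h^{-1}$ therefore cannot be all of $\rho(L(\bbF))$, so $\mathrm{Stab}(y)\subsetneq L$ as algebraic groups. This contradicts minimality of $L$. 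Hence the hull of $\rho\circ\alpha$ is conjugate to $\rho(L)$.
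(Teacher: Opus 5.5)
Your argument is correct and is the standard one behind the result the paper cites from Filip (and Zimmer); the paper gives no proof of its own. You push the reduction of $\alpha$ forward by $\rho$, and for minimality you use smoothness of the algebraic $L(\bbF)$-action on $\rho(L)(\bbF)/M'(\bbF)$ to pull a smaller reduction of $\rho\circ\alpha$ back to a reduction of $\alpha$ into $\rho^{-1}(hM'h^{-1})\cap L$.

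Your ``main obstacle'' is harmless, and the Zariski-density claim is not needed. That algebraic stabilizer is proper in $L$ simply because $\rho\colon L\to\rho(L)$ is surjective and $hM'h^{-1}\subsetneq\rho(L)$. The density of $\rho(L(\bbF))$ in $\rho(L)$ does hold here, but only because minimality of the hull forces $L(\bbF)$ to be Zariski dense in $L$; you should say this if you keep that step.
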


\begin{definition}
A linear algebraic group over a field of characteristic $0$ is called \emph{reductive} if every finite-dimensional algebraic representation is semisimple. Equivalently, every invariant subspace in such a representation has an invariant complement. We call a cocycle reductive if its algebraic hull is reductive.
\end{definition}
Since images of reductive groups stay reductive, we obtain
\begin{corollary}\label{reductivitycor}
  Reductivity of cocycles is preserved under algebraic representations (in particular, tensor products of representations).
\end{corollary}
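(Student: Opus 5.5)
The plan is to combine Lemma~\ref{replemma} with the fact that homomorphic images of reductive groups are reductive. Let $\alpha:G\times X\to H(\bbF)$ be a reductive cocycle with algebraic hull $L$, and let $\rho:H\to\mathrm{GL}(U)$ be an algebraic representation, for example $\rho=\sigma^{\otimes k}\otimes(\sigma^*)^{\otimes l}$ built from a standard representation $\sigma$, or a direct sum of such. By Lemma~\ref{replemma}, the algebraic hull of $\rho\circ\alpha$ is conjugate to $\rho(L)$. Since reductivity is invariant under conjugation, it suffices to show that $\rho(L)$ is reductive.

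For this, I would use the definition of reductivity given above. Let $W$ be a finite-dimensional algebraic representation of $\rho(L)$. Pulling it back along the surjection $\rho|_L:L\to\rho(L)$ makes $W$ an algebraic representation of $L$. The $\rho(L)$-invariant subspaces of $W$ are exactly the $L$-invariant ones, because the two actions have the same image in $\mathrm{GL}(W)$. Since $L$ is reductive, every invariant subspace has an invariant complement, and that complement is then $\rho(L)$-invariant as well. Hence $\rho(L)$ is reductive.

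One point needs care: $\rho(L)$ should be an algebraic subgroup, and this is where the only real obstacle lies. Over $\bbC$, the image of an algebraic group homomorphism is Zariski closed. Over $\bbR$, the image of the real points $L(\bbR)$ may be only a finite-index open subgroup of the real points of the image group. I would therefore interpret $\rho(L)$ as the Zariski closure of $\rho(L(\bbR))$, which is how Lemma~\ref{replemma} is meant. A representation of this closure is semisimple if and only if it is semisimple for the Zariski-dense subgroup $\rho(L(\bbR))$, because invariant subspaces of a dense subgroup are invariant under its closure. The argument above then applies unchanged.

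Tensor products and direct sums of representations are special cases of algebraic representations, so the claim in parentheses follows.
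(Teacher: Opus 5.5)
Your proposal is correct and follows the paper's argument: the paper's proof is exactly Lemma~\ref{replemma} combined with the fact that homomorphic images of reductive groups are reductive, and your pullback-of-representations step simply spells out that fact. Your real-points caveat is harmless: by minimality of the hull, $L(\bbR)$ is Zariski dense in $L$, so the Zariski closure of $\rho(L(\bbR))$ coincides with $\rho(L)$.
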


\subsection{Reductivity for Kontsevich--Zorich and products of strata}

Let $\cH^{(1)}(\alpha)$ be a stratum and let $\mu$ be an $\SLgr$-invariant ergodic probability measure on $\cH^{(1)}(\alpha)$
(or on an affine invariant submanifold). Recall that the Kontsevich--Zorich cocycle is a cocycle
$$
\alpha^{KZ}:\SLgr\times \cH^{(1)}(\alpha)\to \operatorname{Sp}(2g,\bbR)
$$
on the absolute cohomology bundle.  

\begin{remark}
Let $\cM$ be the affine invariant manifold whose area-one locus is the support of $\mu$. Choose an $\SLgr$-equivariant finite manifold cover $\pi:\widetilde{\cM}\to\cM$, for example one obtained from a torsion-free level structure as in Section~1. Work on a connected component upstairs with an ergodic invariant lift $\widetilde{\mu}$ of full support satisfying $\pi_*\widetilde{\mu}=\mu$, and let $\widetilde{\alpha}^{KZ}$ be the pullback of the Kontsevich--Zorich cocycle. Throughout this remark, the algebraic hulls downstairs and upstairs are taken with respect to $\mu$ and $\widetilde{\mu}$, respectively.
In general, the algebraic hull of $\widetilde{\alpha}^{KZ}$ need not coincide with that of $\alpha^{KZ}$, because passing to a finite cover amounts to restricting the monodromy representation to a finite-index subgroup of the orbifold fundamental group, and this can change the finite component group.

However, the identity component is unchanged. Indeed, if
$$
\rho:\pi_1^{\mathrm{orb}}(\cM)\to \mathrm{Sp}(2g,\bbR)
$$
is the monodromy representation on absolute cohomology and
$$
\widetilde{\rho}=\rho|_{\pi_1(\widetilde{\cM})},
$$
then $\pi_1(\widetilde{\cM})$ has finite index in $\pi_1^{\mathrm{orb}}(\cM)$, so $\widetilde{\rho}(\pi_1(\widetilde{\cM}))$ has finite index in $\rho(\pi_1^{\mathrm{orb}}(\cM))$.
Hence their Zariski closures have the same identity component.

Apply \cite[Theorem 1.1]{EskinFilipWright2018} to the absolute cohomology bundle over these affine supports: the algebraic hull is the stabilizer of the tautological plane inside the corresponding Zariski closure of monodromy. After identifying fibers at a point and a lift of that point, the tautological planes agree and the two monodromy closures have the same identity component. Their stabilizers of this plane therefore also have the same identity component.
In particular, reductivity is unchanged, since a linear algebraic group is reductive if and only if its identity component is reductive.
\end{remark}

\begin{theorem}[{\cite[Appendix A]{EM18}; \cite[Theorem 2.1(a)]{Avila-Eskin-Moller2017}}]
\label{KZ_reductive_thm}
The Kontsevich--Zorich cocycle $\alpha^{KZ}$ is reductive.
\end{theorem}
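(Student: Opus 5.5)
The plan is to reduce the reductivity claim to a structural description of the algebraic hull, using the preceding Remark to pass to a torsion-free finite cover. Reductivity depends only on the identity component of the hull, so nothing is lost by working on a connected component $\widetilde{\cM}$ with an ergodic lift $\widetilde{\mu}$ of full support. On $\widetilde{\cM}$, the cocycle is Gauss--Manin parallel transport of a genuine flat bundle over a manifold.

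The key steps are as follows.

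\textbf{Step 1 (hull as stabilizer).} I would invoke \cite[Theorem 1.1]{EskinFilipWright2018}. It identifies the algebraic hull with the stabilizer $\mathrm{Stab}_{\mathbf{G}}(P)$ of the tautological plane $P=\operatorname{span}(\operatorname{Re}\omega,\operatorname{Im}\omega)$ inside the Zariski closure $\mathbf{G}$ of monodromy.

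\textbf{Step 2 (reductivity of $\mathbf{G}$).} To see that $\mathbf{G}$ is reductive, I would use the semisimplicity of the monodromy representation. That representation underlies a polarized variation of Hodge structure, and Deligne's semisimplicity theorem applies to it. Alternatively, one can use Filip's semisimplicity for the $\SLgr$-invariant measure together with \cite{EM18}, which gives that $\widetilde{\mu}$ is affine. Since the representation $H^1$ is faithful on $\mathbf{G}$ and semisimple, $\mathbf{G}^\circ$ is reductive.

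\textbf{Step 3 (reductivity of the stabilizer).} Here I would use the splitting $H^1=P\oplus P^{\perp}$. The symplectic form restricts nondegenerately to $P$, because $\omega\wedge\bar\omega$ has nonzero integral, namely the area. Hence $\mathrm{Stab}_{\mathbf{G}}(P)$ also stabilizes $P^\perp$. I would then write $\mathrm{Stab}_{\mathbf{G}}(P)$ as the fixed points in $\mathbf{G}$ of the involution acting by $+1$ on $P$ and by $-1$ on $P^{\perp}$. If $\mathbf{G}$ is stable under this involution, then its fixed-point subgroup is reductive, by Richardson/Steinberg: fixed points of a semisimple automorphism of a reductive group are reductive.

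\textbf{Step 4 (alternative route).} The direct route, which is the one in \cite[Appendix A]{EM18}, avoids identifying the hull. Given an invariant measurable subbundle $V$, one takes its symplectic (equivalently Hodge) orthogonal complement and shows that this complement is also invariant. This uses Forni's and Avila--Eskin--M\"oller's result \cite[Theorem 2.1(a)]{Avila-Eskin-Moller2017} that invariant subbundles are symplectic or the restriction of the form behaves well. The complement then provides the invariant complement required by reductivity, after passing to the hull via Theorem~\ref{thmZim} and Lemma~\ref{replemma}, applied to all tensor representations.

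The main obstacle is Step 3. It requires checking that $\mathbf{G}$ is normalized by the involution, which is not automatic. If that fails, I would fall back on Step 4 and cite \cite[Theorem 2.1(a)]{Avila-Eskin-Moller2017}: every invariant subbundle of the KZ cocycle over an ergodic $\SLgr$-invariant measure has an invariant complement. That gives semisimplicity of every representation of the hull, hence reductivity.
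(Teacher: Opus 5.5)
Your fallback in Step~4 is the route the paper relies on. However, the main line in Steps~1--3 has genuine gaps, and your sketch of Step~4 misdescribes its key step. First, Steps~1--3 are circular. The identification of the hull in \cite[Theorem~1.1]{EskinFilipWright2018} is built on Filip's semisimplicity and rigidity theorems, and those take reductivity of the hull as an input. This paper does the same: the proofs of Theorems~\ref{deligne-torus-thm} and~\ref{thm__Deligne-semisimplicity} begin by invoking Lemma~\ref{main-reductive-lemma} to obtain an invariant complement. The same objection applies to your alternative in Step~2 via Filip's semisimplicity. The Deligne route in Step~2 additionally needs $\mathcal M$ to be an algebraic variety.

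Independently of circularity, Step~3 is not merely unverified: reductivity of $\mathbf G$ together with nondegeneracy of $P$ does not imply that $\mathrm{Stab}_{\mathbf G}(P)$ is reductive. Let $\mathbf G=\{(g,g)\}$ act on $\bbR^2\oplus\bbR^2$ preserving $\omega\oplus\omega$, and let $A=\left(\begin{smallmatrix}1&1\\0&1\end{smallmatrix}\right)$ and $P=\{(v,Av)\}$. Then the form restricts to $2\omega$ on $P$, and $P^{\perp}=\{(v,-Av)\}$. But $\mathrm{Stab}_{\mathbf G}(P)\cong Z_{\SLgr}(A)=\{\pm\left(\begin{smallmatrix}1&t\\0&1\end{smallmatrix}\right)\}$, which is not reductive. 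So Step~3 would need specific information on how $\mathbf G$ sits relative to the tautological plane, for instance that $\theta_P$ normalizes $\mathbf G$. You supply none.

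Step~4 is the right route. It is what \cite[Appendix~A]{EM18} and \cite{Avila-Eskin-Moller2017} do, and what the paper reproves in the product setting in Lemma~\ref{main-reductive-lemma}. But your sketch misplaces its content.

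\begin{itemize}
\item Symplectic and Hodge complements are not interchangeable: $V^{\perp_H}=(CV)^\dagger$, which equals $V^\dagger$ only if $CV=V$.
\item $V^\dagger$ is a complement only when $V$ is symplectic. Invariant subbundles can be degenerate, at least after finite covers, e.g.\ isotropic subspaces of a Forni subspace whose finite monodromy has been trivialized.
\end{itemize}

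The actual argument passes to the radical $K=V\cap V^\dagger$, which is isotropic and invariant. Forni's mechanism, as in Lemma~\ref{lemma:relative-isotropic-isometry}, puts $K$ in the annihilator of the second fundamental form. It combines zero mean Hodge-volume growth, coming from the rotation symmetry, with the nonnegative integrand in the Forni--Matheus--Zorich formula. Hence the cocycle is isometric on $K$, and $K^{\perp_H}$ is invariant. Then $V_0=V\cap K^{\perp_H}$ is symplectic, and $V_0^\dagger\cap K^{\perp_H}$ is an invariant complement of $V$.

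Finally, passing from complements to the hull does not follow from Theorem~\ref{thmZim} and Lemma~\ref{replemma}; it is the argument of Lemma~\ref{lemma:measurable-complements-hull}. If the hull $L$ were not reductive, faithfulness would give an $L$-invariant subspace with no $L$-invariant complement. A measurable invariant complement would then, by ergodicity and countable separation of $L$-orbits on the variety of complements, reduce the cocycle to a proper stabilizer. That contradicts minimality of $L$. Only the defining representation is needed, not all tensor representations.
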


\begin{lemma}[Relative isotropic-isometry lemma]\label{lemma:relative-isotropic-isometry}
Let $p:(Z,\nu)\to(Y,\eta)$ be a measurable $\SLgr$-equivariant factor map of standard Borel probability-preserving $\SLgr$-spaces, where $Y$ is an area-one stratum (or an invariant measurable subset thereof) carrying the absolute Hodge bundle $H^1_{\bbR}$. If $K\subset p^*H^1_{\bbR}$ is a measurable $\SLgr$-invariant isotropic subbundle, then the pulled-back KZ cocycle acts by Hodge isometries on $K$, and its Hodge-orthogonal complement $K^{\perp_H}$ is $\SLgr$-invariant.
\end{lemma}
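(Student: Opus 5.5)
The plan is to follow Filip's argument for isotropic invariant subbundles of the KZ cocycle, which rests on the Hodge-norm variational formula for the Teichmüller flow and Forni's bound on the second fundamental form. The first step is to reduce to a statement about Lyapunov exponents. Pull back the Hodge norm to $Z$ via $p$; the cocycle is still Gauss--Manin transport and $\nu$ is $\SLgr$-invariant. By Oseledets, the top exponent of the restricted cocycle on $K$ is finite and log-integrability holds, since the Hodge norm of the KZ cocycle on an area-one stratum is bounded by $e^{2|t|}$ along $g_t$. I will show that all exponents of $K$ vanish, and then upgrade this to isometry.

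The key computation is the Forni--Kontsevich formula. For an isotropic subspace $K_x$ of dimension $k$, the Hodge-norm growth of $\wedge^k K$ along the Teichmüller disc has a Laplacian, in the hyperbolic disc coordinates, equal to a nonnegative quantity: roughly $\sum |B_\omega(\cdot,\cdot)|^2$-type terms from the second fundamental form $B_\omega$ restricted to $K$, minus a cross term. Isotropy makes the cross term vanish, because the "$\omega\wedge$" pairing $\langle c_i,\bar c_j\rangle$ degeneracy kills the negative contribution. Hence $\Delta\log\|\wedge^k K\|\ge 0$ is subharmonic along discs in $Z$. The discs in $Z$ are just the $\SLgr$-orbits lifted through $p$, so everything is fiberwise over orbits.

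Averaging over circles and using $\SLgr$-invariance of $\nu$ gives the sum of exponents of $K$ as an integral of this nonnegative Laplacian. Invariance under the full $\SLgr$ action, combined with the Kontsevich--Forni formula applied to both $g_t$ and the inverse flow, forces the sum of positive exponents to equal the integral, and symmetry to give zero. More concretely, the sum of exponents of an isotropic invariant subbundle equals both $\int \Lambda$ and $-\int\Lambda$ (time reversal). The Laplacian therefore vanishes a.e. and $\log\|\wedge^kK\|$ is harmonic and bounded-type, hence constant along orbits. Filip's refinement (vanishing of $B_\omega|_K$) then gives that the whole Hodge norm on $K$, not just the determinant, is constant along orbits. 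In other words, $K$ lies in the kernel of the second fundamental form, and $K\oplus\bar K$ splits compatibly with the Hodge decomposition. So $K\otimes\bbC=(K_\bbC\cap H^{1,0})\oplus(K_\bbC\cap H^{0,1})$ is flat and the Hodge metric is parallel on it; that is, the cocycle acts by isometries.

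For the complement, the Hodge form is $h(u,v)=i\langle u,\bar v\rangle$-type, built from the flat symplectic form and the Hodge star. Since the Hodge star restricted to the flat, Hodge-split subbundle $K_\bbC$ is parallel, $J K$ is an invariant subbundle, where $J$ is the Hodge complex structure. Then $K^{\perp_H}=(JK)^{\perp_\omega}$, the symplectic annihilator of an invariant subbundle, and is therefore invariant because the cocycle is symplectic.

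The main obstacle is the step from vanishing of the averaged Laplacian to pointwise vanishing of $B_\omega|_K$ on the measurable extension $Z$. This needs tempered regularity and orbitwise subharmonicity, and I would use the random-walk/subharmonicity input recorded for Section~\ref{sect4}. In particular, I would use that a bounded-below integrable subharmonic function along a.e. orbit with zero average is harmonic, hence constant.
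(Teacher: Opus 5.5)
Your skeleton matches the paper's: apply the Forni--Matheus--Zorich formula to the invariant isotropic plane, kill the mean volume growth using $ra_tr^{-1}=a_{-t}$ and the rotation-invariance of the Hodge metric, then read $B(\omega_a,\omega_b)=0$ for $a\le k$ and all $b$ off the sum-of-squares form of $\Phi_k$.

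\textbf{The gap is in your last step.} The decomposition $K_\bbC=(K_\bbC\cap H^{1,0})\oplus(K_\bbC\cap H^{0,1})$ is false for every nonzero isotropic $K$. Suppose $u\in K_\bbC\cap H^{1,0}$. Then $\overline u\in K_\bbC$, so isotropy gives $I(u,\overline u)=0$. But $h(u,u)=I(Cu,\overline u)=i\,I(u,\overline u)>0$ unless $u=0$. Equivalently, $\|v\|_H^2=I(Cv,v)$ shows that a nonzero isotropic subspace is never Weil-invariant. So $K_\bbC$ is not Hodge-split, and your argument that $JK$ is invariant, and hence that $K^{\perp_H}=(JK)^{\perp_\omega}$ is invariant, is unsupported as written.

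The repair: the Hodge components of each transported $v\in K$ are flat; it is $K+JK$, not $K$, that is Hodge-split. Indeed, $\sigma v^{1,0}=0$ along the orbit and Theorem~\ref{nabla_relation_thm} give $\nabla^{Hg}v^{0,1}=0$. Taking conjugates (the Hodge connection is real) gives $\nabla^{Hg}v^{1,0}=0$, hence $\nabla^{GM}v^{1,0}=0$. Therefore $Cv=iv^{1,0}-iv^{0,1}$ is flat. With this fix, your symplectic-annihilator route is a valid alternative; it is the mechanism of Lemma~\ref{lemma:invariant-hodge-complement}. The paper instead gets both conclusions at once from the Forni--Matheus--Zorich variation formula. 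There $\frac{d}{dt}\langle A_tv,A_tw\rangle_H$ is expressed through $B^{\bbR}$, so it vanishes whenever $A_tv\in\operatorname{Ann}(B^{\bbR})$. Taking $w\in K$ gives isometry, and taking $w\in K^{\perp_H}$ gives flow-invariance of the complement. Rotations then preserve $K^{\perp_H}$, and rotations together with $a_t$ generate $\SLgr$.

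\textbf{You also misplace the main difficulty.} Once $\int\Phi_k=0$ with $\Phi_k\ge 0$, you get $\Phi_k=0$ almost everywhere immediately. No subharmonicity or random-walk input is needed, and the tool you propose does not apply here:
\begin{itemize}
\item $\log\|\wedge^kK\|$ is not a function on $Z$. It depends on a choice of spanning $k$-vector and is really the cocycle $J_t(z)$, so Lemma~\ref{harm_const_lemma_1} has nothing to act on.
\item ``Harmonic on the disc, hence constant'' is false on $\mathbb H$.
\end{itemize}
Constancy of Hodge norms on $K$ comes from the first-variation formula together with $B$-annihilation, not from harmonicity. The technical point that actually remains is passing from the almost-everywhere inclusion $K_z\subseteq\operatorname{Ann}(B^{\bbR}_{p(z)})$ to vanishing of the derivative along almost every orbit. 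The paper does this by Fubini in $(t,z)$ and absolute continuity of the pairing in $t$.
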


\begin{proof}
Write $\nu=\int\nu_\xi\,d\tau(\xi)$ for the ergodic decomposition of the $\SLgr$-action on the standard Borel probability space $Z$. Because $K$ is invariant, its rank is constant $\nu_\xi$-almost everywhere on almost every component; denote this rank by $k_\xi$. The marginal $\eta_\xi=p_*\nu_\xi$ is $\SLgr$-invariant and ergodic, being a factor of $(Z,\nu_\xi)$. Components with $k_\xi=0$ require no argument. Fix a component from the conull family with $k_\xi>0$ and write $k=k_\xi$ until the end of the componentwise proof. The equivariant measurable map
\[
 z\longmapsto\bigl(p(z),K_z\bigr)
\]
pushes $\nu_\xi$ forward to an $\SLgr$-invariant probability measure $\widehat\nu_\xi$ on the isotropic Grassmann bundle $\operatorname{IGr}_k(H^1_{\bbR})$ over $(Y,\eta_\xi)$. The tautological $k$-plane carries the ordinary KZ cocycle restricted to that plane. The Hodge metric and the second fundamental form there are inherited from the ordinary Hodge bundle on the stratum; no differentiable structure on $Z$ is required.

We use the notation $B_y$, $B_y^{\bbR}$, and $\Phi_k$ of \cite[Section~2]{ForniMatheusZorich2014LyapunovSpectrum}. Here $B_y$ is the complex bilinear form associated with the second fundamental form of the Hodge bundle, $B_y^{\bbR}$ is the corresponding bilinear form on real cohomology, and
\[
\operatorname{Ann}(B_y^{\bbR})=\{v\in H^1_y(\bbR):B_y^{\bbR}(v,w)=0\text{ for every }w\in H^1_y(\bbR)\}.
\]

Apply the Grassmannian formula of \cite[Theorem~1, (3.8)]{ForniMatheusZorich2014LyapunovSpectrum} to the full absolute Hodge bundle $V=H^1_{\bbR}$ over the ergodic measure $\eta_\xi$, taking its $SO_2(\bbR)$-invariant Grassmannian measure to be $\widehat\nu_\xi$. Since this measure is already invariant under the geodesic flow, it is an admissible weak limit in that formula. We obtain
\begin{equation}\label{eq:relative-isotropic-forni}
\Lambda^{(k)}(\widehat\nu_\xi)=\int_{\operatorname{IGr}_k(H^1_{\bbR})}\Phi_k(y,I)\,d\widehat\nu_\xi(y,I),
\end{equation}
where $\Lambda^{(k)}(\widehat\nu_\xi)$ is the mean asymptotic logarithmic Hodge-volume growth on the tautological $k$-plane. The standard Hodge norm estimate bounds the absolute value of its logarithmic Jacobian at time $t$ by $k|t|$. The asymptotic logarithmic volume growth exists almost everywhere by Oseledets' theorem, and this bound permits passage to its integral by dominated convergence.

In fact the mean volume growth vanishes without taking a Lyapunov limit. Put $a_t=\operatorname{diag}(e^t,e^{-t})$ and $r=\left(\begin{smallmatrix}0&-1\\1&0\end{smallmatrix}\right)$, and let $A_t(z):K_z\to K_{a_tz}$ denote the pulled-back KZ transport. Define
\[
 J_t(z):=\log\bigl|\det\nolimits_H A_t(z)\bigr|,
\]
where the determinant is computed with the Hodge volume norms on its source and target. Since $ra_tr^{-1}=a_{-t}$ and rotation by $r$ preserves the Hodge metric, equivariance of $K$ implies $J_t(rz)=J_{-t}(z)$ almost everywhere. The determinant cocycle identity gives $J_{-t}(z)=-J_t(a_{-t}z)$. Invariance of $\nu_\xi$ under both $r$ and $a_t$ therefore yields
\[
\int J_t\,d\nu_\xi=\int J_{-t}\,d\nu_\xi=-\int J_t\,d\nu_\xi,
\]
so $\int J_t\,d\nu_\xi=0$ for every $t$. Dividing by $t$ and using the uniform bound $|J_t|\leq k|t|$, we conclude that $\Lambda^{(k)}(\widehat\nu_\xi)=0$.

Take a Hodge-orthonormal basis $c_1,\ldots,c_k$ of an isotropic plane $I$ and extend it to an orthonormal Lagrangian basis $c_1,\ldots,c_g$. Let $\omega_1,\ldots,\omega_g$ be the corresponding holomorphic one-forms. The sum-of-squares identity \cite[(2.51)]{ForniMatheusZorich2014LyapunovSpectrum} is
\[
\Phi_k(y,I)=\sum_{a=1}^k\sum_{b=1}^g|B_y(\omega_a,\omega_b)|^2+\sum_{a=1}^k\sum_{b=k+1}^g|B_y(\omega_a,\omega_b)|^2\geq0.
\]
Equation~\eqref{eq:relative-isotropic-forni} implies $\Phi_k(y,I)=0$ for $\widehat\nu_\xi$-almost every $(y,I)$. Hence $B_y(\omega_a,\omega_b)=0$ for all $a\leq k$ and $b\leq g$, or equivalently $I\subseteq\operatorname{Ann}(B_y^{\bbR})$. Pulling this back gives
\[
K_z\subseteq\operatorname{Ann}(B_{p(z)}^{\bbR})\qquad\text{for $\nu_\xi$-almost every }z.
\]

For clarity, the final Hodge-orthogonality step is pointwise and does not require invoking a theorem about the extension. Let $E\subseteq Z$ be a measurable conull set on which the annihilator inclusion above holds. For every $T>0$, invariance of $\nu_\xi$ and Fubini give
\[
 \int_Z\int_{-T}^T\mathbf 1_{Z\setminus E}(a_tz)\,dt\,d\nu_\xi(z)=0.
\]
The measurable equivariance identity $A_t(z)K_z=K_{a_tz}$ also holds for almost every pair $(t,z)$ by Fubini, since it holds for $\nu_\xi$-almost every $z$ for each fixed $t$. Taking $T$ through the positive integers, we obtain a conull set of starting points $z$ for which both properties hold for Lebesgue-almost every $t\in\bbR$. Fix such a $z$ and arbitrary $v\in K_z$ and $w\in H^1_{p(z)}(\bbR)$; transport them by Gauss--Manin along $a_t$. The Hodge-pairing variation formula \cite[(2.39), Lemma~2.6]{ForniMatheusZorich2014LyapunovSpectrum} expresses the derivative of their Hodge pairing in terms of $B^{\bbR}$. For almost every $t$, the transported vector $A_t(z)v$ belongs to $K_{a_tz}\subseteq\operatorname{Ann}(B^{\bbR}_{p(a_tz)})$, so the derivative vanishes. The pairing is locally absolutely continuous in $t$ and hence constant for all $t$. Taking $w\in K_z$ proves Hodge isometry on $K$; taking $w\in K_z^{\perp_H}$ proves invariance of its Hodge-orthogonal complement under the diagonal flow. Rotations preserve the underlying complex structure and the Hodge metric, and therefore preserve $K^{\perp_H}$ by equivariance of $K$. Since rotations and the diagonal subgroup generate $\SLgr$, the conclusion holds for every fixed element of $\SLgr$, almost everywhere (using a finite intersection of the relevant conull sets for each such element), as in \cite[Lemma~4.3]{ForniMatheusZorich2014LyapunovSpectrum}. All these conclusions hold for $\tau$-almost every ergodic component; integrating the componentwise null sets against $\tau$ gives the assertions for $\nu$.
\end{proof}

\begin{lemma}[Measurable complements and algebraic hulls]\label{lemma:measurable-complements-hull}
Let $\alpha$ be a measurable $\mathrm{GL}(V)$-valued cocycle over an ergodic probability-preserving action on a standard Borel probability space, where $V$ is a finite-dimensional real vector space. If every invariant measurable subbundle of the associated bundle admits an invariant measurable complement, then the algebraic hull of $\alpha$ is reductive.
\end{lemma}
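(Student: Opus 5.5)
We work with a minimal algebraic subgroup $L\subset\mathrm{GL}(V)$ into which $\alpha$ can be conjugated, as provided by Theorem~\ref{thmZim}; after changing the measurable trivialization, $\alpha$ takes values in $L(\bbR)$. Since reductivity depends only on the identity component and is equivalent to the triviality of the unipotent radical $R_u(L)$, we argue by contradiction: suppose $U:=R_u(L)\neq\{1\}$ and produce an invariant measurable subbundle with no invariant complement. The concrete choice is the subspace
\[
 V^U=\{v\in V: uv=v\ \text{for all } u\in U\}.
\]
This subspace is nonzero because $U$ is unipotent (Kolchin), and it is proper because $U$ acts nontrivially on $V$ by faithfulness. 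Since $U$ is normal in $L$, the subspace $V^U$ is $L$-stable, so the constant subbundle $X\times V^U$ is $\alpha$-invariant.

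By hypothesis there is an invariant measurable complement $C_x$, that is, $V=V^U\oplus C_x$ with $\alpha(g,x)C_x=C_{gx}$. Form the projection $P_x$ onto $V^U$ along $C_x$. It is a measurable map $X\to\mathcal P$, where $\mathcal P$ is the affine variety of projections with image $V^U$. This map is equivariant for the action $p\mapsto lpl^{-1}$ of $L$ on $\mathcal P$. The next step is to use the fact that the $L(\bbR)$-action on the variety $\mathcal P$ is algebraic, hence smooth in the sense of Borel orbit spaces. By ergodicity, $P_x$ therefore lies almost surely in a single $L(\bbR)$-orbit, say the orbit of $p_0$. Choosing a measurable section of $L(\bbR)\to L(\bbR)\cdot p_0$ and conjugating reduces $\alpha$ to the stabilizer $\mathrm{Stab}_L(p_0)$, which is an algebraic subgroup. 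Minimality of $L$ forces $\mathrm{Stab}_L(p_0)=L$, so $L$ preserves the decomposition $V=V^U\oplus\ker p_0$. This step repeats the argument in the proof sketch of Theorem~\ref{thmZim}.

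Now iterate. The group $L$ preserves $W:=\ker p_0$, and $U$ is still a normal unipotent subgroup of $L$ acting on $W$. If $U$ acts nontrivially on $W$, then $W^U\neq 0$; but $W^U\subset V^U\cap W=0$, a contradiction. Hence $U$ acts trivially on both $V^U$ and $W$, so $U=\{1\}$ by faithfulness, and $L$ is reductive. Equivalently, one can run an induction on $\dim V$: the restriction of $\alpha$ to $W$ again satisfies the complement hypothesis, since intersecting with $W$ along the projection gives complements.

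The main obstacle is the smoothness and orbit-reduction step. We need to check that the reduced cocycle really takes values in the real points of an algebraic subgroup of $L$, and that minimality applies to it. The stabilizer is Zariski closed, so this holds, but care is needed because real-point orbits of a real algebraic action are only locally closed (Borel--Serre / Chevalley), which suffices for the smoothness of the action on $\mathcal P$ as used in Theorem~\ref{thmZim}.
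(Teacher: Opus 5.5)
Your proposal is correct and follows essentially the paper's argument. You take an $L$-invariant subspace with no $L$-invariant complement, pass to the equivariant map into the variety of complements (your projections $\mathcal P$), use smoothness of the algebraic action plus ergodicity to reduce to a stabilizer, and contradict minimality of the hull. The only difference is that you exhibit the subspace concretely as $V^{R_u(L)}$ via Kolchin, where the paper simply invokes the failure of complete reducibility. Your closing ``iteration'' is unnecessary, since $W\neq 0$ already gives $0\neq W^{U}\subseteq V^U\cap W=0$.
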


\begin{proof}
Let $L\subset\mathrm{GL}(V)$ be the algebraic hull and use an $L$-valued measurable trivialization. If $L$ were not reductive, its faithful defining representation on $V$ could not be completely reducible: otherwise the connected unipotent radical of $L$ would act trivially on every irreducible constituent, hence trivially on $V$, contrary to faithfulness. Thus there is an $L$-invariant subspace $U\subset V$ with no $L$-invariant complement. Its constant measurable bundle is $\alpha$-invariant, so by hypothesis it has an invariant measurable complementary bundle $W_x$. This determines an $\alpha$-equivariant measurable map from the base to the real algebraic variety $\mathcal C(U)$ of complements of $U$ in $V$.

The algebraic $L$-action on $\mathcal C(U)$ has locally closed orbits and a countably separated orbit space by \cite[Proposition~5.5]{EskinFilipWright2018} (see also the algebraic-hull uniqueness argument of Theorem~\ref{thmZim}). Consequently the invariant measurable orbit class of $W_x$ is almost surely constant by ergodicity: $W_x\in L\cdot W_0$ almost everywhere. A measurable section of $L\to L\cdot W_0$ changes trivialization so that the cocycle takes values in the algebraic stabilizer $\operatorname{Stab}_L(W_0)$. This stabilizer is proper because $U$ has no $L$-invariant complement, contradicting the minimality of $L$ as algebraic hull. Hence $L$ is reductive.
\end{proof}

Now let
$$
X=\cH^{(1)}(\alpha_1,\ldots,\alpha_n)=\cH^{(1)}(\alpha_1)\times\cdots\times \cH^{(1)}(\alpha_n),
$$
let $G\subset (\SLgr)^n$ be a closed projection-surjective subgroup, and let $\mu$ be a $G$-invariant ergodic probability measure on $X$.
Let $\Gamma$ denote the cocycle on the (absolute) cohomology bundle
$$
\rmH^1_\bbR(X)\;=\;\bigoplus_{i=1}^n \pi_i^*\rmH^1_\bbR(\cH^{(1)}(\alpha_i))
$$
induced by the Gauss--Manin connection; equivalently, $\Gamma$ is the restriction to $G$ of the product cocycle for $(\SLgr)^n$.

\begin{lemma}\label{main-reductive-lemma}
The cocycle $\Gamma$ has reductive algebraic hull, even without the rotational-conjugacy restriction on $G$.
Consequently, every fixed algebraic tensor construction induced from $\rmH^1_\bbR(X)$ has reductive algebraic hull; the same holds after complexification.
\end{lemma}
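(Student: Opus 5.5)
The plan is to verify the hypothesis of Lemma~\ref{lemma:measurable-complements-hull}: every measurable $\Gamma$-invariant subbundle $U\subset\rmH^1_\bbR(X)$ admits a measurable $\Gamma$-invariant complement. The second assertion then follows at once from Corollary~\ref{reductivitycor} together with Lemma~\ref{replemma}. A fixed tensor construction is an algebraic representation of $\prod_i\mathrm{Sp}(2g_i,\bbR)$, so its cocycle's hull is the image of the hull of $\Gamma$ and is therefore reductive. Complexification does not affect reductivity.

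To produce complements, I would use the Hodge metric and the symplectic form together, in the spirit of Lemma~\ref{lemma:relative-isotropic-isometry}. The symplectic form on $\rmH^1_\bbR(X)$ is the orthogonal sum of the factor forms $\Omega_i$, and it is preserved by $\Gamma$. Given an invariant $U$, set $K=U\cap U^{\perp_\Omega}$. This is an invariant subbundle, and it is isotropic.

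\textbf{Step 1 (reduction to the isotropic case).} Suppose $K=0$. Then $U$ is symplectic, and $U^{\perp_\Omega}$ is an invariant complement, so we are done. In general, the main task is to find an invariant complement to $K$ that interacts well with $U$. I would first show that the Hodge-orthogonal complement $K^{\perp_H}$ is $\Gamma$-invariant and that $\Gamma$ acts isometrically on $K$. Granting this, $K$ is Hodge-isometric and invariant, and so is $JK$, where $J$ is the Hodge star. Indeed, the complex structure is preserved by an isometric flat isotropic subbundle: its pairing with everything is constant, so $JK=(K^{\perp_H})^{\perp_\Omega}$ is invariant. Hence $K\oplus JK$ is an invariant symplectic subbundle with invariant symplectic complement $(K\oplus JK)^{\perp_\Omega}$. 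Inside $U$ one then has $U=K\oplus\bigl(U\cap (K\oplus JK)^{\perp_\Omega}\bigr)$. The second summand is symplectic, and $JK$ complements $U^{\perp_\Omega}\supset K$ appropriately. A complement of $U$ is then assembled as $JK\oplus\bigl(U^{\perp_\Omega}\cap (K\oplus JK)^{\perp_\Omega}\bigr)$. Checking this piece of linear algebra fiberwise is routine.

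\textbf{Step 2 (the isotropic case, main obstacle).} Lemma~\ref{lemma:relative-isotropic-isometry} is stated for a single stratum factor, while $K$ sits inside the direct sum $\bigoplus_i\pi_i^*\rmH^1_\bbR$ and need not split. I would handle this by choosing, for each $i$ in a block $I_\beta$, the $i$-th coordinate copy of $\SLgr$ in $D_\beta$. After the $\GLgr$ conjugation of Lemma~\ref{prodlemma}, this copy acts on each factor $j\in I_\beta$ through an isomorphism of $\SLgr$. The plan is to rerun the Forni--Matheus--Zorich argument of that lemma on the product directly, using the diagonal flow $a_t$ of $D_\beta$ and a rotation-type element $r$ with $ra_tr^{-1}=a_{-t}$.

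The symmetry argument needs only invariance of $\mu$ and the fact that $r$ preserves Hodge volumes. This is exactly where arbitrary conjugations threaten the proof, so I expect this to be the genuine difficulty. A conjugated rotation changes Hodge norms only by a bounded factor, and this should still give $\int J_t=o(t)$, so the mean volume growth on $K$ vanishes. The Forni formula is additive over factors because the second fundamental form of the product bundle is block diagonal. Its nonnegativity then forces $K\subset\bigoplus_i\operatorname{Ann}(B^\bbR_{x_i})$ almost everywhere. The pointwise variation argument of Lemma~\ref{lemma:relative-isotropic-isometry} would then give Hodge isometry on $K$ and invariance of $K^{\perp_H}$ along each $D_\beta$. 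The finite group $F$ acts by $\pm\Id$ on the factors and hence preserves Hodge orthogonality trivially.

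Finally, ergodicity is only assumed for $G$, not for its subgroups. I would therefore run the argument on the ergodic components of each one-parameter subgroup, or directly on $(X,\mu)$ via the ergodic decomposition, as done in Lemma~\ref{lemma:relative-isotropic-isometry}.
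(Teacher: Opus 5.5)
Your Step~1 linear algebra is correct. Since $U\subset K^{\perp_\Omega}$, your summand $U\cap(K\oplus JK)^{\perp_\Omega}$ equals $U\cap K^{\perp_H}$, which is the paper's $U_0$. The passage to tensor constructions is also the paper's.

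\textbf{The gap is Step~2.} There you need Hodge isometry on $K$, and invariance of $K^{\perp_H}$, for an isotropic invariant $K$ lying diagonally across several factors, under a group whose factor embeddings are arbitrary $\GLgr$-conjugates.

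\begin{itemize}
\item Your bounded-distortion remark repairs only the symmetry step: it does give $\int J_t\,d\mu=O(1)$.
\item It does not supply the two identities that actually force $K\subset\operatorname{Ann}(B)$ and then constancy of Hodge pairings. These are the Forni--Matheus--Zorich identity $\Lambda^{(k)}=\int\Phi_k$ and the pairing-variation formula.
\item Both identities concern the Teichm\"uller flow and $SO_2(\bbR)$-averaging on one Teichm\"uller disc, over which the Hodge bundle is a variation of Hodge structure.
\end{itemize}

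This fails for arbitrarily conjugated groups. If block $\beta$ acts on factor $j$ by $g\mapsto g_jgg_j^{-1}$, then along an orbit the $j$-th Hodge structure is invariant only under left multiplication by $g_j^{-1}SO_2(\bbR)g_j$. These compact subgroups differ from factor to factor unless the $g_j$ agree modulo rotations, and in general the conjugated flow moves the $j$-th surface along hypercycles rather than geodesics. So there is no common orbit disc over which $\bigoplus_j\pi_j^*\rmH^1_\bbR$ is a variation of Hodge structure. Your phrase ``block diagonal second fundamental form'' presupposes exactly such a disc. Even with rotational conjugation, you would have to re-derive the FMZ formula for a product variation; without it you have no formula at all.

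The example in Subsection~\ref{subsec:arbitrary-conjugations} shows the cross-factor Hodge mechanism really breaks. There the Hodge-orthogonal complement of the invariant graph $V$ is the symplectic annihilator of $CV$, hence not invariant. That $V$ is symplectic rather than isotropic, but it shows a product version of Lemma~\ref{lemma:relative-isotropic-isometry} would need new input you do not provide.

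\textbf{The missing idea: you never need complements in the full sum.} You already pick the right subgroup; use it only on one coordinate bundle $E_i=\pi_i^*\rmH^1_\bbR$.

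\begin{enumerate}
\item Take $S_i\subset G^\circ$ from Lemma~\ref{prodlemma} with $p_i|_{S_i}$ an isomorphism, and identify $S_i$ with $\SLgr$ through $p_i$.
\item Then $\pi_i$ is equivariant, and the cocycle on $E_i$ is the pulled-back KZ cocycle. The flow and rotation of $S_i$ act on the $i$-th surface as the genuine ones. Their action on other factors is irrelevant, because $E_i$ and its Hodge metric see only the $i$-th surface.
\item So Lemma~\ref{lemma:relative-isotropic-isometry} applies verbatim with factor map $\pi_i$.
\item The kernel $N_i$ of $p_i|_G$ acts trivially on $E_i$, and $G=S_iN_i$, so $K^{\perp_H}$ is $G$-invariant.
\item Your linear algebra then gives complements in $E_i$. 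Lemma~\ref{lemma:measurable-complements-hull} makes the hull of each $\Gamma_i$ reductive.
\item To assemble, let $L\subset\prod_i\operatorname{Sp}(2g_i,\bbR)$ be the hull of $\Gamma$. Lemma~\ref{replemma} makes each $\rho_i(L)$ reductive. Hence $\rho_i(R_u(L))$ is a connected normal unipotent subgroup of a reductive group, so it is trivial.
\item Since the $\rho_i$ jointly separate points, $R_u(L)=\{e\}$.
\end{enumerate}
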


\begin{proof}
Fix a coordinate $i$, and put $E_i=\pi_i^*H^1_{\bbR}(\cH^{(1)}(\alpha_i))$. By Lemma~\ref{prodlemma}, the identity component of $G$ contains a simple subgroup $S_i\cong\SLgr$ whose projection $p_i:S_i\to\SLgr$ is an isomorphism. This conclusion does not require the rotational-conjugacy restriction used later for Hodge-theoretic arguments. We first show that every measurable $G$-invariant real subbundle $U\subset E_i$ has a measurable $G$-invariant complement.

Write $U^\dagger$ for the symplectic orthogonal of $U$, and let $K=U\cap U^\dagger$ be its symplectic radical. The intersection form is flat, so $U^\dagger$ and $K$ are $G$-invariant, and $K$ is isotropic. Restrict the $G$-action to $S_i$ and identify $S_i$ with $\SLgr$ through $p_i$. The coordinate projection $\pi_i:(X,\mu)\to(\cH^{(1)}(\alpha_i),(\pi_i)_*\mu)$ is equivariant for this action, and the cocycle on $E_i$ is the pullback of the ordinary KZ cocycle. Lemma~\ref{lemma:relative-isotropic-isometry} shows that $K^{\perp_H}$ is invariant under $S_i$.

In fact, $K^{\perp_H}$ is $G$-invariant. Indeed, the kernel $N_i=\ker(p_i:G\to\SLgr)$ fixes the $i$th surface and acts as the identity on its absolute cohomology, so it preserves the Hodge metric on $E_i$. Since $K$ is $G$-invariant, $K^{\perp_H}$ is therefore invariant under $N_i$ as well. Surjectivity of $p_i|_{S_i}$ gives $G=S_iN_i$, proving the claim.

Set $U_0=U\cap K^{\perp_H}$. Because $K\subset U$ and the Hodge metric is positive definite, $U=K\oplus U_0$. The symplectic form is nondegenerate on $U_0$: if $u\in U_0$ pairs trivially with $U_0$, it also pairs trivially with $K\subset U^\dagger$, hence with all of $U$; thus $u\in K\cap K^{\perp_H}=\{0\}$. Consequently $E_i=U_0\oplus U_0^\dagger$. Since $E_i=K\oplus K^{\perp_H}$ and $K\subset U_0^\dagger$, we have $U_0^\dagger=K\oplus(U_0^\dagger\cap K^{\perp_H})$. It follows that
\[
E_i=U\oplus\bigl(U_0^\dagger\cap K^{\perp_H}\bigr).
\]
Every subbundle in this decomposition is measurable and $G$-invariant. This proves that every $G$-invariant measurable subbundle of $E_i$ has a $G$-invariant measurable complement. By Lemma~\ref{lemma:measurable-complements-hull}, the algebraic hull of the coordinate cocycle $\Gamma_i$ is reductive.

Now let $L\subset\prod_{i=1}^n\operatorname{Sp}(2g_i,\bbR)$ be the algebraic hull of the full direct-sum cocycle $\Gamma$. By Lemma~\ref{replemma}, the coordinate projection $\rho_i(L)$ is conjugate to the algebraic hull of $\Gamma_i$, so it is reductive. If $R_u(L)$ is the connected unipotent radical of $L$, its image under each $\rho_i$ is a connected normal unipotent subgroup of the reductive group $\rho_i(L)$ and hence trivial. The coordinate projections separate points, whence $R_u(L)=\{e\}$ and $L$ is reductive. The conclusion for fixed algebraic tensor constructions follows from Corollary~\ref{reductivitycor}, and complexification preserves reductivity.
\end{proof}

\section{Hodge-theoretic preliminaries}\label{sect3}

\subsection{Setup}
\begin{definition}\label{hodge_str_def}
Let $\rmH_\bbR$ be a finite-dimensional vector space over $\bbR$, and let $\rmH_\bbC=\rmH_\bbR\otimes_\bbR\bbC$ be its complexification. A \emph{pure Hodge structure of weight $w$ on $\rmH_\bbR$} is a finite direct-sum decomposition
\begin{equation}\label{hodge_str_eq}
\rmH_\bbC=\bigoplus_{p+q=w}\rmH^{p,q},
\qquad
\overline{\rmH^{p,q}}=\rmH^{q,p}.
\end{equation}
The indices $p,q$ are arbitrary integers; only finitely many summands are nonzero. The associated decreasing \emph{Hodge filtration} is
\begin{equation}\label{filtration_eq}
\rmF^p\rmH_\bbC=\bigoplus_{i\geq p}\rmH^{i,w-i},
\qquad p\in\bbZ.
\end{equation}
It is eventually equal to $\rmH_\bbC$ for $p\ll0$ and to $0$ for $p\gg0$.
\end{definition}
The decomposition and filtration determine one another:
\[
\rmH^{p,q}=\rmF^p\rmH_\bbC\cap\overline{\rmF^q\rmH_\bbC},
\qquad p+q=w.
\]
For the effective weight-one Hodge structure on $H^1$ only the types $(1,0)$ and $(0,1)$ occur, but arbitrary integer Hodge indices arise naturally when taking duals, endomorphism bundles, and other tensor constructions.

\begin{remark}
  The idea of the Hodge structure is inspired by Hodge decomposition theorem for De Rham and Dolbeault cohomologies. While cohomologies are indeed the object whose Hodge structures we want to study in this paper, the following paragraph works in the generalized setting of a finite-dimensional vector space over $\bbR$, as the Hodge-theoretical results in this paragraph do not depend on any properties specific to cohomology, and therefore this extra data is simply unneeded.
\end{remark}

\begin{definition}\label{weil_op_def}
The \emph{Weil operator C} associated to a Hodge structure on $\rmH_\bbR$ is the operator that acts on each of the components $\rmH^{p,q}$ by multiplication by $(\sqrt{-1})^{p-q}$. This operator is real and therefore descends to $\rmH_\bbR$.
\end{definition}

\begin{definition}
A \emph{polarization} of a Hodge structure of weight $w$ on $\rmH_\bbR$ is a nondegenerate bilinear form
$I:\rmH_\bbR\times \rmH_\bbR\to \bbR$ which is $(-1)^w$-symmetric, satisfies
\[
 I(\rmH^{p,q},\rmH^{r,s})=0\quad\text{unless }(r,s)=(q,p),
\]
and is such that the Hermitian form
$$
Q(x,y):= I(Cx,\overline{y})\qquad (x,y\in \rmH_\bbC)
$$
is positive definite. The induced norm $\|x\|:=\sqrt{Q(x,x)}$ is called the \emph{Hodge norm}.
\end{definition}

\begin{definition}
A \emph{morphism} of Hodge structures of the same weight $w$ is an $\bbR$-linear map
$\phi:\rmH_\bbR\to \tilde{\rmH}_\bbR$ such that its complexification satisfies
$\phi_\bbC(\rmH^{p,q})\subseteq \tilde{\rmH}^{p,q}$ for all $p+q=w$
(equivalently, $\phi_\bbC(\rmF^p\rmH_\bbC)\subseteq \tilde{\rmF}^p\tilde{\rmH}_\bbC$ for all $p$).
\end{definition}

\begin{remark}[Hodge structures via the Deligne torus]
Let $\mathbb S:=\operatorname{Res}_{\bbC/\bbR}\mathbb{G}_m$ be the \emph{Deligne torus}. Then
$\mathbb S(\bbR)=\bbC^\times$ (via $a+ib\mapsto \begin{pmatrix}a&-b\\ b&a\end{pmatrix}$), and
$\mathbb S(\bbC)\cong \bbC^\times\times \bbC^\times$; complex conjugation on $\bbC$ exchanges the two factors.

A pure Hodge structure of weight $w$ on a real vector space $\rmH_\bbR$ is equivalent to an algebraic homomorphism
$h:\mathbb S\to GL(\rmH_\bbR)$ such that the restriction to the embedded $\mathbb{G}_{m,\bbR}\hookrightarrow \mathbb S$
(given on real points by $r\mapsto r\in\bbC^\times$) acts by $r\mapsto r^{\,w}\Id$.
After complexifying, the representation of $\mathbb S(\bbC)\cong \bbC^\times\times \bbC^\times$ yields a decomposition
$$
\rmH_\bbC=\bigoplus_{p+q=w}\rmH^{p,q},\qquad
\rmH^{p,q}=\{v\in \rmH_\bbC:\ h(z_1,z_2)v=z_1^{p}z_2^{q}v\},
$$
and complex conjugation satisfies $\overline{\rmH^{p,q}}=\rmH^{q,p}$.
Conversely, a decomposition $\rmH_\bbC=\bigoplus_{p+q=w}\rmH^{p,q}$ determines $h$ by
$h(z)v=z^{p}\bar z^{q}v$ for $z\in\bbC^\times$ and $v\in \rmH^{p,q}$.

With this convention the Weil operator is $C=h(i)$, so $C|_{\rmH^{p,q}}=i^{p-q}$.
A morphism of Hodge structures is equivalently an $\mathbb S$-equivariant $\bbR$-linear map.
Shifting every Hodge type $(p,q)$ to $(p+m,q+m)$ changes the weight by $2m$ but leaves the induced Hodge structure on $\operatorname{End}(\rmH_\bbR)$ unchanged.
For a pure Hodge structure $\rmH_\bbR$, the induced Hodge structure on $\operatorname{End}(\rmH_\bbR)$ has weight $0$ and
$$
\operatorname{End}(\rmH_\bbC)^{r,-r}
=\{T:\rmH^{p,q}\to \rmH^{p+r,q-r}\ \text{for all }p,q\},
\qquad
\operatorname{End}(\rmH_\bbR)^{0,0}=\End_{\mathrm{HS}}(\rmH_\bbR).
$$
\end{remark}

\begin{definition}
Let $A$ be an $\bbR$-algebra. A Hodge structure on $A$ is \emph{compatible with the algebra structure} if
the multiplication map $m:A\otimes A\to A$ is a morphism of Hodge structures.
If this Hodge structure is pure and $A$ is a nonzero unital algebra, its weight must be $0$.
\end{definition}

\begin{lemma}\label{end-hodge-lemma}
    Let $H$ be a finite-dimensional real vector space. A Hodge structure on $\operatorname{End}(H)$ which is compatible with the algebra structure comes from a Hodge structure on $H$, unique up to shift of weight.

The same statement is true for $H$ a module over a division algebra $A$. Namely, a Hodge structure on $\operatorname{End}_A(H)$ compatible with the algebra structure gives a Hodge structure on $H$, compatible with the $A$-action (i.e. such that $A$ acts by endomorphisms of the Hodge structure on $H$).
\end{lemma}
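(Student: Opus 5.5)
The plan is to translate a Hodge structure on $\operatorname{End}(H)$ into a $\bbZ$-grading of the algebra $\operatorname{End}(H_\bbC)$, realize that grading as an inner derivation, and read off a Hodge decomposition of $H_\bbC$ from the eigenspaces of the implementing operator.

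\textbf{Step 1: reduce to a grading.} Compatibility with the algebra structure means $m:\operatorname{End}(H)\otimes\operatorname{End}(H)\to\operatorname{End}(H)$ is a morphism of Hodge structures. As noted after the definition of compatibility, this forces weight $0$. Write
\[
E^r:=\operatorname{End}(H_\bbC)^{r,-r}.
\]
Then $\operatorname{End}(H_\bbC)=\bigoplus_r E^r$, with $E^rE^s\subset E^{r+s}$ and $\overline{E^r}=E^{-r}$. So the Hodge structure is exactly an algebra $\bbZ$-grading of $\operatorname{End}(H_\bbC)$ that complex conjugation reverses.

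\textbf{Step 2: realize the grading by an inner derivation.} Define $\delta$ on $\operatorname{End}(H_\bbC)$ by $\delta|_{E^r}=r$. This is a derivation. Every derivation of the simple algebra $\operatorname{End}(H_\bbC)$ is inner (Skolem--Noether infinitesimally), so $\delta=\operatorname{ad}(D_0)$ for some $D_0\in\operatorname{End}(H_\bbC)$. This $D_0$ is unique up to adding a scalar.

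Since $\operatorname{ad}(D_0)$ is diagonalizable with integer eigenvalues, $D_0$ is semisimple and its eigenvalues differ pairwise by integers. Indeed, a non-semisimple or non-integral eigenvalue difference would produce a matrix unit $e_{ij}$ that is not a $\delta$-eigenvector with integer eigenvalue. Shifting $D_0$ by a scalar, we may assume all eigenvalues are integers. Let $H_p$ be the $p$-eigenspace; then $E^r$ is exactly the set of maps with $H_p\to H_{p+r}$ for all $p$.

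\textbf{Step 3: reality and the weight.} Because $\overline{E^r}=E^{-r}$, the conjugate operator $\overline{D_0}$ satisfies $\operatorname{ad}(\overline{D_0})=-\delta$. By uniqueness up to scalars, $\overline{D_0}=-D_0+c$ for some $c\in\bbC$.

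Now compare eigenspaces. $\overline{H_p}$ is the $p$-eigenspace of $\overline{D_0}$, hence the $(c-p)$-eigenspace of $D_0$. Thus $c-p$ is an eigenvalue of $D_0$, so $c=:w\in\bbZ$ and $\overline{H_p}=H_{w-p}$. Setting $H^{p,w-p}:=H_p$ gives a pure Hodge structure of weight $w$ on $H$. Its induced Hodge structure on $\operatorname{End}$ has $(r,-r)$-part equal to $E^r$, so it recovers the given one.

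For uniqueness, any other Hodge structure inducing the same one on $\operatorname{End}$ gives another $D_0'$ with $\operatorname{ad}(D_0')=\delta$. Hence $D_0'=D_0+m$ with $m\in\bbZ$ (integrality of eigenvalues forces $m$ integral). This is precisely a shift of types by $(m,m)$.

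\textbf{Step 4: the module case.} For $H$ a module over a division algebra $A$, run the same argument in $B:=\operatorname{End}_A(H)$, a simple algebra isomorphic to a matrix algebra over $A^{\mathrm{op}}$. The main obstacle is that $B\otimes\bbC$ need not be simple: when the center $Z(A)=\bbC$, it splits into two factors. I would handle this as follows.

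\begin{itemize}
\item The center $Z(B)=Z(A)$ is a finite separable field extension of $\bbR$, so the derivation $\delta$ kills it. Thus $\delta$ is $Z(B)$-linear.
\item A $Z(B)$-linear derivation of a central simple $Z(B)$-algebra is inner (Skolem--Noether/Noether's theorem on derivations). So $\delta=\operatorname{ad}(D_0)$ with $D_0\in B_\bbC$, applying this factor by factor if $B_\bbC$ splits.
\end{itemize}

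Steps 2--3 then apply verbatim: work inside $\operatorname{End}(H_\bbC)$ and normalize scalars factorwise, using that conjugation swaps the factors compatibly. Since $D_0$ commutes with $A$, each eigenspace $H_p$ is $A$-stable. Therefore $A$ acts by endomorphisms of type $(0,0)$, i.e. by morphisms of the resulting Hodge structure on $H$.
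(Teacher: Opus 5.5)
Your proof is correct. The paper gives no argument of its own here, only a pointer to Filip. The standard proof of this fact works at the group level rather than infinitesimally, and goes as follows. A compatible Hodge structure on $\operatorname{End}(H)$ is an action of the Deligne torus $\mathbb S$ by algebra automorphisms. Skolem--Noether turns this action into a homomorphism $\mathbb S\to\mathrm{PGL}(H)$, which one then lifts to $\mathrm{GL}(H)$. Uniqueness holds because two lifts differ by a real character of $\mathbb S$, i.e.\ a power of the norm $z\bar z$, and such a power is exactly a shift of types by $(m,m)$.

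You work with the infinitesimal version instead: the grading derivation $\delta$ is inner, and the scalar ambiguity of $D_0$ replaces the lifting problem. This gives a self-contained argument that never has to justify why the extension of $\mathbb S$ by $\mathbb G_m$ inside $\mathrm{GL}(H)$ splits over $\bbR$. The integrality of eigenvalue differences, the relation $\overline{D_0}=-D_0+c$, and $c\in\bbZ$ deliver the grading, the weight and the condition $\overline{H^{p,q}}=H^{q,p}$ all at once.

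The price is paid in the module case. There you must check by hand that $\delta$ kills the separable center and is inner on each simple factor of $B\otimes\bbC$. At the group level one only observes that the connected torus acts trivially on $Z(B)$, then applies Skolem--Noether over $Z(B)$ before lifting.

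The step you describe as ``conjugation swaps the factors compatibly'' does go through when $Z(A)=\bbC$. Write the central ambiguity as $(c',c'')$; applying conjugation twice gives $c'=\overline{c''}$, and integrality of both normalized factors forces $c'=c''=w\in\bbZ$. In that case the two factors can also be shifted independently, so there is no uniqueness up to a single weight shift. This is consistent with the statement, whose module clause claims only existence.
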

The complex version holds with Hodge structures interpreted as bigradings. See \cite[Lemma~3.9 and Remark~3.10]{Filip2013} for these statements, including compatibility with polarizations.

\begin{definition}\label{VHSdef}
Let $M$ be a complex manifold and let $\rmH\to M$ be a finite-dimensional real flat vector bundle with flat Gauss--Manin connection $\nabla^{GM}$. A \emph{variation of Hodge structures of weight $w$} on $M$ is a decreasing filtration of the complexification by holomorphic subbundles
\[
\cdots\supset \rmF^{p}\rmH_\bbC\supset \rmF^{p+1}\rmH_\bbC\supset\cdots,
\]
which is locally finite (so $\rmF^p=\rmH_\bbC$ for $p\ll0$ and $\rmF^p=0$ for $p\gg0$), such that:
\begin{enumerate}
\item for every $s\in M$, the filtration $\rmF^\bullet\rmH_\bbC(s)$ is the Hodge filtration of a pure Hodge structure of weight $w$; equivalently,
\[
\rmH^{p,q}(s)=\rmF^p(s)\cap\overline{\rmF^q(s)},
\qquad p+q=w,
\]
and the Hodge numbers $h^{p,q}=\dim\rmH^{p,q}(s)$ are locally constant;
\item the flat connection satisfies Griffiths transversality:
\[
\nabla^{GM}(\rmF^p)\subset \rmF^{p-1}\otimes\Omega_M^1
\qquad\text{for every }p\in\bbZ.
\]
\end{enumerate}

The variation is \emph{polarized} if the underlying real local system carries a flat polarization $I$ whose restriction to every fiber polarizes its Hodge structure; we use the same symbol for the complex-bilinear extension of $I$ to $\rmH_\bbC$.
\end{definition}

\begin{definition}[Finite direct sums of pure Hodge structures]
A \emph{split Hodge structure with finitely many weights} is a direct sum
\[
 H=\bigoplus_w H^{[w]},
\]
with only finitely many nonzero summands, each a pure Hodge structure of weight $w$. Morphisms preserve every Hodge type. A split variation is a direct sum of pure variations, with its weight summands flat. We equip it with the orthogonal sum of the pure Hodge metrics. In the complex case the same convention uses complex Hodge structures and variations. In particular,
\[
 \operatorname{End}(H)=\bigoplus_{a,b}\operatorname{Hom}(H^{[a]},H^{[b]}),
\]
whose displayed summands have weights $b-a$; the full endomorphism algebra need not have weight zero.
\end{definition}

\subsection{Differential-geometric properties of Hodge bundles}

Consider a holomorphic vector bundle $E$ over an arbitrary complex manifold $M$. The bundle is endowed with a (pseudo-)hermitian metric $ \langle\cdot,\cdot\rangle$.

\begin{definition}
    The \emph{Chern connection} on $E$-valued smooth differential forms is the unique connection $\nabla: \cA^{\bullet}(E) \rightarrow \cA^{\bullet+1}(E)$ that satisfies the following two conditions:
    \begin{enumerate}
        \item For any $\alpha,\,\beta\in \cA^{\bullet}(E)$, 
        \begin{equation}
        \label{Chern_eq_1}
        d\langle\alpha,\beta\rangle = \langle\nabla\alpha,\beta\rangle + (-1)^{\deg \alpha}\langle\alpha,\nabla\beta\rangle;
        \end{equation}
        \item For any holomorphic section $s$ of $E$,
        \begin{equation}
        \label{Chern_eq_2}
        \nabla(s) \in \cA^{1,0}(E).
        \end{equation}
    \end{enumerate}
\end{definition}
The Chern connection can be given explicitly if we fix a local holomorphic basis of $E$. Let $h$ be the matrix of the Hermitian metric in this basis, and let $A$ be the matrix of $(1,0)$-forms. Note that $A=h^{-1}\partial h$. The Chern connection is then (locally) given by the form 
$$
\nabla=d+A.
$$
Let $\Omega$ denote the curvature of the Chern connection $\nabla$.In the same local trivialization, 
$$
\Omega=dA+A\wedge A.
$$

\begin{remark}
In some references, the curvature appears as $dA - A\wedge A$ rather than $dA + A\wedge A$. This depends on how one treats the sign in $\nabla^2 f = (dA)f \mp A(Af)$.  Once $A$ is viewed as a matrix of one-forms, it naturally appears with a plus sign in $dA + A\wedge A$.
\end{remark}

We begin with a second variation formula for the logarithm of the norm of a holomorphic section. We then obtain curvature identities for quotient and subbundles, later specializing to Hodge bundles. The overall treatment follows the reference \cite{Filip2013} with very minor differences and is included for the sake of convenience only.

\subsection{A Second Variation Formula}

\begin{lemma}
\label{lemma:delbar_del_log_rewrite}
Assume that the Hermitian metric on $E$ is positive definite, and let $\phi$ be a holomorphic section of $E$. On the complement of its zero locus,
\begin{equation}\label{deldelbarlog_eq}
    \delbar\del \log \|\phi\|^2 = 
\frac{\langle \Omega\phi,\,\phi\rangle}{\|\phi\|^2}
+
\frac{\langle \nabla\phi,\,\phi\rangle\,\langle \phi,\,\nabla\phi\rangle
-\|\phi\|^2\,\langle \nabla\phi,\,\nabla\phi\rangle}{\|\phi\|^4}.
\end{equation}
\end{lemma}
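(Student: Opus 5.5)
The plan is a direct local computation: express $\log\|\phi\|^2$ through the metric, differentiate once using metric compatibility of the Chern connection, and differentiate a second time using the curvature. Write $f=\|\phi\|^2=\langle\phi,\phi\rangle$, which is smooth and positive on the complement of the zero locus. We work there throughout. Since $\phi$ is holomorphic, condition \eqref{Chern_eq_2} gives $\nabla\phi\in\cA^{1,0}(E)$; equivalently, $\nabla^{0,1}\phi=\delbar\phi=0$. By \eqref{Chern_eq_1}, $df=\langle\nabla\phi,\phi\rangle+\langle\phi,\nabla\phi\rangle$. The first term is of type $(1,0)$ and the second of type $(0,1)$, because the metric is conjugate-linear in its second slot. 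Comparing types yields $\del f=\langle\nabla\phi,\phi\rangle$ and $\delbar f=\langle\phi,\nabla\phi\rangle$.

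Next, apply $\delbar$ to $\del f$ using the graded version of \eqref{Chern_eq_1} with $\alpha=\nabla\phi$ of degree one:
\[
d\langle\nabla\phi,\phi\rangle=\langle\nabla^2\phi,\phi\rangle-\langle\nabla\phi,\nabla\phi\rangle .
\]
Here $\nabla^2\phi=\Omega\phi$. The Chern curvature is of type $(1,1)$, so the $(1,1)$-part of this identity is $\delbar\del f=\langle\Omega\phi,\phi\rangle-\langle\nabla\phi,\nabla\phi\rangle$. The remaining $(2,0)$-part is absorbed consistently, since $\del\del f=0$. Finally, the chain rule gives
\[
\delbar\del\log f=\frac{\delbar\del f}{f}-\frac{\delbar f\wedge\del f}{f^2}.
\]
Substituting the expressions for $\del f$, $\delbar f$, and $\delbar\del f$, and reordering the wedge product $\delbar f\wedge\del f=-\del f\wedge\delbar f$ (recorded as the product $\langle\nabla\phi,\phi\rangle\langle\phi,\nabla\phi\rangle$ with the paper's ordering convention), gives exactly \eqref{deldelbarlog_eq} after putting everything over $\|\phi\|^4$.

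The main obstacle is sign bookkeeping rather than any conceptual step. Three conventions must be fixed consistently: the order $\delbar\del$ versus $\del\delbar$, the sign $(-1)^{\deg\alpha}$ in \eqref{Chern_eq_1}, and the convention for pairing form-valued sections, including the convention $\Omega=dA+A\wedge A$ discussed in the preceding remark. As a sanity check, I would carry out the computation in a local holomorphic frame with $A=h^{-1}\del h$ and $\phi$ a constant vector at a point where $h=\Id$ and $dh=0$. There the second term vanishes and $\delbar\del\log\|\phi\|^2$ reduces to $\langle\Omega\phi,\phi\rangle$ evaluated from $\Omega=\delbar(h^{-1}\del h)$, which confirms the signs.
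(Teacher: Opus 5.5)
Your proposal is correct and follows essentially the same route as the paper. Both arguments obtain $\del\|\phi\|^2=\langle\nabla\phi,\phi\rangle$ from metric compatibility and holomorphicity, identify $\delbar\del\|\phi\|^2$ with $d\langle\nabla\phi,\phi\rangle=\langle\Omega\phi,\phi\rangle-\langle\nabla\phi,\nabla\phi\rangle$ because the $(2,0)$-part $\del^2\|\phi\|^2$ vanishes, and then differentiate the logarithm. The only cosmetic difference is that you apply the chain rule $\delbar\del\log f=\delbar\del f/f-\delbar f\wedge\del f/f^2$, whereas the paper applies the product rule to $\langle\nabla\phi,\phi\rangle/\|\phi\|^2$; the wedge reordering is handled identically.
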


\begin{proof}
Since $\phi$ is holomorphic, $\del\|\phi\|^2$ is simply the $(1,0)$-part of $d\|\phi\|^2$. By definition, for any $\alpha,\,\beta\in \cA^{\bullet}(E)$, 
$$
d\langle\alpha,\beta\rangle =\del\langle\alpha,\beta\rangle +\delbar \langle\alpha,\beta\rangle.
$$
From this and the properties of the Chern connection, we obtain
$$
\del\|\phi\|^2=\langle\nabla \phi, \phi\rangle.
$$
Therefore
$$
\del \log \|\phi\|^2=\frac{1}{\|\phi\|^2} \cdot \del\|\phi\|^2=\frac{\langle\nabla \phi, \phi\rangle}{\|\phi\|^2}.
$$
By the product rule,
$$
\delbar(\del \log \|\phi\|^2)=
(\delbar \frac{1}{\|\phi\|^2})\langle\nabla \phi, \phi\rangle+
\frac{1}{\|\phi\|^2} \delbar\langle\nabla \phi, \phi\rangle.
$$
Calculating the first term is straightforward:
$$
\delbar \frac{1}{\|\phi\|^2}=\frac{-1}{\|\phi\|^4} \cdot \delbar\|\phi\|^2=-\frac{\langle\phi, \nabla \phi\rangle}{\|\phi\|^4}.
$$
To calculate the second term, note that 
$$
\delbar\langle\nabla \phi, \phi\rangle=d\langle\nabla \phi, \phi\rangle.
$$
Indeed, its $(2,0)$-part is
$\del\langle\nabla\phi,\phi\rangle=\del^2\|\phi\|^2=0$. Therefore, $d\langle\nabla \phi, \phi\rangle$ only has a $(1,1)$-part.
By the properties of the Chern connection,
$$
d\langle\nabla \phi, \phi\rangle=
\langle\Omega \phi, \phi\rangle-
\langle\nabla \phi, \nabla \phi\rangle.
$$
Combining, obtain
\begin{align*}
\delbar\del \log \|\phi\|^2 
&=
-\frac{\langle\phi, \nabla \phi\rangle}{\|\phi\|^4}\langle\nabla \phi, \phi\rangle
+\frac{1}{\|\phi\|^2} \langle\Omega \phi, \phi\rangle
-\frac{1}{\|\phi\|^2} \langle\nabla \phi, \nabla \phi\rangle=
\\
&=
\frac{\langle\Omega \phi, \phi\rangle}{\|\phi\|^2}+\frac{\langle\nabla \phi, \phi\rangle \cdot\langle\phi, \nabla \phi\rangle-\|\phi\|^2 \cdot\langle\nabla \phi, \nabla \phi\rangle}{\|\phi\|^4}.
\end{align*}
\end{proof}

\begin{remark}
    For a positive-definite metric, the second term is a seminegative imaginary $(1,1)$-form by the Cauchy--Schwarz inequality, with the conventions below. The first term requires a separate curvature calculation; the Hodge-bundle application is made in the proof of Theorem~\ref{fixed_part_thm}. 
\end{remark}

\begin{lemma}\label{delbardelnorm_lemma}
For a holomorphic section $\phi$, the following identity holds everywhere, including at its zeros:
$$
\delbar\del\|\phi\|^2=d\langle\nabla\phi,\phi\rangle=\langle\Omega \phi, \phi\rangle-
\langle\nabla \phi, \nabla \phi\rangle.
$$
\end{lemma}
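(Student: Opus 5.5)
The plan is to repeat the first steps of the proof of Lemma~\ref{lemma:delbar_del_log_rewrite}, but without dividing by $\|\phi\|^2$. Because no division occurs, the identity holds at every point of $M$, zeros of $\phi$ included.

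First we compute $\del\|\phi\|^2$. Write $d=\del+\delbar$ and apply the metric compatibility \eqref{Chern_eq_1} with $\alpha=\beta=\phi$:
\[
 d\|\phi\|^2=\langle\nabla\phi,\phi\rangle+\langle\phi,\nabla\phi\rangle .
\]
Since $\phi$ is holomorphic, \eqref{Chern_eq_2} says $\nabla\phi$ is of type $(1,0)$. With the convention that the hermitian pairing is conjugate-linear in the second slot, $\langle\nabla\phi,\phi\rangle$ is a $(1,0)$-form and $\langle\phi,\nabla\phi\rangle$ is a $(0,1)$-form. Comparing bidegrees gives $\del\|\phi\|^2=\langle\nabla\phi,\phi\rangle$.

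Next we show that $\delbar\del\|\phi\|^2=d\langle\nabla\phi,\phi\rangle$. Apply $d$ to the previous identity:
\[
 d\langle\nabla\phi,\phi\rangle=d\,\del\|\phi\|^2=\del\del\|\phi\|^2+\delbar\del\|\phi\|^2=\delbar\del\|\phi\|^2 ,
\]
since $\del^2=0$ on functions. This proves the first equality everywhere.

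For the second equality we apply \eqref{Chern_eq_1} once more, now with the $E$-valued $1$-form $\alpha=\nabla\phi$ and $\beta=\phi$. The sign factor is $(-1)^{\deg\alpha}=-1$, and $\nabla(\nabla\phi)=\Omega\phi$, so
\[
 d\langle\nabla\phi,\phi\rangle=\langle\Omega\phi,\phi\rangle-\langle\nabla\phi,\nabla\phi\rangle .
\]

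The only delicate point is bookkeeping. We must check that the extension of $\langle\cdot,\cdot\rangle$ to form-valued sections, with wedge products and conjugation on the second slot, gives exactly this sign and the same ordering convention for $\langle\nabla\phi,\nabla\phi\rangle$ as in Lemma~\ref{lemma:delbar_del_log_rewrite}. If a convention mismatch occurs, it only reorders a wedge of a $(1,0)$-form with a $(0,1)$-form; we would fix the convention consistently with \eqref{deldelbarlog_eq}.

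Finally, none of these steps uses positivity of the metric or nonvanishing of $\phi$. The identity therefore holds globally, and also for pseudo-hermitian metrics.
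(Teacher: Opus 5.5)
Your proof is correct and is the same computation the paper carries out inside the proof of Lemma~\ref{lemma:delbar_del_log_rewrite}: $\del\|\phi\|^2=\langle\nabla\phi,\phi\rangle$ by type, $\del^2=0$, and \eqref{Chern_eq_1} applied with $\alpha=\nabla\phi$ together with $\nabla^2=\Omega$, only here without dividing by $\|\phi\|^2$, so the identity also holds at the zeros. With the paper's sign convention in \eqref{Chern_eq_1}, the sign comes out exactly as stated, so the convention issue you flag requires no adjustment.
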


\subsection{Curvature of Quotients and Subbundles}

Given a short exact sequence of holomorphic vector bundles
$$
0 \;\to\; S \;\hookrightarrow\; E \;\overset{\pi}{\twoheadrightarrow}\; Q \;\to\; 0,
$$
suppose $E$ carries a non-degenerate hermitian form $\langle\cdot,\cdot\rangle$ (possibly indefinite).  Assume $\langle\cdot,\cdot\rangle|_{S}$ is also non-degenerate, so that $S\cap S^\perp=\{0\}$ and $S^\perp\simeq Q$.

\begin{definition}
    The \emph{second fundamental form} is a tensor 
$B \in \operatorname{Hom}(S, Q) \otimes \cA^{1,0}$ such that 

$$
B: S \xrightarrow{\nabla} E \otimes \cA^{1,0} \xrightarrow{\pi \otimes \mathbf{id}} Q \otimes \cA^{1,0}.
$$
More specifically, $B: S \rightarrow Q \otimes \mathcal{A}^{1,0}$ is a linear tensor, i.e. $B(f \cdot s)=f \cdot B(s)$ for any function $f$ and any section $s$ of $S$. 

We can define the \emph{adjoint} $B^{\dagger} \in \operatorname{Hom}(Q, S) \otimes \cA^{0,1}$ by taking the adjoint in the Hom-space using the hermitian form and complex conjugation on the second factor.
\end{definition}

\begin{lemma}
\label{curvlemma}
The subbundle $S$ and the quotient $Q$ have curvatures
$$
\Omega_S \;=\; \operatorname{pr}_S\circ\Omega_E\big|_S + B^\dagger \wedge B,
\quad
\Omega_Q \;=\; \operatorname{pr}_{S^\perp}\circ\Omega_E\big|_{S^\perp} + B\wedge B^\dagger.
$$
Here $Q$ is identified smoothly with $S^\perp$, and the projections are for the orthogonal splitting $E=S\oplus S^\perp$. Thus the curvature terms on the right are the diagonal blocks of $\Omega_E$, not unprojected restrictions.
\end{lemma}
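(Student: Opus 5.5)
The plan is to work in a local frame adapted to the orthogonal splitting $E=S\oplus S^\perp$ and compute the Chern connection of $E$ in block form. Write $\nabla_E$ for the Chern connection of $E$. For sections $s$ of $S$ and $t$ of $S^\perp$ (identified smoothly with $Q$), decompose
\[
\nabla_E\begin{pmatrix}s\\ t\end{pmatrix}=\begin{pmatrix}\nabla_S & -B^\dagger\\ B & \nabla_Q\end{pmatrix}\begin{pmatrix}s\\ t\end{pmatrix}.
\]
Here $\nabla_S=\operatorname{pr}_S\circ\nabla_E|_S$ and $\nabla_Q=\operatorname{pr}_{S^\perp}\circ\nabla_E|_{S^\perp}$.

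The first step is to justify this block form.
\begin{enumerate}
\item The off-diagonal term $\operatorname{pr}_{S^\perp}\nabla_E|_S$ is by definition $B$, using $S^\perp\simeq Q$ via $\pi$. It is tensorial because of the Leibniz rule, and it is of type $(1,0)$: holomorphic sections of $S$ are holomorphic in $E$, so their $\nabla_E$-derivative is of type $(1,0)$.
\item The other off-diagonal term $\operatorname{pr}_S\nabla_E|_{S^\perp}$ is forced by metric compatibility \eqref{Chern_eq_1}. For $s\in S$ and $t\in S^\perp$ we have $\langle s,t\rangle=0$, so
\[
0=\langle\nabla_E s,t\rangle+\langle s,\nabla_E t\rangle .
\]
Hence $\langle \operatorname{pr}_S\nabla_E t,s\rangle=-\langle t,Bs\rangle$, that is, $\operatorname{pr}_S\nabla_E|_{S^\perp}=-B^\dagger$, a $(0,1)$-form by the conjugation convention in the definition of $B^\dagger$. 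Nondegeneracy of the form on $S$ (and hence on $S^\perp$) is exactly what makes the splitting and the adjoint well defined.
\item Finally, $\nabla_S$ and $\nabla_Q$ must be identified with the Chern connections. $\nabla_S$ is metric with respect to the restricted form, and it sends holomorphic sections to $(1,0)$-forms, since projecting preserves type. So it is the Chern connection of $S$ by uniqueness.
\item For $Q$ the holomorphic structure is the quotient one. A local holomorphic section of $Q$ lifts to a holomorphic section $e$ of $E$, and the corresponding smooth section of $S^\perp$ is $t=e-\operatorname{pr}_S e$. Then $\nabla_Q t=\operatorname{pr}_{S^\perp}\nabla_E e-\operatorname{pr}_{S^\perp}\nabla_E(\operatorname{pr}_S e)$. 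The first term is $(1,0)$ and the second is $B(\operatorname{pr}_S e)$, also $(1,0)$. Metric compatibility is clear, so again uniqueness of the Chern connection applies.
\end{enumerate}

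The second step is to square the block connection. The curvature is
\[
\Omega_E=\nabla_E^2=\begin{pmatrix}\nabla_S^2-B^\dagger\wedge B & \ast\\ \ast & \nabla_Q^2-B\wedge B^\dagger\end{pmatrix},
\]
and taking the diagonal blocks gives $\operatorname{pr}_S\Omega_E|_S=\Omega_S-B^\dagger\wedge B$ and $\operatorname{pr}_{S^\perp}\Omega_E|_{S^\perp}=\Omega_Q-B\wedge B^\dagger$. Rearranging yields the stated formulas.

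I expect the main obstacle to be sign bookkeeping. There are two possible sources of trouble: the sign in $-B^\dagger$, which depends on the convention that the adjoint includes conjugation of the form factor, and the graded sign when wedging matrix-valued one-forms, where $B^\dagger\wedge B$ arises as $(-B^\dagger)\wedge B$. I would fix the conventions of Lemma~\ref{lemma:delbar_del_log_rewrite} and check the identities on the single case $E$ trivial of rank two with a varying line, as a consistency check.
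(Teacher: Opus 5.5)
Your proposal is correct and follows the paper's proof: decompose the Chern connection of $E$ in blocks along $E=S\oplus S^\perp$, with off-diagonal entries $B$ and $-B^\dagger$, then square and read off the diagonal blocks of $\Omega_E$. Your additional checks fill in details the paper leaves implicit. These are that the off-diagonal block is $-B^\dagger$ by metric compatibility, and that the diagonal blocks are the Chern connections of $S$ and of $Q\simeq S^\perp$ by uniqueness.
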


\begin{proof}
In a local frame, write the connection matrix of $E$ as
$$
A_E
\;=\;
\begin{pmatrix}
A_S & -B^\dagger \\[3pt]
B & A_Q
\end{pmatrix}.
$$
Since $\Omega_E = dA_E + A_E\wedge A_E$, we can compute
$$
\Omega_E
\;=\;
\begin{pmatrix}
\Omega_S - B^\dagger \wedge B & * \\[3pt]
* & \Omega_Q - B\wedge B^\dagger
\end{pmatrix}.
$$
Comparing the block components gives the desired identities.
\end{proof}

\subsection{Conventions on Positivity}

We would like to adopt the following conventions on which forms are considered positive and negative.

\begin{definition}
A purely imaginary $(1,1)$-form $\omega$ is \emph{positive} if it can be written as
$$
\omega
\;=\;
\sum h_{ij}\,dz^i\wedge \overline{dz^j},
$$
with the coefficients $h_{ij}$ forming a positive-definite Hermitian matrix. Replacing positive-definite by positive-semidefinite defines semipositivity.  For example, $dz\wedge \overline{dz}$ is positive.

A purely real $(1,1)$-form $\omega$ is \emph{positive} if $-\sqrt{-1}\,\omega$ is a positive imaginary form. For example, $dx\wedge dy$ is positive because $dx\wedge dy=\frac{\sqrt{-1}}{2}\,dz\wedge \overline{dz}$.

For $\Omega\in\cA^{1,1}\otimes\operatorname{End}(E)$, positivity means that $\langle\Omega e,e\rangle$ is positive at each point for every nonzero fiber vector $e$. Semipositivity is defined by replacing positive by semipositive. These conventions are applied when the scalar forms have the indicated reality type.
\end{definition}

\begin{remark}
Equivalently, an imaginary $(1,1)$-form $\omega$ is semipositive if $\omega(\xi,\overline{\xi})\geq0$ for every $\xi\in T^{1,0}$. It is positive if this inequality is strict for every nonzero $\xi\in T^{1,0}$.
\end{remark}

\begin{lemma}
In the setup of the previous section, assume that the hermitian metric on $E$ is positive definite.  Then $B\wedge B^\dagger$ is semipositive and $B^\dagger \wedge B$ is seminegative.
\end{lemma}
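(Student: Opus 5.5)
We verify the claim pointwise, fixing $x\in M$, a nonzero $q\in Q_x$ (identified with $S^\perp_x$), and a nonzero $s\in S_x$. By the Remark following the positivity conventions, semipositivity of an imaginary $(1,1)$-form amounts to $\omega(\xi,\overline\xi)\geq0$ for every $\xi\in T^{1,0}_x$. So we evaluate $\langle (B\wedge B^\dagger)q,q\rangle$ and $\langle (B^\dagger\wedge B)s,s\rangle$ on $(\xi,\overline\xi)$, keeping track of the reality convention: these scalar forms are imaginary $(1,1)$-forms, so semipositivity is tested directly in this way.

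Write $B=\sum_i B_i\,dz^i$ with $B_i\in\operatorname{Hom}(S,Q)$, and correspondingly $B^\dagger=\sum_j B_j^*\,\overline{dz^j}$, where $B_j^*$ is the fiberwise Hermitian adjoint taken with the positive-definite metric. This uses that the metric is positive definite on $E$, hence on $S$ and $S^\perp$, so adjoints exist and $\langle v,v\rangle\geq0$. Then
\[
\langle (B\wedge B^\dagger)q,q\rangle=\sum_{i,j}\langle B_iB_j^*q,q\rangle\,dz^i\wedge\overline{dz^j}=\sum_{i,j}\langle B_j^*q,B_i^*q\rangle\,dz^i\wedge\overline{dz^j}.
\]
The coefficient matrix $h_{ij}=\langle B_j^*q,B_i^*q\rangle$ is a Gram matrix, hence positive semidefinite Hermitian. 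Concretely, for $\xi=\sum\xi^i\partial_i$, the pairing gives $\|\sum_j\overline{\xi^j}B_j^*q\|^2\geq0$, up to the normalization of $dz\wedge\overline{dz}$, which is positive by convention. Hence $B\wedge B^\dagger$ is semipositive.

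For the other form we compute in the same way:
\[
\langle (B^\dagger\wedge B)s,s\rangle=\sum_{i,j}\langle B_j^*B_is,s\rangle\,\overline{dz^j}\wedge dz^i=-\sum_{i,j}\langle B_is,B_js\rangle\,dz^i\wedge\overline{dz^j}.
\]
Up to the minus sign coming from reordering the wedge factors, this is again a Gram matrix, so the form is seminegative.

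The only delicate step is bookkeeping of signs. Three conventions enter: the order of factors in the wedge of End-valued forms, the conjugate-linearity of $\dagger$ on the form factor, and the sign in $A_E$ from Lemma~\ref{curvlemma}, which is chosen as $-B^\dagger$ precisely so that $A_E$ is skew-adjoint in a unitary frame. I would check these against Lemma~\ref{curvlemma} in the simplest case: a rank-one $S$ inside trivial $\bbC^2$, where $\Omega_E=0$. There $\Omega_S=B^\dagger\wedge B$ must be seminegative, since a subbundle of a flat Hermitian bundle has curvature decreasing, as in the Fubini--Study line-bundle computation. This sanity check fixes the overall sign.
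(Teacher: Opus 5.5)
Your argument is correct and is the paper's proof in coordinates: the paper writes the same computation compactly as $\langle B\wedge B^\dagger q,q\rangle=-\langle B^\dagger q,B^\dagger q\rangle$ and $\langle B^\dagger\wedge B s,s\rangle=-\langle Bs,Bs\rangle$, and your Gram matrices $\langle B_j^*q,B_i^*q\rangle$ and $\langle B_is,B_js\rangle$ are exactly the coefficients of these form-valued norms. Your closing sanity check is harmless but unnecessary: the lemma depends only on the definition of $B^\dagger$, the wedge convention and the positivity convention, not on the sign of the off-diagonal block of $A_E$.
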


\begin{proof}
Recall that $B\in \operatorname{Hom}(S,Q)\otimes\cA^{1,0}$ and $B^\dagger\in \operatorname{Hom}(Q,S)\otimes\cA^{0,1}$.  For a local section $q\in Q$,
$$
\langle B\wedge B^\dagger q,q\rangle
\;=\;
-\,\langle B^\dagger q, B^\dagger q\rangle,
$$
which is a positive form of the indicated type.  Similarly,
$$
\langle B^\dagger\wedge B s, s\rangle
\;=\;
-\,\langle B s, B s\rangle,
$$
showing that $B^\dagger\wedge B$ is seminegative.
\end{proof}

\subsection{Curvature of Hodge Bundles}

Let us return to a polarized variation of Hodge structures of weight $w$. Write $I$ for its real bilinear polarization and use Hermitian metrics linear in their first argument. The flat indefinite Hermitian metric and the positive Hodge metric are
\[
 h_i(u,v)=i^{-w}I(u,\overline v),
 \qquad
 h(u,v)=I(Cu,\overline v)=i^w h_i(Cu,v).
\]
The first metric is flat for $\nabla^{GM}$. On $\rmH^{p,w-p}$ one has $h_i=(-1)^p h$, so the Chern connections and curvatures for these two restricted metrics coincide. The smooth Hodge summands are identified with the holomorphic quotient bundles $\rmF^p/\rmF^{p+1}$; they are not generally holomorphic subbundles of the flat holomorphic bundle.

\begin{remark}
The bundle $\rmH_\bbC$ carries two different holomorphic structures and metrics. On the one hand, we have the flat structure (inducing a holomorphic one) and indefinite metric. On the other, we have a direct sum of holomorphic bundles, the $\rmH^{p, w-p}$, each equipped with a definite metric.
\end{remark}

The Gauss--Manin connection is the Chern connection of $(\rmH_\bbC,\,\langle\cdot,\cdot\rangle_i)$ when viewed with its flat complex structure.  Alternatively, one can regard $\rmH_\bbC$ as $\bigoplus \rmH^{p,w-p}$ with the definite metric and define the Hodge connection $\nabla^{Hg}$ as a direct sum of Chern connections.

Let $\rmF^p\subset \rmH_\bbC$ be the Hodge filtration. Since $\rmF^p$ is a holomorphic subbundle and
$\nabla^{GM}$ is the flat connection on $\rmH_\bbC$, the \emph{second fundamental form} of $\rmF^p$
is the bundle map
$$
\sigma_p:\rmF^p\longrightarrow (\rmH_\bbC/\rmF^p)\otimes \cA^{1,0}
$$
defined as follows: for a local smooth section $s$ of $\rmF^p$,
$$
\sigma_p(s):=\bigl[\nabla^{GM}s\bigr]\mod \rmF^p.
$$
Equivalently,
$$
\sigma_p=\operatorname{pr}_{\rmH_\bbC/\rmF^p}\circ \nabla^{GM}\big|_{\rmF^p}.
$$
Because $\rmF^p$ is holomorphic, the $(0,1)$-part of $\nabla^{GM}s$ remains in $\rmF^p$, so the quotient above is of type $(1,0)$.
By Griffiths transversality, the image actually lies in
$$
\rmF^{p-1}/\rmF^p\otimes \cA^{1,0}\cong \rmH^{p-1,w-p+1}\otimes \cA^{1,0}.
$$

Griffiths transversality also gives
$\nabla^{GM}(\rmF^{p+1})\subset\rmF^p\otimes\cA^{1,0}+\rmF^{p+1}\otimes\cA^{0,1}$.
Consequently $\sigma_p$ vanishes on $\rmF^{p+1}$ and induces, with the same notation, the tensor
\[
 \sigma_p:\rmH^{p,w-p}=\rmF^p/\rmF^{p+1}
 \longrightarrow\rmH^{p-1,w-p+1}\otimes\cA^{1,0}.
\]
It is these induced tensors on the Hodge summands that are summed below.
Denote the adjoint of $\sigma_p$ for $h_i$ by $\sigma_p^{\dagger}$, and its adjoint for the positive metric $h$ by $\sigma_p^{*h}$. Adjacent Hodge summands have opposite signs for $h_i$, so $\sigma_p^{\dagger}=-\sigma_p^{*h}$. We have 
\begin{theorem}[See \cite{PerBook2017}, Theorem 13.1.7]\label{nabla_relation_thm}
$$
\nabla^{GM}=\nabla^{Hg}+\sigma_{\bullet}-\sigma_{\bullet}^{\dagger}
=\nabla^{Hg}+\sigma_{\bullet}+\sigma_{\bullet}^{*h}.
$$
Here, $\sigma_{\bullet}$ denotes $\oplus_p \sigma_p$. 
\end{theorem}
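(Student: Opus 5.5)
The plan is to compare the two connections one Hodge summand at a time, using the orthogonal smooth splitting $\rmH_\bbC=\bigoplus_p\rmH^{p,w-p}$. Let $s$ be a smooth section of $\rmH^{p,w-p}$, regarded as a section of $\rmF^p$. I would decompose $\nabla^{GM}s$ into its components in $\rmH^{q,w-q}$ and identify each one. The claim is that only three components survive: the $(p,w-p)$-component is $\nabla^{Hg}s$, the $(p-1,w-p+1)$-component is $\sigma_p(s)$, and the $(p+1,w-p-1)$-component is $-\sigma_{p+1}^\dagger(s)$.

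\textbf{Range of components.} By Griffiths transversality, $\nabla^{GM}s$ lies in $\rmF^{p-1}\otimes\cA^{1,0}+\rmF^p\otimes\cA^{0,1}$, so no components of type $q<p-1$ occur. Apply the same reasoning to $\bar s$, which is a section of $\rmH^{w-p,p}\subset\rmF^{w-p}$. The connection $\nabla^{GM}$ is real and $\overline{\rmF^{q}}=\bigoplus_{j\le w-q}\rmH^{j,w-j}$, so conjugating gives $\nabla^{GM}s\in\overline{\rmF^{w-p-1}}\otimes\cA^{0,1}+\overline{\rmF^{w-p}}\otimes\cA^{1,0}$. Hence no components with $q>p+1$ occur either. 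This conjugation argument also controls form types: the $(p-1)$-component is of type $(1,0)$, and the $(p+1)$-component is of type $(0,1)$.

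\textbf{Identifying the pieces.} The $(p-1)$-component is, by definition, $\sigma_p(s)$ read through $\rmF^{p-1}/\rmF^p\cong\rmH^{p-1,w-p+1}$. Next let $D$ be the diagonal part of $\nabla^{GM}$, meaning $\operatorname{pr}_p\circ\nabla^{GM}$ on $\rmH^{p,w-p}$. I would check that $D$ is the Chern connection of $\rmH^{p,w-p}\cong\rmF^p/\rmF^{p+1}$ for $h$. It is compatible with the metric because $h_i$ is $\nabla^{GM}$-flat, the summands are $h_i$-orthogonal, and $h_i=(-1)^ph$ on the summand. Its $(0,1)$-part is the $\delbar$-operator of the quotient holomorphic structure, because $\nabla^{GM,(0,1)}$ preserves $\rmF^p$, and the quotient $\rmF^p\to\rmF^p/\rmF^{p+1}$ kills the $\rmF^{p+1}$ part. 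By uniqueness of the Chern connection, $D=\nabla^{Hg}$; the curvature comparison after Lemma~\ref{curvlemma} is a consistency check. Finally, for the $(p+1)$-component, take $t$ in $\rmH^{p+1,w-p-1}$. Flatness of $h_i$ and orthogonality give $0=d\,h_i(s,t)=h_i(\nabla^{GM}s,t)+h_i(s,\nabla^{GM}t)$. Taking the $(0,1)$-part, only the $(p+1)$-component of $\nabla^{GM}s$ and the $(p)$-component of $\nabla^{GM}t$, namely $\sigma_{p+1}(t)$, survive. Therefore the $(p+1)$-component of $\nabla^{GM}s$ equals $-\sigma_{p+1}^\dagger s$. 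Since adjacent summands have opposite $h_i$-signs, this is $+\sigma_{p+1}^{*h}s$.

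Summing over $p$ gives both displayed forms. The main obstacle is sign and convention bookkeeping, especially the adjoint involving a conjugated form factor and the sign $h_i=(-1)^ph$. I would settle these by a single local computation in a unitary frame of each summand.
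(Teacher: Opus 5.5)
Your argument is correct and is essentially the standard proof behind the cited result; the paper gives no proof of its own and defers to \cite{PerBook2017}. That proof is the block form $\begin{pmatrix}\nabla_S&-B^\dagger\\ B&\nabla_Q\end{pmatrix}$ of the Chern connection of $(\rmH_\bbC,h_i)$ relative to the Hodge filtration, as in the proof of Lemma~\ref{curvlemma}, with Griffiths transversality leaving only adjacent off-diagonal blocks; your use of the conjugate filtration to exclude types $q>p+1$ is a harmless variant, since flatness of $h_i$ plus transversality for $t$ gives the same vanishing.
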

Note that the curvature of $\rmH^{p, w-p}$ for either metric is the same, since the metrics agree up to a sign.

\begin{lemma}[See \cite{PerBook2017},(13.1)]
\label{curvature_lemma_1}
The curvature of each Hodge bundle $\rmH^{p,w-p}$ can be written as
$$
\Omega_{\rmH^{p,w-p}}
\;=\;
\sigma_p^\dagger\wedge\sigma_p
\;+\;
\sigma_{p+1}\wedge\sigma_{p+1}^\dagger.
$$
\end{lemma}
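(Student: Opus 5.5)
The plan is to apply Lemma~\ref{curvlemma} twice, once for each side of the Hodge summand $\rmH^{p,w-p}$. The ambient bundle will be the flat bundle $(\rmH_\bbC,h_i)$, where the Gauss--Manin connection is the Chern connection; its curvature $\Omega_{\rmH_\bbC}$ therefore vanishes. Because the diagonal-block terms in Lemma~\ref{curvlemma} are projections of $\Omega_{\rmH_\bbC}$, they drop out, and only the second-fundamental-form terms are left. The metric $h_i$ is indefinite, but it is nondegenerate on every $\rmF^p$ and on every $\rmH^{p,w-p}$, since these are $h_i$-orthogonal sums of Hodge summands on which $h_i=\pm h$. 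So Lemma~\ref{curvlemma}, which only needs nondegeneracy, applies.

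\textbf{Step 1.} Take the sequence $0\to\rmF^{p+1}\to\rmF^p\to\rmH^{p,w-p}\to0$. The quotient is identified with the $h_i$-orthogonal complement of $\rmF^{p+1}$ inside $\rmF^p$. Lemma~\ref{curvlemma} gives
\[
\Omega_{\rmH^{p,w-p}}=\operatorname{pr}\circ\Omega_{\rmF^p}|_{\rmH^{p,w-p}}+B\wedge B^\dagger ,
\]
where $B$ is the second fundamental form of $\rmF^{p+1}\subset\rmF^p$. By Griffiths transversality and the discussion before Theorem~\ref{nabla_relation_thm}, the only component of $\nabla^{GM}$ on $\rmF^{p+1}$ that leaves $\rmF^{p+1}$ in the $(1,0)$ direction is the component sent to $\rmH^{p,w-p}$. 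That component is induced by $\sigma_{p+1}$, so $B=\sigma_{p+1}$ and the second term is $\sigma_{p+1}\wedge\sigma_{p+1}^\dagger$.

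\textbf{Step 2.} To find $\Omega_{\rmF^p}$, take the subbundle sequence $0\to\rmF^p\to\rmH_\bbC\to\rmH_\bbC/\rmF^p\to0$ and apply the first formula of Lemma~\ref{curvlemma} with $\Omega_{\rmH_\bbC}=0$:
\[
\Omega_{\rmF^p}=\sigma_p^\dagger\wedge\sigma_p .
\]
Here $\sigma_p$ is the second fundamental form of $\rmF^p$ defined in the text. It vanishes on $\rmF^{p+1}$ and takes values in $\rmH^{p-1,w-p+1}$. Hence, once we compress $\sigma_p^\dagger\wedge\sigma_p$ to the summand $\rmH^{p,w-p}$, we get exactly the induced tensor $\sigma_p^\dagger\wedge\sigma_p$ on that summand.

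\textbf{Step 3.} Substitute Step 2 into Step 1 to get
\[
\Omega_{\rmH^{p,w-p}}=\sigma_p^\dagger\wedge\sigma_p+\sigma_{p+1}\wedge\sigma_{p+1}^\dagger .
\]
Since $h_i=\pm h$ on each summand, the Chern connections for $h_i$ and $h$ coincide, so this is also the curvature for the Hodge metric, as noted before the lemma.

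The step that needs the most care is the bookkeeping of identifications. The quotient $\rmF^p/\rmF^{p+1}$ carries a holomorphic structure, while the smooth summand $\rmH^{p,w-p}$ sits inside $\rmH_\bbC$, and the two must be matched. The adjoints must be taken consistently with respect to the indefinite form $h_i$. Finally, the projected term $\operatorname{pr}\circ\Omega_{\rmF^p}$ must really reduce to $\sigma_p^\dagger\wedge\sigma_p$ on the summand, which requires that $\sigma_p$ kills $\rmF^{p+1}$. As a cross-check, I would verify the result directly from the block decomposition $\nabla^{GM}=\nabla^{Hg}+\sigma_\bullet-\sigma_\bullet^\dagger$ of Theorem~\ref{nabla_relation_thm}. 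Expanding the flat curvature $0=(\nabla^{GM})^2$ and reading off its $(p,p)$ diagonal block gives
\[
0=\Omega_{\rmH^{p,w-p}}-\sigma_p^\dagger\wedge\sigma_p-\sigma_{p+1}\wedge\sigma_{p+1}^\dagger ,
\]
which is the same identity.
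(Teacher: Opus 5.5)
Your argument is correct. The paper gives no proof of its own. It cites \cite[(13.1)]{PerBook2017} and states Theorem~\ref{nabla_relation_thm} just before the lemma, so the intended derivation is your cross-check: expand $(\nabla^{GM})^2=0$ with $\nabla^{GM}=\nabla^{Hg}+\sigma_\bullet-\sigma_\bullet^\dagger$ and read off the $(p,p)$ block, since every other term shifts the Hodge index.

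Your main route is genuinely different and also valid. You apply Lemma~\ref{curvlemma} twice: first the quotient formula for $\rmF^{p+1}\subset\rmF^p$, then the subbundle formula for $\rmF^p\subset\rmH_\bbC$ with $\Omega_{\rmH_\bbC}=0$. This works because $h_i$ is nondegenerate on every $\rmF^q$, the $h_i$-complement of $\rmF^{p+1}$ in $\rmF^p$ is $\rmH^{p,w-p}$, and $B^\dagger$ lands in $\rmH^{p+1,w-p-1}$ because $B$ kills $\rmF^{p+2}$. In Step~1 you tacitly use that the Chern connection of $(\rmF^p,h_i)$ is the $h_i$-orthogonal projection of $\nabla^{GM}$, which is what lets Griffiths transversality compute $B$. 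That fact is exactly the diagonal-block content of Lemma~\ref{curvlemma} in your Step~2, so nothing is missing.

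Your route buys economy of input and a clear interpretation. It needs only Lemma~\ref{curvlemma} and Griffiths transversality, effectively re-deriving the diagonal part of Theorem~\ref{nabla_relation_thm}. It also shows $\sigma_p^\dagger\wedge\sigma_p$ as the curvature $\rmF^p$ inherits as a subbundle of a flat bundle, and $\sigma_{p+1}\wedge\sigma_{p+1}^\dagger$ as the correction for passing to the quotient by $\rmF^{p+1}$.

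The flatness expansion is shorter and treats all $p$ at once. It avoids tracking identifications of quotients with orthogonal complements and matching adjoints of induced tensors. It also yields the off-diagonal identities, such as $\sigma_\bullet\wedge\sigma_\bullet=0$, for free.
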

A convenient way to view the above is via sections:

\begin{lemma}\label{curvature_lemma_2}
For smooth sections $e,e'$ of $\rmH^{p,w-p}$, the positive Hodge metric gives
\[
 \langle\Omega_{\rmH^{p,w-p}}e,e'\rangle
 =\langle\sigma_p e,\sigma_p e'\rangle
 +\langle\sigma_{p+1}^\dagger e,\sigma_{p+1}^\dagger e'\rangle.
\]
In particular, if $\sigma_p e=0$, then $\langle\Omega_{\rmH^{p,w-p}}e,e\rangle$ is seminegative.
\end{lemma}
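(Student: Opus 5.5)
The plan is to deduce Lemma~\ref{curvature_lemma_2} directly from Lemma~\ref{curvature_lemma_1}, reading the wedge products through the pairing conventions used in the semipositivity lemma above. Throughout, $\langle\cdot,\cdot\rangle$ will be the positive Hodge metric $h$ restricted to the relevant Hodge summands. The curvature $\Omega_{\rmH^{p,w-p}}$ is the same whether computed with $h$ or with $h_i$, because on $\rmH^{p,w-p}$ the two metrics differ only by the constant sign $(-1)^p$. This leaves us free to pair with $h$ even though the daggers in Lemma~\ref{curvature_lemma_1} are $h_i$-adjoints.

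First I will treat the term $\sigma_p^\dagger\wedge\sigma_p$. Here $\sigma_p$ maps $\rmH^{p,w-p}$ to $\rmH^{p-1,w-p+1}\otimes\cA^{1,0}$. Moving $\sigma_p^\dagger$ across the pairing turns $h(\sigma_p^\dagger\wedge\sigma_p e,e')$ into a pairing of $\sigma_p e$ with $\sigma_p e'$ in the metric $h_i$ on the target $\rmH^{p-1,w-p+1}$. Two effects then come into play. Converting $h_i$ back to $h$ on the target and on the source produces the relative sign $(-1)^{p-1}\cdot(-1)^p=-1$, which is the identity $\sigma^\dagger=-\sigma^{*h}$. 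Moving a $(0,1)$-form across the conjugate-linear slot produces a further sign, exactly as in the computation $\langle B^\dagger\wedge B s,s\rangle=-\langle Bs,Bs\rangle$. Combining these, I expect the term to equal $\langle\sigma_p e,\sigma_p e'\rangle$, where the form-valued pairing is $\sum\langle\cdot,\cdot\rangle\,dz\wedge d\bar z$ with the paper's orientation convention.

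The term $\sigma_{p+1}\wedge\sigma_{p+1}^\dagger$ is handled the same way. Now $\sigma_{p+1}^\dagger$ maps $\rmH^{p,w-p}$ to $\rmH^{p+1,w-p-1}\otimes\cA^{0,1}$, and the analogous adjunction gives $\langle\sigma_{p+1}^\dagger e,\sigma_{p+1}^\dagger e'\rangle$. Adding the two contributions yields the displayed identity.

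For the final claim, I take $e=e'$ with $\sigma_p e=0$. The first term vanishes, so the curvature pairing reduces to $\langle\sigma_{p+1}^\dagger e,\sigma_{p+1}^\dagger e\rangle$. By the semipositivity lemma, with $B=\sigma_{p+1}$ (whose image pairs with $(0,1)$-forms), this is exactly the seminegative form $-\langle B^\dagger q,B^\dagger q\rangle$ appearing there, up to the same sign bookkeeping. Hence it is seminegative.

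The main obstacle is the sign bookkeeping: the order of the forms in the wedge, conjugate-linearity in the second slot, and the alternating signs of $h_i$ on adjacent summands. I will verify all of these in a local orthonormal frame with a single coordinate $z$, writing $\sigma_p=S\,dz$ and checking each scalar form against the convention that $dz\wedge\overline{dz}$ is positive. Everything beyond that check is formal.
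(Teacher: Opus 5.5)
Your proposal is correct and is essentially the paper's proof: pair the identity of Lemma~\ref{curvature_lemma_1} with $e'$ in the Hodge metric, use $\sigma^\dagger=-\sigma^{*h}$ together with the wedge-order sign, and read off that the $(1,0)$-term is semipositive and the $(0,1)$-term seminegative. One small sign slip in your last step. In the semipositivity lemma, $-\langle B^\dagger q,B^\dagger q\rangle$ is the \emph{semipositive} form, and your surviving term $\langle\sigma_{p+1}^\dagger e,\sigma_{p+1}^\dagger e\rangle$ is its negative. That term is seminegative directly: writing $\sigma_{p+1}^\dagger e=\beta\,d\bar z$, its self-pairing is $\|\beta\|^2\,d\bar z\wedge dz$.
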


\begin{proof}
Pair the curvature identity with $e'$ and use $\sigma_p^\dagger=-\sigma_p^{*h}$, accounting for the wedge signs. The first term has type $(1,0)$ paired with itself and is semipositive, while the second has type $(0,1)$ paired with itself and is seminegative.
\end{proof}

\begin{corollary}
If $p_{\min}$ is the lowest nonzero Hodge index, then $\rmH^{p_{\min},w-p_{\min}}$ is seminegatively curved. 
\end{corollary}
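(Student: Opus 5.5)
The corollary should follow directly from Lemma~\ref{curvature_lemma_1} (equivalently Lemma~\ref{curvature_lemma_2}) once we check which second fundamental form vanishes at the bottom of the filtration. Set $p=p_{\min}$. The tensor $\sigma_p$ maps $\rmH^{p,w-p}$ into $\rmH^{p-1,w-p+1}\otimes\cA^{1,0}$. By minimality of $p_{\min}$, the target bundle $\rmH^{p_{\min}-1,w-p_{\min}+1}$ is zero, so $\sigma_{p_{\min}}\equiv0$. Equivalently, $\rmF^{p_{\min}-1}=\rmF^{p_{\min}}=\rmH_\bbC$, so the quotient $\rmH_\bbC/\rmF^{p_{\min}}$ in which $\sigma_{p_{\min}}$ takes values is trivial. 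Consequently $\sigma_{p_{\min}}^\dagger=0$ as well.

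Lemma~\ref{curvature_lemma_1} then reduces to
\[
\Omega_{\rmH^{p_{\min},w-p_{\min}}}=\sigma_{p_{\min}+1}\wedge\sigma_{p_{\min}+1}^\dagger .
\]
To fix the sign, pair this with a smooth section $e$ of $\rmH^{p_{\min},w-p_{\min}}$ as in Lemma~\ref{curvature_lemma_2}, using the positive Hodge metric:
\[
\langle\Omega e,e\rangle=\langle\sigma_{p_{\min}+1}^\dagger e,\sigma_{p_{\min}+1}^\dagger e\rangle ,
\]
where the first term $\langle\sigma_{p_{\min}}e,\sigma_{p_{\min}}e\rangle$ has already dropped out. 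Here $\sigma^\dagger_{p_{\min}+1}e$ is an $\rmH^{p_{\min}+1,w-p_{\min}-1}$-valued $(0,1)$-form. Paired with itself under a positive definite metric, such a form gives a seminegative $(1,1)$-form under the stated convention; this is the same computation as in the lemma showing that $B^\dagger\wedge B$ is seminegative, or as noted at the end of Lemma~\ref{curvature_lemma_2}.

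Hence $\langle\Omega e,e\rangle$ is seminegative for every fiber vector $e$, which is the claim. Two points need a brief check. First, the identity $h_i=\pm h$ on each Hodge summand means the curvature is the same whichever metric is used, so the positivity convention may be applied with $h$. Second, the conjugation and wedge-order signs must be tracked so that a $(0,1)$-form paired with itself really is seminegative and not semipositive. This sign bookkeeping is the only place anything could go wrong, and it has already been settled in Lemma~\ref{curvature_lemma_2}, so I expect no genuine obstacle.
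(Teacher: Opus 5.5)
Your proof is correct and is essentially the paper's argument: $\sigma_{p_{\min}}$ vanishes because its target summand $\rmH^{p_{\min}-1,w-p_{\min}+1}$ is zero. The only surviving term in Lemma~\ref{curvature_lemma_2} is $\langle\sigma_{p_{\min}+1}^\dagger e,\sigma_{p_{\min}+1}^\dagger e\rangle$, a $(0,1)$-form paired with itself, hence seminegative.
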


\begin{proof}
    The map $\sigma_{p_{\min}}$ vanishes because its target Hodge summand is zero. The remaining term in the curvature formula is seminegative. In particular this applies to $\rmH^{0,w}$ when $0$ is the lowest nonzero Hodge index.
\end{proof}

\section{Random walks on groups}\label{sect4}
Throughout this section, we are working with a probability space $(X,\,\mu)$ and a locally compact group $G$ with a compactly supported probability measure $\nu$ such that $\textrm{supp}(\nu)$ generates $G$. We assume that the left action of $G$ on $X$ is measurable, measure-preserving and ergodic.

All results presented in this section are well-known and are included for the sake of convenience.

\subsection{Random Ergodic Theorem}
\label{RETheorem}
\begin{theorem}[{\cite{Furman2002}, Theorem 3.1}]
Let $f\in L^1(X,\mu)$. Then for $\mu\times \nu^\bbN$-a.e.\ $(x,\omega)\in X\times G^\bbN$,
$$
\lim_{N\to\infty}\frac1N\sum_{i=0}^{N-1} f\big(g_i(\omega)\cdots g_1(\omega)\,x\big)=\int_X f\,d\mu.
$$
\end{theorem}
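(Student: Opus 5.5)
The plan is to read the statement as the standard random (Kakutani-type) ergodic theorem and prove it by passing to a skew product on $X\times G^{\bbN}$. Define
\[
T:X\times G^{\bbN}\to X\times G^{\bbN},\qquad T(x,\omega)=(g_1(\omega)x,\sigma\omega),
\]
where $\sigma$ is the left shift and $g_i(\omega)$ is the $i$th coordinate.

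The iterates are $T^i(x,\omega)=(g_i(\omega)\cdots g_1(\omega)x,\sigma^i\omega)$. The Birkhoff sums of $F(x,\omega)=f(x)$ under $T$ are therefore exactly the averages in the statement.

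\textbf{Step 1 (invariance).} First I check that $T$ preserves $\mu\times\nu^{\bbN}$. Let $\Phi$ be a bounded test function. By Fubini, integrating over the first coordinate $g$ of $\omega$ gives
\[
\int\Phi(T(x,\omega))\,d\mu\,d\nu^{\bbN}
=\int_G\int_X\int\Phi(gx,\omega')\,d\nu^{\bbN}(\omega')\,d\mu(x)\,d\nu(g).
\]
Invariance of $\mu$ under each fixed $g$ turns this into the integral of $\Phi$ itself.

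\textbf{Step 2 (Birkhoff).} Since $F\in L^1$, Birkhoff's theorem gives almost-everywhere convergence of the averages. The limit is $\mathbb E[F\mid\mathcal I]$, where $\mathcal I$ is the $T$-invariant $\sigma$-algebra.

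\textbf{Step 3 (ergodicity).} It then suffices to show that $T$ is ergodic. This is the main obstacle. Ergodicity of the $G$-action alone must be converted into ergodicity of a one-sided skew product, and this uses that $\operatorname{supp}\nu$ generates $G$.

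I would argue via $\nu$-stationary functions, following Kakutani's random ergodic theorem as in \cite{Furman2002}. Let $h\in L^\infty$ be $T$-invariant. Set
\[
\bar h(x)=\int h(x,\omega)\,d\nu^{\bbN}(\omega).
\]
Invariance of $h$ together with independence of the coordinates gives
\[
\bar h(x)=\int_G\bar h(gx)\,d\nu(g),
\]
so $\bar h$ is $\nu$-harmonic on $X$.

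Next, because $\mu$ is $G$-invariant, a Hilbert-space computation in $L^2(\mu)$ shows that $\bar h$ is invariant under $\nu$-almost every $g$. The Markov operator $P\phi(x)=\int\phi(gx)\,d\nu(g)$ is an average of unitaries $\phi\mapsto\phi\circ g$. So $P\bar h=\bar h$ forces $\bar h\circ g=\bar h$ for $\nu$-a.e.\ $g$, by the strict convexity of the $L^2$-ball.

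Such a $g$-invariance then holds for the closed group generated by $\operatorname{supp}\nu$, which is $G$. Ergodicity of $G$ on $(X,\mu)$ then makes $\bar h$ constant.

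It remains to see that $h$ does not depend on $\omega$. For this I would use a martingale argument. Set $h_n(x,\omega)=\mathbb E[h\mid x,g_1,\dots,g_n]$. Invariance gives
\[
h_n=\bar h\bigl(g_n\cdots g_1x\bigr),
\]
which is constant. Martingale convergence gives $h_n\to h$, so $h$ is constant. Hence $\mathcal I$ is trivial.

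\textbf{Step 4 (conclusion).} With $\mathcal I$ trivial, the limit in Step 2 equals $\int F=\int_X f\,d\mu$ almost everywhere. This is the claim.
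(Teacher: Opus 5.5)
Your proof is correct and takes the standard route: it is Kakutani's skew-product proof of the random ergodic theorem, which is how the cited Theorem~3.1 of \cite{Furman2002} is proved, and the paper gives no proof of its own beyond that citation. The one step you leave implicit is that $\{g\in G:\bar h\circ g=\bar h\}$ is a closed subgroup, which follows from strong continuity of the Koopman representation (standard for measurable actions of second countable locally compact groups on standard probability spaces). Your choice of the $L^2$ argument to show that the $\nu$-harmonic function $\bar h$ is invariant is also the right one, because the paper's Lemma~\ref{harm_const_lemma_1} is itself proved using this theorem and invoking it here would be circular.
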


\subsection{Harmonic and subharmonic functions}

\begin{definition}\label{harmonic_definitions}
Consider a $\mu$-measurable function $h: X \rightarrow \bbR$. The averaging operator $Av_G(h)$ is 
$$
Av_G(h)(x)=(\nu * h)(x)=\int_G h(g x) d \nu(g).
$$
The function $h$ is called \emph{$\nu$-harmonic} if, for $\mu$-a.e. $x \in X$,
$$
h(x)=Av_G(h)(x).
$$
The function $h$ is called $\nu$-subharmonic if, for $\mu$-a.e. $x \in X$,
$$
h(x)\leq Av_G(h)(x).
$$
The \emph{Laplacian} is
$$
L h = Av_G(h)-h.
$$
\end{definition}
Note that $Av_G(h)(x)$ is not automatically well-defined.

\begin{lemma}\label{harm_const_lemma_1}
\begin{enumerate}
\item If $f\in L^1(X,\mu)$ is $\nu$-subharmonic, then $f$ is constant $\mu$-almost everywhere.
\item If $h:X\to \bbR_{\ge 0}$ is measurable, finite almost everywhere and $\nu$-harmonic, then $h$ is constant $\mu$-almost everywhere. No integrability of $h$ is required.
\end{enumerate}
\end{lemma}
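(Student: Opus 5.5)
Both parts come down to one identity for the averaging operator. Measure preservation gives $\int_X Av_G(f)\,d\mu=\int_X f\,d\mu$ for integrable $f$, and ergodicity of the $G$-action together with the fact that $\operatorname{supp}(\nu)$ generates $G$ turns $\nu$-invariance into $G$-invariance.

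For part (1), let $f\in L^1$ with $f\le Av_G(f)$ almost everywhere. By Fubini and $G$-invariance of $\mu$,
\[
\int_X Av_G(f)\,d\mu=\int_G\int_X f(gx)\,d\mu(x)\,d\nu(g)=\int_X f\,d\mu,
\]
so $Av_G(f)$ is well defined almost everywhere and integrable. The nonnegative function $Av_G(f)-f$ then has integral zero, hence $f$ is $\nu$-harmonic. To obtain constancy, I would take the truncations $f_c=\min(f,c)$; these are still subharmonic because $\min$ of a subharmonic function with a constant is subharmonic. A cleaner route is the following. Harmonicity gives $f(x)=\int f(gx)\,d\nu(g)$, and for $f\in L^2$ one has the strict convexity identity
\[
\int\!\!\int |f(gx)-f(x)|^2\,d\nu(g)\,d\mu(x)=2\|f\|_2^2-2\langle Av_G f,f\rangle=0.
\]
Hence $f(gx)=f(x)$ for $\nu$-a.e. $g$ and $\mu$-a.e. $x$. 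To stay within $L^1$, I would apply this to the bounded functions $\phi(f)$ for strictly convex bounded $\phi$, for instance $\arctan$-type functions composed with the convexity inequality. Harmonicity plus Jensen gives $\phi(f)\le Av_G(\phi(f))$, integrals are preserved, so equality holds. Strict convexity then forces $f(gx)=f(x)$ for $\nu$-a.e. $g$. Invariance under $\nu$-a.e. $g$ propagates to the closed group generated by $\operatorname{supp}\nu$, which is all of $G$; this uses continuity of the action of $G$ on $L^1(X)$. Ergodicity then gives that $f$ is constant.

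For part (2), let $h\ge0$ be harmonic and finite almost everywhere. The plan is to reduce to bounded harmonic-type functions. For $c>0$, $\min(h,c)$ is $\nu$-superharmonic and bounded, hence in $L^1$. Applying part (1) to $-\min(h,c)$, which is subharmonic, shows that $\min(h,c)$ is constant for each $c$. Letting $c\to\infty$ and using finiteness almost everywhere, $h$ is constant.

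The main obstacle is in part (1): extracting actual invariance from the equality $f=Av_G(f)$ when $f$ is only integrable. The strict-convexity trick with a bounded strictly convex function settles it, but a bounded function on $\bbR$ cannot be convex. I would therefore use $\phi(t)=\sqrt{1+t^2}$, which has linear growth, so $\phi(f)\in L^1$. Jensen's inequality gives $\phi(f)=\phi(Av f)\le Av\,\phi(f)$, equal integrals give equality almost everywhere, and strict convexity forces $f(gx)$ to be $\nu$-a.s. equal to $f(x)$.
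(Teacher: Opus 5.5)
Your final argument is correct, but it differs from the paper's proof in two places.

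In part (1) you reach the same intermediate point as the paper. You integrate to get harmonicity, then apply Jensen with the strictly convex $\sqrt{1+t^2}$ to get $f(gx)=f(x)$ for $\mu\times\nu$-almost every $(x,g)$. From there you pass from $\nu$ to $G$ differently. You note that $\{g: f\circ g=f\ \mu\text{-a.e.}\}$ is a subgroup, that it is closed by strong continuity of $g\mapsto f\circ g$ in $L^1$, and that it contains a $\nu$-conull set and hence $\operatorname{supp}\nu$; ergodicity of $G$ then finishes. The paper instead iterates the invariance along the random walk and applies the random ergodic theorem. Your route is more elementary. It does require continuity of the Koopman action, which is standard for second countable groups such as $\SLgr$ in the application, whereas the paper takes the passage from $\nu$ to $G$ from the random ergodic theorem.

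In part (2) the paper applies Jensen directly with the bounded strictly concave $t/(1+t)$ and again uses the random ergodic theorem. You instead observe that $\min(h,c)$ is bounded and $\nu$-superharmonic, apply part (1) to $-\min(h,c)$, and let $c\to\infty$ using almost-everywhere finiteness. This reuses part (1) rather than repeating the convexity argument. It also proves slightly more: nonnegative, almost-everywhere finite $\nu$-superharmonic functions are constant.

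Two intermediate claims in your write-up are false and should be deleted, although your final argument does not use them. First, $\min(f,c)$ is not subharmonic in general. It is $\max(f,c)$ that inherits subharmonicity; truncation from above preserves \emph{super}harmonicity, which is exactly what you use correctly in part (2). Second, there is no bounded strictly convex function on $\mathbb{R}$, as you eventually notice yourself.
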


\begin{proof}
Write $P=Av_G$. Stationarity gives $\int Pv\,d\mu=\int v\,d\mu$ for every $v\in L^1(X,\mu)$.
\begin{enumerate}
\item Integrating $Pf-f\geq0$ gives $Pf=f$ almost everywhere. The strictly convex function $\Phi(t)=\sqrt{1+t^2}$ satisfies $\Phi(f)\in L^1$, and Jensen's inequality gives
\[
 \Phi(f)=\Phi(Pf)\leq P\Phi(f).
\]
Stationarity forces equality. The equality case in Jensen's inequality gives $f(gx)=f(x)$ for $\mu\times\nu$-almost every $(x,g)$. Iteration along the random walk and Theorem~\ref{RETheorem} give $f(x)=\int_Xf\,d\mu$ almost everywhere.

\item The bounded strictly concave function $\varphi(t)=t/(1+t)$ on $[0,\infty)$ gives
\[
 P\varphi(h)\leq\varphi(Ph)=\varphi(h).
\]
Again stationarity forces equality, so $h(gx)=h(x)$ for $\mu\times\nu$-almost every $(x,g)$. Apply Theorem~\ref{RETheorem} along the random walk to the bounded function $\varphi(h)$. It is constant almost everywhere, and injectivity of $\varphi$ proves the assertion for $h$.
\end{enumerate}
\end{proof}

\subsection{Geodesic approximation}
This section briefly explains an important variation of the multiplicative ergodic theorem proved by Kaimanovich.

\begin{definition}
Let $G$ be a connected real semisimple Lie group with finite center, and let $K$ be a maximal compact subgroup. Set $Y:=G/K$.
A sequence $\{y_n\}\subset Y$ is called \emph{regular} if there exist a geodesic ray
$\gamma:[0,\infty)\to Y$ and $C\ge 0$ such that
$$
d\big(y_n,\gamma(Cn)\big)=o(n).
$$
\end{definition}

\begin{theorem}[\cite{Kaimanovich1989}; see \cite{Filip2019} for more details]
\label{Kaimanovich_th}
Let $G$, $K$ and $Y$ be as above. Fix a Cartan decomposition $\mathfrak g=\mathfrak k\oplus\mathfrak p$, a maximal abelian subspace $\mathfrak a\subset\mathfrak p$, and a closed positive Weyl chamber $\mathfrak a^+$. For $y\in Y$, its Cartan projection $r(y)\in\mathfrak a^+$ is the unique element such that $y=k\exp(r(y))K$ for some $k\in K$. A sequence $\{y_n\}$ in $Y$ is regular if and only if the following two conditions are satisfied:
\begin{enumerate}
\item Small steps: $\quad \mathrm{d}\left(y_n,\,y_{n+1}\right)=o(n)$
\item Distances converge: $\alpha:=\lim \frac{r\left(y_n\right)}{n}$ exists in $\mathfrak{a}^{+}$.
\end{enumerate}
\end{theorem}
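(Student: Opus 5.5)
The plan is to prove the two implications separately. Throughout, $o=eK$ is the base point. I will use two standard facts about the Cartan projection:
\[
|r(y)-r(y')|\le d(y,y')
\quad\text{and}\quad
d\bigl(\exp(H)o,\exp(H')o\bigr)=|H-H'|\quad(H,H'\in\mathfrak a).
\]
The first says that $r$ is $1$-Lipschitz for the $\mathfrak a^+$-valued distance, and the second holds because $\exp(\mathfrak a)o$ is a flat.

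\textbf{Necessity.} Assume $d(y_n,\gamma(Cn))=o(n)$ for a geodesic ray $\gamma$. The small-steps condition follows from the triangle inequality:
\[
d(y_n,y_{n+1})\le o(n)+d\bigl(\gamma(Cn),\gamma(C(n+1))\bigr)+o(n)=o(n).
\]
For the second condition, write $\gamma(t)=g\exp(tH)o$ with $H\in\mathfrak a^+$ a unit vector. Using $g=k a k'$ and moving the base point, one checks that $r(\gamma(t))-tH$ stays bounded. Then the Lipschitz property of $r$ gives $r(y_n)/n\to CH$.

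\textbf{Sufficiency, setup.} Let $\alpha=\lim r(y_n)/n$.
\begin{itemize}
\item If $\alpha=0$, take $C=0$. Then $d(y_n,o)=|r(y_n)|=o(n)$, and we are done.
\item If $\alpha\neq0$, write $y_n=k_n\exp(r_n)o$ with $r_n=r(y_n)$. Let $P_\alpha$ be the parabolic subgroup attached to the face of $\mathfrak a^+$ containing $\alpha$, and let $M_\alpha=Z_K(\alpha)$.
\end{itemize}
The strategy is to show that the cosets $k_nM_\alpha$ converge in $K/M_\alpha$ to some $kM_\alpha$, with a quantitative rate. The candidate ray is then $\gamma(t)=k\exp(t\alpha/|\alpha|)o$ with $C=|\alpha|$.

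\textbf{Sufficiency, the key estimate.} The core lemma is a root-by-root contraction estimate. Suppose $y=k\exp(H)o$ and $y'=k'\exp(H')o$ with $H,H'=n\alpha+o(n)$ and $d(y,y')\le\varepsilon n$. Write $k^{-1}k'=m\exp(X)$ with $m\in M_\alpha$ and $X\in\mathfrak k\cap\bigoplus_{\beta(\alpha)>0}(\mathfrak g_\beta+\mathfrak g_{-\beta})$. Then the $\beta$-component of $X$ is $O\bigl(e^{-\beta(n\alpha)+O(\varepsilon n)}\bigr)$. I would prove this by comparing $\exp(-H)k^{-1}k'\exp(H')$, which must lie within $\varepsilon n$ of $K$, with the explicit conjugation action $\operatorname{Ad}(\exp(-H))$ on root spaces.

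Apply this lemma to consecutive terms $y_n,y_{n+1}$, with $\varepsilon\to0$. The successive corrections are then summable, so $k_nM_\alpha\to kM_\alpha$. Moreover, the accumulated $\beta$-component of $k_n^{-1}k$ is $e^{-\beta(n\alpha)+o(n)}$.

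\textbf{Sufficiency, conclusion.} Estimate
\[
d(y_n,\gamma(Cn))\le d\bigl(k_n\exp(r_n)o,\;k\exp(r_n)o\bigr)+|r_n-n\alpha|.
\]
The second term is $o(n)$ by hypothesis. For the first term, write $k_n^{-1}k=m\exp(X_n)$.
\begin{itemize}
\item The factor $m$ commutes with $\exp(n\alpha)$, so it costs only $2|r_n-n\alpha|=o(n)$.
\item The factor $\exp(X_n)$ costs at most $\sup_\beta e^{\beta(r_n)}|X_n^\beta|$ up to constants. By the previous step this is $e^{o(n)}$, i.e. the estimate contributes $o(n)$ in distance after taking logarithms through the standard comparison $d(\exp(Y)o,o)\asymp\log\|{\operatorname{Ad}}\|$.
\end{itemize}

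The main obstacle is exactly this matching of rates. The exponential decay of each root component of $k_n^{-1}k$ must beat the expansion $e^{\beta(r_n)}$ for every root $\beta$ positive on $\alpha$, including the large ones. A crude single-rate estimate (the smallest root in the contraction, the largest in the expansion) does not close. So I would carry out the contraction lemma carefully root by root, following Kaimanovich, tracking $o(n)$ errors uniformly in $\beta$.
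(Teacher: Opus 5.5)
The paper does not prove this statement; it only cites Kaimanovich and Filip's notes. Your outline follows the standard route of those sources: convergence of the flag $k_nM_\alpha$ at gap-dependent exponential rates, as in the Raghunathan--Ruelle proof of the multiplicative ergodic theorem. Your necessity argument and the case $\alpha=0$ are fine.

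The gap is that sufficiency stops exactly at the step you call the main obstacle. The root-by-root estimate is asserted, not proved, and your sketch is incomplete at three points.

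First, the key lemma. Let $\mathfrak g^{(s)}$ be the $s$-eigenspace of $\operatorname{ad}\alpha$. The identity $\operatorname{Ad}(k^{-1}k')=\operatorname{Ad}(\exp H)\operatorname{Ad}(g)\operatorname{Ad}(\exp(-H'))$, with $\|\operatorname{Ad}g\|\le e^{C\varepsilon n}$, bounds only the blocks $\mathfrak g^{(s')}\to\mathfrak g^{(s)}$ with $s<s'$. You need an extra input to get $e^{-|s-s'|n+O(\varepsilon n)+o(n)}$ for all $s\neq s'$, e.g.\ orthogonality of $\operatorname{Ad}(k^{-1}k')$, or the same computation for the inverse. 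That extra input is what shows $k^{-1}k'$ is close to $M_\alpha$ at all, so that the decomposition $m\exp X$ is defined. The resulting bounds are per grade $s=\beta(\alpha)$, not per root, because $M_\alpha$ can mix root spaces of equal grade.

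Second, and more seriously, the accumulation. ``The successive corrections are summable'' is additive reasoning applied to the noncommutative product $k_n^{-1}k_N=u_n\cdots u_{N-1}$. Its cross terms move mass between grades, and nothing in your proposal controls them. The missing idea is subadditivity, $|s_1+s_2|\le|s_1|+|s_2|$. Expand each block of $\operatorname{Ad}(u_n)\cdots\operatorname{Ad}(u_{N-1})$ as a sum over grade paths. Every block of an orthogonal map has norm at most $1$, and a jump $\Delta$ at time $j\ge n$ costs $e^{-|\Delta|j+o(j)}$ with $|\Delta|$ bounded below, so the sum over paths converges. This gives $\|\operatorname{Ad}(k_n^{-1}k_N)_{ss'}\|\le e^{-|s-s'|n+o(n)}$ uniformly in $N$, and you then take $k$ to be a limit point of $(k_N)$.

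Third, the final step needs this same block bound. You pass to $Y=\operatorname{Ad}(\exp(-r_n))X_n$, of size $e^{o(n)}$, and invoke $d(\exp(Y)o,o)\asymp\log\|\operatorname{Ad}\exp Y\|$. For general $Y$ this gives only $d\lesssim|Y|=e^{o(n)}$, which need not be $o(n)$ (take $Y\in\mathfrak p$, where equality holds). It gives $o(n)$ here only because $Y$ lies, up to an exponentially small error, in the nilpotent algebra $\bigoplus_{s<0}\mathfrak g^{(s)}$. That is what makes $\|\operatorname{Ad}\exp Y\|$ bounded by a polynomial in $|Y|$; you give no reason for this. More simply, the block bound shows directly that every block of $\operatorname{Ad}(\exp(-r_n)k_n^{-1}k\exp(r_n))$ is $e^{o(n)}$. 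Hence $d(y_n,k\exp(r_n)o)=o(n)$, and the $m\exp X$ bookkeeping can be dropped.

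In the only case the paper uses, $H/K_H\cong\mathbb H$ with $H\cong\SLgr$, the group $K_H\cong SO_2(\bbR)$ is abelian and there is a single positive root. There the accumulation really is additive and your single-rate estimate closes; the general statement needs the argument above.
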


\section{Semisimplicity}\label{sect5}

\subsection{Standing assumptions and cocycle bounds}
\label{subsec:semisimplicity-setup}

Throughout this section, we work with a projection-surjective group $G\subset(\SLgr)^n$ acting on the product of strata
$$
X=\cH^{(1)}(\alpha_1,\ldots,\alpha_n) =\cH^{(1)}(\alpha_1)\times\cdots\times\cH^{(1)}(\alpha_n).
$$
We make the following additional assumption on $G$. Fix a partition $\{1,\ldots,n\}=I_1\sqcup\cdots\sqcup I_r$, and let $D_j\cong\SLgr$ act diagonally on the factors indexed by $I_j$ and trivially on the remaining factors. We assume that, up to conjugation by coordinatewise rotations,
$$
G=(D_1\cdots D_r)F,
\qquad F\leq\{\pm\Id\}^n.
$$
We work in the coordinates furnished by this rotational conjugation. We do not allow arbitrary coordinatewise conjugations: although they give equally valid group-theoretic embeddings of the diagonal factors, they need not preserve the canonical Hodge structures on the cohomology bundles. This distinction is essential for the Hodge-theoretic arguments below; see Subsection~\ref{subsec:arbitrary-conjugations}.

Let $\mu$ be an ergodic $G$-invariant probability measure on $X$. Let $\Gamma$ denote the Kontsevich--Zorich cocycle on
$$
\rmH_\bbF
=\bigoplus_{i=1}^n\pi_i^*\rmH^1_\bbF
   \bigl(\cH^{(1)}(\alpha_i)\bigr),
\qquad \bbF=\bbR\text{ or }\bbC,
$$
or a cocycle obtained from it by a tensor construction, equipped with the induced Hodge norm. We first treat a tensor construction $\rm E$ which is pure of a fixed Hodge weight $w$, so that
$$
\rm E_\mathbb C=\bigoplus_{p+q=w}\rm E^{p,\,q}.
$$
Here $p,q$ are arbitrary integers, not necessarily nonnegative; negative Hodge indices arise naturally when taking duals. For any fixed tensor construction, only finitely many Hodge types occur. Finite direct sums of different weights will be treated in Corollary~\ref{cor:graded-semisimplicity}.

Set $K_j=D_j\cap(SO_2\bbR)^n$ and
$$
K_G=(K_1\cdots K_r)F.
$$
Since $K_G\subset G$, the measure $\mu$ is $K_G$-invariant. The cocycle over $K_G$ preserves the Hodge structures and the Hodge norm. The canonical variations of Hodge structure restrict to the orbit polydiscs, whose universal parameter spaces are
$$
K_G\backslash G\cong\mathbb H^r.
$$
In particular, each individual block has parameter space $K_j\backslash D_j\cong\mathbb H$.

We will need the following estimate. For $g=(g_1,\ldots,g_n)\in(\SLgr)^n$, set
$$
\|g\|:=\max_i\|g_i\|_{\mathrm{op}},
$$
where $\|\cdot\|_{\mathrm{op}}$ denotes the Euclidean operator norm on $2\times2$ matrices.

\begin{lemma}
There exist constants $C_1,C_2\geq0$ such that
\begin{equation}\label{boundedness_eq}
\log\|\Gamma(g,x)\|\leq C_1\log\|g\|+C_2
\end{equation}
for all $g\in G$ and $x\in X$.
\end{lemma}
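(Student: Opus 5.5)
The plan is to reduce the estimate to a single factor and a single one-parameter subgroup. The key input is the standard Hodge-norm variation bound for the Kontsevich--Zorich cocycle along Teichm\"uller geodesics. Rotations act by Hodge isometries, and along $g_t=\mathrm{diag}(e^t,e^{-t})$ the Hodge norm of any Gauss--Manin flat section of $H^1$ satisfies
\[
\Bigl|\tfrac{d}{dt}\log\|c_t\|\Bigr|\le 1 .
\]
This follows from the Hodge-pairing variation formula cited in the proof of Lemma~\ref{lemma:relative-isotropic-isometry}, since $|B_y^{\bbR}|\le1$ (Forni's bound). It gives $\|A_{KZ}(g_t,y)\|\le e^{|t|}$.

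For a single $\SLgr$ factor I would use the Cartan decomposition $g_i=k\,g_t\,k'$ with $k,k'\in SO_2(\bbR)$ and $e^{|t|}=\|g_i\|_{\mathrm{op}}$. Applying the cocycle identity, the two rotation factors contribute isometries, so
\[
\|A_{KZ}(g_i,y)\|\le\|g_i\|_{\mathrm{op}}
\]
for every $y$ in the stratum.

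Next I pass to the product. On $\rmH_\bbF=\bigoplus_i\pi_i^*\rmH^1_\bbF$, the cocycle $\Gamma(g,x)$ is block diagonal with blocks $A_{KZ}(g_i,x_i)$, and the Hodge norm is the orthogonal sum of the factor Hodge norms. Hence $\|\Gamma(g,x)\|=\max_i\|A_{KZ}(g_i,x_i)\|\le\|g\|$. The finite central part $F$ needs no separate treatment, since the bound is pointwise and holds for every $g\in(\SLgr)^n$. Indeed $-\Id$ sends $(X,\omega)$ to $(X,-\omega)$, which has the same underlying Riemann surface. So it acts trivially on cohomology by Gauss--Manin transport, is a Hodge isometry, and has $\|{-\Id}\|_{\mathrm{op}}=1$. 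No rotational-conjugacy hypothesis is needed at this step, though in our coordinates it is in any case satisfied.

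Finally I handle tensor constructions and finite direct sums. Suppose the construction is built from $a$ copies of $\rmH_\bbF$ and $b$ copies of its dual, with induced Hodge metrics that are tensor products of the original ones. Then operator norms are submultiplicative, so
\[
\|\Gamma_{\rmE}(g,x)\|\le\|\Gamma(g,x)\|^{a}\,\|\Gamma(g,x)^{-1}\|^{b}.
\]
On the dual we use $\Gamma(g,x)^{-1}=\Gamma(g^{-1},gx)$ and $\|g^{-1}\|=\|g\|$ for $\SLgr$, which gives $\|\Gamma(g,x)^{-1}\|\le\|g\|$. Subquotients, symmetric or alternating powers, and Schur functors cut out as orthogonal summands do not increase the norm. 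For finite direct sums with the orthogonal-sum metric we take the maximum of the exponents. This yields \eqref{boundedness_eq} with $C_1=a+b$ and $C_2=0$, or some $C_2\ge0$ if the chosen metric differs from the induced one by a bounded factor.

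The main obstacle is justifying the geodesic derivative bound $|\tfrac{d}{dt}\log\|c_t\||\le1$ uniformly in $y$, rather than only almost everywhere. I would take it directly from Forni's formula, $\tfrac{d}{dt}\|c\|^2=-2\,\mathrm{Re}\,B_y(c,c)$, together with the pointwise Cauchy--Schwarz bound $|B_y(c,c)|\le\|c\|^2$. Everything else is formal.
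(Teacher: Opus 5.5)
Your proof is correct and follows the same route as the paper. Both use the Cartan decomposition $g_i=k\,a_t\,k'$ with rotations acting as Hodge isometries and the Teichm\"uller-geodesic growth bound $\|A(a_t,y)\|\le e^{|t|}$. Both then pass to the product by block-diagonality and treat duals and tensor constructions through the inverse cocycle, using $\|g^{-1}\|=\|g\|$. The one difference is that you derive the geodesic bound directly from Forni's variational formula and $|B_y|\le 1$, whereas the paper cites the Forni--Matheus--Zorich lemma; this gives you the explicit constants $C_1=a+b$ and $C_2=0$.
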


\begin{proof}
We first treat the Kontsevich--Zorich cocycle $\mathrm{KZ}$ on one stratum, equipped with the Hodge norm. Let $a_t=\operatorname{diag}(e^t,e^{-t})$. By \cite[Lem.~2.3]{ForniMatheusZorich2014LyapunovSpectrum}, there exist constants $c_1,c_2$ such that
\begin{equation}\label{eq:KZ-at}
\log\|\mathrm{KZ}(a_t,x)\|\leq c_1|t|+c_2.
\end{equation}
Write an arbitrary $g\in\SLgr$ in Cartan form $g=k_1a_tk_2$, with $k_1,k_2\in SO_2\bbR$ and $t\geq0$. Since rotations act isometrically for the Hodge norm, \eqref{eq:KZ-at} implies
$$
\log\|\mathrm{KZ}(g,x)\|\leq c_1t+c_2.
$$
For the Euclidean operator norm, $\|g\|=\|a_t\|=e^t$, which proves the estimate on one stratum. Other fixed matrix norms give the same estimate after changing the constants.

For a product of strata, apply the estimate in each factor. Tensor products only change the constants; duals are handled by applying the same estimate to the inverse cocycle, using $\|g^{-1}\|=\|g\|$ in each $\SLgr$ factor. Restriction to $G$ preserves the estimate.
\end{proof}

\subsection{A one-disc subharmonicity lemma}
\label{subsec:one-disc-subharmonicity}

Only one simultaneous copy of $\SLgr$ is needed. Put
\[
 H=\{(g,\ldots,g):g\in\SLgr\}\subset D_1\cdots D_r,
 \qquad K_H=H\cap K_G,
\]
and write $A_t=(a_t,\ldots,a_t)\in H$.

\begin{lemma}[Simultaneous-flow invariants]\label{lemma:mautner-blocks}
Put $D=D_1\cdots D_r$. In any strongly continuous unitary representation of $D$, the subspaces fixed by $A_1$, by $H$, and by $D$ coincide. Consequently, for the Koopman representation on $L^2(X,\mu)$, the $H$-ergodic and $D$-ergodic decompositions coincide. If $\mu$ is $DF$-ergodic, they consist of finitely many components, permuted transitively by $F$; on each component both $A_t$ and $A_1$ are ergodic.
\end{lemma}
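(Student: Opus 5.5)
The plan rests on the Howe--Moore / Mautner phenomenon applied to each simple factor $D_\beta\cong\SLgr$. The chain of inclusions $\mathrm{Fix}(D)\subset\mathrm{Fix}(H)\subset\mathrm{Fix}(A_1)$ is trivial, so only $\mathrm{Fix}(A_1)\subset\mathrm{Fix}(D)$ needs proof.

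Let $\pi$ be a strongly continuous unitary representation of $D$ and let $v$ be fixed by $A_1=(a_1,\ldots,a_1)$. The element $A_1$ factors as a product of commuting elements $a^{(\beta)}_1\in D_\beta$, one for each block. Its contracting horospherical subgroup is $U^-=\prod_\beta U^-_\beta$, where $U^-_\beta\subset D_\beta$ is the lower unipotent subgroup of the block. For $u\in U^-$ we have $A_1^{k}uA_1^{-k}\to e$ as $k\to\infty$. The standard Mautner computation then gives
\[
\|\pi(u)v-v\|=\|\pi(A_1^{k}uA_1^{-k})v-v\|\to0,
\]
so $v$ is $U^-$-fixed. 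The same argument with $k\to-\infty$ shows that $v$ is fixed by the expanding group $U^+=\prod_\beta U^+_\beta$.

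In each block, $U^+_\beta$ and $U^-_\beta$ generate $D_\beta\cong\SLgr$. Hence $v$ is fixed by every $D_\beta$, and therefore by $D$. The only subtlety is that $D$ need not be a direct product, since the blocks may share a central sign under $F$. I would handle this by working on the simply connected cover, or by noting that $D=D_1\cdots D_r$ is generated by the images of the $D_\beta$. Either way the conclusion holds. The same argument applied to the one-parameter group $A_t$ shows that $\mathrm{Fix}(A_t)=\mathrm{Fix}(D)$ as well.

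For the ergodic-decomposition statement, apply this to the Koopman representation on $L^2(X,\mu)$, which is strongly continuous for a measurable measure-preserving action. Indicators of $H$-invariant sets are $H$-fixed vectors and hence $D$-fixed. So the $\sigma$-algebras of $H$-, $A_1$-, $A_t$- and $D$-invariant sets agree mod $\mu$, and the ergodic decompositions coincide. Ergodicity of $A_t$ and $A_1$ on each component follows by applying the same fixed-vector statement to the component measures. Those measures are $D$-invariant and $D$-ergodic, so any $A_1$-invariant $L^2$ function on a component is $D$-invariant, hence constant.

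Finiteness and transitivity come from the structure of $G=DF$ with $F$ finite and central. Since $D$ is normal in $G$, the group $F$ permutes the $D$-invariant $\sigma$-algebra. Pick a $D$-ergodic component measure $\nu$. The average $\frac1{|F|}\sum_{f\in F}f_*\nu$ is $G$-invariant, so by $G$-ergodicity of $\mu$ and extremality it equals $\mu$. Thus $\mu$ is a finite average of the $D$-ergodic measures $f_*\nu$. Consequently there are at most $|F|$ distinct components, and $F$ acts transitively on them.

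I expect the main obstacle to be not the Mautner step but checking that the component decomposition is genuinely given by a finite partition. Concretely, one must show that the distinct measures $f_*\nu$ are mutually singular with disjoint $D$-invariant supports. This follows because distinct ergodic measures for the same group are mutually singular.
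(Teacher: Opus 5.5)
Your proposal is correct and follows the paper's proof essentially step for step. The ingredients match: Mautner's lemma for the upper and lower unipotent subgroups contracted by $A_1$ in each block, indicator functions to identify the invariant $\sigma$-algebras, and finiteness of $G/D$ for the component count. Your explicit averaging over $F$ is a concrete version of the paper's remark that $G/D$ acts ergodically on the space of $D$-ergodic components.

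There are two harmless slips. First, $D=D_1\cdots D_r$ is in fact a direct product, because the blocks act on disjoint coordinates; any sign coupling lives only in $F$. Second, the averaging argument should be run for almost every $D$-ergodic component of $\mu$, not an arbitrary one.
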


\begin{proof}
If a vector is fixed by $A_1$, it is fixed by every $A_n$, $n\in\bbZ$. Conjugation by these elements contracts the upper and lower unipotent subgroups in each $D_\beta$ in the appropriate time direction. Mautner's lemma \cite[Lemma~1.4.8]{BekkaDeLaHarpeValette2008} therefore makes the vector fixed by these subgroups, which generate $D$. The reverse inclusions follow from $A_1\in H\subset D$.

Applied to indicator functions, this identifies the invariant sigma-algebras and hence the ergodic decompositions. The finite group $G/D$ acts ergodically on the space of $D$-ergodic components, so that space is one finite orbit. Applying the same fixed-vector argument on each component proves the last assertion.
\end{proof}

\begin{lemma}\label{lemma:simultaneous-generation}
The subgroups $H$ and $K_G$ generate $G$.
\end{lemma}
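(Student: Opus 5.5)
The plan is to show that the subgroup $\langle H, K_G\rangle$ contains each diagonal block $D_\beta$ and the finite group $F$. Since $G=(D_1\cdots D_r)F$ and $F\subset K_G$ by definition of $K_G=(K_1\cdots K_r)F$, it suffices to produce every $D_\beta$.

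\emph{Step 1: the Cartan decomposition.} Every $d\in D=D_1\cdots D_r$ can be written as $(d_1,\ldots,d_r)$ with $d_\beta\in\SLgr$ acting diagonally on $I_\beta$. Write each $d_\beta=k_\beta a_{t_\beta}k'_\beta$ in Cartan form with $t_\beta\geq0$ and $k_\beta,k'_\beta\in SO_2\bbR$. The rotational parts $(k_1,\ldots,k_r)$ and $(k'_1,\ldots,k'_r)$ lie in $K_1\cdots K_r\subset K_G$. It therefore suffices to generate every diagonal element $(a_{t_1},\ldots,a_{t_r})$ with independent $t_\beta$, which are the independent block diagonal flows.

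\emph{Step 2: decoupling the times using rotations in a single block.} This is the key step. Let $r_\beta\in K_\beta$ be the rotation by $\pi/2$, acting on the factors of $I_\beta$ only. Then
\[
 r_\beta A_t r_\beta^{-1}
\]
equals $a_{-t}$ on the block $I_\beta$ and $a_t$ on all other blocks. Multiplying gives
\[
 A_t\,r_\beta A_t r_\beta^{-1}=(a_{2t}\text{ off }I_\beta,\ \Id\text{ on }I_\beta).
\]
Combining these elements for the various $\beta$, for example taking the product over $\beta'\neq\beta$ and comparing with $A_{2t}^{\,r-1}$, isolates $a_{ct}$ on the single block $I_\beta$ with $c\neq0$ whenever $r\geq2$. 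Alternatively, and more cleanly, one can form $A_t\cdot(r_\beta A_t r_\beta^{-1})^{-1}$, which is $a_{2t}$ on $I_\beta$ and $\Id$ elsewhere. This directly yields the one-block diagonal flow $\{a_s\}$ embedded in $D_\beta$. When $r=1$ the claim is immediate, since $H=D_1$.

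\emph{Step 3: generating each block.} The subgroup of $D_\beta\cong\SLgr$ generated by $\{a_s\}$ and $K_\beta\cong SO_2\bbR$ is all of $D_\beta$, by the Cartan decomposition $KAK$. Hence $D_\beta\subset\langle H,K_G\rangle$ for every $\beta$, and adjoining $F\subset K_G$ gives all of $G$.

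The only delicate point is checking the conjugation identity $r a_t r^{-1}=a_{-t}$ in each coordinate, together with the fact that $r_\beta$ is an element of $K_\beta$ acting diagonally on $I_\beta$. Both are direct computations. No closure is needed, since the argument produces the generated group itself and not merely its closure.
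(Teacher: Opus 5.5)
Your proposal is correct and follows the paper's proof. The paper forms $A_t k_\beta A_{-t}k_\beta^{-1}=a_{2t}^{(\beta)}$, with $k_\beta$ the $\pi/2$-rotation on block $I_\beta$; this is exactly your $A_t(r_\beta A_t r_\beta^{-1})^{-1}$. It then concludes using $D_\beta=K_\beta A_\beta K_\beta$ and $F\subset K_G$, as you do.
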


\begin{proof}
Fix a block $I_\beta$, and let $k_\beta\in K_\beta$ act by rotation through $\pi/2$ on this block and trivially on all other blocks. Since this rotation conjugates $a_{-t}$ to $a_t$, we have
\[
A_tk_\beta A_{-t}k_\beta^{-1}=a_{2t}^{(\beta)},
\]
where $a_s^{(\beta)}$ denotes the geodesic flow acting only on the block $I_\beta$. Indeed, the factors $A_t$ and $A_{-t}$ cancel outside $I_\beta$, while on $I_\beta$ their product with the rotation is $a_ta_t=a_{2t}$. Thus $\langle H,K_G\rangle$ contains the geodesic flow in every block. Since it also contains $K_\beta$, the Cartan decomposition $D_\beta=K_\beta A_\beta K_\beta$ shows that it contains all of $D_\beta$. Finally, $F\subseteq K_G$, so $\langle H,K_G\rangle=G$.
\end{proof}

The variation restricts to the simultaneous orbit discs $K_H\backslash H\cong\mathbb H$. 
By Lemma~\ref{lemma:mautner-blocks}, it suffices to work on the finitely many $D$-ergodic components of $\mu$, each of which is also $H$-ergodic. We apply the following lemma on each component.

\begin{lemma}\label{lemma:subharmonic_randomwalk}
Let $\eta$ be an $H$-invariant ergodic probability measure on $X$. Let $f$ be an $\eta$-measurable, almost everywhere finite, $K_H$-invariant function whose restriction to almost every orbit disc is subharmonic:
$
i\,\partial\bar\partial f\geq0
$ 
in the sense of currents. We allow subharmonic functions with logarithmic poles, so in particular $f$ may take the value $-\infty$ at zeros of the underlying holomorphic section.

Suppose that there is a nonnegative, finite almost everywhere, $K_H$-invariant measurable function $u$ such that $f^+\leq u$ and:
\begin{enumerate}
\item For $\eta$-almost every $x\in X$ and Lebesgue-almost every $\theta\in[0,2\pi)$,
\[
\lim_{t\to\infty}\frac{u(a_t r_\theta x)}{t}=0,
\]
where $r_\theta\in K_H$ denotes rotation through $\theta$ in the simultaneous subgroup;

\item there are constants $B_1,B_2\geq0$ such that
$$
|u(gx)-u(x)|\leq B_1\log\|g\|+B_2
$$
for $\eta$-almost every $x$ and all $g\in H$.
\end{enumerate}
Then $f$ is $\eta$-almost everywhere constant.
\end{lemma}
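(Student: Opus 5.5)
The plan follows Filip's random-walk argument (\cite{Filip2013}, Section 5), now using the simultaneous subgroup $H\cong\SLgr$ and its orbit discs $K_H\backslash H\cong\mathbb H$. The first step is to choose a compactly supported, $K_H$-bi-invariant probability measure $\nu$ on $H$ whose support generates $H$; for instance, push forward the normalized uniform measure on a hyperbolic circle of fixed radius $R$ centred at the basepoint of $K_H\backslash H$. Since $f$ is $K_H$-invariant and subharmonic on almost every orbit disc, the sub-mean-value property for circles on $\mathbb H$ gives $f(x)\le Av_H(f)(x)$ for $\eta$-almost every $x$. This holds in $[-\infty,\infty)$, and logarithmic poles cause no trouble because the circle average of a subharmonic function is well defined, possibly $-\infty$ only if $f\equiv-\infty$ on the disc. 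Thus $f$ is $\nu$-subharmonic. If $f$ were in $L^1$, Lemma~\ref{harm_const_lemma_1}(1) would end the proof. Integrability is not given, however, so the argument has to be routed through truncations and the random walk.

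The second step handles the positive part. Condition (2) implies that $Av_H(u)-u$ is bounded, by $B_1R'+B_2$ where $R'$ bounds $\log\|g\|$ on $\operatorname{supp}\nu$. Let $x_N=g_N\cdots g_1x$ be the random walk. Replacing $u$ by its $K_H$-average does not change $u$, since $u$ is $K_H$-invariant, so the step distribution composed with a random rotation has the form $a_tr_\theta$. Using $K_H$-bi-invariance, the law of $x_N$ is that of $k\,a_{t_N}r_\theta x$ with $t_N$ growing linearly, and the growth rate is given by Theorem~\ref{Kaimanovich_th}, since the walk on $\mathbb H$ is regular and tracks a geodesic ray with random endpoint. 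Condition (1), combined with condition (2) to absorb the sublinear tracking error $o(N)$ and the rotation, then yields $u(x_N)/N\to0$ almost surely. Meanwhile $f(x_N)-\sum_{i<N}(Lf)(x_i)$ is a supermartingale-type expression. More precisely, $M_N=f(x_N)-f(x)-\sum_{i<N}Lf(x_i)$ is a martingale wherever it is defined. So I would work with the truncations $f_c=\max(f,-c)$, which are still subharmonic (as a maximum of subharmonic functions) and satisfy $|f_c|\le u+c$.

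The third step shows that each $f_c$ is constant. Since $Lf_c\ge0$, the quantity $f_c(x_N)\ge f_c(x)+M_N$ tends to grow. But $f_c(x_N)\le u(x_N)=o(N)$. The martingale increments are controlled by $Av_H(u)-u$ together with the oscillation bound (2), so they have bounded conditional second moments, and the martingale strong law gives $M_N/N\to0$. Hence $\frac1N\sum_{i<N}Lf_c(x_i)\to0$. By the random ergodic theorem (Theorem~\ref{RETheorem}) applied to the nonnegative function $Lf_c$, truncated if necessary, we get $\int Lf_c\,d\eta=0$, so $Lf_c=0$ almost everywhere. Then $f_c+c\ge0$ is finite and $\nu$-harmonic, and Lemma~\ref{harm_const_lemma_1}(2) makes it constant. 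Letting $c\to\infty$ shows that $f$ is constant; the possibility $f\equiv-\infty$ is excluded because $f$ is finite almost everywhere.

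I expect the main obstacle to be the second step: passing from condition (1), which is stated along geodesic rays $a_tr_\theta x$ for almost every angle, to sublinearity of $u$ along random-walk paths. This requires Kaimanovich's geodesic tracking together with the Lipschitz-type bound (2), so that the $o(N)$ deviation between the walk and its tracking ray changes $u$ by only $o(N)$. It also requires checking that the limiting ray directions are absolutely continuous in $\theta$, which holds because the hitting measure of a $K_H$-bi-invariant walk is rotation invariant. This is what makes the ``Lebesgue-almost every $\theta$'' in condition (1) sufficient.
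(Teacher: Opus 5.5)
Your first two steps agree with the paper. Both arguments use a $K_H$-bi-invariant compactly supported step measure and the sub-mean-value inequality on hyperbolic circles. Both then combine Kaimanovich tracking, a rotation-invariant limiting direction and hypothesis (2) to get $u(w_Nx)=o(N)$ almost surely. Two small points here. The fixed-time law of $w_N$ plays no role; only the pathwise tracking does. The paper also makes the left/right issue explicit by passing to $w_N^{-1}K_H$, which is the standard forward walk on $H/K_H$.

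\textbf{The gap is in your third step.} The increments of $M_N=f_c(x_N)-f_c(x)-\sum_{i<N}Lf_c(x_i)$ are $f_c(g_{N+1}x_N)-Pf_c(x_N)$. Hypothesis (2) controls the oscillation of $u$, not of $f_c$. All you know about $f_c$ is $-c\le f_c\le u$, so the only available bound is $|M_{N+1}-M_N|\le u(x_N)+B_1R'+B_2+c$. This is not uniformly bounded: $u(x_N)$ is only $o(N)$ along the walk, and deterministically only $O(N)$. So the claimed bounded conditional second moments do not hold. The conditional-variance form of the martingale strong law would need something like $\sum_N u(x_N)^2/N^2<\infty$, which sublinear growth does not give. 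The pathwise conclusion $\frac1N\sum_{i<N}Lf_c(x_i)\to0$ is therefore unproved.

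\textbf{The repair} is to average over the walk instead of arguing pathwise, which is what the paper does. For typical $x$, all terms are finite, since $P^k|f_c|(x)\le u(x)+kB_1R'+B_2+c$. Telescoping gives
\[
\frac1N\sum_{k<N}P^k(Lf_c)(x)=\frac{P^Nf_c(x)-f_c(x)}{N}.
\]
The right side tends to $0$ by dominated convergence in the random-walk variable. Indeed, $f_c(w_Nx)/N\to0$ almost surely by $-c\le f_c\le u$ and your second step. Moreover, (2) together with $\log\|w_N\|\le NR'$ gives the domination
\[
\frac{|f_c(w_Nx)|}{N}\le\frac{u(x)+c+B_2}{N}+B_1R'.
\]
Now put $b=\min(Lf_c,L)$. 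Monotonicity of $P$ gives $0\le\frac1N\sum_{k<N}P^kb\le\frac1N\sum_{k<N}P^k(Lf_c)\to0$ pointwise, and the left side is bounded by $L$. You can conclude $\int b\,d\eta=0$ in either of two ways. One is the random ergodic theorem plus Fatou, as in the paper. The other is stationarity, $\int b\,d\eta=\int\frac1N\sum_{k<N}P^kb\,d\eta$, followed by bounded convergence. Hence $Lf_c=0$ almost everywhere, and your final appeal to Lemma~\ref{harm_const_lemma_1}(2) goes through unchanged.
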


\begin{proof}
Choose a compactly supported $K_H$-biinvariant probability measure $\nu$ on $H$ with a smooth density and generating support, and put
$$
Pv(x)=\int_Hv(gx)\,d\nu(g).
$$
Since $\nu$ is $K_H$-biinvariant, its pushforward to $K_H\backslash H\cong\mathbb H$ is a radial probability measure centered at $K_H$. The submean inequality for subharmonic functions, first on each hyperbolic circle and then averaged over the radial variable, therefore gives
$$
Pf(x)=\int_H f(gx)\,d\nu(g)\ge f(x).
$$
Thus a function subharmonic on the orbit discs is $\nu$-subharmonic.

Write $w_N=g_N\cdots g_1$ for the random walk. 
The inversion map
$$
K_H\backslash H\longrightarrow H/K_H,
\qquad
K_Hg\longmapsto g^{-1}K_H,
$$
is an isometry.
Thus $K_Hw_N$ corresponds to
$$
w_N^{-1}K_H
=
g_1^{-1}\cdots g_N^{-1}K_H,
$$
which is the usual forward random walk on $H/K_H$ with increment distribution $\check\nu$, the image of $\nu$ under inversion.
Since $\nu$ is compactly supported, this walk has bounded steps. The rank-one random-walk statement in Theorem~\ref{Kaimanovich_th} gives a deterministic $\delta>0$ such that
$$
\frac{d(K_H,w_N^{-1}K_H)}{N}\longrightarrow\delta
$$
almost surely. Therefore Theorem~\ref{Kaimanovich_th}, applied on $H/K_H$ and transported back by inversion, gives an almost surely defined geodesic ray $\gamma$ in $K_H\backslash H$ such that
$$
d(K_Hw_N,\gamma(\delta N))=o(N).
$$
This is the rank-one random-walk argument used in \cite[Lem.~6.2]{Filip2013}. Since $\nu$ is $K_H$-biinvariant, so is $\check\nu$, and the distribution of the limiting geodesic direction is the normalized angular measure. 
Moreover, the bound in \textup{(2)} controls the change of $u$ between two points by a constant times their hyperbolic distance, plus another constant. Combining this with \textup{(1)} and the tracking estimate gives
$$
u(w_Nx)=o(N)
$$
for $\eta$-almost every $x$ and almost every random walk.

Fix an integer $M\geq1$, and set
$$
h_M=\max(f,-M)+M.
$$
Then $0\leq h_M\leq u+M$, and $Ph_M\geq h_M$. If $R=\sup_{g\in\operatorname{supp}\nu}\log\|g\|$, then
$$
0\leq\frac{h_M(w_Nx)}{N}
\leq\frac{u(x)+M+B_2}{N}+B_1R.
$$
Consequently, dominated convergence, for each fixed typical $x$, gives
$$
\frac{P^Nh_M(x)}{N}\longrightarrow0.
$$
Let $b_M=Ph_M-h_M\geq0$. Telescoping gives
$$
\frac1N\sum_{k=0}^{N-1}P^kb_M(x)
=\frac{P^Nh_M(x)-h_M(x)}{N}\longrightarrow0.
$$
For each $L>0$, apply the random ergodic theorem to the bounded function $\min(b_M,L)$, and then apply Fatou's lemma in the random-walk variable. This yields
$$
\int_X\min(b_M,L)\,d\eta
\leq\liminf_{N\to\infty}
\frac1N\sum_{k=0}^{N-1}P^k\min(b_M,L)(x)=0.
$$
It follows that $b_M=0$ almost everywhere. Thus $h_M$ is finite, nonnegative and $\nu$-harmonic. Lemma~\ref{harm_const_lemma_1}(2), applied with $H$ and $\eta$, makes it constant almost everywhere, without an integrability assumption. Letting $M\to\infty$ proves the assertion for $f$.
\end{proof}

\subsection{The fixed-part theorem}
\label{subsec:fixed-part}

\begin{theorem}\label{fixed_part_thm}
Keep the standing assumptions on $G$ and $\mu$. Let $\rmE_\bbC$ be a finite-rank measurable $G$-cocycle which induces a pure polarized variation of real or complex Hodge structures of weight $w$ on almost every orbit polydisc. Assume that its orbitwise flat connection induces the cocycle, that its polarization is flat, that $K_G$ preserves its Hodge decomposition and Hodge metric, and that the cocycle satisfies \eqref{boundedness_eq} in that metric. The variation and its Hodge data are assumed measurable over $X$, which is automatic for the Kontsevich--Zorich bundle and its finite tensor constructions.

If a measurable section $\phi$ of $\rmE_\bbC$ is $G$-equivariant, meaning
\[
 \phi(gx)=\Gamma(g,x)\phi(x)
 \qquad\text{for every }g\in G\text{ and }\mu\text{-almost every }x,
\]
then every $(p,w-p)$-component of $\phi$ is $G$-equivariant as well. In particular, these sections are flat along the connected orbit directions; equivariance also includes compatibility with the finite component group.

In particular, this applies to the Hodge bundle and its pure tensor constructions. It also applies to pure polarized subvariations with the induced metric, their duals and Hom-variations, and tensor products with constant polarized Hodge structures, whenever they are formed from these bundles.
\end{theorem}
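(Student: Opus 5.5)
The plan follows Filip's proof of the fixed part theorem, run on the simultaneous disc $K_H\backslash H\cong\mathbb H$ of Subsection~\ref{subsec:one-disc-subharmonicity}. By Lemma~\ref{lemma:mautner-blocks} it suffices to work on each of the finitely many $D$-ergodic components $\eta$ of $\mu$, each of which is $H$-ergodic. Since $\phi$ is flat along orbits, it is holomorphic for the flat holomorphic structure on each orbit disc.

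The argument is an induction from the top of the Hodge filtration. Let $p_{\max}$ be the largest index with $\phi^{p_{\max}}:=\phi^{p_{\max},w-p_{\max}}\neq0$ on a positive-measure set. Because $\phi\in\rmF^{p_{\max}}$ is flat, its image $\phi^{p_{\max}}$ in the quotient $\rmF^{p_{\max}}/\rmF^{p_{\max}+1}\cong\rmE^{p_{\max},w-p_{\max}}$ is a holomorphic section of that quotient bundle. It also satisfies $\sigma_{p_{\max}+1}^\dagger\phi^{p_{\max}}=0$, since there is nothing above it. By Lemma~\ref{curvature_lemma_2} and Lemma~\ref{lemma:delbar_del_log_rewrite}, with the sign conventions arranged so that the curvature and Cauchy--Schwarz terms have the right signs, the function $f=\log\|\phi^{p_{\max}}\|$ is subharmonic on the orbit discs, allowing $-\infty$ at zeros. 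More precisely, $i\partial\bar\partial\log\|\phi^{p}\|^2\ge \|\sigma_{p}\phi^{p}\|^2/\|\phi^p\|^2$-type terms appear with a favorable sign. In Filip's setup one uses the top piece, whose curvature is semipositive in the relevant direction. I would double check the sign here, and if needed use the bottom piece $\rmE^{p_{\min}}$ via the corollary of seminegative curvature together with the conjugate section.

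The step I expect to be the main obstacle is verifying the growth hypotheses of Lemma~\ref{lemma:subharmonic_randomwalk}. I would take $u=\log^+\|\phi\|$, which dominates $f^+$ because the Hodge projections are orthogonal. It is $K_H$-invariant because $K_G$ preserves the Hodge metric and $\phi$ is equivariant. Hypothesis (2) follows from $\phi(gx)=\Gamma(g,x)\phi(x)$ and \eqref{boundedness_eq} applied to $\Gamma$ and $\Gamma^{-1}$. For hypothesis (1) I would use the fact that $u(a_t r_\theta x)$ is a measurable function evaluated along the ergodic flow $A_t$ of Lemma~\ref{lemma:mautner-blocks}. A finite a.e. function along an ergodic measure-preserving flow has sublinear growth, $u(A_tx)/t\to0$, once one uses the Lipschitz-in-time bound (2) to interpolate between integer times. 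At integer times the limit $v\circ A_1^n/n\to0$ holds a.e. for any finite measurable $v$ in the liminf sense, and the argument upgrades this to a genuine limit by applying the bound to the increments $u\circ A_1-u$, which are bounded by (2).

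With both hypotheses in hand, Lemma~\ref{lemma:subharmonic_randomwalk} makes $f$ constant. The equality case in the subharmonicity formula then forces the extra terms to vanish: $\sigma_{p_{\max}}\phi^{p_{\max}}=0$, and the holomorphic and antiholomorphic Hodge derivatives vanish. By Theorem~\ref{nabla_relation_thm}, $\phi^{p_{\max}}$ is therefore $\nabla^{GM}$-flat along the $H$-discs, hence $H$-equivariant. It is also $K_G$-equivariant, because $K_G$ preserves the Hodge decomposition and $\phi$ is $K_G$-equivariant. By Lemma~\ref{lemma:simultaneous-generation}, $\phi^{p_{\max}}$ is $G$-equivariant.

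Finally, I subtract $\phi^{p_{\max}}$ from $\phi$, which leaves an equivariant section with a strictly smaller top Hodge index, and iterate. This completes the induction. The closing assertions follow by applying the same argument to the listed constructions, which satisfy the same hypotheses.
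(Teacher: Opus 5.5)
\textbf{Gap: you have misidentified which Hodge component of $\phi$ is holomorphic.} Your overall architecture matches the paper's: one $D$-ergodic component, subharmonicity on the simultaneous disc, Lemma~\ref{lemma:subharmonic_randomwalk} with $u=\log^+\|\phi\|$, the equality case, and then $K_G$-equivariance plus Lemma~\ref{lemma:simultaneous-generation}. But the key step is wrong as written.

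Since $\rmF^p=\bigoplus_{i\geq p}\rmE^{i,w-i}$, a section whose largest nonzero Hodge index is $p_{\max}$ lies in $\rmF^{p_{\max}}$ only if it is pure. In general $\phi$ lies in $\rmF^{p_{\min}}$, where $p_{\min}$ is its \emph{smallest} nonzero index. So $\phi^{p_{\max}}$ is not the image of $\phi$ in $\rmF^{p_{\max}}/\rmF^{p_{\max}+1}$. It is the image of $\phi$ in the antiholomorphic quotient $\overline{\rmF^{w-p_{\max}}}/\overline{\rmF^{w-p_{\max}+1}}$, so it is antiholomorphic.

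This is exactly where the sign is decided. For a \emph{holomorphic} section $\psi$ with $\sigma^\dagger_{p+1}\psi=0$, Lemma~\ref{curvature_lemma_2} leaves only $\langle\sigma_p\psi,\sigma_p\psi\rangle$, which is semipositive. In Lemma~\ref{lemma:delbar_del_log_rewrite} this term competes with the seminegative Cauchy--Schwarz term, so no subharmonicity follows. (Log-norms of holomorphic sections of semipositively curved bundles are not subharmonic in general.) Your lower bound by $\|\sigma_p\phi^p\|^2/\|\phi^p\|^2$ is correct only once $\phi^{p_{\max}}$ is treated as antiholomorphic. You leave this unresolved ("I would double check the sign"). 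Your fallback also does not apply directly: the seminegative-curvature corollary concerns the lowest Hodge summand of the ambient bundle, not the lowest component of $\phi$.

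\textbf{The fix, which is what the paper does.} Take $p$ largest with $\phi\in\rmF^p$, i.e.\ the smallest nonzero index of $\phi$, and set $\psi=\phi^{p,w-p}$.
\begin{itemize}
\item $\psi$ is a holomorphic section of $\rmF^p/\rmF^{p+1}$.
\item Flatness gives $\sigma_p\psi=[\nabla^{GM}\phi]\bmod\rmF^p=0$. This is the relevant vanishing, not $\sigma^\dagger_{p+1}\psi=0$.
\item Lemma~\ref{curvature_lemma_2} then leaves only the seminegative term $\langle\sigma^\dagger_{p+1}\psi,\sigma^\dagger_{p+1}\psi\rangle$, and $\log\|\psi\|^2$ is subharmonic.
\end{itemize}
Equivalently, you can run your top-down induction on $\overline\phi$.

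\textbf{The equality step also needs more.} Vanishing of the Cauchy--Schwarz term only gives $\nabla^{Hg}\psi$ proportional to $\psi$, not zero. The paper first shows $\|\psi\|$ is a positive constant on almost every disc, so $\psi$ has no zeros. It then applies Lemma~\ref{delbardelnorm_lemma} to $\|\psi\|^2$ to get both $\nabla^{Hg}\psi=0$ and $\sigma^\dagger_{p+1}\psi=0$.

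\textbf{Your growth verification is valid and differs from the paper's.} You apply Birkhoff to the bounded increment $u\circ A_1-u$ and use recurrence along a subsequence. This forces the mean increment to vanish, giving $u(A_1^nx)/n\to0$, and you interpolate to real times using (2). The paper instead uses existence of the Oseledets limit of $\log\|\phi(A_tx)\|$ together with Poincar\'e recurrence. Your opening claim, that any a.e.\ finite function grows sublinearly along an ergodic flow, is false in general, but the bounded-increment argument you give afterwards is what actually carries the step.
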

\begin{proof}
Let $H$ be the simultaneous subgroup above, and fix one of the finitely many $D$-ergodic components $\eta$ of $\mu$. By Lemma~\ref{lemma:mautner-blocks}, both $H$ and $A_1$ act ergodically on this component. We use the proof of the fixed-part theorem in \cite{Filip2013}: its inputs here are the polarized variation on $K_H\backslash H$, the $K_H$-compatible Hodge metric and the cocycle bound \eqref{boundedness_eq}. No one-stratum description of the base is needed. We include the argument to make this applicability explicit. If $\phi=0$ almost everywhere on this component, there is nothing to prove; otherwise $\phi$ is nonzero almost everywhere.

We will prove the assertion by inductive exhaustion. Let $p$ be the largest integer such that $\phi$ is a section of $\rmF^p\rmE$ along almost every disc. Equivalently, $p$ is the smallest generically nonzero Hodge index of $\phi$. This index is almost everywhere constant on an ergodic component: the condition that a flat section lies identically in a given filtration step along its disc is $H$-invariant, and a holomorphic quotient section which is not identically zero has only discrete zeros. We can write
$$
\phi=\sum_{q\geq p}\phi^{q,\,w-q}.
$$
If we prove the flatness of the lowest nonzero component, we can subtract it from $\phi$ and repeat until no components remain.

Denote the lowest nonzero Hodge component of $\phi$ by $\psi=\phi^{p,\,w-p}$. Recall that the associated graded bundle of the Hodge filtration is
\[
\operatorname{Gr}_F^p\rmE:=F^p\rmE/F^{p+1}\rmE\cong\rmE^{p,\,w-p},
\]
which is naturally identified, fiberwise, with $\rmE^{p,\,w-p}$. Since $\phi$ is Gauss--Manin flat, it is holomorphic for the flat holomorphic structure. Moreover, $\phi$ takes values in the holomorphic subbundle $F^p\rmE$, and the quotient map $F^p\rmE\to\operatorname{Gr}_F^p\rmE$ is holomorphic. Thus $\psi$, viewed as a section of $\operatorname{Gr}_F^p\rmE$, is holomorphic along the orbit discs.

Applying Formula~\ref{deldelbarlog_eq}, with the order of $\delbar\del$ switched, gives, away from the zeros of $\psi$,
\begin{align*}
\del\delbar\log\|\psi\|^2 ={}&-\frac{\langle\Omega_{\rmE^{p,w-p}}\psi,\,\psi\rangle}
             {\|\psi\|^2}\\
&+\frac{
\|\psi\|^2\langle\nabla^{Hg}\psi,\,\nabla^{Hg}\psi\rangle -\langle\nabla^{Hg}\psi,\,\psi\rangle
 \langle\psi,\,\nabla^{Hg}\psi\rangle}
{\|\psi\|^4}.
\end{align*}
Here $\nabla^{Hg}$ is the Chern connection of $\rmE^{p,\,w-p}$. After multiplication by $i$, the second term is nonnegative by Cauchy--Schwarz. Since $\sigma_p\psi=0$, Lemma~\ref{curvature_lemma_2} identifies the first term as
$$
-\frac{\langle\Omega_{\rmE^{p,w-p}}\psi,\,\psi\rangle} {\|\psi\|^2} =-\frac{\langle\sigma_{p+1}^{\dagger}\psi,\,
                   \sigma_{p+1}^{\dagger}\psi\rangle}
{\|\psi\|^2}.
$$
After multiplication by $i$, this term is also nonnegative. Thus $f=\log\|\psi\|^2$ is subharmonic on the orbit discs, including its logarithmic singularities at zeros of $\psi$. The zeros have $\eta$-measure zero, by invariance of $\eta$ and discreteness of the zeros on almost every disc.

We check the hypotheses of Lemma~\ref{lemma:subharmonic_randomwalk} using
$$
u=(\log\|\phi\|^2)^+.
$$
Hodge orthogonality gives $f^+\leq u$. For almost every initial point, Oseledets' theorem gives the limit
$$
\lambda_\phi
=
\lim_{t\to\infty}
\frac1t\log\|\phi(A_t x)\|.
$$
Choose a positive-measure set on which $\|\phi\|$ is bounded both above and away from zero. By Poincaré recurrence, almost every point of this set returns to it along a sequence $t_k\to\infty$, and hence
$$
\frac1{t_k}\log\|\phi(A_{t_k}x)\|\to0.
$$
Since the Oseledets limit exists, it follows that $\lambda_\phi=0$. Ergodicity of $A_1$ makes this conclusion valid almost everywhere on the component. Then $K_H$-invariance of $\eta$ and Fubini in the angular variable give the same conclusion for almost every initial direction. Hence $u$ grows sublinearly along almost every geodesic. This is the argument of \cite[Lem.~6.1]{Filip2013}.

Finally, invariance of $\phi$, the estimate \eqref{boundedness_eq} for $g$ and $g^{-1}$, and $\|g^{-1}\|=\|g\|$ give
$$
|u(gx)-u(x)|\leq2C_1\log\|g\|+2C_2
\qquad(g\in H).
$$
Both $f$ and $u$ are $K_H$-invariant, because rotations preserve the Hodge decomposition and norm. The lemma therefore shows that $\log\|\psi\|^2$ is $\eta$-almost everywhere constant. Leafwise continuity and invariance of $\eta$ imply that $\|\psi\|$ is constant on almost every orbit disc. This constant is positive, since $\psi$ is not identically zero; continuity then rules out isolated zeros. Hence $\psi$ is nowhere vanishing on those discs.

It follows that $\delbar\del\|\psi\|^2=0$ there. By Lemma~\ref{delbardelnorm_lemma},
$$
0=\delbar\del\|\psi\|^2 =\langle\Omega_{\rmE^{p,\,w-p}}\psi,\,\psi\rangle -\langle\nabla^{Hg}\psi,\,\nabla^{Hg}\psi\rangle.
$$
After multiplication by $i$, both terms on the right are nonpositive, so both vanish. Thus $\sigma_{p+1}^{\dagger}\psi=0$ and $\nabla^{Hg}\psi=0$ along the discs. Together with $\sigma_p\psi=0$, Theorem~\ref{nabla_relation_thm} shows that $\psi$ is Gauss--Manin flat there. It is also equivariant under $K_H$, so it is flat along the $H$-orbits. Subtracting $\psi$ and repeating proves the assertion for all Hodge components.

Applying the argument on each $D$-ergodic component makes every Hodge component of $\phi$ $H$-equivariant over $(X,\mu)$. Each component is already $K_G$-equivariant, since $\phi$ is $G$-equivariant and $K_G$ preserves the Hodge decomposition. Lemma~\ref{lemma:simultaneous-generation} therefore gives $G$-equivariance, as required.
\end{proof}

\subsection{Invariant subbundles and the Deligne torus}
\label{subsec:deligne-torus}

Denote by $\rmE_\bbF$ the Hodge bundle or a tensor construction from it. Let $a$ be an element of the Deligne torus acting on $\rmE_\bbC$.

\begin{theorem}\label{deligne-torus-thm}
Suppose that $V\subset\rmE_\bbC$ is a $G$-invariant subbundle. Then $a\cdot V$ is also $G$-invariant.
\end{theorem}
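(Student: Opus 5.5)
We follow Filip's route: encode the invariant subbundle $V$ as a flat section of a suitable tensor construction, apply the fixed-part theorem (Theorem~\ref{fixed_part_thm}) to it, and read off the invariance of $a\cdot V$ from the flatness of its Hodge components.

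\textbf{Step 1: encode $V$ as a section.} Let $k=\operatorname{rank}V$. The natural choice is the line $\Lambda^kV\subset\Lambda^k\rmE_\bbC$, but a line bundle gives only a projectively equivariant section. We therefore use the endomorphism bundle instead. By Lemma~\ref{main-reductive-lemma}, the cocycle on $\rmE_\bbC$ is reductive, so $V$ has a $G$-invariant measurable complement $V'$. Let $\pi_V\in\operatorname{End}(\rmE_\bbC)$ be the projection onto $V$ along $V'$. This is a genuinely $G$-equivariant measurable section of $\operatorname{End}(\rmE_\bbC)=\rmE_\bbC\otimes\rmE_\bbC^\vee$. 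That bundle is a pure polarized tensor construction of weight $0$. It satisfies \eqref{boundedness_eq} by the argument of the cocycle-bound lemma (duals handled via $g^{-1}$), and $K_G$ preserves its Hodge data because the conjugations are rotations.

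\textbf{Step 2: apply the fixed-part theorem.} By Theorem~\ref{fixed_part_thm}, each Hodge component $\pi_V^{r,-r}$ is $G$-equivariant. The Deligne torus acts on $\operatorname{End}(\rmE_\bbC)$ by conjugation: for $a=(z_1,z_2)\in\mathbb S(\bbC)$ one has
\[
 a\pi_Va^{-1}=\sum_r z_1^{r}z_2^{-r}\,\pi_V^{r,-r}.
\]
This is a finite combination with constant coefficients of equivariant sections, so $a\pi_Va^{-1}$ is $G$-equivariant. Its image is $a\cdot V$. Since the cocycle commutes with $a\pi_Va^{-1}$ fiberwise, it maps $\operatorname{im}(a\pi_Va^{-1})_x$ onto $\operatorname{im}(a\pi_Va^{-1})_{gx}$. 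Hence $a\cdot V$ is $G$-invariant.

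\textbf{Step 3: complex and real coefficients.} When working over $\bbF=\bbR$, we complexify first, which preserves reductivity. For real $a\in\mathbb S(\bbR)$, the bundle $a\cdot V$ is real whenever $V$ is. For complex $V\subset\rmE_\bbC$, the complement comes from reductivity of the complexified hull.

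\textbf{Main obstacle.} The delicate point is checking that $\operatorname{End}(\rmE_\bbC)$, with the induced data, satisfies all hypotheses of Theorem~\ref{fixed_part_thm}. These are measurability of the Hodge data, the flat polarization, and $K_G$-compatibility. Note that the Hodge indices of $\operatorname{End}$ may be negative; this is allowed by the theorem.

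If one prefers to avoid relying on the complement, there is a fallback. Apply Theorem~\ref{fixed_part_thm} to the section $\Lambda^kV$ twisted by a flat unitary character. This works because the determinant cocycle on $\Lambda^kV$ has zero exponent, which follows from the Lyapunov argument in the proof of Theorem~\ref{fixed_part_thm}. The complement route is cleaner, so we take it.
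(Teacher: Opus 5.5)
Your argument is correct and is essentially the paper's proof: you take an invariant complement from reductivity (Lemma~\ref{main-reductive-lemma}), apply the fixed-part theorem to the projection $\pi_V$ viewed as a section of $\operatorname{End}(\rmE_\bbC)$, and use $a\cdot\pi_V=a\pi_Va^{-1}$, whose image is $a\cdot V$. The fallback via $\Lambda^kV$ is not needed, and as sketched it is not justified, since a zero-exponent scalar cocycle need not be cohomologous to a unitary character; you should drop it.
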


\begin{proof}
By Lemma~\ref{main-reductive-lemma}, the cocycle on $\rmE_\bbC$ has reductive algebraic hull. Thus $V$ has a $G$-invariant complement $W$. The projection $\pi_V$ onto $V$ along $W$ is a $G$-invariant section of $\operatorname{End}(\rmE_\bbC)$. By Theorem~\ref{fixed_part_thm}, each of its Hodge components is $G$-invariant. Since $\operatorname{End}(E)=E\otimes E^\vee$, functoriality of the Deligne-torus action gives
$$
a\cdot T=aTa^{-1}
$$
on $\operatorname{End}(E)$. Hence $a\pi_Va^{-1}$ is $G$-invariant and is the projection onto $aV$ along $aW$. Therefore, $aV$ is $G$-invariant.
\end{proof}

\subsection{Deligne semisimplicity}
\label{subsec:deligne-semisimplicity}
We now state the core result of this section. For $\bbF=\bbC$, Hodge structures and variations are understood in the complex sense.

\begin{theorem}[Deligne semisimplicity]\label{thm__Deligne-semisimplicity}
Let $\rm E_\bbF$ be the pure tensor construction of Hodge weight $w$ fixed above, where $\bbF=\bbR$ or $\bbC$.
\begin{enumerate}
\item There exist pairwise nonisomorphic irreducible $G$-invariant subbundles $V_i\subset\rm E_\bbF$ and finite-dimensional vector spaces $W_i$ such that
\begin{equation}\label{E-decomp-eq2}
\rm E_\bbF\cong\bigoplus_i E_i,
\qquad E_i\cong V_i\otimes_{A_i}W_i,
\end{equation}
where $E_i$ is the full isotypic summand corresponding to $V_i$ and
\[
A_i=\operatorname{End}_G(V_i)^{\mathrm{op}}
\]
is a division algebra. Here $V_i$ is a right $A_i$-module and $W_i$ is a left $A_i$-module.

Each $V_i$ can be equipped with a pure variation of Hodge structure of some weight $\nu_i$ along the orbit polydiscs, and each $W_i$ with a constant pure Hodge structure of weight $w-\nu_i$, such that the evaluation isomorphism $V_i\otimes_{A_i}W_i\cong E_i$ is Hodge-compatible. The $A_i$-actions preserve these Hodge structures.

The variation of Hodge structure on $V_i$ is constructed on its underlying invariant bundle; the chosen inclusion $V_i\subset\rm E_\bbF$ need not be a morphism of variations of Hodge structure. In particular, $V_i$ need not itself be a sub-variation of the ambient Hodge structure.

\item Under the identifications in part~(1), every $G$-invariant subbundle $U\subset\rm E_\bbF$ has the form
\[
U=\bigoplus_i V_i\otimes_{A_i}W_i',
\]
where $W_i'\subset W_i$ are $A_i$-submodules. These submodules need not be Hodge substructures of $W_i$, so an invariant subbundle $U$ need not inherit a variation of Hodge structure from $\rm E_\bbF$.

Nevertheless, by Theorem~\ref{deligne-torus-thm}, the image of $U$ under every element of the Deligne torus is again a $G$-invariant subbundle.
\end{enumerate}
\end{theorem}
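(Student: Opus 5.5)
The plan follows Filip's proof of Deligne semisimplicity, with Theorem~\ref{fixed_part_thm} and Theorem~\ref{deligne-torus-thm} supplying the Hodge-theoretic input.

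\emph{Step 1: isotypic decomposition.} By Lemma~\ref{main-reductive-lemma}, the cocycle on $\rmE_\bbF$ has reductive algebraic hull $L$. After a measurable reduction to $L$, invariant subbundles correspond to $L$-invariant subspaces of the fiber, and $G$-equivariant bundle maps between such subbundles correspond to $L$-equivariant linear maps. More precisely, an invariant section of a Hom-bundle is a $G$-fixed section; in an $L$-valued trivialization it is an $L$-equivariant map, by minimality of the hull and ergodicity. So the classical structure theory of semisimple modules applies. We get $\rmE_\bbF=\bigoplus_i E_i$ with $E_i$ the $V_i$-isotypic parts. Schur's lemma makes $A_i=\End_G(V_i)^{\mathrm{op}}$ a division algebra. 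The multiplicity space is $W_i=\operatorname{Hom}_G(V_i,\rmE_\bbF)$, a constant finite-dimensional space, since its elements are flat invariant sections of $\operatorname{Hom}(V_i,\rmE_\bbF)$. Evaluation $V_i\otimes_{A_i}W_i\to E_i$ is then an isomorphism. Every invariant $U$ is semisimple with $U\cap E_i$ isotypic, and $U\cap E_i=V_i\otimes_{A_i}W_i'$ with $W_i'=\operatorname{Hom}_G(V_i,U)$. This gives part~(2) apart from the Hodge statements; the Deligne-torus remark there is Theorem~\ref{deligne-torus-thm}.

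\emph{Step 2: Hodge structure on $\End_G(E_i)$.} The algebra $\End_G(\rmE_\bbF)$ is the space of $G$-invariant sections of $\End(\rmE_\bbF)$, which carries a weight-$0$ polarized variation. By Theorem~\ref{fixed_part_thm}, the Hodge components of every invariant section are invariant. Hence $\End_G(\rmE_\bbF)$ is a sub-Hodge structure, constant along orbits since its elements are flat, and it is compatible with composition. The central idempotent projecting onto $E_i$ is invariant. Since the center is Hodge-stable and these idempotents are its only idempotents, each is of type $(0,0)$; one can see this from the Deligne-torus action permuting the isotypic components, combined with connectedness of the torus. So $B_i=\End_G(E_i)\cong M_{k}(A_i^{\mathrm{op}})\cong\End_{A_i}(W_i)$ inherits a Hodge structure compatible with multiplication. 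Lemma~\ref{end-hodge-lemma}, in its division-algebra version, then gives a Hodge structure on $W_i$, unique up to weight shift, on which $A_i$ acts by Hodge endomorphisms.

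\emph{Step 3: Hodge structure on $V_i$ and compatibility.} Write $V_i=\operatorname{Hom}_{A_i}(W_i,E_i)$. Because $\operatorname{Hom}_{A_i}$ is a summand of a Hom-variation, Theorem~\ref{fixed_part_thm} applies to it as well. Using the Hodge structures on $W_i$ and on $E_i$, this space carries a pure variation of weight $\nu_i=w-\mathrm{wt}(W_i)$; we choose the shift so the weights match. Evaluation is Hodge-compatible by construction. The main obstacle is this step. One must check that the Hodge decomposition of $\operatorname{Hom}_{A_i}(W_i,E_i)$ restricts to the subbundle of maps that are $A_i$-linear. That holds because $A_i$ acts on $W_i$ by type $(0,0)$ and on $E_i$ through $(0,0)$ elements of $B_i$'s centralizer. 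One must also check Griffiths transversality and polarization on $V_i$. These follow because $V_i$ is a direct summand, via $(0,0)$ idempotents, of a tensor variation with a constant polarized Hodge structure, and the polarization compatibility comes from the polarized version of Lemma~\ref{end-hodge-lemma}. Finally, the embedding $V_i\subset\rmE_\bbF$ corresponds to a choice of vector in $W_i$ that need not be of pure type, which explains the stated caveat.
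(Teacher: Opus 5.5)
Your Steps 1 and 2 are sound and follow the paper closely. The paper works with a minimal-dimension invariant subbundle and Schur's lemma on measurable bundles rather than with representations of the hull, but your reduction is a legitimate alternative, and your torus-connectedness argument for the central idempotents is exactly the paper's.

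The gap is in Step 3, at the point you flag as the main obstacle: the identification $V_i=\operatorname{Hom}_{A_i}(W_i,E_i)$ is false. If $A_i$ is noncommutative, $E_i=V_i\otimes_{A_i}W_i$ carries no natural left $A_i$-action. The commutant of $B_i=\End_G(E_i)$ in $\End(E_i)$ is fiberwise $\End_{A_i}(V_{i,x})$, and its $G$-invariant part is only the center of $A_i$. So ``$A_i$ acts on $E_i$ through $(0,0)$ elements of $B_i$'s centralizer'' does not define anything. If $A_i$ is commutative, e.g.\ $A_i=\bbF$, the space is defined but too large: $\operatorname{Hom}_{A_i}(W_i,V_i\otimes_{A_i}W_i)\cong V_i\otimes_{A_i}\End_{A_i}(W_i)$ is $k^2$ copies of $V_i$, where $k=\dim_{A_i}W_i$. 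It recovers $V_i$ only when the multiplicity is one.

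The correct Morita identification, which the paper uses, is $V_i\cong\operatorname{Hom}_{B_i}(W_i,E_i)$ with $B_i=\End_{A_i}(W_i)$, via $v\mapsto(f\mapsto f(v))$. The Hodge check then cannot be a type-$(0,0)$ argument. $B_i$ is a weight-zero Hodge structure that in general has nonzero $(r,-r)$-parts; already $\End(W)$ for $W$ of weight one does.

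What suffices is that the two action maps $B_i\otimes W_i\to W_i$ and $B_i\otimes E_i\to E_i$ are morphisms of Hodge structures. The first holds by construction of the Hodge structure on $W_i$ via Lemma~\ref{end-hodge-lemma}. The second holds because $B_i$ is a Hodge subalgebra of $\End(E_i)$ by Theorem~\ref{fixed_part_thm}, and $E_i$ is a sub-variation. Then $\operatorname{Hom}_{B_i}(W_i,E_i)$ is the kernel of the morphism of variations $\phi\mapsto\bigl(b\otimes f\mapsto\phi(bf)-b\,\phi(f)\bigr)$ into $\operatorname{Hom}(B_i\otimes W_i,E_i)$. Hence it is a flat sub-variation of $\operatorname{Hom}(W_i,E_i)$. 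Under this identification, the right $A_i$-action on $V_i$ becomes precomposition with the type-$(0,0)$ action on $W_i$, and evaluation is Hodge-compatible. With this replacement your outline goes through, and no further appeal to Theorem~\ref{fixed_part_thm} is needed in this step.
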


\begin{proof}
1. By Lemma~\ref{main-reductive-lemma}, the $G$-cocycle is semisimple. Choose a nonzero $G$-invariant subbundle $V$ of smallest possible fiber dimension. Every nonzero $G$-invariant endomorphism of $V$ is an isomorphism, since otherwise its kernel or image would give a proper nonzero invariant subbundle. Thus
$$
A=\operatorname{End}_G(V)^{\mathrm{op}}
$$
is a division algebra. The invariant morphism spaces used below are finite-dimensional by ergodicity.

We view $V$ as a right $A$-module by $v\cdot a=a(v)$, where $a$ is viewed as the underlying endomorphism of $V$, and put
$$
W=\operatorname{Hom}_G(V,\rmE_\bbF).
$$
These morphisms need not respect Hodge structures. We make $W$ a left $A$-module by precomposition:
$$
a\cdot f=f\circ a.
$$
The evaluation map
$$
\operatorname{ev}:V\otimes_AW\longrightarrow\rmE_\bbF,
\qquad \operatorname{ev}(v\otimes f)=f(v),
$$
is well-defined because
$$
\operatorname{ev}((v\cdot a)\otimes f)
=f(a(v))=(f\circ a)(v) =\operatorname{ev}(v\otimes(a\cdot f)).
$$
Denote its image by $\rmE'$. This is the full $V$-isotypic summand: it is the sum of all invariant subbundles isomorphic to $V$. Indeed, decompose $\rmE_\bbF$ into irreducible invariant summands. Schur's lemma shows that $\operatorname{Hom}_G(V,\,U)$ vanishes for every summand $U$ not isomorphic to $V$ and is a free left $A$-module of rank one for each copy of $V$. This also proves that
$$
\operatorname{ev}:V\otimes_AW\longrightarrow\rmE'
$$
is an isomorphism and that
$$
B:=\operatorname{End}_G(\rmE')
\cong\operatorname{End}_A(W).
$$

We first check that $\rmE'$ is a sub-variation of Hodge structures. Theorem~\ref{fixed_part_thm}, applied to $\operatorname{End}(\rmE_\bbF)$, gives the finite-dimensional algebra $\operatorname{End}_G(\rmE_\bbF)$ a Hodge structure compatible with multiplication. The Deligne torus therefore acts on this algebra by algebra automorphisms. It fixes each primitive central idempotent: these form a finite set, whereas the torus is connected. The canonical projection onto the isotypic summand $\rmE'$ is such an idempotent. Hence it is of Hodge type $(0,0)$, and its image $\rmE'$ is a sub-variation along the orbit polydiscs.

Applying Theorem~\ref{fixed_part_thm} to its invariant endomorphisms gives $B$ its induced Hodge structure. Equivalently, $B$ is the corresponding Hodge subalgebra of $\operatorname{End}_G(\rmE_\bbF)$. Transfer this structure through $B\cong\operatorname{End}_A(W)$. Lemma~\ref{end-hodge-lemma} then gives an $A$-compatible Hodge structure on $W$.

It remains to endow $V$ with a variation of Hodge structure making evaluation Hodge-compatible. The algebra $B=\operatorname{End}_A(W)$ acts on $W$ and on $\rmE'\cong V\otimes_AW$, in the latter case through $\Id_V\otimes b$. Identify $V$ with
$$
\operatorname{Hom}_B(W,\rmE') =\{\phi\in\operatorname{Hom}_\bbF(W,\rmE'):
  \phi(bf)=b\phi(f)\text{ for all }b\in B,\ f\in W\}
$$
by sending $v$ to the evaluation map $f\mapsto f(v)$. Both $B$-actions are morphisms of Hodge structures, so this intertwiner subbundle is a sub-variation of $\operatorname{Hom}_\bbF(W,\rmE')$, with $W$ viewed as a constant variation. The evaluation isomorphism is therefore Hodge-compatible, as is the right $A$-action on $V$. 
This construction equips $V$ with a variation of Hodge structure on its underlying flat bundle; it does not require the chosen inclusion $V\subset\rmE_\bbF$ to preserve the ambient Hodge decomposition.
 The Hodge structures on $V$ and $W$ may be shifted simultaneously in opposite directions.

The polarization-orthogonal complement of $\rmE'$ is $G$-invariant and is also its Hodge-orthogonal complement, since $\rmE'$ is a sub-variation. Repeating the construction on this complement proves part~(1). Each isotypic summand is removed in its entirety, so the resulting $V_i$ are pairwise nonisomorphic.

2. Consider a $G$-invariant subbundle $U\subset\rmE_\bbF$. By semisimplicity, $U$ is a direct sum of irreducible invariant subbundles. Each such subbundle is isomorphic to exactly one $V_i$ and hence lies in its full isotypic summand $E_i$. Consequently,
$$
U=\bigoplus_i(U\cap E_i).
$$
For each $i$, let
$$
W_i'=\operatorname{Hom}_G(V_i,U)
\subset\operatorname{Hom}_G(V_i,\rmE_\bbF)=W_i.
$$
This is an $A_i$-submodule, and the same evaluation argument gives
$$
U\cap E_i\cong V_i\otimes_{A_i}W_i'.
$$
This proves part~(2).
\end{proof}

\begin{corollary}[Finite direct sums of different weights]
\label{cor:graded-semisimplicity}
Keep the standing assumptions of this section, and let
\[
 \rmE_\bbF=\bigoplus_{w\in\mathcal W}\rmE_\bbF^{[w]},
 \qquad \bbF=\bbR\text{ or }\bbC,
\]
where $\mathcal W\subset\bbZ$ is finite and every $\rmE^{[w]}$ is a pure tensor construction of Hodge weight $w$. Then there are pairwise nonisomorphic irreducible $G$-bundles $V_i$, each carrying a pure variation of Hodge structure of some weight $\nu_i$ along the orbit polydiscs, and finite-dimensional spaces
\[
 W_i=\bigoplus_m W_i^{[m]},
\]
where $W_i^{[m]}$ is a pure Hodge structure of weight $m$, such that evaluation gives a $G$-equivariant, Hodge-compatible isomorphism
\[
 \rmE_\bbF\cong\bigoplus_i V_i\otimes_{A_i}W_i,
 \qquad A_i=\operatorname{End}_G(V_i)^{\mathrm{op}}.
\]
The $A_i$-actions have Hodge type $(0,0)$, and
\[
 \rmE_\bbF^{[w]}
 \cong\bigoplus_i V_i\otimes_{A_i}W_i^{[w-\nu_i]}.
\]
Every measurable $G$-invariant subbundle $U\subset\rmE_\bbF$ has the form
\[
 U=\bigoplus_i V_i\otimes_{A_i}W_i',
\]
for $A_i$-submodules $W_i'\subset W_i$. These submodules need not respect either the weight or the Hodge decomposition. The bundle $U$ respects the ambient Hodge decomposition if and only if every $W_i'$ is a Hodge substructure of $W_i$.
\end{corollary}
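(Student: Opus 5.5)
The plan is to reduce to the pure case by working with split variations throughout, and to rerun the proof of Theorem~\ref{thm__Deligne-semisimplicity} with graded Hodge data. First, Lemma~\ref{main-reductive-lemma} applies to each pure summand $\rmE^{[w]}$. Since each summand is a flat, $G$-invariant tensor construction, the direct sum is an algebraic representation of the product of hulls. Its hull is therefore reductive by Corollary~\ref{reductivitycor}, and every invariant subbundle has an invariant complement.

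Next, I would establish a graded fixed-part statement. An invariant section of $\operatorname{End}(\rmE_\bbF)=\bigoplus_{a,b}\operatorname{Hom}(\rmE^{[a]},\rmE^{[b]})$ decomposes into its weight blocks. Each block is a section of a pure polarized variation of weight $b-a$ (a Hom-variation, covered by the last paragraph of Theorem~\ref{fixed_part_thm}). Since the weight summands are flat, each block is separately invariant, and Theorem~\ref{fixed_part_thm} makes each Hodge component invariant. Thus $\operatorname{End}_G(\rmE_\bbF)$ is a finite-dimensional algebra carrying a split Hodge structure compatible with multiplication. Hodge types add under composition, and weights add as $(b-a)+(c-b)$. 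Equivalently, $\mathbb S\times\mathbb G_m$ acts by algebra automorphisms, where the extra $\mathbb G_m$ records the weight grading $t^{w}$ on $\rmE^{[w]}$.

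Then I would follow the pure proof. Pick a minimal invariant $V$, set $A=\operatorname{End}_G(V)^{\mathrm{op}}$, and let $W=\operatorname{Hom}_G(V,\rmE_\bbF)$. Evaluation is an isomorphism onto the isotypic summand $\rmE'$ by Schur. The projection onto $\rmE'$ is a primitive central idempotent of $\operatorname{End}_G(\rmE_\bbF)$, hence fixed by the connected group $\mathbb S\times\mathbb G_m$. So $\rmE'$ is of type $(0,0)$ and weight $0$, which means $\rmE'=\bigoplus_w(\rmE'\cap\rmE^{[w]})$ with each piece a sub-variation.

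Here lies the main obstacle. $B=\operatorname{End}_G(\rmE')\cong\operatorname{End}_A(W)$ now carries a split, not pure, Hodge structure, and Lemma~\ref{end-hodge-lemma} must be extended to it. I would handle this as follows. The weight-grading $\mathbb G_m$ acts on $B$ by inner automorphisms; by Skolem--Noether applied to the simple algebra $\operatorname{End}_A(W)$, these are conjugations by a cocharacter into $\operatorname{GL}_A(W)$, unique up to central scalars. That cocharacter grades $W=\bigoplus_m W^{[m]}$ by $A$-submodules. The Deligne torus commutes with it and preserves the weight-zero part. Applying Lemma~\ref{end-hodge-lemma} to the commuting $\mathbb S$-action then gives a pure Hodge structure on each $W^{[m]}$. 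Since $V$ is irreducible, $\operatorname{Hom}_B(W,\rmE')$ should be pure. Concretely, fixing the normalization so that $V$ has some weight $\nu$, the intertwiners land in $\operatorname{Hom}(W^{[m]},\rmE^{[m+\nu]})$ for all $m$ simultaneously, and the weight shift is constant by irreducibility. The construction then gives the VHS on $V$ and the identity $\rmE^{[w]}\cong\bigoplus_i V_i\otimes W_i^{[w-\nu_i]}$. After that, pass to the Hodge-orthogonal complement (invariant, since it is a sum of pure sub-variations) and iterate.

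The subbundle classification follows verbatim from part~(2) of Theorem~\ref{thm__Deligne-semisimplicity}, with $W_i'=\operatorname{Hom}_G(V_i,U)$. For the final equivalence, $U$ is a sum of Hodge-type pieces exactly when its projector commutes with $\mathbb S\times\mathbb G_m$. Since $V_i$ carries a fixed VHS, this happens exactly when each $W_i'$ is stable under that action on $W_i$, that is, when $W_i'$ is a graded Hodge substructure.
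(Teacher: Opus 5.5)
The step you flag as the main obstacle is a genuine gap, and your sketch does not close it.

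\textbf{The gap.} Write the graded pieces of $W$ as $W_{(w)}=\operatorname{Hom}_G(V,\rmE'\cap\rmE^{[w]})$. Applying Lemma~\ref{end-hodge-lemma} to the diagonal blocks $\operatorname{End}_A(W_{(w)})$ gives each $W_{(w)}$ a pure Hodge structure, but only up to an ambiguity chosen independently for each $w$. This ambiguity is a shift of weight. When $Z(A)=\bbC$ (in particular whenever $\bbF=\bbC$), it also includes a twist by a character with values in $Z(A)^\times$, which changes $p-q$ but not the weight.

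Each choice induces a variation on $V\cong\operatorname{Hom}_{\operatorname{End}_A(W_{(w)})}(W_{(w)},\rmE'\cap\rmE^{[w]})$. The intertwiner space $\operatorname{Hom}_B(W,\rmE')$ sits inside $\bigoplus_w\operatorname{Hom}(W_{(w)},\rmE'\cap\rmE^{[w]})$ as the graph of isomorphisms between these copies of $V$. It is a pure subvariation only if all the induced variations on $V$ coincide. Irreducibility of $V$ does not give this, and normalizing the weights fixes only $p+q$. When $Z(A)=\bbR$, matching weights is enough, but even that needs an argument beyond irreducibility, e.g.\ the fixed-part theorem on the Hom-variation between two such structures on $V$, together with simplicity of $A$ as an $A$-bimodule.

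\textbf{What is missing.} You need to use the off-diagonal blocks $\operatorname{Hom}_G(\rmE'\cap\rmE^{[a]},\rmE'\cap\rmE^{[b]})\cong\operatorname{Hom}_A(W_{(a)},W_{(b)})$. In effect this is a split version of Lemma~\ref{end-hodge-lemma}: for instance, lift the whole $\mathbb S$-action on the simple algebra $B$ to a single homomorphism $\mathbb S\to\operatorname{GL}_A(W)$, and check that it commutes with the grading.

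Relatedly, Skolem--Noether by itself only yields a homomorphism into $B^\times/Z(B)^\times$, so it does not give you a cocharacter directly. For the grading you do not need it at all. The weight action on $B$ is conjugation by the explicit element $\tau(t)=\sum_w t^w\pi_w\in B^\times$, built from the invariant weight projections and of Hodge type $(0,0)$.

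\textbf{How the paper avoids this.} The paper takes $V_i$ with the pure variation of weight $\nu_i$ already produced by Theorem~\ref{thm__Deligne-semisimplicity} for one weight summand. It sets $W_i^{[m]}=\operatorname{Hom}_G(V_i,\rmE^{[\nu_i+m]})$ and obtains its Hodge structure directly from Theorem~\ref{fixed_part_thm} applied to the pure Hom-variation $\operatorname{Hom}_\bbF(V_i,\rmE^{[w]})$. The cocycle bound is checked through the realization of $V_i$ inside $\operatorname{Hom}_\bbF(M_i,\rmE^{[w_i]})$. Coherence across weights is then automatic, because evaluation adds Hodge types. The subbundle classification is the weight-blind Schur argument. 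No split endomorphism lemma, extra $\mathbb G_m$, or Skolem--Noether is needed.

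\textbf{A smaller correction.} Your reductivity sentence is not valid as phrased. The hull of a direct sum is only conjugate into a subgroup of the product of the hulls, and a cocycle valued in a reductive group can have a non-reductive hull. The correct reason is that the sum is an algebraic representation of the reductive cocycle of Lemma~\ref{main-reductive-lemma}, as noted in Corollary~\ref{cor:graded-fixed-part}. Alternatively, use the projection argument at the end of that lemma.
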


\begin{proof}
Apply Theorem~\ref{thm__Deligne-semisimplicity} to every $\rmE^{[w]}$ and choose one representative $V_i$ of each isomorphism class of underlying irreducible $G$-cocycles occurring in any of these decompositions. Retain the pure Hodge structure of weight $\nu_i$ supplied by that theorem, with the $A_i$-action of type $(0,0)$ supplied by Lemma~\ref{end-hodge-lemma} and the intertwiner construction in its proof.

Put $W_i=\operatorname{Hom}_G(V_i,\rmE_\bbF)$, with the left $A_i$-action by precomposition. These spaces are finite-dimensional by ergodicity. The invariant projections onto the weight summands give
\[
 W_i=\bigoplus_w\operatorname{Hom}_G(V_i,\rmE_\bbF^{[w]}).
\]
The bundle $\operatorname{Hom}_\bbF(V_i,\rmE_\bbF^{[w]})$ is a pure variation of weight $w-\nu_i$. The fixed-part theorem applies to it. To check the cocycle bound, recall that the proof of the pure theorem realizes $V_i$ as a subvariation of $\operatorname{Hom}_\bbF(M_i,\rmE^{[w_i]})$ for a constant pure Hodge structure $M_i$. The induced Hodge norm on this subvariation satisfies the same growth bound as the ambient bundle; taking duals and tensor products preserves that bound. The flat polarization and $K_G$-compatibility are inherited by these operations as well. Thus all the hypotheses of Theorem~\ref{fixed_part_thm}, in its stated generality, hold for the indicated Hom-variation.

Every Hodge component of an invariant map $V_i\to\rmE^{[w]}$ is consequently invariant. Hence
\[
 W_i^{[m]}:=\operatorname{Hom}_G(V_i,\rmE_\bbF^{[\nu_i+m]})
\]
carries a constant pure Hodge structure of weight $m$, with absent weight summands interpreted as zero. Precomposition by $A_i$ preserves these Hodge structures.

The evaluation classification in the proof of Theorem~\ref{thm__Deligne-semisimplicity} applies to the underlying semisimple cocycle, independently of its weights. It identifies $V_i\otimes_{A_i}W_i$ with the full $V_i$-isotypic summand and classifies invariant subbundles by $W_i'=\operatorname{Hom}_G(V_i,U)$. Evaluation is Hodge-compatible because a map of type $(p,q)$ takes $V_i^{a,b}$ to $\rmE^{a+p,b+q}$, giving the formula for each weight summand.

If the $W_i'$ are Hodge substructures, their evaluated images respect the Hodge decomposition. Conversely, if $U$ respects that decomposition, every Hodge component of an invariant map into $U$ still lands in $U$ and is invariant. Thus each $W_i'$ is a Hodge substructure.
\end{proof}

\begin{remark}
The ambient weight splitting is part of the hypothesis. To see why invariant subbundles need not respect it, consider two trivial real line bundles $L_0$ and $L_2$, equipped with constant pure Hodge structures of types $(0,0)$ and $(1,1)$, respectively. Their direct sum has weights $0$ and $2$, but the two summands are isomorphic as flat bundles. Thus, if $e_0$ and $e_2$ are nonzero flat sections of the respective summands, the diagonal line $\bbR(e_0+e_2)$ is invariant but does not respect the weight decomposition. It corresponds to a subspace of the multiplicity space of the single trivial irreducible cocycle that is not a Hodge substructure. The corollary concerns finite direct sums of pure variations; it makes no assertion about nonsplit extensions of different weights.
\end{remark}

\begin{corollary}[Fixed part and Deligne-torus invariance for finite direct sums]
\label{cor:graded-fixed-part}
For a finite direct sum $\rmE=\bigoplus_w\rmE^{[w]}$ of pure tensor constructions, every Hodge component of a measurable invariant section is invariant. The Deligne torus takes invariant subbundles of $\rmE$ to invariant subbundles.
\end{corollary}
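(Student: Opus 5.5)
The plan is to reduce both assertions to the pure case, using the flat, invariant weight splitting $\rmE=\bigoplus_w\rmE^{[w]}$.

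For the fixed-part statement, let $\phi$ be a measurable $G$-invariant section of $\rmE$. The projections onto the weight summands are flat and commute with the cocycle, since each $\rmE^{[w]}$ is a pure tensor construction carried by its own flat subbundle. So each weight component $\phi^{[w]}$ is a measurable invariant section of the pure bundle $\rmE^{[w]}$. Each $\rmE^{[w]}$ satisfies the hypotheses of Theorem~\ref{fixed_part_thm}: it is polarized, $K_G$-compatible, and obeys the cocycle bound \eqref{boundedness_eq}, as checked for pure tensor constructions. That theorem therefore makes every $(p,w-p)$-component of $\phi^{[w]}$ invariant. The Hodge components of $\phi$ are by definition these components, ranging over all weights, which proves the first claim. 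I will note that the Hodge metric is the orthogonal sum, so no cross terms arise.

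For Deligne-torus invariance, I will mimic Theorem~\ref{deligne-torus-thm}. First, $\rmE$ has reductive algebraic hull: it is a finite direct sum of tensor constructions of $\rmH_\bbR(X)$, hence itself an algebraic representation of the same cocycle, so Lemma~\ref{main-reductive-lemma} and Corollary~\ref{reductivitycor} apply. An invariant subbundle $V$ therefore has an invariant complement $W$, and the projection $\pi_V$ is an invariant section of $\operatorname{End}(\rmE)=\bigoplus_{a,b}\operatorname{Hom}(\rmE^{[a]},\rmE^{[b]})$.

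The main obstacle is that $\operatorname{End}(\rmE)$ is not pure. I will handle this by applying the previous paragraph to it. Each block $\operatorname{Hom}(\rmE^{[a]},\rmE^{[b]})=\rmE^{[b]}\otimes(\rmE^{[a]})^\vee$ is a pure tensor construction of weight $b-a$, and $\operatorname{End}(\rmE)$ is a finite direct sum of such. Alternatively, one can apply Theorem~\ref{fixed_part_thm} directly to each Hom-block, as its statement allows. It follows that every Hodge component of every block of $\pi_V$ is invariant.

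Now let $a\in\mathbb S$ act on $\rmE$ through its action on each $\rmE^{[w]}$. On a component of type $(p,q)$ in $\operatorname{Hom}(\rmE^{[a]},\rmE^{[b]})$, the conjugation $T\mapsto aTa^{-1}$ acts by the character $z^p\bar z^q$. Hence $a\pi_Va^{-1}$ is a finite linear combination, with constant coefficients, of invariant sections, and is therefore invariant. It is the projection onto $aV$ along $aW$, so $aV$ is invariant. For complex $\rmE_\bbC$ I will use the bigrading and the two-factor torus $\mathbb S(\bbC)$; the argument is the same.
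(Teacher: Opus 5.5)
Your proposal is correct and follows essentially the same route as the paper. You project an invariant section onto the flat, invariant weight summands and apply Theorem~\ref{fixed_part_thm} to each, then rerun the invariant-projection argument of Theorem~\ref{deligne-torus-thm} on $\operatorname{End}(\rmE)=\bigoplus_{a,b}\operatorname{Hom}(\rmE^{[a]},\rmE^{[b]})$, whose blocks are pure of weight $b-a$, with reductivity holding because the direct sum is an algebraic representation of the original reductive cocycle; your explicit check that $T\mapsto aTa^{-1}$ acts on each Hodge component of a Hom-block by a constant character is a detail the paper leaves implicit.
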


\begin{proof}
The weight projections are invariant, so apply Theorem~\ref{fixed_part_thm} to each projected section. For the second assertion apply this observation to
\[
 \operatorname{End}(\rmE)
 =\bigoplus_{a,b}\operatorname{Hom}(\rmE^{[a]},\rmE^{[b]}),
\]
whose displayed summands are pure of weights $b-a$, and repeat the invariant-projection proof of Theorem~\ref{deligne-torus-thm}. Reductivity holds for the direct sum because it is an algebraic representation of the original reductive cocycle.
\end{proof}

\subsection{Why arbitrary conjugations are excluded}
\label{subsec:arbitrary-conjugations}

The restriction on $G$ at the beginning of this section is essential. In particular, Theorem~\ref{deligne-torus-thm} fails if one allows an arbitrary conjugate of a diagonal subgroup. We give a genus-two example in which the obstruction lies entirely in the tautological planes.

Let
\[
X=\cH^{(1)}(2)\times\cH^{(1)}(2).
\]
Each factor parametrizes unit-area genus-two translation surfaces with one zero of order two, corresponding to a cone point of angle $6\pi$. Equip each factor with its normalized Masur--Veech measure $\nu$, and put $\mu=\nu\otimes\nu$. Set
\[
Q=\begin{pmatrix}2&0\\0&1/2\end{pmatrix}\in\SLgr
\]
and consider the subgroup
\[
G_Q=\{(g,Q^{-1}gQ):g\in\SLgr\}
\subset(\SLgr)^2.
\]
Both coordinate projections of $G_Q$ are surjective. This group is conjugate to the diagonal subgroup, but it is not covered by our standing hypothesis.

The measure $\mu$ is invariant under the full product group, whose action is ergodic. Since $Q$ commutes with $a_t=\operatorname{diag}(e^t,e^{-t})$, the group $G_Q$ contains the simultaneous geodesic flow $(a_t,a_t)$. Lemma~\ref{lemma:mautner-blocks}, applied to the full product action, makes this flow ergodic. Hence $G_Q$ is ergodic as well.

We now consider the real cohomology bundle
\[
\rmH_\bbR
=\pi_1^*\rmH^1_\bbR\oplus\pi_2^*\rmH^1_\bbR.
\]
For $z_i=(S_i,\omega_i)$, let
\[
T_i(z_i)
=\operatorname{span}_{\bbR}
\{[\operatorname{Re}\omega_i],[\operatorname{Im}\omega_i]\}
\subset H^1(S_i,\bbR)
\]
be the tautological plane, and define its frame by
\[
F_i(z_i):\bbR^2\longrightarrow T_i(z_i),
\qquad
F_i(z_i)(a,b)
=a[\operatorname{Re}\omega_i]+b[\operatorname{Im}\omega_i].
\]
These frames trivialize the tautological bundles, not the full cohomology bundles. The tautological planes are preserved by both the Kontsevich--Zorich cocycle and the Weil operator.

In these moving frames, the Kontsevich--Zorich action of $g\in\SLgr$ is represented by $g^{-T}$. Indeed, the action on real and imaginary periods gives
\[
\alpha_i^{KZ}(g,z_i)F_i(z_i)
=F_i(gz_i)g^{-T}.
\]
Consequently, the two cocycle matrices for the action of $(g,Q^{-1}gQ)\in G_Q$ on $T_1\oplus T_2$ are
\[
\rho_1(g)=g^{-T},
\qquad
\rho_2(g)=(Q^{-1}gQ)^{-T}
=Qg^{-T}Q^{-1},
\]
where we used $Q^T=Q$. In particular,
\[
\rho_2(g)Q=Q\rho_1(g).
\]

Define a rank-two subbundle $V\subset\rmH_\bbR$ by
\[
V_{(z_1,z_2)}
=
\left\{
\bigl(F_1(z_1)u,F_2(z_2)Qu\bigr):
u\in\bbR^2
\right\}.
\]
Thus, in the tautological frames, $V$ is the graph of $Q$. The preceding intertwining identity shows that $V$ is $G_Q$-invariant.

We claim that its image under the Weil operator is not $G_Q$-invariant. With our convention $C|_{\rmH^{p,q}}=i^{p-q}$, we have
\[
C[\operatorname{Re}\omega_i]
=-[\operatorname{Im}\omega_i],
\qquad
C[\operatorname{Im}\omega_i]
=[\operatorname{Re}\omega_i].
\]
Thus, in each tautological frame, the Weil operator has matrix
\[
J=\begin{pmatrix}0&1\\-1&0\end{pmatrix},
\]
and on $T_1\oplus T_2$ it is represented by $J\oplus J$. If $u=Jv$, then
\[
(J\oplus J)(v,Qv)
=(u,-JQJu).
\]
For our choice of $Q$,
\[
-JQJ=Q^{-1}.
\]
Therefore $CV$, in the same tautological frames, is the graph of $Q^{-1}$.

For this graph to be $G_Q$-invariant, we would need
\[
\rho_2(g)Q^{-1}=Q^{-1}\rho_1(g)
\qquad\text{for every }g\in\SLgr.
\]
Using the formulas above, this is equivalent to
\[
Q^2g^{-T}=g^{-T}Q^2
\qquad\text{for every }g\in\SLgr.
\]
But take
\[
g=\begin{pmatrix}1&4\\0&1\end{pmatrix}.
\]
Then
\[
g^{-T}=\begin{pmatrix}1&0\\-4&1\end{pmatrix},
\qquad
Q^2=\begin{pmatrix}4&0\\0&1/4\end{pmatrix},
\]
and these matrices do not commute. Thus $CV$ is not $G_Q$-invariant. Since the cocycle matrices in the tautological frames depend only on $g$, this failure occurs at every point of $X$ for the same fixed group element. In particular, it cannot be removed by modifying the bundle on a $\mu$-null set.

Complexifying gives a $G_Q$-invariant subbundle
\[
V_\bbC=V\otimes_\bbR\bbC
\subset\rmH_\bbR\otimes_\bbR\bbC
\]
whose Weil transform is not $G_Q$-invariant either, since complexification reflects equality of real subspaces. Thus Theorem~\ref{deligne-torus-thm} fails for this ergodic $G_Q$-invariant probability measure.

The example also rules out the Hodge-compatible semisimplicity classification of Theorem~\ref{thm__Deligne-semisimplicity} for arbitrary conjugates of diagonal subgroups. Indeed, under such a decomposition, every invariant subbundle would have the form
\[
U=\bigoplus_i V_i\otimes_{A_i}W_i',
\]
and Hodge compatibility would give
\[
CU=\bigoplus_i V_i\otimes_{A_i}C_{W_i}(W_i').
\]
Here $C_{W_i}$ denotes the Weil operator on the constant multiplicity space $W_i$. Since the $A_i$-action has Hodge type $(0,0)$, each $C_{W_i}(W_i')$ is again an $A_i$-submodule. Consequently $CU$ would be invariant, contradicting the example above.

\begin{remark}[Other genera and general conjugations]
The construction works on the product of any two connected components of unit-area strata, equipped with their normalized Masur--Veech measures. Only the tautological planes enter the calculation. In particular, the same obstruction occurs for every pair of genera $g_1,g_2\geq2$, including unequal genera.

One can also identify the obstruction for an arbitrary $Q\in\SLgr$. In the tautological frames, the second cocycle matrix is now
\[
\rho_2(g)=Q^Tg^{-T}Q^{-T},
\]
so the graph of $Q^T$ defines a $G_Q$-invariant subbundle $V_Q$. Since
\[
JQ^TJ^{-1}=Q^{-1},
\]
its Weil transform is the graph of $Q^{-1}$. This graph is invariant if and only if
\[
g^{-T}(Q^{-T}Q^{-1})
=(Q^{-T}Q^{-1})g^{-T}
\qquad\text{for every }g\in\SLgr.
\]
The commutant of the standard contragredient representation consists of scalar matrices. Since $Q^{-T}Q^{-1}$ is positive definite with determinant one, the displayed condition is equivalent to $Q^{-T}Q^{-1}=\Id$, or $Q\in SO_2(\bbR)$. The product measure remains $G_Q$-ergodic: the measure-preserving map $(z_1,z_2)\mapsto(z_1,Qz_2)$ conjugates this action to the diagonal action, which is ergodic by Lemma~\ref{lemma:mautner-blocks}.

Thus every nonrotational twist by an element of $\SLgr$ produces this obstruction. For a rotational twist, the intertwining graph is preserved by the Weil operator, in agreement with our standing hypothesis.
\end{remark}

\section{Rigidity}\label{sect6}

Throughout this section we keep the standing hypothesis on $G$ from Section~\ref{sect5}. After suppressing coordinatewise rotations, write
\[
 G=DF,\qquad D=D_1\cdots D_r,\qquad F\leq\{\pm\Id\}^n,
\]
where $D_\beta$ acts diagonally on the original block $I_\beta$. Let $\mu$ be an ergodic $G$-invariant probability measure on the product of area-one strata, and let $\rmE_\bbF$ be a fixed pure tensor construction of the Hodge bundle, or a finite direct sum of such constructions, with its Hodge metric. We consider an arbitrary measurable $G$-invariant subbundle $\rmV\subset\rmE_\bbF$, where $\bbF=\bbR$ or $\bbC$.

The proof follows the leafwise argument of \cite{Filip2013}. Section~\ref{sect5} supplies invariant Hodge-orthogonal complements. We then use a single simultaneous \Teichmuller~flow to prove polynomiality along stable and unstable plaques. A separate-polynomiality theorem on products of quadrics gives joint polynomiality. The supporting local-geometry and separate-polynomiality arguments are recorded in Appendices~\ref{app:local-geometry} and~\ref{app:separate-polynomiality}. Appendix~\ref{app:tempered-regularity} verifies the noncompact regularity needed for the invariance principle.

\subsection{The measure and its local period model}
\label{subsec:rigidity-local-model}

\begin{proposition}[Reduction to block marginals]
\label{prop:affine-measure-reduction}
The measure $\mu$ has finitely many $D$-ergodic components, permuted transitively by $F$. Each component $\eta$ is a product
\[
 \eta=\eta_1\otimes\cdots\otimes\eta_r,
\]
where $\eta_\beta$ is an ergodic $D_\beta$-invariant probability measure on $\prod_{i\in I_\beta}\cH^{(1)}(\alpha_i)$.
\end{proposition}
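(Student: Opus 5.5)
The first assertion is already contained in Lemma~\ref{lemma:mautner-blocks}. I will quote it directly. Since $\mu$ is $G=DF$-ergodic, the $D$-ergodic decomposition consists of finitely many components, each invariant under $D$, and the finite group $G/D\cong F/(F\cap D)$ permutes them transitively. On each component $\eta$ the group $D$ acts ergodically, and so does the single element $A_1$. The content left to prove is therefore the product structure of a $D$-ergodic probability measure $\eta$ for $D=D_1\times\cdots\times D_r$ acting on $\prod_\beta Y_\beta$, where $Y_\beta=\prod_{i\in I_\beta}\cH^{(1)}(\alpha_i)$ and $D_\beta$ acts only on $Y_\beta$.

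Set $\eta_\beta=(\mathrm{pr}_\beta)_*\eta$. Each $\eta_\beta$ is a $D_\beta$-invariant probability measure, and it is ergodic because it is a factor of the ergodic $D$-system in which the other blocks act trivially. I would prove $\eta=\bigotimes_\beta\eta_\beta$ by a Mautner-type argument block by block. Fix $\beta$ and let $D_\beta'=\prod_{\gamma\neq\beta}D_\gamma$. By Mautner's lemma as in Lemma~\ref{lemma:mautner-blocks}, applied to the unitary representation of $D_\beta'$ on $L^2(\eta)$, ergodicity of $\eta$ under $D$ does not by itself give ergodicity under $D_\beta'$. So I will instead disintegrate.

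Let $\eta=\int\eta^{y}\,d\eta_\beta(y)$ be the disintegration over $\mathrm{pr}_\beta$. Since $D_\beta'$ commutes with $D_\beta$ and fixes the $Y_\beta$ coordinate, the conditional measures $\eta^y$ are $D_\beta'$-invariant for $\eta_\beta$-a.e.\ $y$. Moreover, $D_\beta$-equivariance gives $\eta^{gy}=g_*\eta^y$, where $g$ acts only on the $Y_\beta$ coordinate. Transported to $\prod_{\gamma\ne\beta}Y_\gamma$, this becomes $\eta^{gy}=\eta^y$. Thus $y\mapsto\eta^y$ is a $D_\beta$-invariant measurable map into the standard Borel space of probability measures. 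Ergodicity of $\eta_\beta$ makes it almost surely constant, equal to some $\eta'_\beta$. Hence $\eta=\eta_\beta\otimes\eta'_\beta$.

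The measure $\eta'_\beta$ is the marginal on the remaining blocks. It is $D'_\beta$-ergodic, again as a factor of $\eta$. Inducting on $r$ then gives $\eta=\eta_1\otimes\cdots\otimes\eta_r$.

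The main point requiring care is the claim that the conditional-measure map is $D_\beta$-invariant. This needs the disintegration to be equivariant almost everywhere for all $g$ simultaneously. I would handle it the standard way: check the identity for a countable dense subset of $D_\beta$, then use continuity of the action on measures in the weak-$*$ topology. An alternative route is to use $L^2$ directly. For $\phi\in L^2(\eta_\beta)$ and $\psi\in L^2(\eta'_\beta)$, the function $y\mapsto\int\psi\,d\eta^y$ is $D_\beta$-invariant in $L^2(\eta_\beta)$, hence constant by ergodicity. That constant equals $\int\psi\,d\eta$, and this already yields the product formula on rectangles.
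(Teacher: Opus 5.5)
Your proof is correct, but the key step runs through the opposite factor from the paper's. Both arguments disintegrate $\eta$ over one block. The paper then works with the complementary block. Its conditionals $\eta_x$ are $D_2$-invariant because $D_2$ acts trivially on the base. Their barycentre is the ergodic marginal $\eta_2$, and extremality of ergodic measures among invariant ones forces $\eta_x=\eta_2$ almost everywhere.

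You instead work with the base block. $D_\beta$-invariance of $\eta$ and uniqueness of disintegration give $\eta^{gy}=\eta^y$. Hence the conditional-measure map is invariant, and ergodicity of $\eta_\beta$ makes it constant. This is the disjointness of an ergodic system from an identity system. It uses only invariance of $\eta$ under $D_\beta$ and ergodicity of the base marginal. The $D'_\beta$-invariance of the conditionals that you record is never used. The sentence about Mautner's lemma is a false start and can be deleted.

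The two routes also differ in their technical fine print. The paper needs each $\eta_x$ to be invariant under all of $D_2$ at once. There, a countable dense subset plus weak-$*$ continuity of $h\mapsto h_*\eta_x$ is exactly the right device. In your route that device does not do what you say. The identity $\eta^{gy}=\eta^y$ compares conditionals at different base points, and $y\mapsto\eta^y$ is only measurable. So weak-$*$ continuity of the action on a fixed measure does not extend it from a dense set of $g$.

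You do not need it, however. Almost-everywhere invariance for each fixed $g$ already suffices for an ergodic action of a second-countable group: preimages of Borel sets are almost invariant, hence null or conull. Your $L^2$ variant avoids the issue altogether. There $y\mapsto\int\psi\,d\eta^y$ is a conditional expectation that commutes with $D_\beta$. It is therefore the constant $\int\psi\,d\eta'_\beta$, not $\int\psi\,d\eta$ as written. This is the cleanest rigorous form of your argument.
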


\begin{proof}
The finite-component assertion is Lemma~\ref{lemma:mautner-blocks}.

The marginal $\eta_\beta$ is $D_\beta$-ergodic: the inverse image of a $D_\beta$-invariant set is $D$-invariant. For two blocks, disintegrate $\eta=\int\eta_x\,d\eta_1(x)$. Since $D_2$ acts trivially on the first coordinate, $\eta_x$ is $D_2$-invariant for almost every $x$.
Its barycentre is the ergodic measure $\eta_2$; extremality gives $\eta_x=\eta_2$ almost everywhere. Iteration proves the assertion.
\end{proof}

For each block marginal we impose the following hypothesis, working on smooth branches off the null self-intersection locus:
\begin{quote}
\textup{(S)} The support is locally a smooth immersed submanifold $M$, and the marginal measure is locally equivalent to smooth volume on $M$.
\end{quote}
The general rigidity statements in this section assume \textup{(S)}. For the full product action this hypothesis is automatic, as explained in Corollary~\ref{thm__joint-polynomiality}. The product measure-rigidity theorem of \cite[Theorem~1.3.3]{BrownEskinFilipRodriguezHertz2025} motivates this input; we do not use an independently scaled affine-cone description or assert here that \textup{(S)} follows from that formulation.

In period coordinates, write a point as $q=x+iy$ in $\bigoplus_i(E_i\oplus iE_i)$, where $E_i=H^1(X_i,\Sigma_i;\bbR)$ is the real relative cohomology period space of component $i$. The area of that component is $\omega_i(x_i,y_i)$, where $\omega_i$ is the pullback of the absolute intersection pairing.

\begin{lemma}[Local quadric model from smooth support]
\label{lemma:prime-local-model}
For an ergodic diagonal $\SLgr$-invariant measure on $m$ area-one strata satisfying \textup{(S)}, its smooth support $M$ has the following local description.
There is a partition
\[
 \{1,\ldots,m\}=J_1\sqcup\cdots\sqcup J_k
\]
such that a sufficiently small neighborhood in each smooth branch of $M$ is, in linear period coordinates, a product of open subsets of
\[
 Q_{\Omega_a}
 =\{(x_a,y_a)\in L_a\times L_a:\Omega_a(x_a,y_a)=1\}.
\]
Here $L_a\subset\bigoplus_{i\in J_a}E_i$ is real linear and $\Omega_a$ is a nonzero alternating form. All the component area pairings for $i\in J_a$ pull back to $\Omega_a$ on $L_a$.
The measure is locally equivalent to the product smooth measure class. Every vector in $L_a$ with zero absolute part lies in $\ker\Omega_a$.

The partition is determined by the support; its parts can be smaller than an original diagonal block. No invariance of the bundle $V$ under separate actions on the $J_a$ is asserted or required.
\end{lemma}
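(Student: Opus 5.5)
The plan is to reproduce, in the product setting, the Eskin--Mirzakhani linearization argument. It takes place in one smooth branch $M$ of the support, in linear period coordinates $q=x+iy$. In these coordinates the diagonal $\SLgr$ acts linearly and simultaneously on all $m$ components, so $M$ is locally invariant under a linear action. By \textup{(S)}, $dg$ maps $T_qM$ to $T_{gq}M$, and the measure is invariant and locally equivalent to smooth volume on $M$.

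\emph{Step 1: the unstable and stable subspaces.} For the simultaneous flow $a_t$, the unstable leaf through $q$ is the affine space $q+\{(v,0)\}$ and the stable leaf is $q+\{(0,v)\}$. Set $W^u_q=T_qM\cap\{(v,0)\}$ and $W^s_q=T_qM\cap\{(0,v)\}$.
\begin{enumerate}
\item Smoothness of $M$, local equivalence of the measure with volume, and ergodicity of $a_t$ (Lemma~\ref{lemma:mautner-blocks}) should give that the leafwise conditional measures are Lebesgue on the affine pieces $q+W^u_q$. It follows that $M$ contains these affine pieces locally.
\item The unipotent flows $u_s:(x,y)\mapsto(x+sy,y)$ and their transposes preserve $M$, so $T_qM$ contains the directions $(y,0)$ and $(0,x)$.
\item Rotation by $\pi/2$ swaps the two factors, so $W^s_q$ is the image of $W^u_q$ under this swap.
\item An argument on dimensions and transversality then gives $T_qM=\{(v,w):v,w\in L\}\cap\bigcap_i\ker dA_i$, where $L$ is a real subspace of $\bigoplus_iE_i$ with $W^u_q=L\times 0$.
\end{enumerate}
Since $L$ is locally constant along $M$, we get
\[
 M\subset\{(x,y)\in L\times L:\omega_i(x_i,y_i)=1\text{ for all }i\}
\]
as an open subset.

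\emph{Step 2: the partition.} Restrict the area pairings to $L$, obtaining alternating forms $\Omega_i=\omega_i\circ(\mathrm{pr}_i\times\mathrm{pr}_i)$ on $L$. Group indices $i\sim j$ whenever $\Omega_i=\Omega_j$ on the relevant part of $L$. The goal is to show that $L=\bigoplus_aL_a$ with $L_a\subset\bigoplus_{i\in J_a}E_i$, and that within each part every $\Omega_i$ equals a common form $\Omega_a$. Then the constraint set is exactly the product $\prod_aQ_{\Omega_a}$, and the product smooth measure class is the volume class on this product. I expect this splitting to be the main obstacle. My first attempt would compare two descriptions of $T_qM$:
\begin{enumerate}
\item by Step 1, $\dim T_qM=2\dim L-(\text{number of independent }d(\Omega_i))$;
\item the tangent space also contains the joint $GL_2^+$ directions for each set of components on which the areas can be rescaled independently inside $L$.
\end{enumerate}
Any index $i$ whose projection $\mathrm{pr}_i|_L$ is not cut out by a coordinate splitting would force a linear relation among the $d\Omega_i$ at generic points. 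That relation should contradict either the constancy of $L$ along $M$ or the fact that every component has area one. Choosing the finest partition compatible with the projections $\mathrm{pr}_i|_L$ should then yield the $J_a$. These parts can be finer than an original diagonal block, because nothing forces two factors of a block to share a single $L_a$.

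\emph{Step 3: the kernel statement.} The last assertion is formal. Each $\omega_i$ is pulled back from the absolute intersection pairing, so it vanishes on any vector whose absolute part is zero. Restricting to $L_a$ shows that such vectors lie in $\ker\Omega_a$. No invariance of $\rmV$ is used anywhere in the argument, consistent with the final remark of the statement.
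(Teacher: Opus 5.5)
There is a genuine gap. The three load-bearing claims of your Step~1 are asserted rather than proved, and the one concrete mechanism you offer is wrong. Rotation $r$ by $\pi/2$ moves the base point, so it only gives $W^s_{rq}=r_*W^u_q$; it relates nothing at a single point $q$. The same-point ``swap'' is in fact false in the very model you are aiming for. Write $X_c=(c_ix_i)_i$ and $Y_c=(c_iy_i)_i$ for $c\in\bbR^m$, let $W\subset\bbR^m$ be spanned by the indicators of the parts, and let $K(q)$ consist of vectors $v$ with $\omega_i(v_i,x_i)=\omega_i(v_i,y_i)=0$ for all $i$. In the model, the real tangent directions at $q$ are $Y_W(q)\oplus K(q)$, while the imaginary ones are $i\bigl(X_W(q)\oplus K(q)\bigr)$. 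Correspondingly $W^u_q=(L\times 0)\cap\bigcap_i\ker dA_i$, not $L\times 0$, so your item (4) contradicts your own formula for $T_qM$.

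The paper uses rotation only to equate $\dim(T_qM\cap E^u_q)$ and $\dim(T_qM\cap E^s_q)$. The same-point relation comes from horocycle invariance. Push a stable tangent vector $i(X_c+v)$ by $u_{\pm1}$ and split the image into its Oseledets components at $u_{\pm1}q$. This shows that $Y_c\pm v$ is an unstable tangent vector at $q$, which gives the balanced form above with a single $K(q)$.

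Your claims that $M$ contains the affine pieces $q+W^u_q$ and that $L$ is locally constant are essentially the content of the lemma, and nothing in the proposal proves them. Smoothness of an invariant $M$ only gives smooth unstable manifolds sitting inside the affine ambient leaves. The paper's key device is to adjoin exactly the radial fields $X_c+iY_c$, $c\in W$, forming the cone $C$ over independent scalings of the parts. Then $T_qC=L(q)\oplus iL(q)$ is complex-linear and invariant under conjugation. These are closed conditions, so they hold at every point. A submanifold with such tangent spaces is locally a holomorphic graph whose derivative is real, hence constant, and tangency of the common radial field removes the affine offset.

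Step~2 also lacks a working mechanism, and its heuristic fails. Take three genus-one factors, so $E_i=\bbR^2$, and let $L=\{(u,w,u+w)\}$. Then:
\begin{itemize}
\item $L$ admits no nontrivial coordinate splitting;
\item the three forms $\Omega_i$ are pairwise distinct on $L$;
\item their differentials are independent at generic points of $\{(x,y)\in L\times L:\Omega_1=\Omega_2=\Omega_3=1\}$;
\item near such points this nonempty set is a smooth, diagonally $\SLgr$-invariant submanifold with constant $L$ and all areas equal to one.
\end{itemize}
So no relation among the $d\Omega_i$ is forced. Since this set is not a product of quadrics of the asserted form, it can only be excluded by using the finite ergodic invariant measure dynamically. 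Your Step~2(ii) gestures at the right objects, but it presupposes the partition it is meant to produce.

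In the paper the partition is produced first. Put $U_c=Y_c$, $V_c=iX_c$, $H_c=X_c-iY_c$. The sets of weights $c$ for which each of these fields is tangent to $M$ are $a_t$-invariant, hence constant by ergodicity. The brackets $[U_c,V_d]=-H_{cd}$, $[H_c,U_{\boldsymbol{1}}]=-2U_c$ and $[H_c,V_{\boldsymbol{1}}]=2V_c$ show that these sets coincide and form a unital subalgebra $W$ of $\bbR^m$ under coordinatewise multiplication. Hence $W$ is spanned by indicators $\boldsymbol{1}_{J_a}$. Once the cone over these parts is known to be open in the linear space $L\oplus iL$, its invariance under dilation of each $J_a$ gives $p_aL\subset L$ for the coordinate projections $p_a$. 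Thus $L=\bigoplus_a p_aL$, and equality of the component areas on an open subset of the cone gives the common form $\Omega_a$. Your Step~3 is correct.
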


The proof is given in Appendix~\ref{app:local-geometry}. Apply the lemma to each block marginal and take the product. Thus a sufficiently small neighborhood in every smooth branch of a $D$-ergodic component has the form
\begin{equation}\label{eq:rigidity-product-chart}
 U=U_1\times\cdots\times U_s,
 \qquad U_a\subset Q_{\Omega_a},
\end{equation}
with smooth product measure class. We call these the \emph{local quadric factors}. Their partition refines the original partition into the $I_\beta$. For example, a diagonal joining of two identical genus-one factors has one local quadric factor, while their product Haar measure has two even when the acting group is diagonal.

\begin{theorem}[Joint polynomiality on the normalized support]
\label{thm__joint-polynomiality-fixed-area}
Assume \textup{(S)} for the block marginals of the $D$-ergodic components of $\mu$. On each smooth local branch of their supports, every matrix coefficient of the Hodge-orthogonal projection $\Pi_{\rmV}$, in a flat trivialization, agrees almost everywhere with the restriction of an ambient polynomial in real period coordinates, with coefficients in $\bbF$.
\end{theorem}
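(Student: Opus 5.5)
The plan follows Filip's leafwise mechanism along a single simultaneous flow, then upgrades separate polynomiality to joint polynomiality via the product-of-quadrics model of Lemma~\ref{lemma:prime-local-model}.

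\textbf{Step 1: reduction to a $D$-ergodic component.} Fix one of the finitely many $D$-ergodic components $\eta$ (Proposition~\ref{prop:affine-measure-reduction}). Work in a product chart \eqref{eq:rigidity-product-chart} $U=U_1\times\cdots\times U_s$ inside a smooth branch. By Lemma~\ref{main-reductive-lemma} and Theorem~\ref{thm__Deligne-semisimplicity} (or Corollary~\ref{cor:graded-semisimplicity} for direct sums), the Hodge-orthogonal complement of $\rmV$ is again $G$-invariant. Indeed, $\rmV$ corresponds to submodules $W_i'$, and the Hodge-orthogonal complement corresponds to $W_i'^{\perp}$ for an $A_i$-compatible metric on $W_i$. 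So $\Pi_\rmV$ is a measurable $G$-invariant section of $\operatorname{End}(\rmE_\bbF)$.

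\textbf{Step 2: leafwise polynomiality.} Use the simultaneous flow $A_t$, which is ergodic on $\eta$ by Lemma~\ref{lemma:mautner-blocks}. Its stable and unstable leaves through a point of $U$ are, in period coordinates, the affine fibers obtained by varying only $y=(y_a)_a$, respectively only $x=(x_a)_a$, inside the quadric product $\prod_a Q_{\Omega_a}$, intersected with $U$. Because each $\Omega_a$ is bilinear, fixing $x$ cuts out an affine subspace of $\prod_a L_a$. I will apply Filip's argument. First, an invariance principle (Ledrappier / Avila--Viana type, with the tempered integrability of Appendix~\ref{app:tempered-regularity} and the cocycle bound \eqref{boundedness_eq}) shows that the conditional measures of the projectivized lift are invariant under the flat holonomy along stable and unstable leaves. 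This makes $\Pi_\rmV$ a.e.\ equal to a section that is flat-parallel along leaves in the \emph{affine} Kontsevich--Zorich sense. Second, one writes the Hodge projection in a flat frame. The Hodge decomposition at $x+iy$ depends on $y$ along the stable leaf only through the tautological data, and the Hodge-orthogonality condition can be expressed using the flat polarization and the Weil operator. Hodge components of invariant sections are invariant by Theorem~\ref{fixed_part_thm}, so Filip's degree bound gives polynomial dependence in the leaf variable, of degree bounded by the tensor weight. The conclusion is that $\Pi_\rmV$ is polynomial in $y$ for fixed $x$, and polynomial in $x$ for fixed $y$, on a conull set of leaves.

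\textbf{Step 3: joint polynomiality.} Pass from separate to joint polynomiality on $\prod_a Q_{\Omega_a}$ using the separate-polynomiality theorem of Appendix~\ref{app:separate-polynomiality}. Measurability, together with the Fubini-type statement that a.e.\ leaf carries the polynomial representative, first yields a representative polynomial in each variable group with uniformly bounded degree. A Zariski density and Lagrange interpolation argument on the quadric then identifies this representative with the restriction of an ambient polynomial in $(x,y)$. The quadric $\{\Omega_a(x_a,y_a)=1\}$ is the graph-like locus where one can solve for one coordinate. Because possibly degenerate $\Omega_a$ have kernels (the zero-absolute-part directions), the interpolation must be done on $\ker\Omega_a$ and a complement separately.

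\textbf{Main obstacle.} I expect Step 3, together with the bookkeeping of degrees along the curved quadric, to be the main obstacle. Separate polynomiality in $x$ and $y$ on a nonlinear hypersurface does not trivially imply joint polynomiality in the ambient space. The plan is to parametrize each $Q_{\Omega_a}$ locally by $(x_a,y_a')$, with one $y$-coordinate solved rationally as $(1-\Omega_a(x_a,y'_a))/\ell(x_a)$, and then clear denominators using the homogeneity forced by area normalization. This is where the block-area denominators of Theorem~B later appear.
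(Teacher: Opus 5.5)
\textbf{There is a genuine gap in Step~2.} The mechanism you describe would fail, because both of its intermediate claims are false.

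First, the invariance principle does not make $\Pi_{\rmV}$ Gauss--Manin flat along stable leaves, let alone along both families. Take the tautological plane on one stratum with $I(x,y)=1$. Its Hodge-orthogonal projection is $v\mapsto I(v,y)\,x+I(x,v)\,y$ (compare $F_iG_i$ in Subsection~\ref{subsec:sharpness-full-product}). This is affine but nonconstant in $y$ along a stable leaf.

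Second, the Hodge decomposition along a stable leaf does not depend only on tautological data. It depends on the full complex structure, and in a flat frame the Hodge metric is merely real-analytic. So no degree bound ``by the tensor weight'' exists. The fixed-part theorem concerns orbit directions and says nothing about dependence along leaves.

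What the invariance principle actually gives (Lemma~\ref{lemma:invariance-principle-block}), via single-exponent holonomies on the quotients $\rmE^{\leq\lambda_i}/\rmE^{\leq\lambda_{i+1}}$, is narrower. Along stable plaques, the forward Lyapunov filtration is flat, and so are invariant bundles sandwiched between consecutive steps, such as $\rmV^{\leq\lambda_i}+\rmE^{\leq\lambda_{i+1}}$. You then recover $\rmV$ inductively as Hodge-orthogonal complements inside these flat bundles. This uses the splitting $\rmE^{\leq\lambda_i}=\rmV^{\leq\lambda_i}\oplus(\rmV^{\perp_H})^{\leq\lambda_i}$ and analyticity of the Hodge metric. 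It yields only leafwise real-analyticity (Lemma~\ref{thm__lyapunov-filtrations-leafwise-analytic}).

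Polynomiality is a separate dynamical step that your plan lacks. The derivative tensor $T_k=\frac1{k!}D^k_{\mathrm{unstable}}\Pi_{\rmV}$ is an invariant section of $\operatorname{End}(\rmE)\otimes\operatorname{Sym}^k((T^+)^*)$. All its exponents are at most $\Lambda-k\chi$, where $\chi>0$ lies below the positive tangent exponents on $T^+$ and $\Lambda$ bounds the exponents on $\operatorname{End}(\rmE)$. Recurrence forces $T_k=0$ once $k\chi>\Lambda$ (Lemma~\ref{lemma__polynomiality}). The degree bound therefore comes from Lyapunov data, not from the weight.

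\textbf{Step~3} is fine insofar as it invokes Proposition~\ref{prop:product-quadric-polynomiality}, with Fubini for the smooth product measure class from \textup{(S)} and a linear retraction from the $L_a$ to ambient period space. That is exactly the paper's argument. The elementary route in your ``main obstacle'' paragraph would not work, for two reasons:
\begin{enumerate}
\item On the normalized support all areas are identically $1$, so there is no homogeneity with which to clear denominators.
\item The linear-form denominators $\ell(x_a)$ from solving $\Omega_a(x_a,y_a)=1$ are unrelated to the area denominators of Theorem~\ref{thm:original-block-cone}. Those arise only from area normalization on the cone.
\end{enumerate}
The paper instead uses the homogeneous model $Q_\Omega\cong\operatorname{Sp}(L,\Omega)/\operatorname{Sp}(W)$. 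The two fiber families are swept out by opposite nilpotent subalgebras whose elements square to zero, so fiberwise polynomiality becomes $Y^{d+1}\widetilde f=0$ on a Lie-generating set. A finite-jet lemma makes the solution space finite-dimensional, the resulting $\mathfrak{sp}$-module integrates to an algebraic representation, and $\widetilde f$ descends to a regular function on $Q_\Omega$. In the degenerate case, $\ker\Omega_a$ is split off.

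\textbf{Step~1} has a smaller gap. Identifying $\rmV^{\perp_H}$ with $\bigoplus_i V_i\otimes_{A_i}(W_i')^{\perp}$ presupposes that the Hodge metric factors over $V_i\otimes_{A_i}W_i$, which Theorem~A does not assert. Use instead $\rmV^{\perp_H}=(C^{-1}\rmV)^{\perp_I}$, which is invariant by Theorem~\ref{deligne-torus-thm} and flatness of $I$ (Lemma~\ref{lemma:invariant-hodge-complement}).
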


We prove this theorem in Subsection~\ref{subsec:rigidity-joint} after establishing the leafwise statement.

\subsection{Polynomiality along stable and unstable leaves}
\label{subsec:rigidity-leafwise}

\begin{lemma}[Invariant Hodge complements]
\label{lemma:invariant-hodge-complement}
The Hodge-orthogonal complement $\rmV^{\perp_H}$ is $G$-invariant. Consequently $\Pi_{\rmV}$ is a $G$-equivariant measurable section of $\operatorname{End}(\rmE)$.
\end{lemma}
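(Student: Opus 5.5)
We need to show that $\rmV^{\perp_H}$ is $G$-invariant, using the Deligne semisimplicity of Section~\ref{sect5}. The route is to compare the Hodge-orthogonal complement with a polarization-orthogonal complement, which is automatically invariant. We first treat a pure construction $\rmE$ of weight $w$, then finite direct sums.

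\textbf{Pure case.} By Corollary~\ref{cor:graded-semisimplicity} (or Theorem~\ref{thm__Deligne-semisimplicity} in the pure case), $\rmV=\bigoplus_i V_i\otimes_{A_i}W_i'$ for $A_i$-submodules $W_i'\subset W_i$. Let $I$ be the flat polarization on $\rmE$.
\begin{enumerate}
\item The Hodge metric is $h(u,v)=I(Cu,\overline v)$, up to the normalizing power of $i$. Hence the $h$-orthogonal complement of $\rmV$ equals the $I$-orthogonal complement of $C\rmV$.
\item By Theorem~\ref{deligne-torus-thm}, applied with the Weil element $a=i\in\mathbb S(\bbR)$, the bundle $C\rmV$ is $G$-invariant.
\item The form $I$ is flat, i.e.\ preserved by the cocycle, since the polarization is flat and $F$ acts by $\pm\Id$ on each factor. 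Therefore the $I$-orthogonal of a $G$-invariant subbundle is $G$-invariant.
\end{enumerate}
Together these give $\rmV^{\perp_H}=(C\rmV)^{\perp_I}$, which is $G$-invariant. In the real case one works with $\rmV_\bbC$ and observes that both complements are defined over $\bbR$, since $C$ is real.

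\textbf{Finite direct sums.} For $\rmE=\bigoplus_w\rmE^{[w]}$, the Hodge metric is the orthogonal sum of the pure metrics. Define $C$ weightwise and let $I$ be the orthogonal sum of the weightwise polarizations $I^{[w]}$. Each weight projection is invariant, so $I$ is flat, and $h(u,v)=I(C'u,\overline v)$, where $C'$ is $C$ on each summand multiplied by the constant $i^{-w}$. The operator $C'$ is again the action of a Deligne-torus element up to constant scalars on flat invariant summands. Corollary~\ref{cor:graded-fixed-part} gives invariance of $C'\rmV$, because the weight-summand scalars commute with the invariant weight projections and so preserve the set of invariant subbundles. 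The same argument then applies.

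\textbf{Conclusion.} With both $\rmV$ and $\rmV^{\perp_H}$ invariant and complementary, $\Pi_\rmV$ is the projection along an invariant complement onto an invariant subbundle. It therefore commutes with $\Gamma(g,x)$, i.e.\ it is a $G$-equivariant measurable section of $\operatorname{End}(\rmE)$; measurability follows from that of $\rmV$ and of the Hodge metric.

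The main care point is the direct-sum case. A single Deligne element acts with different scalars on different weights, and invariant subbundles may mix weights. The fix is that scalar rescaling of individual weight summands is conjugation by an invariant automorphism, so it still preserves invariance.
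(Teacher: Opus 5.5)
Your argument is correct and is essentially the paper's proof. Both express $\rmV^{\perp_H}$ as the polarization-orthogonal of a Weil-operator image of $\rmV$, then invoke Theorem~\ref{deligne-torus-thm} (Corollary~\ref{cor:graded-fixed-part} for direct sums) and the flatness of $I$, and conclude that $\Pi_{\rmV}$ is equivariant from the invariant splitting. The differences are cosmetic: the paper uses the bilinear $I$ and so writes $\rmV^{\perp_H}=(C^{-1}\overline{\rmV})^{\perp_I}$ for complex $\rmV$, whereas your $C\rmV$ version is right when $\perp_I$ is read through the sesquilinear pairing $I(\cdot,\overline{\cdot})$, which the real cocycle also preserves; and the semisimplicity decomposition you quote at the start is never actually used.
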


\begin{proof}
For a real pure tensor construction, write $I$ for the polarization and $C$ for the Weil operator. Then
\[
 h(v,u)=I(Cv,u)=I(v,C^{-1}u),
 \qquad
 \rmV^{\perp_H}=(C^{-1}\rmV)^{\perp_I}.
\]
Theorem~\ref{deligne-torus-thm} makes $C^{-1}\rmV$ invariant, and flatness of $I$ makes its annihilator invariant. For a complex subbundle use
$h(v,u)=I(v,C^{-1}\overline u)$ and the invariant bundle $C^{-1}\overline{\rmV}$. For a finite direct sum of weights take the direct sum of the bilinear polarizations and use Corollary~\ref{cor:graded-fixed-part}. The argument uses only their flatness and nondegeneracy; no common symmetry sign is required. Positivity of $h$ gives the direct sum $\rmE=\rmV\oplus\rmV^{\perp_H}$ and hence equivariance of its projection.
\end{proof}

Work from now on with one $D$-ergodic component $\eta$. Use the simultaneous flow
\[
 g_t=(a_t,\ldots,a_t)\in D,
 \qquad a_t=\operatorname{diag}(e^t,e^{-t}).
\]
\begin{lemma}\label{lemma:simultaneous-flow}
The flow $g_t$ is ergodic for $\eta$. On the local model \eqref{eq:rigidity-product-chart}, its strong unstable and stable plaques are respectively the connected local fibers with all $y_a$ fixed and all $x_a$ fixed. Their tangent spaces are
\[
 T^+=\bigoplus_a\{(v_a,0):\Omega_a(v_a,y_a)=0\},
 \qquad
 T^-=\bigoplus_a\{(0,w_a):\Omega_a(x_a,w_a)=0\}.
\]
All tangent Lyapunov exponents on $T^+$ are positive and those on $T^-$ are negative.
\end{lemma}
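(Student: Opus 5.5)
Ergodicity comes first and is immediate from Lemma~\ref{lemma:mautner-blocks}. The component $\eta$ is $D$-ergodic, and the $H$-ergodic and $D$-ergodic decompositions coincide, with $A_1$ ergodic on each component. So $g_t=A_t$ is ergodic for $\eta$. No further input from (S) is needed for this part.

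For the plaques I will work directly in the product chart \eqref{eq:rigidity-product-chart}, where $g_t$ acts linearly on period coordinates by $(x_a,y_a)\mapsto(e^tx_a,e^{-t}y_a)$ in every factor simultaneously. Take two points $q=(x,y)$ and $q'=(x',y')$ in $U$. Then
\[
 d(g_tq,g_tq')\approx e^{t}|x-x'|+e^{-t}|y-y'|
\]
in the flat period metric, at least while both orbits remain in a common chart; for returning orbits I use the affine structure of the period charts along the orbit. Backward exponential convergence holds exactly when $y=y'$, and forward convergence exactly when $x=x'$. So the strong unstable plaque through $q$ is the connected component of $\{(x',y)\}\cap U$, and the strong stable plaque is the component of $\{(x,y')\}\cap U$. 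The main care here is in making this rigorous. The strong unstable manifold is defined intrinsically through the flat (Gauss--Manin/period) metric on the product of strata, but it must be intersected with the support. So I will argue that the affine fiber $\{y=\text{const}\}\cap\prod_a Q_{\Omega_a}$ lies in the support locally, which holds because $U$ is a product of open subsets of the quadrics. Conversely, any point of $U$ whose backward orbit shadows that of $q$ must share its $y$-coordinates, since the $y$-difference is expanded under $g_{-t}$.

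The tangent spaces come from differentiating the constraints. The quadric $Q_{\Omega_a}$ is cut out by $\Omega_a(x_a,y_a)=1$ inside $L_a\times L_a$. A variation $(v_a,0)$ with $v_a\in L_a$ stays on it to first order iff $\Omega_a(v_a,y_a)=0$. In fact the constraint is affine-linear in $x_a$ when $y_a$ is fixed, so the fiber is exactly an open piece of the affine subspace $\{x_a+v_a:\Omega_a(v_a,y_a)=0\}$. This gives the formula for $T^+$, and $T^-$ follows symmetrically from $\Omega_a(x_a,w_a)=0$. Taking the direct sum over $a$ uses the product structure of the chart from Lemma~\ref{lemma:prime-local-model}.

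For the exponents, note that in flat period coordinates the derivative of $g_t$ is the linear map $e^t$ on $x$-directions and $e^{-t}$ on $y$-directions. It preserves $T^\pm$, since $\Omega_a(e^tv_a,e^{-t}y_a)=\Omega_a(v_a,y_a)$. The flat metric is not the Hodge/AGY norm, and comparing the two is the step I expect to need the most care. I will measure exponents with a norm that is uniformly comparable to the flat one on compact sets, such as the AGY/Hodge-type norm on the relative period bundle. I will then apply Oseledets' theorem to the ergodic flow together with Poincaré recurrence to a compact set where the norms are comparable, exactly as in the proof of Theorem~\ref{fixed_part_thm}. Along recurrence times the growth is exactly $e^{\pm t}$ times bounded factors, so every tangent exponent on $T^+$ equals $+1$ and every exponent on $T^-$ equals $-1$; in particular they are positive and negative respectively.
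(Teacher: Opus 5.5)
Your ergodicity argument and your derivation of $T^\pm$ by differentiating $\Omega_a(x_a,y_a)=1$ with one variable frozen are fine and match the paper. The gap is in the exponent step, and with it the plaque identification. Period coordinates are only local, and the Euclidean metric of a chart is not invariant under chart changes. When $g_tq$ returns to your compact set, it is read in a chart that differs from the flat continuation of the chart at $q$ by a monodromy matrix, namely the relative Kontsevich--Zorich cocycle, which is exponentially large. So the growth along return times is not $e^{\pm t}$ times bounded factors, and your conclusion that every exponent on $T^+$ equals $+1$ is false. For instance, the unstable horocycle direction $Y_{\boldsymbol1}(q)=(y,0)$ lies in $T^+$, since $\Omega_a(y_a,y_a)=0$. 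It satisfies $Dg_t\,Y_{\boldsymbol1}(q)=e^{2t}\,Y_{\boldsymbol1}(g_tq)$, so its exponent is $2$. In general, a real period variation whose KZ exponent is $\lambda$ has tangent exponent $1+\lambda$, and an imaginary one has $-1+\lambda$. Only the relative directions have exponents exactly $\pm1$.

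The missing input is the spectral gap of the KZ cocycle. On the symplectic complement of the tautological plane the KZ exponents lie strictly in $(-1,1)$, and on the relative part they vanish. The classes $y$ and $x$ carry $\lambda=1$ and $\lambda=-1$. Positivity on $T^+$ also needs the observation that the only real direction with $1+\lambda=0$, namely $X_c$, is excluded by the constraint $\Omega_a(v_a,y_a)=0$. Symmetrically, $iY_c$ is excluded from $T^-$. This is Lemma~\ref{lemma:tangent-sign-decomposition}, which the paper intersects with the tangent space of the local model.

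Your distance argument for the plaques (``forward convergence exactly when $x=x'$'', ``the $y$-difference is expanded under $g_{-t}$'') has the same defect. Without the gap, a direction in $iK_0$ with $\lambda=1$ would be neutral, and chart-metric estimates cannot see this. The paper instead identifies the affine fibers as strong (un)stable plaques on two grounds. They have the full positive, respectively negative, Oseledets dimension. And by Appendix~\ref{app:tempered-regularity} they carry tempered affine charts on which the time-one map contracts exponentially in the AGY norm in the appropriate time direction.
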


\begin{proof}
Ergodicity of the flow and its time-one map follows from Lemma~\ref{lemma:mautner-blocks}.

The factorwise tangent-spectrum calculation in Lemma~\ref{lemma:tangent-sign-decomposition} identifies the positive directions with the area-preserving real period variations and the negative directions with the imaginary ones. Intersecting with the tangent of the local model gives the displayed formulas. Their integral plaques are affine fibers, since $\Omega_a(x_a,y_a)=1$ is affine in either variable when the other is fixed. Appendix~\ref{app:tempered-regularity} constructs tempered affine plaque charts and proves exponential contraction in the appropriate time direction, using the AGY norm and the spectral gap just identified. The plaques have the full positive or negative tangent dimension, so they are the strong unstable or stable plaques. Relative directions have tangent exponents $1$ and $-1$ and cause no exception.
\end{proof}

Write the forward Lyapunov filtration of $\rmE$ for $g_t$ as
\[
 \rmE=\rmE^{\leq\lambda_1}\supsetneq\cdots
 \supsetneq\rmE^{\leq\lambda_m}\supsetneq\{0\},
 \qquad \lambda_1>\cdots>\lambda_m,
\]
and put $\rmE^{\leq\lambda_{m+1}}=0$. For an invariant subbundle, its filtration terms are its intersections with these terms.

\begin{lemma}[Stable invariance principle]
\label{lemma:invariance-principle-block}
Gauss--Manin transport along almost every stable plaque preserves the forward Lyapunov filtration. If a measurable $g_t$-invariant bundle satisfies
\[
 \rmE^{\leq\lambda_{i+1}}\subseteq\rmB\subseteq\rmE^{\leq\lambda_i},
\]
then $\rmB$ has a representative flat on almost every stable plaque. The corresponding statements for the backward filtration and unstable plaques hold after reversing time.
\end{lemma}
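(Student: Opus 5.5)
The plan is to follow the standard invariance-principle argument of \cite{Filip2013}, in its Avila--Viana form, for the simultaneous flow $g_t$. The first step establishes the stable-plaque invariance of the forward filtration. Fix a stable plaque $W^-(x)$ from Lemma~\ref{lemma:simultaneous-flow}, and let $P_{x\to x'}$ denote Gauss--Manin transport within the flat local chart. For $x'\in W^-(x)$ the distance between $g_tx$ and $g_tx'$ contracts exponentially, and Appendix~\ref{app:tempered-regularity} supplies tempered plaque charts. The key identity is
\[
 \Gamma(g_t,x')P_{x\to x'}=P_{g_tx\to g_tx'}\Gamma(g_t,x),
\]
which holds because the connection is flat.

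Because the base points converge exponentially, we need $\|P_{g_tx\to g_tx'}-\Id\|$ to decay exponentially in the Hodge norm along $g_t$. This would follow from tempered local bounds: the norm of the Hodge metric in flat coordinates grows subexponentially along Oseledets-regular orbits, so that $\log\|\cdot\|$ of the metric comparison is $o(t)$. Granting this, a vector $v\in \rmE^{\leq\lambda_i}_x$ is transported to a vector whose forward growth rate is at most $\lambda_i$, since the error term is exponentially small times subexponential factors. Hence $P_{x\to x'}\rmE^{\leq\lambda_i}_x\subseteq\rmE^{\leq\lambda_i}_{x'}$, and equality follows from dimension. This part is mostly bookkeeping once the tempered estimates from the appendix are in place.

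The second step handles a $g_t$-invariant $\rmB$ that is sandwiched between two consecutive filtration terms. Pass to the quotient bundle $\rmE^{\leq\lambda_i}/\rmE^{\leq\lambda_{i+1}}$, on which the cocycle has the single exponent $\lambda_i$. Twist by $e^{-\lambda_i t}$ to obtain a cocycle with zero exponent, and let $\bar\rmB$ be the image of $\rmB$. By the first step, stable transport descends to holonomies $h^-_{x\to x'}$ on this quotient. I would then run the Ledrappier/Avila--Viana argument on the Grassmannian of the quotient, using Luzin sets. Choose a compact set $K$ of large measure on which $\bar\rmB$, the Oseledets data, and the temperedness constants are uniformly continuous. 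By recurrence of $g_t$, which holds since it is ergodic by Lemma~\ref{lemma:simultaneous-flow}, we can pick $t_k\to\infty$ with $g_{t_k}x,g_{t_k}x'\in K$. Then
\[
 h^-_{x\to x'}\bar\rmB_x=\Gamma(g_{t_k},x')^{-1}h^-_{g_{t_k}x\to g_{t_k}x'}\bar\rmB_{g_{t_k}x}.
\]
As $k\to\infty$ the middle holonomy tends to the identity and the two points become close inside $K$.

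The step I expect to be the main obstacle is controlling $\Gamma(g_{t_k},x')^{-1}$ on the single-exponent quotient. This needs the quotient cocycle to be subexponentially conformal in a tempered sense, so that $\|\Gamma(g_t)\|\,\|\Gamma(g_t)^{-1}\|=e^{o(t)}$, and zero exponent alone does not give that. I would first try to use the Oseledets--Pesin reduction, adapted to the tempered norms of Appendix~\ref{app:tempered-regularity}, to conjugate the quotient cocycle into one that is uniformly close to conformal on $K$. If that fails, the fallback is the Avila--Viana route: disintegrate the measure $\int\delta_{\bar\rmB_x}d\eta$ on the projectivized quotient and invoke the invariance principle, which says zero exponents force the $s$-invariance of the disintegration. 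Having obtained $h^-\bar\rmB_x=\bar\rmB_{x'}$ for almost every pair, we pull back to $\rmB$, which equals the preimage of $\bar\rmB$ and contains $\rmE^{\leq\lambda_{i+1}}$, and that preimage is flat on the plaques by the first step. Finally, the unstable statement follows by applying everything to $g_{-t}$, for which the backward filtration plays the role of the forward one.
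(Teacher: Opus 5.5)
Your first step is sound. The paper argues the same way, using simultaneous returns to a compact set on which transport and its inverse are uniformly bounded. Your tempered bounds from Appendix~\ref{app:tempered-regularity} work equally well, and you only need subexponential bounds on transport and its inverse, not exponential decay of anything.

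The gap is in the second step. Invariance gives $\bar\rmB_{x'}=\Gamma(g_{t_k},x')^{-1}\bar\rmB_{g_{t_k}x'}$, so you must compare the images under $\Gamma(g_{t_k},x')^{-1}$ of $h^-\bar\rmB_{g_{t_k}x}$ and $\bar\rmB_{g_{t_k}x'}$. Luzin continuity makes these two subspaces close, but with no rate. Meanwhile, the Lipschitz constant of $\Gamma(g_{t_k},x')^{-1}$ on the Grassmannian can be as large as $\|\Gamma\|\,\|\Gamma^{-1}\|$. On the single-exponent quotient this product is $e^{o(t_k)}$ by Oseledets, so, contrary to your remark, subexponential conformality is not what is missing. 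What is missing is that the product is not bounded, and adapted Pesin norms only bring it down to $e^{2\epsilon t_k}$. A rateless $o(1)$ multiplied by an unbounded factor proves nothing, so neither your main line nor fallback (a) closes. Holonomy invariance of a merely measurable invariant subbundle of a single-exponent cocycle is the content of an invariance principle (Ledrappier, Avila--Viana). It does not follow from Luzin sets and recurrence unless the cocycle has bounded distortion.

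The paper imports that principle in the noncompact form of \cite[\S B.2.3, Theorem~B.2.4]{BrownEskinFilipRodriguezHertz2025}. It first checks the hypotheses for $Q_i$: spatially constant cocycle and transition matrices in the flat plaque trivializations, tempered bounds, and log-integrability from the Hodge growth bound. This yields a standard stable holonomy $H_i$ preserving $\rmB/\rmE^{\leq\lambda_{i+1}}$.

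The step your proposal lacks is identifying $H_i$ with the Gauss--Manin holonomy $P_i$. Here the rate problem disappears, because one compares two holonomies rather than the measurable bundle. Both are equivariant, equal to the identity on the diagonal, and Lipschitz on compact sets of large measure. So at simultaneous return times their discrepancy is $O(e^{-\chi n})$, and conjugating back costs only $e^{2\epsilon n}$ on the single-exponent quotient. This gives $\|H_i(q,q')-P_i(q,q')\|\leq Ce^{(2\epsilon-\chi)n}\to0$.

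Your fallback (b) is the right kind of tool. It could replace this step if you used a version of the invariance principle that accepts Gauss--Manin transport as its holonomy and holds in this noncompact, nonuniformly hyperbolic measurable setting. As written, however, it is only named. Verifying those hypotheses, or identifying the principle's holonomy with $P_i$, is precisely the substance of the proof.
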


\begin{proof}
Lemma~\ref{lemma:tempered-flat-charts} supplies contracting flat plaque charts with tempered radii and bounds. All almost-everywhere assertions below use \textup{(S)} and Fubini, so regular points have full smooth conditional measure on almost every plaque.

First, Gauss--Manin transport preserves the forward filtration. Indeed, almost every pair of regular points on a stable plaque has simultaneous forward returns to compact sets of measure greater than $1/2$, by Birkhoff's theorem. Transport and its inverse are uniformly bounded at sufficiently close pairs in these sets. Equivariance identifies the growth rates along a subsequence of these returns, and the Oseledets limits at both endpoints give the assertion.

Consequently each quotient
\[
 Q_i=\rmE^{\leq\lambda_i}/\rmE^{\leq\lambda_{i+1}},
\]
has flat stable trivializations and an induced Gauss--Manin holonomy $P_i$. With the quotient Hodge norm, its cocycle and transport bounds follow from those on $\rmE$. In the flat quotient trivializations the fiber transition and cocycle matrices remain spatially constant. Thus the smoothness and temperedness requirements of \cite[\S D.3.2]{BrownEskinFilipRodriguezHertz2025} hold directly for $Q_i$, and the Hodge growth bound gives logarithmic integrability. Since $Q_i$ has the single exponent $\lambda_i$, \cite[\S B.2.3 and Theorem~B.2.4]{BrownEskinFilipRodriguezHertz2025} provide its standard stable holonomy $H_i$, preserving the invariant subbundle $\rmB/\rmE^{\leq\lambda_{i+1}}$.

We check that $H_i=P_i$. Both are equivariant and equal the identity on the diagonal. On compact sets of arbitrarily large measure, their local plaque norms and Lipschitz constants are bounded: for $P_i$ this follows from the flat charts, and for $H_i$ from the smooth single-exponent holonomy construction in the cited references. Shrink these sets also to bound the plaque radii from below.

Fix a typical pair $q,q'$ on the same stable plaque and take simultaneous return times $n$ to such a set. For some $\chi>0$ their distance at time $n$ is $O(e^{-\chi n})$. Since $Q_i$ has only the exponent $\lambda_i$, the norms of its forward cocycle and inverse give, for any sufficiently small $\epsilon>0$,
\[
 \|H_i(q,q')-P_i(q,q')\|
 \leq C(q,q',\epsilon)e^{2\epsilon n}e^{-\chi n}.
\]
Here equivariance conjugates the difference at time $n$ back to time zero; the factors with exponent $\lambda_i$ cancel. Taking $2\epsilon<\chi$ proves $H_i=P_i$. This is the single-exponent uniqueness argument recalled in \cite[\S B.2.9]{BrownEskinFilipRodriguezHertz2025}.

Thus Gauss--Manin holonomy preserves $\rmB/\rmE^{\leq\lambda_{i+1}}$; taking its inverse image shows that it preserves $\rmB$. A flat chart gives a constant representative on almost every plaque. Time reversal gives the unstable assertion.
\end{proof}

\begin{lemma}[Leafwise analyticity]
\label{thm__lyapunov-filtrations-leafwise-analytic}
\label{cor:projection-leafwise-analytic}
The bundle $\rmV$, its Hodge-orthogonal complement, and the projection $\Pi_{\rmV}$ vary real-analytically on almost every stable and almost every unstable plaque.
\end{lemma}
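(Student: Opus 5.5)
The argument follows Filip's leafwise scheme: first express $\rmV$ on each stable plaque through Lyapunov-filtration data that the stable invariance principle (Lemma~\ref{lemma:invariance-principle-block}) makes flat, then use the unstable analogue. Combining flat forward and backward data recovers $\rmV$ as a function of the Hodge structure, which varies analytically.

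\emph{Step 1: flat data on stable plaques.} Since $\rmV$ is $g_t$-invariant, the intersections $\rmV\cap\rmE^{\leq\lambda_i}$ form measurable $g_t$-invariant bundles. The sums $\rmB_i=(\rmV\cap\rmE^{\leq\lambda_i})+\rmE^{\leq\lambda_{i+1}}$ are invariant and sandwiched between consecutive filtration terms. Lemma~\ref{lemma:invariance-principle-block} therefore gives representatives of every $\rmB_i$ and of every $\rmE^{\leq\lambda_i}$ that are flat on almost every stable plaque. The same argument applies to $\rmV^{\perp_H}$, which is invariant by Lemma~\ref{lemma:invariant-hodge-complement}.

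\emph{Step 2: recovering $\rmV$ from the flat data.} Oseledets gives the splitting $\rmE=\bigoplus_i\rmE^{\lambda_i}$, where $\rmE^{\lambda_i}=\rmE^{\leq\lambda_i}\cap\rmE^{\geq\lambda_i}$. Because $\rmV$ is invariant and $\rmV\oplus\rmV^{\perp_H}$ is an invariant splitting, one has
\[
\rmV=\bigoplus_i\bigl(\rmV\cap\rmE^{\lambda_i}\bigr).
\]
On a stable plaque the forward filtration and the graded pieces $\rmB_i/\rmE^{\leq\lambda_{i+1}}$ are constant in a flat chart. The backward filtration, however, is not constant on a stable plaque. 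To handle this, I would use the Hodge metric instead of the backward filtration. Identify each graded piece $\rmE^{\leq\lambda_i}/\rmE^{\leq\lambda_{i+1}}$ with the Hodge-orthogonal complement of $\rmE^{\leq\lambda_{i+1}}$ inside $\rmE^{\leq\lambda_i}$. Then $\rmV\cap\rmE^{\leq\lambda_i}$ should be reconstructed as the preimage of the flat graded piece intersected with the flat bundle obtained from $\rmV^{\perp_H}$ through $I$ and $C$ (cf.\ the proof of Lemma~\ref{lemma:invariant-hodge-complement}), i.e.\ $\rmV=(C^{-1}\rmV^{\perp_H})^{\perp_I}$. Here $C^{-1}\rmV^{\perp_H}$ is itself $G$-invariant by Theorem~\ref{deligne-torus-thm}, so Step~1 applies to it as well.

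The cleanest route, following Filip, is the following. Both $\rmV$ and $C\rmV$ are invariant, hence both are flat modulo filtration data on the plaque. This expresses $\rmV$ as the solution of a linear-algebra problem whose coefficients are the flat data together with the Weil operator $C$. On the plaque, which is an affine fiber $\{y_a\text{ fixed}\}$ in the quadric model, $C$ depends real-analytically (in fact rationally) on the period coordinates: the Hodge decomposition of $H^1$ varies real-analytically with the complex structure, and the plaque is an analytic submanifold of the stratum. I would solve this as an iterative reconstruction. Start from the top filtration step, where $\rmV\cap\rmE^{\leq\lambda_m}$ is flat. At each step, the next piece is obtained as the intersection of a flat subbundle with the Hodge-orthogonal complement of already-analytic data. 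Intersections and Hodge-orthogonal complements of analytic subbundles of constant rank are analytic, so $\rmV$ is analytic on the plaque. Constant rank holds almost everywhere by ergodicity.

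\emph{Step 3: conclusion.} $\Pi_\rmV$ is built from $\rmV$, $\rmV^{\perp_H}$ and the analytic Hodge metric, so it is analytic on almost every stable plaque. Time reversal gives the unstable case. Fubini, using (S) and the product structure \eqref{eq:rigidity-product-chart}, gives the almost-every-plaque statement.

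\emph{Main obstacle.} Step~2 is the hard part: showing that the intersections with the Hodge-orthogonal pieces really reconstruct $\rmV$, and that rank degeneracies occur only on a null set. My first attempt would be to prove $\Pi_\rmV=\sum_i\Pi_{\rmV\cap\rmE^{\lambda_i}}$ and to express each summand through flat forward data and the Hodge inner product, as in Filip's argument.
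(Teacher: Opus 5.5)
Your Step~1 matches the paper. The shape you reach at the end of Step~2 is also the right one: intersect a flat subbundle with the Hodge-orthogonal complement of already-analytic data. But you never say which flat bundle and which analytic data, and you leave this step as your main obstacle.

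The route you propose for that step fails. In general $\Pi_\rmV\neq\sum_i\Pi_{\rmV\cap\rmE^{\lambda_i}}$, because the Oseledets spaces are not Hodge-orthogonal; the right-hand side need not even be idempotent. Moreover, the $\rmE^{\lambda_i}$ involve the backward filtration, which, as you note yourself, is not flat on stable plaques. Likewise, $(C^{-1}\rmV^{\perp_H})^{\perp_I}$ is just $\rmV$ again and is not a flat bundle on the plaque, so the detour through $C$ gives nothing new.

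The missing idea is to run the induction on the filtrations of $\rmV$ and $\rmW=\rmV^{\perp_H}$ simultaneously, interleaved. The splitting $\rmE=\rmV\oplus\rmW$ is invariant and $\|\Pi_\rmV\|_H\le1$, so a vector's forward growth rate is the maximum of the rates of its two components. Hence the forward filtration splits, $\rmE^{\leq\lambda_i}=\rmV^{\leq\lambda_i}\oplus\rmW^{\leq\lambda_i}$, and
\[
 \rmB_i=\rmV^{\leq\lambda_i}+\rmE^{\leq\lambda_{i+1}}=\rmV^{\leq\lambda_i}\oplus\rmW^{\leq\lambda_{i+1}}
\]
is a Hodge-orthogonal decomposition of a bundle that is flat on stable plaques by Lemma~\ref{lemma:invariance-principle-block}. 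So $\rmV^{\leq\lambda_i}$ is the Hodge-orthogonal complement of $\rmW^{\leq\lambda_{i+1}}$ inside $\rmB_i$. It is therefore analytic once $\rmW^{\leq\lambda_{i+1}}$ is, and the same holds with $\rmV$ and $\rmW$ interchanged. Start from the flat smallest terms and climb to $\rmV^{\leq\lambda_1}=\rmV$; the projection $\Pi_\rmV$ is then analytic as well.

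An induction on $\rmV$ alone, as you sketch it, does not close. Knowing that $\rmV^{\leq\lambda_{i+1}}$ is analytic does not determine $\rmV^{\leq\lambda_i}$ inside $\rmB_i$; the $\rmW$-data does.

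This also removes your rank worry. The orthogonal complement of a subbundle inside a larger one has rank $\operatorname{rank}\rmB_i-\operatorname{rank}\rmW^{\leq\lambda_{i+1}}$ at every point of the plaque. By contrast, for a general intersection of two analytic subbundles, almost-everywhere constancy of rank from ergodicity would not by itself give an analytic subbundle on the whole plaque.
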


\begin{proof}
Put $\rmW=\rmV^{\perp_H}$. The forward filtration splits as
\[
 \rmE^{\leq\lambda_i}
 =\rmV^{\leq\lambda_i}\oplus\rmW^{\leq\lambda_i}.
\]
Induct from the smallest exponent upward, proving analyticity of both filtrations along stable plaques. Their smallest terms are flat by Lemma~\ref{lemma:invariance-principle-block}. At the next step the same lemma makes
\[
 \rmB_i=\rmV^{\leq\lambda_i}+\rmE^{\leq\lambda_{i+1}}
        =\rmV^{\leq\lambda_i}\oplus\rmW^{\leq\lambda_{i+1}}
\]
flat. The last decomposition is Hodge-orthogonal. Since its second summand is analytic by induction and the Hodge metric is real-analytic in period coordinates, its orthogonal complement inside $\rmB_i$ is analytic. Interchange $\rmV$ and $\rmW$ and continue. The top terms are the full bundles. Their associated projection is analytic as well. Reverse time for unstable plaques.
\end{proof}

We use the following elementary consequence of recurrence. If all Lyapunov exponents of a cocycle are strictly negative, every measurable invariant section is zero. Indeed, a nonzero invariant section has a positive-measure set on which its norm is bounded above and away from zero. Recurrence to that set contradicts its exponential decay. This argument does not require the section itself to be integrable.

\begin{lemma}[Leafwise polynomiality]\label{lemma__polynomiality}
There is an integer $d$ such that $\Pi_{\rmV}$ agrees with a polynomial of degree at most $d$ on almost every connected local stable and unstable plaque, in flat plaque coordinates.
\end{lemma}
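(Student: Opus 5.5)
We follow Filip's leafwise mechanism \cite{Filip2013}, treating the stable case; the unstable case follows by reversing time. Fix a $D$-ergodic component $\eta$ and a typical point $q$ with its stable plaque $W^-(q)$. In the flat chart of Lemma~\ref{lemma:tempered-flat-charts}, the plaque is an open subset of an affine space, with coordinate $w=(w_a)$ satisfying $\Omega_a(x_a,w_a)=0$. The Gauss--Manin connection trivializes $\rmE$ over it, so $\Pi_{\rmV}$ becomes a matrix-valued function $P_q(w)$. By Lemma~\ref{cor:projection-leafwise-analytic}, $P_q$ is real-analytic on almost every plaque. We may therefore expand it in a Taylor series about $w=0$,
\[
 P_q(w)=\sum_{k\ge0}P_q^{(k)}(w),
\]
where $P_q^{(k)}$ is the homogeneous degree-$k$ term. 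Each $P_q^{(k)}$ is a measurable section of
\[
 \operatorname{Sym}^k\bigl((T^-)^*\bigr)\otimes\operatorname{End}(\rmE)
\]
over the base. The goal is to show that these sections vanish for $k>d$, for an explicit $d$.

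The key step is to check that $q\mapsto P_q^{(k)}$ is an equivariant section of a cocycle. The cocycle is the tensor product of the $k$-th symmetric power of the dual tangent cocycle on $T^-$ with the cocycle on $\operatorname{End}(\rmE)$. Equivariance holds for the following reasons. $\Pi_{\rmV}$ is $g_t$-equivariant by Lemma~\ref{lemma:invariant-hodge-complement}. The flow $g_t$ maps stable plaques to stable plaques by affine maps in the flat charts; in period coordinates it is linear, acting by $e^{-t}$ on the imaginary parts. Gauss--Manin transport commutes with the flow. Hence the Taylor coefficients transform tensorially. Now compute the Lyapunov exponents of this cocycle under the forward flow. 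By Lemma~\ref{lemma:simultaneous-flow}, the tangent exponents on $T^-$ are negative, and the relevant ones are at most $-\kappa$ for some $\kappa>0$; the relative directions have exponent exactly $-1$. So $(T^-)^*$ has exponents at least $\kappa$. Here we must choose the time direction with care: with $P_q^{(k)}$ viewed as a section, it is more convenient to run the flow backward on stable plaques. Under $g_{-t}$, the plaque coordinate is expanded by $e^{t}$ and a degree-$k$ coefficient is multiplied by roughly $e^{-k\kappa' t}$ with $\kappa'>0$. Meanwhile $\operatorname{End}(\rmE)$ has exponents in $[-2\Lambda,2\Lambda]$, where $\Lambda$ is the top exponent of $\rmE$ (bounded via \eqref{boundedness_eq}). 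Hence for $k\kappa'>2\Lambda$ every exponent of the combined cocycle is strictly negative in the appropriate time direction. The recurrence remark preceding the lemma then forces $P_q^{(k)}=0$ almost everywhere. This gives a uniform $d=\lfloor 2\Lambda/\kappa'\rfloor$, which is independent of the plaque.

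Because $P_q$ is analytic on the connected plaque and all its Taylor terms of degree greater than $d$ vanish at almost every base point, $P_q$ agrees with a polynomial of degree at most $d$ on the whole connected local plaque. To make this rigorous, the Taylor sections must be taken at every typical point of the plaque, not just at the center, and one needs a Fubini argument together with the fact, from (S), that regular points have full conditional measure. The step I expect to be the main obstacle is making the equivariance of the Taylor coefficients and the exponent bookkeeping precise in the tempered, noncompact setting. Concretely, the plaque charts of Appendix~\ref{app:tempered-regularity} have only tempered radii. The Taylor coefficients must therefore be measurable and log-integrable, or at least finite almost everywhere, so that Oseledets applies, and the recurrence argument needs only finiteness. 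I would first try to bound $P_q^{(k)}$ by Cauchy estimates on the tempered radius, using $\|\Pi_{\rmV}\|\le1$ in the Hodge norm and the tempered distortion between the flat and Hodge norms.
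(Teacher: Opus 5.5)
Your proposal is correct and follows the paper's proof. The paper likewise takes the full homogeneous derivative tensor $\tfrac1{k!}D^k\Pi_{\rmV}$ as an invariant section of $\operatorname{End}(\rmE)\otimes\operatorname{Sym}^k$ of the dual plaque tangent cocycle. It kills this section by the recurrence observation once $k$ times the tangent gap exceeds the exponent bound on $\operatorname{End}(\rmE)$, and concludes by Fubini and analyticity on each connected plaque. The paper works on unstable plaques in forward time and then reverses time, which is equivalent to your backward-time treatment of stable plaques.

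The Cauchy-estimate step you anticipate is unnecessary, since the recurrence argument only needs a measurable, almost everywhere finite section. The one thing to add is taking the maximum of the degree bounds over the finitely many $D$-ergodic components.
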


\begin{proof}
Let $\Lambda$ bound the Lyapunov exponents of the induced cocycle on $\operatorname{End}(\rmE)$ from above, in both time directions. By Lemma~\ref{lemma:simultaneous-flow}, choose $\chi>0$ below the smallest positive exponent on $T^+$ and below the absolute value of every negative exponent on $T^-$. These constants exist on the ergodic component under consideration.

Along unstable plaques, take the full homogeneous derivative tensor
\[
 T_k=\frac1{k!}D^k_{\mathrm{unstable}}\Pi_{\rmV}
 \in\operatorname{End}(\rmE)\otimes\operatorname{Sym}^k((T^+)^*).
\]
The analytic plaque representatives define measurable tensors. Affine changes of flat plaque coordinates make them intrinsic, and differentiating equivariance of $\Pi_{\rmV}$ makes $T_k$ an invariant section of the indicated tensor cocycle. Every Lyapunov exponent of this cocycle is at most $\Lambda-k\chi$. Choose $k$ so that $k\chi>\Lambda$. The recurrence observation gives $T_k=0$ almost everywhere.

By the smooth measure class and Fubini, on almost every plaque this analytic tensor vanishes at almost every point, hence identically. The projection is therefore a polynomial of degree at most $k-1$ on that connected plaque. Reverse time for stable plaques. Finally, there are only finitely many $D$-ergodic components; take the maximum of the resulting degree bounds. It is essential here to use the full homogeneous tensor, since the tangent cocycle can mix monomials of the same degree.
\end{proof}

\subsection{Joint polynomiality}
\label{subsec:rigidity-joint}

The algebraic input is the following consequence of symplectic separate polynomiality. Its proof, including the almost-everywhere assembly, is given in Appendix~\ref{app:separate-polynomiality}.

\begin{proposition}[Separate polynomiality on products of quadrics]
\label{prop:product-quadric-polynomiality}
Let $U=\prod_{a=1}^sU_a$, where each $U_a$ is a connected, relatively compact open subset of
\[
 Q_{\Omega_a}=\{(x_a,y_a)\in L_a\times L_a:
                         \Omega_a(x_a,y_a)=1\},
\]
and each $\Omega_a$ is a nonzero alternating form, possibly degenerate. Let $f\in L^1(U)$. Suppose that, for every $a$ and almost every choice of the other quadric coordinates, $f$ is locally polynomial of degree at most $d$ on almost every affine $x_a$-fiber with $y_a$ fixed, and on almost every affine $y_a$-fiber with $x_a$ fixed. Then $f$ agrees almost everywhere on $U$ with an ambient polynomial. Its degree has a bound depending only on $d$ and the dimensions of the $L_a$.
\end{proposition}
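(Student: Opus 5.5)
We first handle a single quadric factor, then assemble the factors. Fix $a$ and write $Q=Q_{\Omega_a}$, $L=L_a$, $\Omega=\Omega_a$. The first step is to strip off the degenerate directions. Let $N=\ker\Omega$, choose a complement $L=N\oplus P$ with $\Omega|_P$ symplectic, and write $x=x_N+x_P$, $y=y_N+y_P$. The defining equation $\Omega(x_P,y_P)=1$ involves only the $P$-components. Hence $Q\cong N\times N\times Q_P$, and the $x$- and $y$-fibers split compatibly as products of full $N$-directions with the fibers of $Q_P$.

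Along the $N$-directions, $f$ is polynomial in $x_N$ on almost every $x$-fiber and in $y_N$ on almost every $y$-fiber. We will therefore expand $f$ in monomials of $(x_N,y_N)$, with coefficients that are functions on $Q_P$. To make this expansion rigorous we do not take pointwise derivatives. Instead we integrate $f$ against a fixed finite family of test polynomials (a Lagrange-type basis) on small boxes. This expresses each coefficient as an $L^1$ function that inherits separate polynomiality of bounded degree in the $x_P$ and $y_P$ fibers. The reduction thus comes down to the nondegenerate case.

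The core step is the nondegenerate case $Q_P=\{\Omega(x,y)=1\}\subset P\times P$. Here $f$ is polynomial of degree at most $d$ on almost every affine hyperplane $\{y=y_0,\ \Omega(x,y_0)=1\}$ and on almost every hyperplane $\{x=x_0,\ \Omega(x_0,y)=1\}$. We want a single ambient polynomial in $(x,y)$. We would argue via invariance under the symplectic group, which acts transitively on $Q_P$, together with a Bernstein/Baire-type argument. By Fubini there is a positive-measure set of $y_0$ on which the restriction is polynomial with coefficients depending measurably on $y_0$. Choose a basis of polynomials on each $x$-hyperplane, parametrized affinely by $y_0$, for instance through coordinates $x=x_0(y_0)+z$ with $z\in y_0^{\perp_\Omega}$.

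The coefficient functions $c(y_0)$ are then recovered by integrating $f$ against test functions on the hyperplane. Varying the test points and using the $y$-fiber polynomiality shows that each $c$ is polynomial of degree at most $d$ on almost every $y$-fiber of the dual kind. A degree count then bounds the total degree. Concretely, we expect the key identity to be the following: a function that is polynomial of degree at most $d$ on all translates of hyperplanes in two transverse families, whose union spans, is a polynomial of degree at most $2d\cdot\dim$. This is proved by iterated finite differences, since a $(d+1)$-st finite difference along either fiber direction vanishes, and the fiber directions at nearby points span $T Q_P$. The finite-difference formulation works directly in $L^1$, with identities holding almost everywhere via Fubini, and so avoids the need for pointwise regularity.

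We expect the main obstacle to be that the fiber directions vary with the base point: $x$-fibers have direction $y_0^{\perp_\Omega}$. Finite differences along a fixed vector therefore do not stay inside fibers. Our first attempt will be to work in local coordinates that simultaneously rectify both foliations to coordinate planes. Specifically, we will use a Darboux-type map $(x,y)\mapsto(x,\ \text{coordinates of }y)$ adapted to a fixed $y^\ast$, chosen to be polynomial with polynomial inverse on $U$ (a rational map with nonvanishing denominator $\Omega(x^\ast,y)$). In these coordinates the hypotheses become separate polynomiality in two complementary coordinate blocks. Bernstein-type separate-polynomiality in $L^1$ then gives a rational function with controlled denominator that is polynomial on fibers. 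A final check that the denominator cancels, which follows from polynomiality on the original affine fibers, yields an ambient polynomial.

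For the product over $a$, we proceed by induction on $s$. For almost every value of the other factors, the single-factor result makes $f$ polynomial of bounded degree in $U_a$. Classical separate polynomiality for products (again via finite differences in $L^1$) then gives joint polynomiality. Connectedness of each $U_a$ extends local polynomial agreement to all of $U$, and the degree bound depends only on $d$ and the $\dim L_a$.
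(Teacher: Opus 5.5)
Your reduction of the degenerate case to $Q_P$ (splitting off $N=\ker\Omega$ and extracting coefficients with dual test functions) and your assembly over the factors $a$ are essentially the paper's Step~6 and its finite-dimensional assembly lemma. The gap is in the nondegenerate core.

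\textbf{The two fiber families do not span.} With $\dim P=2m$, the quadric $Q_P$ has dimension $4m-1$. The tangent spaces of the two fiber families at $(x,y)$ are $\{(v,0):\Omega(v,y)=0\}$ and $\{(0,w):\Omega(x,w)=0\}$, of total dimension $4m-2$. The vector $(x,-y)$ is tangent to $Q_P$ but does not lie in their sum, because $\Omega(x,y)=1$. So the fiber directions do not span $TQ_P$. The missing direction is the Cartan (geodesic-flow) direction; in the paper's grading of $\mathfrak{sp}(L,\Omega)$ it arises only as the bracket $[\mathfrak g_2,\mathfrak g_{-2}]=\mathbb R h_0$.

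\textbf{Consequences for your steps.} No chart can turn the hypotheses into separate polynomiality in two complementary coordinate blocks. In your chart $(x,u)$ with $u=y/\Omega(x^*,y)$, the $y$-fibers are indeed the slices $\{x=x_0\}$. But a slice $\{u=u_0\}$ is $2m$-dimensional, and the $x$-fiber hypothesis only says that $f$ is polynomial on each hyperplane $\{\Omega(x,u_0)=t\}$ inside it, with arbitrary dependence on $t$. Neither hypothesis constrains that direction, so the Bernstein step has no input there and the denominator-cancellation check never starts. Your earlier coefficient scheme hits the same wall. $c(y_0)$ integrates $f$ over a piece of the hyperplane $\{\Omega(x,y_0)=1\}$, which moves with $y_0$. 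The integrand is therefore not evaluated at fixed $x$, and the $y$-fiber hypothesis, a statement at fixed $x$, does not transfer to $c$.

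\textbf{What is missing.} You need a mechanism that extracts regularity in the bracket direction from the two families. The paper lifts $f$ to $\tilde f(g)=f(ge,gf)$ on $\mathrm{Sp}(L,\Omega)$, for a base point $(e,f)\in Q_\Omega$. There the hypotheses become $Y^{d+1}\tilde f=0$ for left-invariant fields $Y$ in $\mathfrak n_f\cup\mathfrak n_e$, together with $Y\tilde f=0$ for $Y\in\mathfrak h$; these fields Lie-generate the Lie algebra. A local finite-jet lemma in the style of Antonelli--Le Donne then places $\tilde f$ in a finite-dimensional space of analytic germs. Right-invariant fields make this space a finite-dimensional $\mathfrak{sp}$-module. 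That module integrates to an algebraic representation of $\mathrm{Sp}(L_{\mathbb C},\Omega_{\mathbb C})$, which exhibits $\tilde f$ as a matrix coefficient and hence a regular function. By right $H$-invariance it descends to a polynomial on $Q_\Omega$.

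Some substitute for this bracket-generating step is unavoidable. Finite differences taken along one fiber family at a time never move in the direction $(x,-y)$; that direction is reached only through compositions of moves along the two families.
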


\begin{proof}[Proof of Theorem~\ref{thm__joint-polynomiality-fixed-area}]
Work in a relatively compact product chart \eqref{eq:rigidity-product-chart}, and let $\pi$ be a flat matrix coefficient of $\Pi_{\rmV}$. It is locally bounded: the Hodge operator norm of an orthogonal projection is at most $1$, and the Hodge and coordinate norms are uniformly equivalent on compact subsets.

By Lemmas~\ref{lemma:simultaneous-flow} and~\ref{lemma__polynomiality}, $\pi$ is polynomial of uniformly bounded degree on almost every plaque with all $y_a$ fixed, and likewise with all $x_a$ fixed. In foliation coordinates, the measure is equivalent to smooth product measure. Fubini, first over plaques and then over the other affine fiber variables within a plaque, therefore supplies exactly the factorwise hypotheses of Proposition~\ref{prop:product-quadric-polynomiality}. This step uses almost-everywhere restrictions, never a choice of a prescribed slice through an exceptional set.

The proposition gives a polynomial in the $L_a$-coordinates. For $\bbF=\bbC$, apply it to real and imaginary parts. Since the $L_a$ embed linearly in the ambient relative cohomology spaces, a linear retraction extends this polynomial to ambient period coordinates. Repeating on the finitely many $D$-ergodic components proves the theorem.
\end{proof}

\subsection{Extension to the cone of original blocks}

Let $C_G(\mathcal M)$ be the cone obtained from the fixed-area support by allowing one positive scalar $t_\beta$ for all components in $I_\beta$, independently for different $\beta$. On this cone all component areas within $I_\beta$ are equal; denote their common value by $A_\beta$. Thus $A_\beta=t_\beta^2$.
Extend the projection by pullback under area normalization, using the natural flat identification along scalar rays.
We use period coordinates after the rotational coordinate change in the standing hypothesis, so the original block geodesic flows act by simultaneous real expansion and imaginary contraction.

\begin{theorem}[Polynomiality on the cone of original blocks]
\label{thm:original-block-cone}
Under the hypotheses of Theorem~\ref{thm__joint-polynomiality-fixed-area}, each matrix coefficient of the extended projection, locally on $C_G(\mathcal M)$, has the form
\[
 \widetilde\pi(x,y)
 =\frac{P(x,y)}{\prod_{\beta=1}^r A_\beta^{D_\beta}},
 \qquad D_\beta\in\mathbb Z_{\geq0},
\]
where $P$ has coefficients in $\bbF$ and is bihomogeneous of total degree $(D_\beta,D_\beta)$ in all the $x$-variables and all the $y$-variables belonging to the original block $I_\beta$.
\end{theorem}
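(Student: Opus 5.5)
The plan is to start from the polynomial representative $\pi(x,y)=p(x,y)$ of a flat matrix coefficient on a smooth branch of the fixed-area support, given by Theorem~\ref{thm__joint-polynomiality-fixed-area}. I will use equivariance under the block geodesic flows to show that $p$ may be replaced by a polynomial that is balanced in each original block. Then I will read off the area denominators from the normalization map.

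\emph{Step 1: equivariance under each block flow.} By Lemma~\ref{lemma:simultaneous-generation} (more precisely, its proof), $G$ contains the block geodesic flow $a^{(\beta)}_s$ for each original block $I_\beta$. In the rotated period coordinates of the standing hypothesis, this flow acts by $x_i\mapsto e^sx_i$ and $y_i\mapsto e^{-s}y_i$ for $i\in I_\beta$, and trivially on the other coordinates. For $|s|$ small it maps a small chart of the support into a chart with the same flat trivialization. In a flat trivialization the Kontsevich--Zorich cocycle along such short orbit segments is the identity matrix. Lemma~\ref{lemma:invariant-hodge-complement} says $\Pi_{\rmV}$ is equivariant, so in the flat frame this gives
\[
 p\bigl(a^{(\beta)}_s q\bigr)=p(q)
\]
for almost every $q$ in the chart and all small $s$. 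By Fubini and continuity of $p$, the identity then holds for every $q$ in the smooth branch. If necessary, I will pass to the analytic continuation in $s$ along orbit segments that remain in the support.

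\emph{Step 2: extracting the balanced part.} For each $\beta$, decompose
\[
 p=\sum_{k,l}p^{(\beta)}_{k,l},
\]
where $p^{(\beta)}_{k,l}$ is bihomogeneous of degree $(k,l)$ in the $x$- and $y$-variables of $I_\beta$. For fixed $q$ on the support, the function $s\mapsto p(a^{(\beta)}_sq)$ equals the Laurent polynomial $\sum_{k,l}e^{(k-l)s}p^{(\beta)}_{k,l}(q)$. Since it is constant on an interval, every coefficient with $m=k-l\neq0$ vanishes; that is, $\sum_{k-l=m}p^{(\beta)}_{k,l}(q)=0$ for every $m\neq0$. I will apply this successively for $\beta=1,\ldots,r$; the projections commute because they act on disjoint variables. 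The result is that $p$ agrees on the support with a polynomial $p_0=\sum_{\mathbf k}P_{\mathbf k}$, where $P_{\mathbf k}$ is bihomogeneous of degree $(k_\beta,k_\beta)$ in the block-$I_\beta$ variables for every $\beta$.

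\emph{Step 3: cone extension.} A point of $C_G(\mathcal M)$ has the form $(t_\beta x_\beta,t_\beta y_\beta)_\beta$ with $(x,y)$ on the normalized support and $A_\beta=t_\beta^2$. The natural flat identification along scalar rays is used here. Since scalar dilation preserves each relative cohomology class up to the scalar, the flat frame is unchanged. Therefore the extension is
\[
 \widetilde\pi(x,y)=p_0\bigl((x_\beta/t_\beta,\,y_\beta/t_\beta)_\beta\bigr)
 =\sum_{\mathbf k}\frac{P_{\mathbf k}(x,y)}{\prod_\beta A_\beta^{k_\beta}} .
\]
This expression involves only integral powers of the areas, because the degrees are balanced. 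Let $D_\beta=\max k_\beta$, and pick any $i_\beta\in I_\beta$. Multiplying each term by $A_\beta^{D_\beta-k_\beta}=\omega_{i_\beta}(x_{i_\beta},y_{i_\beta})^{D_\beta-k_\beta}$, which is bihomogeneous of degree $(1,1)$ per factor, produces a common numerator $P$ of bidegree $(D_\beta,D_\beta)$ in each block. Hence $\widetilde\pi$ has homogeneous degree zero.

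I expect Step~1 to be the main obstacle. One must check carefully that the cocycle really is the identity in the local flat trivialization along short block-flow segments. One must also check that the almost-everywhere identity upgrades to a pointwise polynomial identity on the whole smooth branch, using (S), Fubini along flow lines, and continuity. The rotational normalization is essential here: only in those coordinates does the block flow act diagonally, as $e^{\pm s}$ on the real and imaginary parts. By contrast, the square-root example in Section~\ref{subsec:sharpness-full-product} shows that the analogous argument fails for local quadric factors that are finer than the $I_\beta$, because no group element scales them independently.
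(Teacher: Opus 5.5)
Your proposal is correct and follows the paper's proof essentially step for step. The steps are: local invariance of the polynomial representative under each original block flow in a flat trivialization; removal of the nonzero torus weights by linear independence of the characters $e^{ms}$; regrouping of the weight-zero part by block bidegree; and clearing denominators with the area polynomial of a fixed component $i_\beta\in I_\beta$.

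The differences are cosmetic. The block flows lie in $D_\beta\subset G$ directly, so Lemma~\ref{lemma:simultaneous-generation} is not needed. Also, because you decompose an ambient representative from the start, the paper's final linear-retraction step becomes unnecessary.
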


\begin{proof}
On a small connected period neighborhood of the normalized support, write $\pi=Q|_Z$ almost everywhere, where $Q$ is polynomial and
\[
 Z=\prod_aQ_{\Omega_a}
\]
is the local algebraic model from Lemma~\ref{lemma:prime-local-model}.
The flow in the original block $\beta$ acts by
\[
 (x_i,y_i)\longmapsto(e^t x_i,e^{-t}y_i),
 \qquad i\in I_\beta.
\]
Equivariance of the projection and its flat trivialization imply invariance of $Q|_Z$ under this local flow. Initially this equality holds almost everywhere; it holds everywhere on the overlapping open sets because both sides are polynomial restrictions and the measure is of full smooth class there.

Decompose $Q$ into its finitely many weight components for the product of these $r$ split tori. Linear independence of their characters shows that the restriction of every nonzero-weight component vanishes. This can be checked on a smaller open set where all sufficiently small flow parameters are allowed. We may therefore replace $Q$ by its weight-zero component. Decomposing further by total degrees gives
\[
 Q=\sum_{\boldsymbol d}Q_{\boldsymbol d},
\]
where $Q_{\boldsymbol d}$ has total bidegree $(d_\beta,d_\beta)$ in each original block.

Area normalization multiplies both $x_i$ and $y_i$, for $i\in I_\beta$, by $A_\beta^{-1/2}$. Hence
\[
 \widetilde\pi(x,y)
 =\sum_{\boldsymbol d}
   \frac{Q_{\boldsymbol d}(x,y)}{\prod_\beta A_\beta^{d_\beta}}.
\]
Choose integers $D_\beta\geq\max_{\boldsymbol d}d_\beta$ and combine denominators. The resulting numerator is
\[
 P=\sum_{\boldsymbol d}Q_{\boldsymbol d}
             \prod_\beta A_\beta^{D_\beta-d_\beta},
\]
which has the required bidegrees, since area has bidegree $(1,1)$.
In this expression one may represent $A_\beta$ by the component area polynomial for any fixed $i_\beta\in I_\beta$; these polynomials agree on the cone in question. Linear retractions onto the $L_a$, applied separately to real and imaginary variables and respecting the original blocks, extend the numerator to ambient period space without changing its block bidegrees.
\end{proof}

\begin{remark}[Independent scaling of every component]
Even for a nontrivial diagonal block, the fixed-area theorem gives the following weaker formula on the independently scaled cone:
\[
 \widetilde\pi(x,y)
 =Q\bigl((A_i^{-1/2}x_i,A_i^{-1/2}y_i)_i\bigr)
 \in\bbF[x,y,A_1^{-1/2},\ldots,A_n^{-1/2}].
\]
Positive square roots are understood. This formula is homogeneous of degree zero under each positive componentwise scalar dilation.
When all scalings in an original block agree, the preceding theorem eliminates its square roots. Without the additional independent flow invariance, it does not eliminate the square roots separately in each component.
\end{remark}

\begin{corollary}[Regular representatives]\label{cor:regular-representatives}
Under the hypotheses of Theorem~\ref{thm__joint-polynomiality-fixed-area}, $\rmV$ has a unique continuous representative on each smooth branch of the support of every $D$-ergodic component. Its Hodge-orthogonal projection is locally the restriction of an ambient polynomial, and the resulting subbundle is real-analytic. These representatives are $G$-equivariant wherever the corresponding smooth branches are mapped to one another.
\end{corollary}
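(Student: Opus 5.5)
The plan is to obtain the representative directly from Theorem~\ref{thm__joint-polynomiality-fixed-area}. Fix a $D$-ergodic component $\eta$ and a smooth branch $M$ of its support. Cover $M$ by relatively compact product charts $U$ as in \eqref{eq:rigidity-product-chart}, using flat trivializations of $\rmE$ over each chart. On each chart, every flat matrix coefficient of $\Pi_{\rmV}$ agrees $\eta$-almost everywhere with the restriction of an ambient polynomial $Q_U$. I will define $\widetilde\Pi_U:=Q_U|_{U}$; this is real-analytic on $U$.

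Next I will check that the local definitions patch together. On an overlap $U\cap U'$, the two trivializations differ by a constant matrix (Gauss--Manin transport), so $\widetilde\Pi_U$ and $\widetilde\Pi_{U'}$ agree almost everywhere on $U\cap U'$. By (S), $\eta$ is equivalent to smooth volume on $M$, so every nonempty open subset of $M$ has positive measure. Two continuous functions that agree almost everywhere therefore agree everywhere, and the local pieces glue to a continuous, real-analytic endomorphism field $\widetilde\Pi$ on $M$. The same positivity gives uniqueness: any continuous representative must coincide with $\widetilde\Pi$ on a dense set, hence everywhere.

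To identify the subbundle, I will use the algebraic identities satisfied by $\Pi_{\rmV}$ almost everywhere: it is idempotent, self-adjoint for the real-analytic Hodge metric, and has constant rank $k$. The idempotence and self-adjointness identities are continuous in the coefficients, so they persist everywhere on $M$. Rank needs a separate argument, since it is only lower semicontinuous. For an idempotent, however, the rank equals the trace, which is continuous and equal to $k$ almost everywhere, and therefore equal to $k$ everywhere. So $\widetilde{\rmV}:=\operatorname{im}\widetilde\Pi$ is a real-analytic rank-$k$ subbundle. Its Hodge-orthogonal projection is $\widetilde\Pi$, which is locally a polynomial restriction.

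Finally, equivariance. Suppose $g\in G$ maps one smooth branch to another, with both branches carrying the respective representatives. The identity $\Pi_{\rmV}(gx)=\Gamma(g,x)\Pi_{\rmV}(x)\Gamma(g,x)^{-1}$ holds almost everywhere by Lemma~\ref{lemma:invariant-hodge-complement}. Both sides are continuous on the branch: $g$ acts smoothly, and the cocycle is locally constant in flat trivializations. Hence the identity holds everywhere. Elements of $F$ permute the components, and those cases are handled the same way.

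The main obstacle I expect is the care needed at non-generic points. Self-intersection points of the immersed support have to be excluded, as (S) stipulates. I also need the full-support property of $\eta$ on each branch to be genuinely available, and to use the trace rather than the rank to rule out rank drops.
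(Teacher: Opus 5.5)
Your proposal is correct and follows essentially the same route as the paper: polynomial chart representatives of $\Pi_{\rmV}$; extension of the almost-everywhere identities $P^2=P$, $P^{*h}=P$ and $\operatorname{tr}P=\operatorname{rank}\rmV$ to the whole branch by continuity and full smooth support; patching on overlaps; and extending the equivariance identity to every point where both branches are smooth. For uniqueness, state explicitly, as the paper does, that a continuous representative of $\rmV$ has a continuous Hodge-orthogonal projection, since that projection is what you compare with $\widetilde\Pi$.
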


\begin{proof}
Let $P$ be a polynomial representative of $\Pi_{\rmV}$ in a flat chart. The identities $P^2=P$, $P^{*h}=P$, and $\operatorname{tr}P=\operatorname{rank}\rmV$ hold almost everywhere and therefore everywhere on the branch, by continuity and full support of the smooth measure class. Thus $P$ has constant rank and its image is a real-analytic subbundle. On overlaps within a branch, polynomial representatives agree almost everywhere and hence everywhere. Any continuous representative of $\rmV$ has a continuous Hodge-orthogonal projection, so the same argument proves uniqueness. For each fixed $g\in G$, the equivariance identity extends from almost every point to every point where both branches are smooth. No identification of different branches at a self-intersection is asserted.
\end{proof}

\begin{corollary}[Polynomial projectors after arbitrary $\SLgr$-conjugations]\label{cor:conjugated-polynomial-projectors}
Let $G_0$ satisfy the standing group hypothesis, let $c=(Q_1,\ldots,Q_n)\in(\SLgr)^n$, and put $G=cG_0c^{-1}$. Let $\mu$ be an ergodic $G$-invariant probability measure such that $\mu_0:=(c^{-1})_*\mu$ satisfies \textup{(S)} for the block marginals of its $G_0^\circ$-ergodic components. For any measurable $G$-invariant subbundle $V$ of a fixed tensor construction of the cohomology bundle, or a finite direct sum of such constructions, there is a $G$-equivariant projector onto $V$ whose coefficients are locally polynomial on every smooth branch in real period coordinates. In particular, $V$ has a unique continuous, real-analytic representative on these branches.

The polynomial projector need not be orthogonal for the canonical Hodge metric. The canonical Hodge-orthogonal projector is nevertheless real-analytic.
\end{corollary}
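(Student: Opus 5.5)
Transport everything by the coordinatewise map $\Phi_c\colon z=(z_1,\dots,z_n)\mapsto c^{-1}z=(Q_1^{-1}z_1,\dots,Q_n^{-1}z_n)$ and reduce to the unconjugated theorems. Since $\Phi_c\circ g=(c^{-1}gc)\circ\Phi_c$, the map $\Phi_c$ conjugates the $G$-action on $(X,\mu)$ to the $G_0$-action on $(X,\mu_0)$. Gauss--Manin transport along the path $t\mapsto\exp(t\log Q_i^{-1})z_i$ gives a measurable, indeed real-analytic, bundle isomorphism
\[
 \Psi_c\colon \rmE_{z}\longrightarrow \rmE_{c^{-1}z},
\]
because $\SLgr$ is connected and $Q_i$ may be joined to the identity. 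Choose this path once and for all. In flat trivializations $\Psi_c$ is the identity matrix, since transport is flat. I will check that $\Psi_c$ intertwines the $G$-cocycle with the $G_0$-cocycle. This follows because both cocycles are restrictions of the full $(\SLgr)^n$ Gauss--Manin cocycle, and that cocycle satisfies the cocycle identity. Hence $V_0:=\Psi_c(V)$, based at $c^{-1}z$, is a measurable $G_0$-invariant subbundle over $\mu_0$.

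Next, I apply Theorem~\ref{thm__joint-polynomiality-fixed-area} and Corollary~\ref{cor:regular-representatives} to $(G_0,\mu_0,V_0)$; their hypotheses hold by assumption (S) on $\mu_0$. The Hodge-orthogonal projection $\Pi_{V_0}$ is locally the restriction of an ambient polynomial on each smooth branch, and $V_0$ has a unique continuous real-analytic representative there. I then pull back by defining
\[
 P(z):=\Psi_c^{-1}\,\Pi_{V_0}(c^{-1}z)\,\Psi_c .
\]
This $P$ is a projector onto $V$, and it is $G$-equivariant because $\Pi_{V_0}$ is $G_0$-equivariant and $\Psi_c$ intertwines the two cocycles. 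In flat coordinates $\Psi_c$ is constant, so the coefficients of $P$ are the coefficients of $\Pi_{V_0}$ composed with $z\mapsto c^{-1}z$. In real period coordinates this map is linear: it acts by $Q_i^{-1}$ on each $(x_i,y_i)$ pair. Composing with it preserves polynomiality, and it maps smooth branches of $\operatorname{supp}\mu$ to smooth branches of $\operatorname{supp}\mu_0$. Continuity, real-analyticity and uniqueness of the representative of $V$ transfer in the same way, since uniqueness depends only on the underlying subbundle.

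The main obstacle is the bookkeeping of flat trivializations under $c$ together with the orbifold and monodromy issues. The point $c^{-1}z$ lies in a different fiber, so I will work on the finite manifold cover (Section~\ref{sect1}) and use local period charts. In these charts flat transport along the chosen path is the identity on $H^1$, and it commutes with the cocycle up to the monodromy already built into the cocycle. For the final remarks, $P$ is orthogonal for the transported metric $\Psi_c^*h$ but not for $h$, because $\Psi_c$ is not a Hodge isometry unless $Q_i\in SO_2(\bbR)$; the example of Subsection~\ref{subsec:arbitrary-conjugations} illustrates this. To see that the canonical $\Pi_V$ is nevertheless real-analytic, I write it from the real-analytic subbundle $V$ and the real-analytic Hodge metric, for example as the orthogonal projection computed from a local analytic frame of $V$ by Gram--Schmidt. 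This formula is analytic but not polynomial.
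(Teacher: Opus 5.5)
Your proposal is correct and is essentially the paper's argument. Your transport $\Psi_c$ is just the ambient product cocycle $\Gamma(c^{-1},z)=\Gamma(c,c^{-1}z)^{-1}$, so your $V_0$ coincides with the paper's $(V_0)_y=\Gamma(c,y)^{-1}V_{cy}$. Likewise your $P(z)=\Psi_c^{-1}\Pi_{V_0}(c^{-1}z)\Psi_c$ coincides with the paper's $P(x)=\Gamma(c,c^{-1}x)P_0(c^{-1}x)\Gamma(c,c^{-1}x)^{-1}$, and polynomiality comes, as you say, from linearity of $c^{-1}$ in period coordinates together with locally constant transport matrices.

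Two small corrections. First, not every element of $\SLgr$ is an exponential, so drop $\exp(t\log Q_i^{-1})$ and use any path from the identity to $Q_i^{-1}$. The transport is path-independent because it equals the cocycle, since the action lifts to marked surfaces, and this is exactly what your intertwining step needs. Second, the closing remarks should say ``need not be Hodge-orthogonal'' and ``need not be polynomial'' rather than asserting failure. For instance, if $c\in G_0$ lies in the diagonal blocks, then $P=\Pi_V$.
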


\begin{proof}
Use the ambient product cocycle to define
\[
 (V_0)_y=\Gamma(c,y)^{-1}V_{cy}.
\]
The cocycle identity makes $V_0$ $G_0$-invariant over $(X,\mu_0)$. Let $P_0$ be its polynomial Hodge-orthogonal projector from Theorem~\ref{thm__joint-polynomiality-fixed-area} and Corollary~\ref{cor:regular-representatives}. Set
\[
 P(x)=\Gamma(c,c^{-1}x)P_0(c^{-1}x)\Gamma(c,c^{-1}x)^{-1}.
\]
Then $P^2=P$, its image represents $V$, and it is $G$-equivariant. The map $x\mapsto c^{-1}x$ is linear in period coordinates, while the bundle map $\Gamma(c,\cdot)$ has locally constant matrices in compatible flat trivializations. Hence $P$ has polynomial coefficients. Uniqueness and analyticity of the represented bundle follow by transport from Corollary~\ref{cor:regular-representatives}. Its canonical Hodge-orthogonal projection is real-analytic because both the subbundle and the Hodge metric are real-analytic. This does not assert Deligne-torus invariance for $G$.
\end{proof}

\begin{corollary}[Full product action]\label{thm__joint-polynomiality}
Let $G=(\SLgr)^n$ and let $\mu$ be any ergodic $G$-invariant probability measure on $X$. No additional hypothesis \textup{(S)} is needed. For every measurable invariant subbundle of the bundles considered above, the normalized extension of each Hodge-orthogonal projection coefficient to the independently scaled factorwise cone lies locally in
\[
 \bbF[x_1,\ldots,x_n,y_1,\ldots,y_n,A_1^{-1},\ldots,A_n^{-1}],
\]
and has a representative $P/\prod_iA_i^{D_i}$ with $P$ of bidegree $(D_i,D_i)$ in each pair $(x_i,y_i)$.
\end{corollary}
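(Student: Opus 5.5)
For $G=(\SLgr)^n$ the standing hypothesis of Section~\ref{sect5} holds trivially: no conjugation is needed, each block $I_\beta=\{\beta\}$ is a singleton with $D_\beta$ the $\beta$th factor, and $F=\{\pm\Id\}^n$ is contained in $G$. The proof therefore has two parts. First we verify hypothesis \textup{(S)} for every ergodic $\mu$. Then we read off the formula from Theorem~\ref{thm:original-block-cone} with singleton blocks.

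\textbf{Verifying \textup{(S)}.} By Proposition~\ref{prop:affine-measure-reduction}, every $D$-ergodic component $\eta$ of $\mu$ is a product $\eta_1\otimes\cdots\otimes\eta_n$, where $\eta_i$ is an ergodic $\SLgr$-invariant probability measure on the single stratum $\cH^{(1)}(\alpha_i)$. Since $D=G^\circ=(\SLgr)^n$ is connected, Lemma~\ref{lemma:mautner-blocks} applies, and in fact $\mu$ is already $D$-ergodic here, because $F\subset D$. The Eskin--Mirzakhani classification \cite{EM18} says that each $\eta_i$ is the affine measure on an affine invariant submanifold $\cM_i$. This means $\operatorname{supp}\eta_i$ is an immersed suborbifold, cut out in period coordinates by real linear equations intersected with the area-one locus. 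Moreover $\eta_i$ is Lebesgue class on it: locally it is the cone measure of Lebesgue measure on the linear subspace. Off the null self-intersection locus, and after passing to the torsion-free cover discussed in Section~\ref{sect2}, this is exactly \textup{(S)}. The product of smooth supports with equivalent volumes again satisfies \textup{(S)} blockwise. So Theorems~\ref{thm__joint-polynomiality-fixed-area} and~\ref{thm:original-block-cone}, together with Corollary~\ref{cor:regular-representatives}, apply to every invariant subbundle $\rmV$.

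\textbf{The formula.} Apply Theorem~\ref{thm:original-block-cone} with $r=n$ and singleton blocks. The cone $C_G(\mathcal M)$ is then the independently scaled factorwise cone, and $A_\beta=A_i$ is the area of the $i$th component. The theorem gives
\[
\widetilde\pi=\frac{P}{\prod_iA_i^{D_i}},
\]
with $P$ bihomogeneous of bidegree $(D_i,D_i)$ in $(x_i,y_i)$. Since $1/A_i$ is the image of $A_i^{-1}$ in the localized ring, this expression lies in $\bbF[x,y,A_1^{-1},\ldots,A_n^{-1}]$. No square roots occur, because the torus weight argument in the proof of Theorem~\ref{thm:original-block-cone} uses the $n$ independent geodesic flows $(1,\ldots,a_t,\ldots,1)\in G$. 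These kill all nonzero weight components, so only components of bidegree $(d_i,d_i)$ survive, and each is normalized by the integral power $A_i^{-d_i}$.

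\textbf{Main obstacle.} The delicate step is matching \textup{(S)} exactly with the output of \cite{EM18}. We must check that $\eta_i$ is locally equivalent to smooth volume on the smooth branches of the support, not merely supported on them, and that orbifold points and self-intersections form a null set. To handle this, we would cite the affine-measure statement of \cite{EM18}: there the measure is the disintegration of Lebesgue measure on the linear cone along the area function, so it has a smooth positive density on smooth branches. We would also note that the non-manifold points form a proper analytic subset, which is null. All remaining steps are direct specializations of the results already proved.
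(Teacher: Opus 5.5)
Your proposal is correct and follows the paper's own proof. The paper likewise splits $\mu$ into single-stratum ergodic factors via Proposition~\ref{prop:affine-measure-reduction}, obtains \textup{(S)} from the Eskin--Mirzakhani affine-measure classification, and then applies Theorem~\ref{thm:original-block-cone} with singleton blocks. Your added remarks — that $F\subset D$, and that orbifold and self-intersection points form a null set — only make explicit what the paper leaves implicit.
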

\begin{proof}
Proposition~\ref{prop:affine-measure-reduction} gives $\mu=\mu_1\otimes\cdots\otimes\mu_n$, with each $\mu_i$ an ergodic $\SLgr$-invariant measure on one stratum. By the single-stratum measure classification \cite{EM18}, every $\mu_i$ is an affine measure, so its support and measure class satisfy \textup{(S)} on smooth branches. Every original block is a singleton; Theorem~\ref{thm:original-block-cone} therefore gives the conclusion.
\end{proof}

\subsection{Sharpness of independent factorwise localization}
\label{subsec:sharpness-full-product}

Independent area normalization can introduce square roots even when the normalized projection is polynomial. We use the genus-two setting and tautological frames of Subsection~\ref{subsec:arbitrary-conjugations}:
\[
 \mathcal M=\cH^{(1)}(2)\times\cH^{(1)}(2),
 \qquad G=\{(g,g):g\in\SLgr\}.
\]
Equip $\mathcal M$ with the product of the normalized Masur--Veech measures. The full product action is ergodic, so Lemma~\ref{lemma:mautner-blocks} makes its simultaneous geodesic flow and hence its diagonal action ergodic. The measure has full smooth support and satisfies \textup{(S)}.

Write $x_i=[\operatorname{Re}\omega_i]$, $y_i=[\operatorname{Im}\omega_i]$, and $I_i$ for the absolute intersection pairing. Since $I_i(x_i,y_i)=1$, the tautological frame and its symplectic dual are
\[
 F_i:\bbR^2\longrightarrow H^1(S_i;\bbR),\qquad F_i(a,b)=ax_i+by_i,
 \qquad
 G_i(v)=\bigl(I_i(v,y_i),I_i(x_i,v)\bigr).
\]
Then $G_iF_i=\Id$ and $\langle F_i u,F_i v\rangle_H=u\cdot v$. The tautological plane is Weil-invariant, so its symplectic complement is also its Hodge-orthogonal complement. Consequently $G_i$ is the Hodge adjoint of $F_i$, and $F_iG_i$ is the Hodge-orthogonal projection onto that plane. Both maps have entries linear in the period coordinates in flat frames.

Fix $\lambda\in\bbR\setminus\{0\}$ and set
\[
 \rmV_\lambda=\{(F_1u,\lambda F_2u):u\in\bbR^2\}
 \subset H^1(S_1;\bbR)\oplus H^1(S_2;\bbR).
\]
The identical contragredient cocycle matrices in the two tautological frames make this bundle $G$-invariant. Orthogonal projection sends $(v_1,v_2)$ to $(F_1u_*,\lambda F_2u_*)$, where
\[
 u_*=(G_1v_1+\lambda G_2v_2)/(1+\lambda^2).
\]
Thus, on the full cohomology bundle,
\[
 \Pi_\lambda=\frac1{1+\lambda^2}
 \begin{pmatrix}
 F_1G_1&\lambda F_1G_2\\
 \lambda F_2G_1&\lambda^2F_2G_2
 \end{pmatrix}.
\]
Every entry is polynomial on the area-one locus.

On the independently scaled cone, write $\widehat x_i,\widehat y_i$ for the unnormalized periods, put $A_i=I_i(\widehat x_i,\widehat y_i)>0$, and define $\widehat F_i,\widehat G_i$ by the same linear formulas. Area normalization gives
\[
 F_i=A_i^{-1/2}\widehat F_i,
 \qquad G_i=A_i^{-1/2}\widehat G_i,
 \qquad
 \widetilde\Pi_{\lambda,12}
 =\frac{\lambda}{1+\lambda^2}
 \frac{\widehat F_1\widehat G_2}{\sqrt{A_1A_2}}.
\]
Fix unit-area base surfaces with periods $x_i^0,y_i^0$ and vary the first surface by the matrix $\operatorname{diag}(t,1)\in\mathrm{GL}_2^+(\bbR)$, keeping the second fixed. For $t>0$ near $1$, this is a period-coordinate slice of the cone with
\[
 \widehat x_1=t x_1^0,\quad \widehat y_1=y_1^0,
 \qquad \widehat x_2=x_2^0,\quad\widehat y_2=y_2^0,
 \qquad A_1=t,\quad A_2=1.
\]
The tautological planes are constant along this slice as subspaces of the flat cohomology fibers. In the fixed bases $(x_2^0,y_2^0)$ and $(x_1^0,y_1^0)$, the restriction of the off-diagonal projection block is
\[
 \widetilde\Pi_{\lambda,12}\big|_{T_2^0}
 =\frac{\lambda}{1+\lambda^2}
 \begin{pmatrix}\sqrt t&0\\0&t^{-1/2}\end{pmatrix}.
\]
Extend these fixed bases to flat bases of the full cohomology spaces. Every element of
\[
 \bbR[\text{period coordinates},A_1^{-1},A_2^{-1}]
\]
restricts to a Laurent polynomial in $t$ on this slice, whereas $\sqrt t$ and $t^{-1/2}$ are not Laurent polynomials on any open interval. Hence independent factorwise localization fails. When the two areas agree, the same projection formula has only an integer power of their common area in its denominator, as required by Theorem~\ref{thm:original-block-cone}. The argument uses only tautological planes and applies in any pair of genera at least two.

\appendix

\section{The local quadric model}
\label{app:local-geometry}

Fix one ergodic diagonal block marginal $\nu$ satisfying \textup{(S)}, with smooth support $M$. Use period coordinates $q=x+iy\in\bigoplus_{i=1}^m(E_i\oplus iE_i)$, where $E_i$ is real relative cohomology, and let $\omega_i$ be the pullback of the absolute intersection pairing. Thus $\omega_i(x_i,y_i)=1$. Put
\[
 K_0(q)=\bigoplus_{i=1}^m
 \{v_i\in E_i:\omega_i(v_i,x_i)=\omega_i(v_i,y_i)=0\},
\]
and, for $c=(c_i)\in\bbR^m$, put $X_c(q)=(c_ix_i)_i$ and $Y_c(q)=(c_iy_i)_i$.
The ambient unstable and stable tangent spaces are
\[
 E^u_q=Y_{\bbR^m}(q)\oplus K_0(q),
 \qquad
 E^s_q=iX_{\bbR^m}(q)\oplus iK_0(q).
\]
The common geodesic flow $a_t$ and its time-one map are ergodic for $\nu$ by the single-block case of Lemma~\ref{lemma:mautner-blocks}. Since the upper triangular subgroup contains this flow, it is ergodic as well, as required by the cited measure-rigidity results.

\begin{lemma}[Sign decomposition of the area-one tangent cocycle]
\label{lemma:tangent-sign-decomposition}
For the common \Teichmuller~flow on the product of $m$ area-one strata, the sum of the positive Oseledets spaces of the tangent cocycle is
\[
 E^u_q=Y_{\bbR^m}(q)\oplus K_0(q),
\]
the sum of the negative Oseledets spaces is
\[
 E^s_q=iX_{\bbR^m}(q)\oplus iK_0(q),
\]
and the zero-exponent space is
\[
 H_{\bbR^m}(q),\qquad H_c=X_c-iY_c.
\]
\end{lemma}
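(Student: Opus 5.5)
The approach is to compute the derivative of the common flow in period coordinates, where it is linear, and then correct for the normalization to area one. In the product of unnormalized strata the flow $a_t$ acts on $q=x+iy$ by $(x,y)\mapsto(e^tx,e^{-t}y)$ in every component. Its differential is therefore the constant map $(v,w)\mapsto(e^tv,e^{-t}w)$ in flat charts, composed with the Gauss--Manin (mapping class group) monodromy. The area-one locus is the level set $\{\omega_i(x_i,y_i)=1\ \forall i\}$, and its tangent space at $q$ is
\[
\bigoplus_i\{(v_i,w_i):\omega_i(v_i,y_i)+\omega_i(x_i,w_i)=0\}.
\]
The plan is to split this tangent space into three flow-invariant pieces, identify the exponents on each piece, and conclude by dimension count.

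The three pieces are as follows.

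1. The sum $Y_{\bbR^m}(q)\oplus K_0(q)$ of real-part variations. A vector $(v,0)$ is tangent iff $\omega_i(v_i,y_i)=0$ for each $i$. Such $v_i$ decompose as $c_iy_i$ plus an element of $K_0$, because $\omega_i(x_i,y_i)=1$ lets one split off the $y_i$ component using $\omega_i(\cdot,x_i)$. The flow scales the real part by $e^t$. The norm must be the tempered AGY-type norm (as in Appendix~\ref{app:tempered-regularity}), or equivalently the Hodge norm on absolute parts plus a relative correction. With respect to it, the exponents on $E^u$ are $1+\lambda$, where $\lambda$ ranges over the KZ exponents of the symplectic complement of the tautological plane, together with exponent $1+1=2$ on the $y$-directions. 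More precisely, the $y_i$ direction is moved by the flow to $e^{t}\cdot e^{t}$ relative to the moving frame. On $K_0$ the absolute part has KZ exponents $\lambda\in(-1,1)$ by Forni's spectral gap, and the purely relative part has exponent $0$ for KZ. Both therefore give $1+\lambda>0$.

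2. The symmetric statement for $iX_{\bbR^m}\oplus iK_0$, by time reversal.

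3. The directions $H_c=X_c-iY_c$. These are tangent, since $\omega_i(c_ix_i,y_i)-\omega_i(x_i,c_iy_i)=0$. They are the infinitesimal componentwise diagonal deformations, i.e. the generator of $\mathrm{diag}(e^{s},e^{-s})$ applied to each component separately. They commute with the common flow, so $Dg_t H_c(q)=H_c(g_tq)$. Because they are flow orbit tangents in the relevant product, they have zero exponent (the frame $x_i,y_i$ grows and shrinks, but exactly compensates).

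It remains to check that the three pieces span the tangent space. Counting dimensions gives $2(m+\dim K_0)+m=\sum_i(2\dim E_i-1)$, using $\dim K_0=\sum_i(\dim E_i-2)$. The sum is direct because the real and imaginary projections separate the pieces.

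The main obstacle is the rigorous identification of the exponents on $K_0$. The KZ cocycle has a unipotent interaction between relative and absolute parts, and the norm on relative cohomology is not canonical. Here I would invoke the single-stratum tangent-cocycle computation (Forni's spectral gap $\lambda_2<1$ for ergodic measures, and the triangular structure with the relative part having exponent $0$), applied componentwise under the marginal $(\pi_i)_*\nu$. Temperedness then follows from the log-integrability of the AGY norm cocycle. Combining the components requires only that the common flow restricts to each factor as the ordinary Teichmüller flow, so that the Oseledets spaces of the product cocycle are direct sums of the factorwise ones.
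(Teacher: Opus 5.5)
Your proposal is correct and follows the paper's route. Both arguments apply a factorwise $\pm1$ shift to the relative Kontsevich--Zorich exponents for real and imaginary period variations, use the spectral gap on the symplectic complement of the tautological plane together with zero exponents on the relative kernel, and compute directly on $Y_c$, $iX_c$ and $H_c$. Your explicit tangent-space description, directness check and dimension count make precise what the paper compresses into the remark that the remaining radial combination is transverse to the area-one locus.
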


\begin{proof}
In one factor, let $\lambda$ be a Lyapunov exponent of the relative Kontsevich--Zorich cocycle. Under the tangent cocycle, a real period variation acquires the additional factor $e^t$, whereas an imaginary period variation acquires the factor $e^{-t}$. Their tangent exponents are therefore respectively $1+\lambda$ and $-1+\lambda$.

On the symplectic orthogonal of the tautological plane, the Kontsevich--Zorich exponents lie strictly between $-1$ and $1$; on the relative kernel they are zero. Hence every real direction in $K_0(q)$ has positive tangent exponent and every corresponding imaginary direction in $iK_0(q)$ has negative tangent exponent. See \cite[Appendix~A]{EM18} and the tangent-spectrum estimate in \cite[\S7.2, especially Remark~7.3]{Filip2013}.

The tautological directions can be read off directly from the action on period coordinates. The field $Y_c$ has tangent exponent $2$, the field $iX_c$ has tangent exponent $-2$, and
\[
 H_c=X_c-iY_c
\]
has exponent zero. The remaining radial combination is transverse to the area-one locus. Applying this factorwise proves the assertion.
\end{proof}

\begin{lemma}[Real-defined complex tangents force local linearity]
\label{lemma:real-complex-tangents-linear}
Let $C$ be a smooth submanifold of a complex vector space. Suppose that every tangent space $T_zC$ is complex linear and invariant under complex conjugation. Then, near each point, $C$ is an open subset of an affine complex-linear subspace whose direction is defined over $\bbR$. If, in addition, the position vector field $z\mapsto z$ is tangent to $C$, then this affine subspace is linear.
\end{lemma}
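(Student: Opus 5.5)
The plan is to first prove local linearity and then obtain linearity from the position field. Work near a point $z_0\in C$ and write $T=T_{z_0}C$, a complex subspace with $\overline{T}=T$. I will show that $C$ is locally a graph over $T$ and that this graph is affine. Choose a complex complement $N$ of $T$ that is also defined over $\bbR$; this is possible because $T=T_\bbR\otimes\bbC$ for the real subspace $T_\bbR=T\cap\bbR^N$, and we may take $N=N_\bbR\otimes\bbC$ for any real complement $N_\bbR$. Near $z_0$, write $C=\{z_0+t+\phi(t)\}$ with $\phi:T\supset B\to N$ smooth, $\phi(0)=0$ and $d\phi(0)=0$.

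The tangent space at $z_0+t+\phi(t)$ is the graph of the real-linear map $d\phi_t:T\to N$. It is complex linear exactly when $d\phi_t$ is $\bbC$-linear, so $\phi$ is holomorphic. Conjugation invariance of the tangent space is the next input. Conjugation maps $T\to T$ and $N\to N$, so it sends the graph of $L=d\phi_t$ to the graph of $\bar L(v)=\overline{L(\bar v)}$. Since two graphs over $T$ coincide only if the maps coincide, we get $L=\bar L$. A complex-linear $L$ with $\overline{L(\bar v)}=L(v)$ satisfies $L(\bar v)=\overline{L(v)}$, so it maps $T_\bbR$ into $N_\bbR$. Thus $d\phi_t$ lies in the real form $\operatorname{Hom}(T,N)_\bbR$, a totally real subspace of $\operatorname{Hom}_\bbC(T,N)$, for every $t$.

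The map $t\mapsto d\phi_t$ is holomorphic, since its entries are derivatives of the holomorphic map $\phi$. A holomorphic map into $\bbC^k$ whose values all lie in $\bbR^k$ is constant on connected sets: its real-valued coordinate functions are holomorphic, hence constant by the Cauchy--Riemann equations. Hence $d\phi_t\equiv d\phi_0=0$ on a connected ball $B$, so $\phi\equiv0$. Near $z_0$, $C$ is therefore the open subset $z_0+B$ of the affine space $z_0+T$, whose direction $T$ is defined over $\bbR$.

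For the final assertion, suppose the position field is tangent to $C$. Then $z\in T_zC=T$ for all $z$ near $z_0$, and in particular $z_0\in T$. It follows that $z_0+T=T$ is linear.

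The main obstacle is the step that turns conjugation invariance of each tangent space into reality of the derivative, which requires the conjugation-adapted splitting $T\oplus N$. Everything after that reduces to the fact that real-valued holomorphic functions are constant.
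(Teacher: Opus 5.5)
Your proof is correct and follows essentially the same route as the paper: graph coordinates over the tangent space with a real-defined complement, holomorphy of the graphing map from complex-linearity of the tangent spaces, reality of its derivative from conjugation invariance, and constancy of real-valued holomorphic functions. For the final assertion you use $z_0\in T_{z_0}C=T$ directly, which is a slightly more direct version of the paper's argument via $z\in S$ and $z-z_0\in S$.
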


\begin{proof}
Choose complex-linear graph coordinates at a base point, with both the tangent space and a complementary space defined over $\bbR$. Complex-linearity of the tangent spaces implies that the graphing map $f$ is holomorphic. Since every tangent graph is invariant under conjugation, the matrix $Df(z)$ is real in these real-defined bases. Its entries are therefore real-valued holomorphic functions and hence constant. Thus the graph is affine complex-linear with real-defined direction.

If the position vector field is tangent, write the affine space as $z_0+S$. For every $z\in C$ one has $z\in S$. Since also $z-z_0\in S$, it follows that $z_0\in S$, and therefore $z_0+S=S$.
\end{proof}

\begin{proof}[Proof of Lemma~\ref{lemma:prime-local-model}]
All vector fields below are the usual fundamental vector fields on the labeled product of strata, so their definitions do not depend on the choice of flat period chart. Define
\[
 U_c=Y_c,\qquad V_c=iX_c,\qquad
 H_c=X_c-iY_c,\qquad R_c=X_c+iY_c.
\]
The first three fields preserve each component area; $R_c$ generates componentwise scalar dilation.

\emph{The neutral directions determine a partition.}
Consider the three subspaces of $\bbR^m$
\[
 \begin{aligned}
 W_+(q)&=\{c:U_c(q)\in T_qM\},\\
 W_-(q)&=\{c:V_c(q)\in T_qM\},\\
 W_0(q)&=\{c:H_c(q)\in T_qM\}.
 \end{aligned}
\]
Since
\[
 (a_t)_*U_c=e^{2t}U_c,\qquad
 (a_t)_*V_c=e^{-2t}V_c,\qquad
 (a_t)_*H_c=H_c,
\]
these are measurable $a_t$-invariant Grassmann-valued maps. Ergodicity makes them constant almost everywhere; denote their values by $W_+,W_-,W_0$. For each vector in a basis of one of these spaces, the corresponding smooth vector field is tangent almost everywhere.
By \textup{(S)}, that full-measure set is dense on every smooth branch. Continuity extends tangency everywhere on the branch.
We may therefore take Lie brackets of these tangent fields.

Let $\boldsymbol1=(1,\ldots,1)$. The fields $U_{\boldsymbol1},V_{\boldsymbol1},H_{\boldsymbol1}$ are tangent by diagonal invariance. Direct calculation gives, with $cd$ denoting coordinatewise multiplication,
\[
 [U_c,V_d]=-H_{cd},\qquad
 [H_c,U_{\boldsymbol1}]=-2U_c,\qquad
 [H_c,V_{\boldsymbol1}]=2V_c.
\]
It follows that $W_+=W_-=W_0=:W$, that $\boldsymbol1\in W$, and that $c,d\in W$ implies $cd\in W$.
Every unital subalgebra of $\bbR^m$ for coordinatewise multiplication is the space of functions constant on the parts of a partition.
Indeed, put $i\sim j$ when $c_i=c_j$ for every $c\in W$; choose an element of $W$ separating the finitely many equivalence classes.
Lagrange interpolation in that element gives every class indicator.
Thus, for a partition $J_1,\ldots,J_k$,
\begin{equation}\label{eq:neutral-partition}
 W=\operatorname{span}_{\bbR}
 \{\boldsymbol1_{J_1},\ldots,\boldsymbol1_{J_k}\}.
\end{equation}

\emph{Recovering the full support tangent.}
By Lemma~\ref{lemma:tangent-sign-decomposition}, the ambient positive, zero and negative Oseledets spaces are respectively
\[
 E^u_q,\qquad H_{\bbR^m}(q),\qquad E^s_q.
\]
The ambient tangent cocycle and its restriction to $TM$ are log-integrable in the standard relative-cohomology norms. For an invariant measurable subbundle of an invertible cocycle, the Oseledets space of a given exponent is the intersection of the ambient Oseledets space with that subbundle. Applying this to $TM$, and using
\[
 T_qM\cap H_{\bbR^m}(q)=H_W(q),
\]
we obtain, almost everywhere,
\[
 T_qM=(T_qM\cap E^u_q)\oplus H_W(q)\oplus(T_qM\cap E^s_q).
\]

Set $\mathcal U_q=T_qM\cap E^u_q$ and $\mathcal S_q=T_qM\cap E^s_q$. Then, almost everywhere,
\begin{equation}\label{eq:full-support-tangent}
 T_qM=\mathcal U_q\oplus H_W(q)\oplus\mathcal S_q.
\end{equation}

\emph{Tangent balance from horocycle invariance.}
Let $n_\tau=\left(\begin{smallmatrix}1&\tau\\0&1\end{smallmatrix}\right)$ act diagonally. Shrink the Oseledets regular set to the finite intersection of its images and inverse images under $n_1$, $n_{-1}$ and rotation by $\pi/2$. This remains conull. We may therefore use the Oseledets decomposition simultaneously at every point needed below. Fix $\tau\in\{1,-1\}$. If $s=i(X_c+v)\in\mathcal S_q$, with $v\in K_0(q)$, then
\[
 (n_\tau)_*s=\tau(X_c+v)+i(X_c+v).
\]
At $q'=n_\tau q=(x+\tau y)+iy$, the decomposition into ambient positive, zero and negative tangent spaces is
\[
 (n_\tau)_*s
 =\underbrace{(-\tau^2Y_c+\tau v)}_{\in E^u_{q'}}
  +\underbrace{H_{\tau c}(q')}_{\text{zero exponent}}
  +\underbrace{i\bigl(X_c(q')+v\bigr)}_{\in E^s_{q'}}.
\]
Each summand belongs to $T_{q'}M$ by its Oseledets splitting.
The inverse horocycle acts as the identity on purely real tangent vectors; moreover $E^u_{q'}$ and $E^u_q$ have the same real period description because their imaginary periods are equal. Pulling the first summand back to $q$ gives
\begin{equation}\label{eq:horocycle-balance-inclusion}
 -\tau^2Y_c+\tau v\in\mathcal U_q.
\end{equation}
These computations can be made using flat transport along the horocycle path, so they do not require one period chart to contain the entire path.

Ergodicity makes both tangent dimensions constant almost everywhere.
Rotation by $\pi/2$ takes one ambient space to the other and preserves $M$ and $\nu$, so these dimensions are equal. For $\tau=\pm1$, the linear maps $i(X_c+v)\mapsto-\tau^2Y_c+\tau v$ are ambient isomorphisms. Thus the inclusions in \eqref{eq:horocycle-balance-inclusion} are equalities on the two support tangent spaces. The two choices $\tau=1,-1$ give
\begin{equation}\label{eq:local-balance}
 B_+(\mathcal S_q)=\mathcal U_q=B_-(\mathcal S_q),
 \qquad B_\pm\bigl(i(X_c+v)\bigr)=Y_c\pm v.
\end{equation}
For $i(X_c+v)\in\mathcal S_q$, adding and subtracting its two images gives $Y_c,v\in\mathcal U_q$. Applying $B_+^{-1}$ shows that $iX_c,iv\in\mathcal S_q$ as well. We obtain a real subspace $K(q)\subset K_0(q)$ with
\begin{equation}\label{eq:balanced-tangents}
 \mathcal U_q=Y_W(q)\oplus K(q),\qquad
 \mathcal S_q=iX_W(q)\oplus iK(q).
\end{equation}

\emph{Adding exactly the required radial directions.}
Let $C$ be the local cone obtained from $M$ by independently scaling the parts $J_a$, using one positive scalar $t_a$ for all $i\in J_a$.
It is a smooth submanifold of dimension $\dim M+k$: its component areas are $t_a^2$, so the area functions recover the parameters and area normalization recovers the point of $M$.
At a point of $M$, its tangent is
\[
 T_qC=T_qM\oplus R_W(q).
\]
Combining \eqref{eq:full-support-tangent} and \eqref{eq:balanced-tangents}, and adding and subtracting $H_c$ and $R_c$, gives
\[
 T_qC=L(q)\oplus iL(q),\qquad
 L(q)=K(q)\oplus X_W(q)\oplus Y_W(q)
\]
almost everywhere. The same assertion holds at scaled points, since these scalings are real linear and commute with multiplication by $i$ and with complex conjugation.
Being complex linear and invariant under conjugation are closed conditions on tangent spaces. Smoothness and density extend both properties to every point of $C$.

\emph{The cone is locally linear.}
By Lemma~\ref{lemma:real-complex-tangents-linear}, $C$ is locally open in an affine complex-linear space with real-defined direction $L\oplus iL$. The common radial field is tangent to $C$, so the position vector belongs to $L\oplus iL$ at every point. The final assertion of that lemma therefore shows that the affine offset vanishes. Thus $C$ is locally open in the linear space $L\oplus iL$ itself.

\emph{Splitting the linear space and the area equations.}
Let $p_a$ be the coordinate projection onto the components $J_a$.
Since dilation in $J_a$ preserves $C$, its infinitesimal field $z\mapsto p_az$ is tangent to $L\oplus iL$ for every $z$ in an open subset of that linear space. Linearity implies $p_a(L\oplus iL)\subset L\oplus iL$. Therefore
\[
 L=\bigoplus_{a=1}^kL_a,\qquad
 L_a=p_aL\subset\bigoplus_{i\in J_a}E_i.
\]
On $C$, all component areas within $J_a$ are equal. Pulling their pairings back to $L_a$, their differences are bilinear polynomials vanishing on an open set of $L_a\times L_a$, hence vanish identically. Denote the common alternating form by $\Omega_a$; it is nonzero because these areas are positive.

The original support is locally precisely the area-one slice of $C$. The equations reduce to $\Omega_a(x_a,y_a)=1$, one for each $a$. This proves the claimed local product description. Each quadric is smooth on that level, since its area differential satisfies
\[
 d\bigl((x,y)\mapsto\Omega_a(x,y)\bigr)_{(x_a,y_a)}(x_a,0)=1.
\]
The measure-class conclusion follows from \textup{(S)}. The final claim follows because all area pairings factor through absolute cohomology.
\end{proof}

\section{Separate polynomiality on products of quadrics}
\label{app:separate-polynomiality}

We prove Proposition~\ref{prop:product-quadric-polynomiality}. The finite-dimensional assembly below is the bounded-degree form of ordinary separate polynomiality; compare \cite[Lemma~7.6]{Filip2013}. The remaining argument treats the constrained affine fibers of a quadric.

\begin{lemma}[Finite-dimensional assembly]
\label{lemma:finite-dimensional-assembly}
Let $U=U_1\times\cdots\times U_s$ be a product of connected, relatively compact smooth coordinate neighbourhoods, and for each $a$ let $\mathcal P_a\subset L^2(U_a)$ be a finite-dimensional space of real-analytic functions. Suppose that $f\in L^1(U)$ has the property that, for every $a$, for almost every choice of the variables outside $U_a$, the $U_a$-slice of $f$ belongs almost everywhere to $\mathcal P_a$. Then
$$
f\in
\mathcal P_1\otimes\cdots\otimes\mathcal P_s
$$
almost everywhere on $U$.
\end{lemma}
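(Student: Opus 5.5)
We argue by induction on the number $s$ of factors, using a finite-dimensional linear-algebra reduction at each stage. For $s=1$ the hypothesis is exactly that $f$ agrees almost everywhere with an element of $\mathcal P_1$, and there is nothing to prove. For the inductive step, write $U=U'\times U_s$ with $U'=U_1\times\cdots\times U_{s-1}$.

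First fix a basis $p_1,\ldots,p_N$ of $\mathcal P_s$. Because the $p_j$ are linearly independent real-analytic functions on the connected set $U_s$, we can choose points $t_1,\ldots,t_N$ in $U_s$ such that the matrix $(p_j(t_k))$ is invertible. By openness of this condition we may instead use small balls $B_k$ around the $t_k$ and average. Concretely, we pick test functions $\chi_k\in C_c^\infty(U_s)$ such that the Gram-type matrix $\bigl(\int p_j\chi_k\bigr)$ is invertible. Such functions exist because the linear functionals $\chi\mapsto\int p_j\chi$ are independent on $C_c^\infty(U_s)$, as the $p_j$ are independent in $L^2(U_s)$.

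By the hypothesis for $a=s$ and Fubini, for almost every $u\in U'$ the slice $f(u,\cdot)$ equals $\sum_j c_j(u)p_j$ almost everywhere. The coefficients are recovered by
\[
 c(u)=M^{-1}\Bigl(\int_{U_s} f(u,t)\chi_k(t)\,dt\Bigr)_k ,
\]
where $M$ is the invertible matrix above. Since $f\in L^1$ and the $\chi_k$ are bounded, each $c_j$ is measurable and lies in $L^1(U')$. Moreover, for each $a<s$ and almost every choice of the variables outside $U_a$ in $U'$, the $U_a$-slice of $c_j$ is a fixed linear combination of integrals of $U_a$-slices of $f$ against $\chi_k$. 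By Fubini, for almost every $t$ and almost every choice of the remaining variables those slices lie in $\mathcal P_a$. Finite-dimensionality, and hence closedness, of $\mathcal P_a$ then puts the integrated combination in $\mathcal P_a$ almost everywhere.

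Thus each $c_j$ satisfies the hypotheses on $U'$. By induction, $c_j\in\mathcal P_1\otimes\cdots\otimes\mathcal P_{s-1}$ almost everywhere, and therefore $f=\sum_j c_j\otimes p_j$ lies in the full tensor product almost everywhere on $U$, by Fubini once more.

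The main care point is the Fubini bookkeeping: one has to check that "almost every slice" statements survive integration against $\chi_k$. The key fact there is that a finite-dimensional space is closed under $L^1$-convergent averaging. This means the set of exceptional parameters must be controlled jointly in $(t,\text{other variables})$ rather than only fibrewise, which Tonelli applied to the indicator of the bad set provides.
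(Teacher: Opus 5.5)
Your proposal is correct and follows essentially the same route as the paper. In both, dual test functionals for a basis of one factor's space produce $L^1$ coefficient functions, and closedness of a finite-dimensional subspace under (Bochner) integration of almost-every slice shows that these coefficients inherit the remaining slice hypotheses; your explicit induction spells out the paper's ``treat two factors and iterate,'' and your dual test functions with invertible pairing matrix need only linear independence in $L^2$, whereas the paper's Gram-matrix construction uses real-analyticity for positive definiteness.
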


\begin{proof}
It is enough to treat two factors and iterate. Choose a basis $p_1,\ldots,p_N$ of $\mathcal P_1$. Choose a nonnegative smooth compactly supported density $\rho_1$ on $U_1$ which is positive on a nonempty open set. The Gram matrix
$$
G_{ij}=\int_{U_1}p_i(u)p_j(u)\rho_1(u)\,du
$$
is positive definite, because a nonzero real-analytic function cannot vanish on an open set. Hence the compactly supported kernels
$$
\varphi_i(u)=\rho_1(u)\sum_j(G^{-1})_{ij}p_j(u)
$$
satisfy
$$
\int_{U_1}p_j(u)\varphi_i(u)\,du=\delta_{ij}.
$$
For almost every $v\in U_2$, the $U_1$-slice of $f$ lies in $\mathcal P_1$, so
$$
f(u,v)=\sum_i c_i(v)p_i(u)
$$
almost everywhere in $u$, with
$$
c_i(v)=\int_{U_1}f(u,v)\varphi_i(u)\,du.
$$
By Fubini, $u\mapsto f(u,\cdot)$ is a Bochner-integrable $L^1(U_2)$-valued map. For almost every $u$, the $U_2$-slice $f(u,\cdot)$ belongs to the finite-dimensional space $\mathcal P_2$. Since a finite-dimensional subspace of $L^1(U_2)$ is closed, the Bochner integral
$$
c_i(\cdot)=\int_{U_1}\varphi_i(u)f(u,\cdot)\,du
$$
also belongs to $\mathcal P_2$. Therefore
$$
f(u,v)=\sum_i p_i(u)c_i(v)
$$
belongs almost everywhere to $\mathcal P_1\otimes\mathcal P_2$. Iterating this two-factor argument proves the general case.
\end{proof}

\begin{lemma}[Local finite-jet consequence of horizontal polynomiality]
\label{lemma:local-finite-jet}
Let $\mathbb G$ be a real Lie group of dimension $N$, and let $S$ be a fixed finite subset of its Lie algebra that Lie-generates the algebra. Fix $k\geq1$. A distribution $T$, defined near the identity and satisfying
\[
 Y^kT=0\qquad(Y\in S),
\]
is represented near the identity by a real-analytic function. The space of solution germs is finite-dimensional, with dimension bounded in terms of $k$ and $N$.
\end{lemma}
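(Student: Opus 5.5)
The plan is to reduce the system $Y^kT=0$ to an elliptic equation and then invoke elliptic regularity. Choose a basis of the Lie algebra consisting of iterated brackets of elements of $S$. Each generator $Y\in S$, viewed as a left-invariant vector field, is a nonvanishing real-analytic field. Along its flow lines, the equation $Y^kT=0$ forces $T$ to be polynomial of degree $<k$ in the flow parameter.

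The first goal is to show that $T$ is annihilated by a fixed power of the sub-Laplacian
\[
 \Delta_S=\sum_{Y\in S}Y^2 .
\]
This does not follow naively, since the $Y$ do not commute. Instead I would argue through finite-dimensionality of derivatives. Let $\mathcal D$ be the span of all distributions $Y_1\cdots Y_jT$ with $Y_i\in S$ and $j$ bounded. The aim is to show that $\mathcal D$ is finite-dimensional and invariant under $S$.

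For this I would use the flows: right translation by $\exp(\tau Y)$ acts on $T$ polynomially in $\tau$, with degree $<k$. Hence the span of the translates $\{R_{\exp\tau Y}T\}$ is finite-dimensional, of dimension at most $k$. Iterating over products $\exp(\tau_1Y_1)\cdots\exp(\tau_NY_N)$ with $Y_i\in S$, one reaches a neighbourhood of the identity, by Lie generation and Chow's theorem, with a fixed number $N$ of factors. This shows that the local right-translation orbit of $T$ spans a space of dimension at most $k^N$, although the local-germ bookkeeping needs care.

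A distribution whose local translates span a finite-dimensional space $\mathcal F$ is real-analytic. To see this, note that $\mathcal F$ is stable under all left-invariant vector fields, so $X\mapsto XT$ is a finite-dimensional representation of the Lie algebra on $\mathcal F$. In particular, the Casimir-type operator $\sum X_i^2$ over a full basis acts on $\mathcal F$ with a minimal polynomial $p$. Then $p\bigl(\sum X_i^2\bigr)T=0$, which is an elliptic equation with analytic coefficients, so $T$ is analytic by elliptic regularity. Dimension bounds follow from $\dim\mathcal F\le k^N$ with $N=N(\dim)$. Moreover, solutions are determined by their jets of order bounded via $p$, giving a finite-dimensional space of germs of bounded dimension.

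The main obstacle is making the ``translates span a finite-dimensional space'' step rigorous for germs near the identity: translates are only defined on shrinking domains. I would handle this by fixing nested neighbourhoods and working with restrictions to a smaller ball, using that the $\tau$-polynomial identity holds for small $|\tau|$ uniformly. A second point is that $N$ must be bounded only in terms of $\dim\mathbb G$. I would take $N$ as the number of factors needed to cover a neighbourhood via products of one-parameter subgroups from $S$ and its brackets. This count can be bounded by $\dim\mathbb G$ times the bracket depth, and the bracket depth is itself bounded by $\dim\mathbb G$.
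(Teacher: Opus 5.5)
There is a genuine gap at the step you describe as local-germ bookkeeping, and the problem is not bookkeeping. The identity $R_{\exp\tau Y}T=\sum_{j<k}\frac{\tau^j}{j!}Y^jT$ holds for $T$ itself. After one translation, however, you are translating the distributions $Y^jT$ along a different $Y'\in S$, and nothing gives $Y'^{\,k}Y^jT=0$.

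Here is a concrete counterexample. Take the $ax+b$ group, $g=\left(\begin{smallmatrix}a&b\\0&1\end{smallmatrix}\right)$, with $S=\{A,B\}$ and $[A,B]=B$. The left-invariant fields are $a\partial_a$ and $a\partial_b$. The function $T=b$ satisfies $A^2T=B^2T=0$. Yet $T(g\exp(\sigma A)\exp(\tau B))=b+\tau a e^{\sigma}$ is not polynomial in $\sigma$; here $BT=a$ and $A^2a=a\neq0$. So neither polynomial dependence on the word parameters nor your bound $k^N$ follows from Chow's theorem plus one-parameter polynomiality.

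The missing idea is the intertwining identity $R_h\circ X=(\operatorname{Ad}(h)X)\circ R_h$ for left-invariant $X$. It lets you push each translation back onto $T$ itself, where $Y^kT=0$ does apply. By induction it gives $R_{\exp\tau_1Y_1}\cdots R_{\exp\tau_mY_m}T=D_m(\tau)T$. Here $D_m(\tau)$ lies in the span $U_{\le m(k-1)}(\mathfrak g)$ of products of at most $m(k-1)$ left-invariant fields, depends analytically on $\tau$, and is independent of $T$. This is exactly the Taylor propagation formula the paper takes from Antonelli--Le Donne. The paper applies it along a $2N$-segment word chart whose base parameter maps to the identity, which gives $M=2N(k-1)$.

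Your dimension claim has a second gap. The equation $p(\Delta)T=0$ does not make solutions determined by a finite jet: elliptic equations have infinite-dimensional solution spaces, already $\Delta u=0$. Moreover, $p$ depends on $T$, and a bound on $\dim\mathcal F$ for each individual $T$ does not by itself bound the space of all solution germs.

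What is needed is the uniform propagation formula above. For every solution $u$ (smooth, or analytic after your regularity step) one has $u(\Phi(\tau))=(D(\tau)u)(e)$, with $D(\tau)\in U_{\le M}(\mathfrak g)$ independent of $u$. Hence the $M$-jet map at $e$ is injective on solution germs, and their space has dimension at most $\dim U_{\le M}(\mathfrak g)=\binom{M+N}{N}$.

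Once the propagation formula is in place, your elliptic-regularity step is a valid alternative to the paper's left mollification and closedness argument. Translates and the intertwining identity make sense directly for distributions, so the span of translates lies in the finite-dimensional space $U_{\le M}(\mathfrak g)T$, which is stable under left-invariant fields. But the finite-dimensionality itself, which is the substance of the lemma, must come from that formula.
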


\begin{proof}
We localize the argument of \cite[Lemmas~3.9--3.10 and Proposition~3.11]{AntonelliLeDonne2022}. Choose a neighborhood $V$ on which the equations hold. The horizontal word chart in Lemma~3.10 of that paper may be chosen with all intermediate flow segments in $V$: first multiply the elements of $S$ by a sufficiently small common positive scalar. This preserves both Lie generation and the equations. The chart uses $2N$ segments and maps a neighborhood of its parameter base point diffeomorphically onto a neighborhood $U$ of the identity.

For a smooth solution $u$ on $V$, the Taylor propagation formula of their Lemma~3.9 is local along these segments and gives
\[
 u(q)=\sum_{|\alpha|\leq M}a_\alpha(q)\,\partial^\alpha u(e),
 \qquad q\in U,
\]
where the $a_\alpha$ are real-analytic and depend only on the chosen chart. One may take $M=2N(k-1)$: each of the $2N$ segments contributes at most $k-1$ differentiations. Thus every smooth solution on $V$ restricts to the same finite-dimensional analytic space on $U$.

For a distributional solution, choose the chart and its intermediate segments inside a relatively compact subset of $V$. Convolve locally with a smooth approximate identity on the left. Left-invariant differential operators commute with this convolution, so the regularizations satisfy the same equations on a fixed smaller neighborhood containing all these segments. They converge to $T$ in distributions. Their restrictions to $U$ belong to the finite-dimensional span of the $a_\alpha$, which is closed in the space of distributions. The limit therefore has an analytic representative in that span.

Finally, the same bound $M$ works no matter how small the initial neighborhood of a solution germ is. The displayed representation shows that a germ whose $M$-jet at the identity vanishes is zero. The jet map on the space of solution germs is therefore injective, giving a dimension bound depending only on $k$ and $N$.
\end{proof}

\begin{theorem}[Symplectic separate polynomiality]
\label{thm:symplectic-separate-polynomiality}
Let $L$ be a finite-dimensional real vector space, let $\Omega$ be a nonzero alternating bilinear form on $L$, and set
$$
Q_\Omega:=\{(x,y)\in L\times L:\Omega(x,y)=1\}.
$$
Let $U\subset Q_\Omega$ be connected and open, and let $f\in L^1_{\mathrm{loc}}(U)$. Assume that there is an integer $d\geq0$ such that:
\begin{enumerate}
\item for almost every $y$ for which $U_y:=\{x:(x,y)\in U\}$ is nonempty, the function $x\mapsto f(x,y)$ agrees, on each sufficiently small relatively open subset of $U_y$, almost everywhere with the restriction of a polynomial of degree at most $d$ on the affine hyperplane $\{x:\Omega(x,y)=1\}$;
\item for almost every $x$ for which $U^x:=\{y:(x,y)\in U\}$ is nonempty, the function $y\mapsto f(x,y)$ agrees, on each sufficiently small relatively open subset of $U^x$, almost everywhere with the restriction of a polynomial of degree at most $d$ on the affine hyperplane $\{y:\Omega(x,y)=1\}$.
\end{enumerate}
Then there exists a polynomial $P\in\bbR[L\oplus L]$ such that $f=P|_{Q_\Omega}$ almost everywhere on $U$. Moreover, $\deg P$ is bounded in terms of $d$ and $\dim L$ only.
\end{theorem}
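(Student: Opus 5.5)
The plan is to recognize $Q_\Omega$ as (an open piece of) a homogeneous space of a Lie group acting by affine transformations, with the $x$-fibers and $y$-fibers being orbits of two abelian subgroups. Then fiberwise polynomiality becomes a family of equations $Y^kT=0$ for left-invariant vector fields, and Lemma~\ref{lemma:local-finite-jet} applies. Concretely, for $y$ fixed the fiber $\{x:\Omega(x,y)=1\}$ is an affine hyperplane whose direction is $\ker\Omega(\cdot,y)$. Directional derivatives along this hyperplane are generated by the vector fields
\[
 X_v(x,y)=(v,0),\qquad v\in L,\quad \Omega(v,y)=0 .
\]
These are not constant-coefficient fields on $Q_\Omega$, since the constraint depends on $y$. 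I would therefore instead use the group $\mathbb G$ of linear maps of $L\oplus L$ generated by the unipotent shears
\[
 u_{A}:(x,y)\mapsto(x+Ay,\,y),\qquad l_{B}:(x,y)\mapsto(x,\,y+Bx),
\]
where $A,B\in\operatorname{End}(L)$ satisfy $\Omega(Ay,y)=0$ and $\Omega(x,Bx)=0$ identically. These are exactly the $\Omega$-skew endomorphisms, and each such shear preserves $Q_\Omega$. For fixed $y$, the orbit of $u_A$ sweeps the affine $x$-fiber, because $\{Ay\}$ ranges over all of $\ker\Omega(\cdot,y)$; this holds when $y\notin\ker\Omega$, which is automatic on $Q_\Omega$. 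Restricted to a fiber, the one-parameter flows $t\mapsto u_{tA}$ are affine lines. Hence $f$ polynomial of degree $\le d$ on the fiber gives $Y_A^{d+1}f=0$ in the distributional sense, where $Y_A$ is the corresponding fundamental field. The same holds for $l_B$. I will first check that these distributional identities follow from hypotheses (1) and (2) by Fubini, testing against products of fiberwise and transverse test functions.

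Next I identify $\mathbb G$ and reduce to the local finite-jet lemma. The Lie algebra generated by the two abelian families is, I expect, the full algebra preserving $\Omega$ up to its radical. When $\Omega$ is nondegenerate, the upper and lower triangular blocks of $\mathfrak{sp}$ of the form $\begin{pmatrix}0&A\\0&0\end{pmatrix}$ and $\begin{pmatrix}0&0\\B&0\end{pmatrix}$ should Lie-generate an $\mathfrak{sp}$-type algebra acting transitively on $Q_\Omega$. I need transitivity locally, i.e.\ that the tangent vectors $(Ay,0)$ and $(0,Bx)$ span $T_{(x,y)}Q_\Omega$. This is immediate: their spans are $\ker\Omega(\cdot,y)\times0$ and $0\times\ker\Omega(x,\cdot)$, which together give the tangent space. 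Pulling $f$ back along the orbit map $g\mapsto g\cdot q_0$, which is a submersion near $e$, yields a distribution $T$ on a neighborhood of $e$ satisfying $Y^{d+1}T=0$ for a finite spanning set $S$ of shears. Lemma~\ref{lemma:local-finite-jet} then shows $T$ is analytic and lies in a finite-dimensional space, so $f$ is locally analytic on $Q_\Omega$.

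The remaining step, which I expect to be the main obstacle, is upgrading to a global polynomial. I would use the explicit jet formula: the Taylor propagation writes $f(g q_0)$ via iterated flows of polynomial-in-time shears. Each shear acts linearly, and fiber restrictions are polynomial in the flow parameters, so along each word chart $f$ is a polynomial in $2N$ time parameters of degree at most $2N d$. Composing with the polynomial dependence of $g\cdot q_0$ on these parameters shows $f$ is a polynomial function of the point on the orbit, at least if the chart inverse is rational. The cleaner route, which I would try first, is an induction on degree. The fiberwise degree of $f$ in $x$ is at most $d$, so I write $f(x,y)=\sum_{|\alpha|\le d} c_\alpha(y)x^\alpha$ with $c_\alpha$ analytic, normalized modulo the hyperplane relation. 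The $y$-fiber condition at several $x$ (Lagrange interpolation in $x$) then forces each $c_\alpha$ to be a polynomial in $y$ of bounded degree on each $x$-transversal. Care is needed because the coefficients are only defined modulo $\Omega(x,y)-1$; I would handle this by choosing a linear complement coordinate. Analyticity and connectedness of $U$ then patch the local polynomial representatives into a single $P$. The degree bound in terms of $d$ and $\dim L$ comes from the finite-dimensionality in Lemma~\ref{lemma:local-finite-jet}. The degenerate case $\ker\Omega\ne0$ is handled by splitting $L=L'\oplus\ker\Omega$, where the kernel coordinates enter only through the freely varying fibers.
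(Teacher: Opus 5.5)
You set things up with a different group from the paper. You use the shears $u_A,l_B$ acting on $L\oplus L$. The paper uses $\operatorname{Sp}(L,\Omega)$ acting diagonally: it realizes the fibre lines by left-invariant fields from $\mathfrak n_f,\mathfrak n_e$ and uses right invariance under the connected group $H\cong\operatorname{Sp}(W)$.

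Your setup needs two corrections.
\begin{itemize}
\item The admissible $A$ satisfy $\Omega(Ay,z)=\Omega(y,Az)$, so the shears preserve the split quadratic form $(x,y)\mapsto\Omega(x,y)$ on $L\oplus L$. They therefore generate an orthogonal algebra, not an $\mathfrak{sp}$-type one. For nondegenerate $\Omega$ with $\dim L\geq4$ this is all of $\mathfrak{so}$ of that form; for $\dim L=2$ it is an $\mathfrak{sl}_2$ acting simply transitively.
\item The fibre directions $\ker\Omega(\cdot,y)\times0\oplus0\times\ker\Omega(x,\cdot)$ have dimension $2\dim L-2$. They form only a hyperplane of $T_{(x,y)}Q_\Omega$, missing $(x,-y)$. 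The orbit map is a submersion only because brackets supply that direction: the bracket of the generators of $u_{\Id}$ and $l_{\Id}$ is $\operatorname{diag}(\Id,-\Id)$.
\end{itemize}
With these fixes, Lemma~\ref{lemma:local-finite-jet} does give you analyticity and a finite-dimensional solution space.

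The genuine gap is the step from ``analytic, in a finite-dimensional space'' to ``restriction of a polynomial''. Neither of your routes closes it.
\begin{itemize}
\item \emph{Word charts.} In $q(t)=u_1(t_1)\cdots u_{2N}(t_{2N})q_0$, only $t_1$ moves a point along a fibre. Varying $t_j$ for $j\geq2$ moves along an orbit of $Mu_jM^{-1}$, where $M=u_1(t_1)\cdots u_{j-1}(t_{j-1})$, and this orbit leaves the fibres. Hypotheses (1)--(2) say nothing there. Even polynomiality in $t$ would only make $f$ rational in $q$.
\item \emph{Interpolation.} At fixed $y$ the nodes must lie on $\{\Omega(\cdot,y)=1\}$, so they move with $y$, and hypothesis~(2) cannot be applied to the extracted coefficients. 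These coefficients are also genuinely non-polynomial. For $f=\ell(x)$ and $x=se+x'$ one has $s=(1-\Omega(x',y))/\Omega(e,y)$, so $c_0(y)=\ell(e)/\Omega(e,y)$.
\end{itemize}
The missing direction $(x,-y)$ is visible only through brackets, and the paper handles it representation-theoretically (Steps~4--5). In your version this would go as follows:
\begin{itemize}
\item Your equations are right-invariant on $\mathbb G$, so the left-invariant fields commute with them and preserve the solution space $\mathcal A$. This makes $\mathcal A$ a finite-dimensional module over the semisimple algebra $\operatorname{Lie}(\mathbb G)$.
\item By Weyl's theorem this module integrates to an algebraic representation of the simply connected complex group. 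The pulled-back function is then a matrix coefficient, hence regular.
\item Invariance under the connected stabilizer lets it descend to a regular function on the complex quadric, i.e.\ an element of $\bbC[L\oplus L]/(\Omega(x,y)-1)$.
\end{itemize}
For nondegenerate $\Omega$ with $\dim L\geq4$, use $\operatorname{Spin}$ of the quadratic form acting on its unit quadric, whose stabilizers are connected; for $\dim L=2$ the generated group acts simply transitively, so nothing needs to descend. First split off $\ker\Omega$ as you propose. The degree bound then comes from finite-dimensionality of the stabilizer-invariant solutions.
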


\begin{proof}
We first assume that $\Omega$ is nondegenerate.

\smallskip
\noindent\emph{Step 1: homogeneous-space model.}
Write $\dim L=2m$, choose $e,f\in L$ with $\Omega(e,f)=1$, and put $W:=\langle e,f\rangle^\perp$. Let
$$
G:=\operatorname{Sp}(L,\Omega),\qquad H:=\{g\in G:ge=e,\ gf=f\}\cong\operatorname{Sp}(W).
$$
The action of $G$ on $Q_\Omega$ is transitive, since every pair $(x,y)$ with $\Omega(x,y)=1$ extends to a symplectic basis. Thus
$$
G/H\cong Q_\Omega
$$
as real algebraic homogeneous spaces.

Fix a point of $U$ and, after applying an element of $G$, assume it is $(e,f)$. Shrink $U$ and choose an open neighbourhood $\mathcal O\subset G$ of the identity mapping into $U$. Define
$$
\widetilde f(g):=f(ge,gf).
$$
Using a local product decomposition for the submersion $G\to G/H$, this is an $L^1_{\mathrm{loc}}$ function, hence a distribution, on $\mathcal O$. It is locally right $H$-invariant.

\smallskip
\noindent\emph{Step 2: horizontal differential equations.}
For $w\in W$ and $a\in\bbR$, define $X_{w,a}\in\mathfrak{sp}(L,\Omega)$ by
$$
X_{w,a}f=0,\qquad X_{w,a}e=w+af,\qquad X_{w,a}u=\Omega(u,w)f\quad(u\in W).
$$
A direct check gives $X_{w,a}^2=0$. Let $\mathfrak n_f$ be the space of all $X_{w,a}$. Then $\exp(\mathfrak n_f)$ fixes $f$ and acts simply transitively on
$$
\{(x,f):\Omega(x,f)=1\}.
$$
There is an opposite algebra $\mathfrak n_e$ fixing $e$ and acting simply transitively on
$$
\{(e,y):\Omega(e,y)=1\},
$$
and every $X\in\mathfrak n_e$ also satisfies $X^2=0$.

Regard elements of $\mathfrak g:=\mathfrak{sp}(L,\Omega)$ as left-invariant vector fields on $G$, whose flows are $g\mapsto g\exp(tX)$. The standard contact grading is
$$
\mathfrak g=
\mathfrak g_{-2}\oplus\mathfrak g_{-1}\oplus
(\bbR h_0\oplus\mathfrak h)\oplus
\mathfrak g_1\oplus\mathfrak g_2,
$$
with
$$
\mathfrak n_f=\mathfrak g_{-2}\oplus\mathfrak g_{-1},\qquad
\mathfrak n_e=\mathfrak g_1\oplus\mathfrak g_2,\qquad
[\mathfrak g_2,\mathfrak g_{-2}]=\bbR h_0.
$$
Choose once and for all a finite set
$$
S=\{Y_1,\ldots,Y_s\}\subset
\mathfrak n_f\cup\mathfrak h\cup\mathfrak n_e
$$
which Lie-generates $\mathfrak g$. Since the curves generated by the elements of $S\cap(\mathfrak n_f\cup\mathfrak n_e)$ are affine lines in the corresponding leaves, the two hypotheses on $f$, together with Fubini in local flow boxes, imply
$$
Y_j^{d+1}\widetilde f=0
$$
in the sense of distributions whenever $Y_j\in\mathfrak n_f\cup\mathfrak n_e$. Local right $H$-invariance gives
$$
Y_j\widetilde f=0
$$
whenever $Y_j\in\mathfrak h$. In particular, after increasing the exponent if necessary, there is one integer
$$
k=d+1
$$
such that
$$
Y_j^k\widetilde f=0
\qquad(j=1,\ldots,s)
$$
in the sense of distributions.

\smallskip
\noindent\emph{Step 3: local finite-jet lemma.}
Apply Lemma~\ref{lemma:local-finite-jet} to the fixed group $G=\operatorname{Sp}(L,\Omega)$ and the generating set $S$. Let $\mathcal A$ be the full space of analytic solution germs at the identity of
\[
 Y_j^{d+1}u=0\qquad(j=1,\ldots,s).
\]
It is finite-dimensional, contains the germ of $\widetilde f$, and its dimension is bounded in terms of $d$ and $\dim L$.

\smallskip
\noindent\emph{Step 4: finite-dimensional representation.}
For $Y\in\mathfrak g$, let $R_Y$ be the right-invariant vector field generated by $Y$. Left- and right-invariant vector fields commute, so every $R_Y$ preserves the full solution-germ space $\mathcal A$. Since
$$
[R_Y,R_Z]=-R_{[Y,Z]},
$$
the operators
$$
\rho(Y):=-R_Y|_{\mathcal A}
$$
define a finite-dimensional representation of $\mathfrak g$.

Let $v\in\mathcal A$ be the germ of $\widetilde f$ and let $\ell(u)=u(e)$. Complexify $\rho$. By Weyl's theorem and highest-weight theory, every irreducible finite-dimensional $\mathfrak{sp}(L_\bbC,\Omega_\bbC)$-module is the differential of an algebraic representation of the simply connected complex symplectic group
$$
G_\bbC=\operatorname{Sp}(L_\bbC,\Omega_\bbC).
$$
Hence $\rho_\bbC$ integrates to an algebraic representation
$$
\Pi:G_\bbC\to\operatorname{GL}(\mathcal A_\bbC).
$$
Set
$$
F(g):=\ell(\Pi(g^{-1})v).
$$
By construction, every iterated right-invariant derivative of $F$ at the identity agrees with the corresponding derivative of the analytic germ $\widetilde f$. Hence the two analytic germs agree near the identity.

\smallskip
\noindent\emph{Step 5: descent to $Q_\Omega$.}
For $Z\in\mathfrak h$, let $L_Z$ denote the left-invariant vector field whose flow is $g\mapsto g\exp(tZ)$. Since $\widetilde f$ is locally right $H$-invariant and $F=\widetilde f$ near the identity,
$$
L_ZF=0
$$
on a nonempty real-open subset of $G$. The function $L_ZF$ is regular on $G_\bbC$, and the real form $G$ is Zariski dense in $G_\bbC$; hence $L_ZF$ vanishes identically. Since $H_\bbC\cong\operatorname{Sp}(W_\bbC)$ is connected, $F$ is right $H_\bbC$-invariant. Therefore it descends to a regular function on
$$
G_\bbC/H_\bbC\cong(Q_\Omega)_\bbC.
$$
Since
$$
\bbC[(Q_\Omega)_\bbC]
=
\bbC[L_\bbC\oplus L_\bbC]/(\Omega(x,y)-1),
$$
the descended function is the restriction of an ambient polynomial. Taking real parts if necessary yields
$$
P\in\bbR[L\oplus L]
$$
agreeing with $f$ almost everywhere near the chosen point.

For the degree bound, fix a standard symplectic model of dimension $2m$ and the generating set $S$ above. Let $\mathcal A_H\subset\mathcal A$ be the subspace of locally right $H$-invariant germs. The preceding argument applies to every element of this finite-dimensional space. Choose a basis and ambient polynomial representatives $P_1,\ldots,P_N$ of its elements. Then $D=\max_j\deg P_j$ bounds the degree needed for any germ in $\mathcal A_H$. The space and the chosen representatives depend only on $d$ and the fixed symplectic model. Transitivity transports this bound to every point of $Q_\Omega$, and the linear symplectic action does not increase polynomial degree. A linear identification with the standard model proves the required bound for every nondegenerate $\Omega$.

\smallskip
\noindent\emph{Step 6: degenerate $\Omega$.}
Let $K=\ker\Omega$ and choose $L=K\oplus L_0$ with $\Omega_0:=\Omega|_{L_0}$ nondegenerate. Then
$$
Q_\Omega\cong K\times K\times Q_{\Omega_0}.
$$
Work on a sufficiently small product neighbourhood
$$
B_x\times B_y\times U_0
\subset
K\times K\times Q_{\Omega_0}.
$$
For almost every $(x_0,y_0)\in U_0$, the hypotheses on the $x$- and $y$-fibers imply, by Fubini, that the $(k_x,k_y)$-slice is separately polynomial of degree at most $d$ in $k_x$ and $k_y$. By Lemma~\ref{lemma:finite-dimensional-assembly}, that slice belongs almost everywhere to
$$
\mathcal P_d(K)\otimes\mathcal P_d(K),
$$
where $\mathcal P_d(K)$ denotes the polynomials on $K$ of degree at most $d$.

Choose a monomial basis $\{m_\alpha\}$ of $\mathcal P_d(K)$ and test functions $\{\varphi_\alpha\}\subset C_c^\infty(B_x)$, $\{\psi_\beta\}\subset C_c^\infty(B_y)$ dual to this basis:
$$
\int m_\gamma\varphi_\alpha=\delta_{\alpha\gamma},
\qquad
\int m_\gamma\psi_\beta=\delta_{\beta\gamma}.
$$
Using these bounded coefficient functionals gives
$$
f(k_x,k_y,x_0,y_0)
=
\sum_{\alpha,\beta}
m_\alpha(k_x)m_\beta(k_y)f_{\alpha\beta}(x_0,y_0)
$$
almost everywhere on the smaller neighbourhood, where
$$
f_{\alpha\beta}(x_0,y_0)
=
\int f(k_x,k_y,x_0,y_0)\varphi_\alpha(k_x)\psi_\beta(k_y)\,dk_x\,dk_y.
$$
No point evaluation on an almost-everywhere class is used.

The original leafwise polynomiality in the $x_0$- and $y_0$-variables is preserved by these coefficient functionals. Hence each $f_{\alpha\beta}$ satisfies the nondegenerate theorem for $Q_{\Omega_0}$ and is therefore the restriction of an ambient polynomial, with degree bounded in terms of $d$ and $\dim L$. Substituting these finitely many polynomial representatives gives an ambient polynomial representative for $f$ and preserves a uniform degree bound.

\smallskip
\noindent\emph{Step 7: connectedness.}
We have proved that every point of $U$ has a neighbourhood on which $f$ agrees almost everywhere with an ambient polynomial of uniformly bounded degree. The affine variety $Q_\Omega$ is irreducible: if $\Omega$ is nondegenerate this follows from the homogeneous-space description above, while in the degenerate case
$$
Q_\Omega\cong K\times K\times Q_{\Omega_0}
$$
reduces the claim to the nondegenerate factor. On an overlap, two local polynomial representatives agree almost everywhere on a nonempty relatively open subset of $Q_\Omega$, hence define the same regular function there and therefore the same element of
$$
\bbR[Q_\Omega]
=
\bbR[L\oplus L]/(\Omega(x,y)-1).
$$
Since $U$ is connected, the local regular functions patch to one class in $\bbR[Q_\Omega]$. Choosing an ambient polynomial representative of that class gives the desired $P$ on all of $U$.
\end{proof}

\begin{remark}[Dimension two]
If $\dim L=2$ and $\Omega$ is nondegenerate, then $Q_\Omega\cong\operatorname{SL}_2(\bbR)$. The two affine leaf families are the right cosets of the two opposite unipotent one-parameter subgroups. The proof reduces to the fact that if both root operators act locally nilpotently with uniformly bounded order, then the generated $\mathfrak{sl}_2$-module is finite-dimensional; its matrix coefficients are polynomial functions of the matrix entries.
\end{remark}

\begin{proof}[Proof of Proposition~\ref{prop:product-quadric-polynomiality}]
Fix $a$ and apply Theorem~\ref{thm:symplectic-separate-polynomiality} to almost every $U_a$-slice. Its uniform degree bound places all these slices in a single finite-dimensional space $\mathcal P_a$ of restrictions of ambient polynomials. These functions are analytic and belong to $L^2(U_a)$ after shrinking to the relatively compact chart under consideration. Lemma~\ref{lemma:finite-dimensional-assembly} gives
\[
 f\in\mathcal P_1\otimes\cdots\otimes\mathcal P_s
 \quad\text{almost everywhere on }U.
\]
A tensor product of polynomial restriction spaces consists of restrictions of polynomials in all the variables. Their total degree is bounded by the sum of the individual degree bounds, proving the assertion.
\end{proof}

\section{Tempered flat charts and stable holonomies}
\label{app:tempered-regularity}

We verify the regularity used in Subsection~\ref{subsec:rigidity-leafwise}. In particular, local smoothness by itself is not the justification for applying the noncompact invariance principle. Throughout this appendix $\eta$ is one $D$-ergodic component satisfying \textup{(S)}, $f=g_1$ is the simultaneous time-one map, and we work on smooth branches. A positive measurable function $b$ is called \emph{tempered} if
\[
 \frac{\log b(f^nq)}{|n|}\longrightarrow0
 \quad\text{as }n\longrightarrow\pm\infty
\]
for almost every $q$.

\begin{lemma}[Tempering by compact returns]
\label{lemma:compact-return-tempering}
Let $f$ be an invertible ergodic transformation of a probability space $(Y,\eta)$, preserving $\eta$, and let $K\subset Y$ have positive measure. For almost every $q$, its backward return time
\[
 N(q)=\min\{j\geq0:f^{-j}q\in K\}
\]
is finite and satisfies
\begin{equation}\label{eq:return-gap-tempered}
 N(f^nq)=o(|n|)\qquad(n\longrightarrow\pm\infty).
\end{equation}
In particular, $c\exp(bN(q))$ is tempered for every $c>0$ and $b\in\bbR$.
\end{lemma}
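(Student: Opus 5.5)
The plan is to prove finiteness with Poincaré recurrence and ergodicity, and then to get sublinear growth from Birkhoff's theorem applied to the indicator of $K$.

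\emph{Finiteness.} The set $\{q:N(q)=\infty\}$ consists of points whose backward orbit never meets $K$. Its complement contains $\bigcup_{j\ge0}f^{j}K$. Ergodicity of the invertible map $f$ makes the fully invariant set $\bigcup_{j\in\bbZ}f^jK$ conull. Poincaré recurrence for $f$ (applied to $K$ and to its images) then shows that the forward-invariant set $\bigcup_{j\ge0}f^jK$ differs from the fully invariant one by a null set. Hence $N<\infty$ almost everywhere.

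\emph{Sublinear growth.} Fix $q$ that is generic for the two-sided Birkhoff theorem for $\mathbf 1_K$, so that
\[
 \frac{1}{n}\#\{0\le j<n: f^{j}q\in K\}\to\eta(K)
 \quad\text{and}\quad
 \frac{1}{n}\#\{0< j\le n: f^{-j}q\in K\}\to\eta(K).
\]
For $n\to+\infty$, $N(f^nq)$ is the gap from time $n$ back to the last visit of the orbit of $q$ to $K$ at a time $\le n$. Suppose $N(f^nq)\ge\varepsilon n$ along a subsequence. Then the orbit makes no visits in the window $(n-\varepsilon n,n]$. The visit count at time $n$ then equals the count at time $(1-\varepsilon)n$, and dividing by $n$ gives $\eta(K)=(1-\varepsilon)\eta(K)$. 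This contradicts $\eta(K)>0$.

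For $n\to-\infty$, write $n=-m$. Then $N(f^{-m}q)$ is the gap before time $-m$ back to the previous visit, and the same window argument applies to the backward visit counts. Here we use that $N(q)<\infty$, so the counting is well defined.

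\emph{Tempering.} Finally, $\log\bigl(c\,e^{bN(f^nq)}\bigr)=\log c+bN(f^nq)=o(|n|)$, which is exactly temperedness.

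The only delicate point is the window argument, namely the correct bookkeeping of visit times in both directions. It is routine once both one-sided Birkhoff averages are in hand.
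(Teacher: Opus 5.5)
Your proposal is correct and follows essentially the paper's argument: two-sided Birkhoff frequencies for $\mathbf 1_K$, and a comparison of visit counts at the two ends of an empty window of length $\varepsilon|n|$, which contradicts $\eta(K)>0$. The only difference is in the finiteness step. You obtain it from ergodicity plus Poincar\'e recurrence, whereas the paper takes it as already implied by the positive backward visit frequency from Birkhoff; both routes are valid.
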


\begin{proof}
Birkhoff's theorem in both time directions gives positive visit frequency $\eta(K)$. A gap of length at least $\epsilon|n|$ ending at arbitrarily large positive or negative times contradicts this frequency: subtract the visit counts at the endpoints of an interval of length $\epsilon|n|$ inside the gap. The asymptotic difference is $\eta(K)\epsilon|n|$, whereas the interval contains no visits. Thus \eqref{eq:return-gap-tempered} holds, and the assertion about exponentials follows. No integrability of $N$ is needed.
\end{proof}

\begin{lemma}[Tempered regularity in flat plaque charts]
\label{lemma:tempered-flat-charts}

The local stable plaques, given in period coordinates by affine slices with all real periods fixed, admit measurable parametrizations with tempered positive radii on which $f$ contracts exponentially. The corresponding statement holds for local unstable plaques, obtained by fixing all imaginary periods, with $f^{-1}$ in place of $f$.

In Gauss--Manin trivializations centered at the base points of these plaques, all positive-order spatial derivatives of the cocycle matrices and of the fiber transition matrices vanish. Their zeroth-order norms, inverse norms, and the comparison constants with the Hodge metric have tempered bounds. Thus the Hodge cocycle, its fixed tensor constructions, duals, and finite direct sums are smooth along stable and unstable plaques in the sense of \cite[\S D.3.2]{BrownEskinFilipRodriguezHertz2025}. Their Gauss--Manin holonomies are locally Lipschitz in smooth flat charts on compact sets of arbitrarily large measure.
\end{lemma}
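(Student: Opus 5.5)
The plan is to work in period coordinates adapted to the local quadric model of Lemma~\ref{lemma:prime-local-model}, where the simultaneous time-one map is linear: $f(x,y)=(e x,e^{-1}y)$. A local stable plaque through $q=(x,y)$ is then the affine slice $\{(x,y'):\Omega_a(x_a,y'_a)=1\}$, and $f$ maps it linearly onto the stable plaque through $fq$, contracting it by exactly $e^{-1}$ in the Euclidean period norm. The only difficulty is that plaques must stay inside the smooth branch and inside a period chart. Euclidean period coordinates are not uniformly comparable to an intrinsic metric on the noncompact strata, so radii will not be uniform.

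\textbf{Step 1: tempered radii.} I will fix a compact set $K$ of measure greater than $1/2$ inside the regular set. It should lie inside the smooth branches and away from the self-intersection locus, and on it three quantities should be uniformly bounded: the period chart radius, the comparison between the period norm and the AGY norm, and the comparison between the Hodge norm and a fixed flat frame. On $K$ I take a uniform radius $r_0$. For general $q$, I use the backward return time $N(q)$ of Lemma~\ref{lemma:compact-return-tempering} and set $r(q)=r_0e^{-cN(q)}$, where $c$ is a constant. To check that the plaque of radius $r(q)$ at $q$ fits, I pull it back by $f^{-N(q)}$. Pulling back expands stable plaques by at most $e^{N(q)}$ in period coordinates, or by $e^{2N(q)}$ in AGY norm using the spectral-gap bound of Lemma~\ref{lemma:tangent-sign-decomposition}. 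Choosing $c\geq2$ therefore lands the pulled-back plaque inside the $r_0$-plaque at $f^{-N(q)}q\in K$. Since $f$ maps plaques to plaques linearly, the forward image is the required plaque at $q$. Exponential contraction is immediate from linearity, and temperedness follows from the lemma.

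\textbf{Step 2: flat trivializations.} On each plaque, which is a contractible subset of one period chart, I use the Gauss--Manin flat frame, which is constant in the chart. In this frame the cocycle matrices are locally constant: they are the monodromy matrices of the chart transitions, which are integer and locally constant. The fiber transition matrices are constant as well. Hence all spatial derivatives of positive order vanish. For the zeroth-order bounds I compare the flat frame at $q$ with the flat frame at the nearby point of $K$ used above. The ratio between the Hodge norm and the frame norm at $q$ is controlled by the cocycle bound~\eqref{boundedness_eq} along $N(q)$ steps, which gives a factor $e^{C N(q)}$. This factor is tempered by Lemma~\ref{lemma:compact-return-tempering}. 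Tensor constructions, duals and direct sums only change the constants. Together these give the hypotheses of \cite[\S D.3.2]{BrownEskinFilipRodriguezHertz2025}.

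\textbf{Step 3: Lipschitz holonomy.} On the compact sets $K'_n=\{N\leq n\}$, whose measure tends to $1$, the radii and norm comparisons are uniformly bounded. Gauss--Manin holonomy is the identity in the flat frame. Its Hodge norm is therefore controlled by the Hodge metric, which is real-analytic and hence locally Lipschitz in period coordinates on a compact set.

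The main obstacle is Step 1. The AGY/period comparison and the chart radius degenerate near the boundary of the stratum, and the argument must ensure that the pulled-back plaque stays in a smooth branch of the support rather than merely in the stratum. I would handle this by building the smooth-branch radius into the definition of $K$. Invariance of the local product model under $f$ then guarantees that images of support plaques are support plaques.
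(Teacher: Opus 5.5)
You get the tempered radii, the vanishing of spatial derivatives in centered Gauss--Manin trivializations, and the Lipschitz holonomy on compact sets essentially as the paper does. Your $r_0e^{-cN(q)}$ with $c=2$ is the paper's $\rho(q)=\delta e^{-2N(q)}$. The gap is the sentence ``Exponential contraction is immediate from linearity.''

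The factor $e^{-1}$ is exact only in period coordinates continued flatly along the orbit. At $f^nq$ these coordinates differ from the AGY or Hodge norm by the Kontsevich--Zorich cocycle over $n$ steps. The logarithm of that comparison grows linearly in $n$, so the coordinates are not tempered-comparable to any intrinsic norm. Measured intrinsically, the tangent exponents on $T^-$ are $-2$ on $iX_c$ and $-1+\lambda$ on $iK_0$, where $\lambda$ is a KZ exponent on the symplectic complement of the tautological plane. Contraction is exponential only because $\lambda<1$ strictly; this is Forni's spectral gap, which enters through Lemma~\ref{lemma:tangent-sign-decomposition}. Even then the contraction constant is only tempered, since it comes from Oseledets. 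The $e^{2|t|}$ bound you attribute to the spectral gap is the uniform a priori AGY estimate from \cite{EM18}, and it gives no contraction.

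The same omission breaks $f$-invariance of your plaque family. Along an excursion away from $K$, your radius shrinks by the factor $e^{-c}$, with $c\geq2$, at every step. The AGY estimate gives no uniform contraction, and directions in $iK_0$ contract only at the asymptotic rate $1-\lambda<2$. So $f^n$ of the plaque at $q$ need not lie in the plaque at $f^nq$. Knowing that the plaque at $q$ sits inside $f^{N(q)}$ of a plaque over $K$ does not give this inclusion. Your Step~2 also needs it: the cocycle matrix between centered trivializations must be defined, and constant, on the whole plaque.

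The paper fills this step in two moves. First, all exponents of $Df|_{T^-}$ are negative, so there is an adapted norm satisfying $\|Df(q)v\|_{s,fq}\leq e^{-\chi}\|v\|_{s,q}$, with tempered comparison $L(q)$ to the AGY norm. Second, the paper replaces $\rho$ by the slowly varying minorant $r(q)=\inf_{j\in\bbZ}e^{\epsilon|j|}\min\{1,\rho(f^jq)\}$ with $\epsilon<\chi$. This gives $f\Phi_q=\Phi_{fq}\circ Df(q)$ on the whole domain. Iterates then contract at rate $e^{-\chi n}$ in the adapted norms, and hence in plaque distance after a small loss in the exponent.
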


\begin{proof}
\emph{Norms and affine dynamics.}
Equip the product of relative cohomology spaces with the maximum of the Avila--Gou\"ezel--Yoccoz norms, denoted $\|\cdot\|_Y$. Use its restriction for tangent vectors to our support and its path distance along plaques. The estimates we need are \cite[Proposition~3.4, Lemma~3.6 and Theorem~A.2]{EM18}: small affine stable and unstable period charts exist, their norms and distances are locally comparable, and
\begin{equation}\label{eq:agy-tangent-bound}
 e^{-2|t|}\|v\|_{Y,q}
 \leq\|Dg_t(q)v\|_{Y,g_tq}
 \leq e^{2|t|}\|v\|_{Y,q}.
\end{equation}
The extra factor $e^{|t|}$ relative to Gauss--Manin transport comes from the action on the real and imaginary period coordinates. These inequalities hold factorwise and hence for the maximum product norm, independently of whether the measure is a product. For each fixed tensor construction $\rmE$, the Hodge growth bound gives constants $c,c_0$ with
\begin{equation}\label{eq:hodge-time-bound}
 \|\Gamma(g_t,q)\|_H,\ \|\Gamma(g_t,q)^{-1}\|_H
 \leq e^{c|t|+c_0}.
\end{equation}
The norm on an inverse map is taken between its corresponding endpoint fibers.

The local model of Lemma~\ref{lemma:prime-local-model} makes the candidate stable plaques affine: $q+v$ remains on the support when $v\in T^-_q$ is small. Changes of period coordinates are affine, and
\[
 f(q+v)=f(q)+Df(q)v.
\]
Thus the plaque dynamics is exactly its linear tangent cocycle, not merely its first-order approximation. The sign calculation in Lemma~\ref{lemma:tangent-sign-decomposition} applies with the AGY norm as well; see the norm comparison and spectral estimates in \cite[\S4.1 and Appendix~A.1]{EM18}.

\emph{Compact returns give tempered radii and metric comparisons.}
Choose a compact set $K$ of positive $\eta$-measure contained in a relatively compact period chart of a smooth branch. There are $\delta>0$ and $C<\infty$ such that, for every $z\in K$, its candidate stable plaque contains the affine $\delta$-ball about $z$ and, on this ball, Gauss--Manin transport and its inverse have norm at most $C$ in the Hodge metric. The analogous tangent-norm comparisons are uniformly bounded there as well. Shrinking $\delta$ ensures that these balls stay in the chosen chart.

The time-one map is ergodic by Lemma~\ref{lemma:mautner-blocks}. Let $N$ be the backward return time from Lemma~\ref{lemma:compact-return-tempering}, and write $z=f^{-N(q)}q$. Transport the $\delta$-plaque at $z$ by $f^{N(q)}$. Inequality~\eqref{eq:agy-tangent-bound} shows that its image contains the affine ball at $q$ of radius
\[
 \rho(q)=\delta e^{-2N(q)}
\]
in the norm $\|\cdot\|_{Y,q}$. For a point $q'$ in this ball, write $z'=f^{-N(q)}q'$. Flatness and equivariance give
\[
 P_{q,q'}
 =\Gamma(g_{N(q)},z')\,P_{z,z'}\,
   \Gamma(g_{N(q)},z)^{-1}.
\]
Consequently
\begin{equation}\label{eq:tempered-holonomy-bound}
 \|P_{q,q'}\|_H,\ \|P_{q,q'}^{-1}\|_H
 \leq C e^{2cN(q)+2c_0}.
\end{equation}
Applying the same argument to the tangent cocycle gives tangent-norm comparison bounds $C'e^{4N(q)}$ on these plaques; these also bound path distance by coordinate displacement. Lemma~\ref{lemma:compact-return-tempering} makes the radii and all comparison bounds tempered. Thus the required noncompact control follows from uniform flow bounds and compact returns.

\emph{Contracting plaque charts.}
All exponents of $Df|_{T^-}$ are negative. Its logarithmic integrability follows from \eqref{eq:agy-tangent-bound}. The adapted-norm construction for an integrable linear cocycle therefore provides $\chi>0$, a measurable norm $\|\cdot\|_{s,q}$, and a tempered $L(q)\geq1$ such that
\[
 \|v\|_{Y,q}\leq\|v\|_{s,q}\leq L(q)\|v\|_{Y,q},
 \qquad
 \|Df(q)v\|_{s,fq}\leq e^{-\chi}\|v\|_{s,q}.
\]
For this standard construction and temperedness see \cite[\S A.1]{BrownEskinFilipRodriguezHertz2025}. Choose $0<\epsilon<\chi$ and take the positive slowly varying minorant
\[
 r(q)=\inf_{j\in\bbZ} e^{\epsilon|j|}\min\{1,\rho(f^jq)\}.
\]
Temperedness makes this infimum positive; it satisfies
$r(q)\leq\rho(q)$ and $e^{-\epsilon}\leq r(fq)/r(q)\leq e^{\epsilon}$.
Hence the affine chart
\[
 \Phi_q:\{v\in T^-_q:\|v\|_{s,q}<r(q)\}\longrightarrow M,
 \qquad v\longmapsto q+v,
\]
satisfies $f\Phi_q(v)=\Phi_{fq}(Df(q)v)$ on its whole domain. Its iterates remain in the corresponding charts and contract at rate $e^{-\chi n}$ in the adapted norms. The tempered comparison bounds above imply exponential contraction also in plaque path distance, after any fixed small loss in the exponent. These affine discs have the full negative Oseledets tangent dimension and give the local strong stable manifolds. The construction is unchanged after reversing time. It also supplies the contraction assertion used in Lemma~\ref{lemma:simultaneous-flow}.

\emph{Cocycle and transition derivatives.}
Trivialize $\rmE$ on this plaque by $v\mapsto P_{q,\Phi_q(v)}$. In the centered trivializations at $q$ and $fq$, equivariance makes the cocycle matrix identically $\Gamma(f,q)$, independent of $v$. On a connected overlap of two plaque charts the fiber change of coordinates is also constant, namely the appropriate Gauss--Manin transport between their centers. Its norm and inverse are bounded by \eqref{eq:tempered-holonomy-bound} when the second center lies in the first local plaque. Every positive-order spatial derivative of these matrices is zero, in any smooth parametrization of the plaque. The affine base transitions, measured in the AGY norms at their centers, have only their first derivative, controlled by the tangent comparison bounds. The adapted norms are used to select the contracting plaque domains; the fiber estimates use the fixed Hodge metrics at the centers.

One may take countably many local flat foliation charts, shrinking their plaque domains as above and using their integer translates under $f$. The centered trivializations just constructed give the tempered overlap and cocycle estimates in \cite[\S D.3.2]{BrownEskinFilipRodriguezHertz2025}; in particular their $C^k$ bounds are tempered for every fixed $k$. Tensor products, duals and finite direct sums preserve spatial constancy and replace the zeroth-order bounds only by fixed products and maxima of such bounds.

Finally, a finite collection of relatively compact smooth flat charts covers a compact set of measure arbitrarily close to one. In these charts, local Gauss--Manin holonomy has constant matrices; smooth changes of frame have bounded first derivatives on smaller compact charts. Consequently, the Gauss--Manin holonomy matrices depend locally Lipschitz-continuously on their endpoints (equivalently, they are locally H\"older continuous with exponent $1$).
 All almost-everywhere restrictions to support plaques are legitimate by \textup{(S)} and Fubini. 
\end{proof}

\printbibliography

\end{document}